\theoremstyle{plain}
\newtheorem{theorem}{Theorem}
\newtheorem{lemma}[theorem]{Lemma}
\newtheorem{proposition}[theorem]{Proposition}
\theoremstyle{definition}
\newtheorem{example}[theorem]{Example}
\newtheorem{remark}[theorem]{Remark}
\numberwithin{theorem}{section}
\renewcommand{\phi}{\varphi}
\def\etc.{\emph{et\thinspace c.}}
\let\emptyset\varnothing
\def\conorm#1{|\kern-.2em\lfloor#1\rfloor\kern-.2em|}
\begin{document}
\title{Lecture notes on the dynamics of the Weil-Petersson flow}
\thanks{This work was partially supported by the French ANR grant ``GeoDyM'' (ANR-11-BS01-0004) and the Balzan Research Project of J. Palis.}
\author{Carlos Matheus}
\email{matheus.cmss@gmail.com}
\address{Universit\'e Paris 13, Sorbonne Paris Cit\'e,
LAGA, CNRS (UMR 7539), F-93430, Villetaneuse, France.}
\Subjclass{37D20}{57N10}
\keywords{Riemann surfaces, moduli spaces, Teichm\"uller spaces, Weil-Petersson metric, Weil-Petersson geodesic flow, ergodicity, mixing, rates of mixing}

\maketitle\tableofcontents

\section{Introduction}\label{s.introduction}

\subsection{Some words on the origin of these notes} This text is an expanded version of some lecture notes prepared by the author in the occasion of a series of three lectures during the workshop \emph{Young mathematicians in dynamical systems} organized by Fran\c coise Dal'bo, Louis Funar, Boris Hasselblatt and Barbara Schapira in November 2013 at Centre International de Rencontres Math\'ematiques (CIRM), Marseille, France.

As it is explained in the introduction of Hasselblatt's text \cite{Hasselblatt} in this volume, the three lectures at the origin of this text were part of a minicourse by Keith Burns, Boris Hasselblatt and the author around the recent theorem of Burns-Masur-Wilkinson \cite{BurnsMasurWilkinson} on the \emph{ergodicity of the Weil-Petersson (WP) geodesic flow}.

Of course, the goal of these notes is the same of the author's lectures: we want to cover some of the aspects related to moduli spaces of Riemann surfaces (and Teichm\"uller theory) in the proofs of the ergodicity of WP flow \cite{BurnsMasurWilkinson} (see also Theorem \ref{t.BMW} below) and the recent results of Burns, Masur, Wilkinson and the author \cite{BurnsMasurMatheusWilkinson} on the rates of mixing of WP flow (see also Theorem \ref{t.BMMW} below).

\subsection{An overview of the dynamics of WP flow} Before giving precise definitions of the terms introduced above (e.g., moduli spaces of Riemann surfaces, Weil-Petersson geodesic flow, etc.), let us list and compare some properties of the WP flow and its close cousin the \emph{Teichm\"uller (geodesic) flow} (see \cite{Zorich}) in order to get a flavor of their dynamical behaviors.

\medskip
\begin{center}
\begin{tabular}{|c|p{5.5cm}|p{5.5cm}|}
\hline
 & Teichm\"uller flow & WP flow \\
\hline
(a) & comes from a Finsler metric & comes from a Riemannian metric \\
\hline
(b) & complete & incomplete \\
\hline
(c) & is part of a $SL(2,\mathbb{R})$-action & is not part of a $SL(2,\mathbb{R})$-action \\
\hline
(d) & non-uniformly hyperbolic & singular hyperbolic \\
\hline
(e) & related to flat geometry of Riemann surfaces & related to hyperbolic geometry of Riemann surfaces \\
\hline
(f) & transitive & transitive \\
\hline
(g) & periodic orbits are dense & periodic orbits are dense \\
\hline
(h) & finite topological entropy & infinite topological entropy \\
\hline
(i) & ergodic for the Liouville measure $\mu_{T}$ & ergodic for the Liouville measure $\mu_{WP}$ \\
\hline
(j) & metric entropy $0<h(\mu_T)<\infty$ & metric entropy $0<h(\mu_{WP})<\infty$ \\
\hline
(k) & exponential rate of mixing & mixing at most polynomial (in general) \\
\hline
\end{tabular}
\end{center}
\medskip

Let us make some comments on both the common features and the significant differences between the Teichm\"uller and WP flows highlighted in the items above.

The Teichm\"uller flow is associated to a \emph{Finsler metric} (i.e., a continuous family of norms) on the fibers of the cotangent bundle of the moduli spaces\footnote{Actually, the Finsler metric corresponding to Teichm\"uller flow is a $C^1$ but \emph{not} $C^2$ family of norms: see, e.g., pages 308 and 309 of Hubbard's book \cite{Hubbard}.}, while the WP flow is associated to a \emph{Riemannian} (and, actually, \emph{K\"ahler}) \emph{metric} called \emph{Weil-Petersson (WP) metric}. In particular, the item (a) says that the WP flow comes from a metric that is \emph{smoother} than the metric generating the Teichm\"uller flow. We will come back to this point later when defining the WP metric.

On the other hand, the item (b) says that the dynamics of WP flow is not so nice because it is \emph{incomplete}, that is, there are certain WP geodesics that ``go to \emph{infinity}''  in \emph{finite} time. In particular, the WP flow is \emph{not} defined for all time $t\in\mathbb{R}$ when we start from \emph{certain} initial data. We will make more comments on this later. Nevertheless, Wolpert \cite{Wolpert2003} showed that the WP flow is defined for all time $t\in\mathbb{R}$ for \emph{almost every} initial data with respect to the Liouville (volume) measure induced by WP metric, and, thus, the WP flow is a legitime flow from the point of view of Ergodic Theory.

The item (c) says that WP flow is less \emph{algebraic} than Teichm\"uller flow because the former is not part of a $SL(2,\mathbb{R})$-action while the latter corresponds to the diagonal subgroup $g_t=\textrm{diag}(e^t,e^{-t})$ of $SL(2,\mathbb{R})$ acting (in a natural way) on the unit cotangent bundle of the moduli spaces of Riemann surfaces. Here, it is worth to mention that the mere fact that the Teichm\"uller flow is part of a $SL(2,\mathbb{R})$-action makes its dynamics very \emph{rich}: for instance, once one shows that the Teichm\"uller flow is ergodic (with respect to some $SL(2,\mathbb{R})$-invariant probability measure), it is possible to apply Howe-Moore's theorem (or variants of it) to improve ergodicity into mixing (and, actually, exponential mixing) of Teichm\"uller flow (see, e.g., \cite{AvilaGouezel} and \cite{AvilaGouezelYoccoz} for more details).

The item (d) says that WP and Teichm\"uller flows (morally) are non-uniformly hyperbolic in the sense of Pesin theory \cite{Pesinthy}, but they are so for \emph{distinct} reasons. The non-uniform hyperbolicity of the Teichm\"uller flow was shown by Veech \cite{Veech86} (for ``volume''/Masur-Veech measure) and Forni \cite{Forni} (for arbitrary invariant probability measures) and it follows from uniform estimates for the derivative of the Teichm\"uller flow on compact sets. On the other hand, the non-uniform hyperbolicity of the WP flow requires a slightly different argument because some sectional curvatures of WP metric approach $-\infty$ or $0$ at certain places near the ``boundary'' of the moduli spaces. We will return to this point in the future.

The item (e) \emph{partly} explains the interest of several authors in Teichm\"uller and WP flows. Indeed, since their introduction by Bernard Riemann in 1851 (in his PhD thesis), the study of Riemann surfaces and their moduli spaces became an important topic of research in both Mathematics and Physics (for reasons whose explanations are beyond the scope of these notes). In particular, the fact that the properties of the Teichm\"uller and WP flows on moduli spaces allows to recover geometrical information about Riemann surfaces motivated part of the literature on the dynamics of these flows. Concerning applications of these flows to the investigation of Riemann surfaces, it is natural to study the Teichm\"uller flow whenever one is interested in the properties of flat metrics with conical singularities on Riemann surfaces (cf. Zorich's survey \cite{Zorich}), while it is more natural to study the WP metric/flow whenever one is interested in the properties of \emph{hyperbolic metrics} on Riemann surfaces: for instance, Wolpert \cite{Wolpert2008} showed that the hyperbolic length of a closed geodesic in a fixed free homotopy class is a convex function along orbits of the WP flow, Mirzakhani \cite{Mirzakhani2008} proved that the growth of the hyperbolic lengths of simple geodesics on hyperbolic surfaces is related to the WP volume of the moduli space, and, after the works of Bridgeman \cite{Bridgeman2010}, McMullen \cite{McMullen2008} and more recently Bridgeman-Canary-Labourie-Sambarino \cite{BCLS2013} (among other authors), we know that the Weil-Petersson metric is intimately related to \emph{thermodynamical invariants} (entropy, pressure, etc.) of the geodesic flow on hyperbolic surfaces.

Concerning items (f) to (h), Pollicott-Weiss-Wolpert \cite{PollicottWeissWolpert2010} showed the transitivity and denseness of periodic orbits of the WP flow in the particular case of the unit cotangent bundle of the moduli space $\mathcal{M}_{1,1}$ (of once-punctured tori). In general, the transitivity, the denseness of periodic orbits and the infinitude of the topological entropy of the WP flow on the unit cotangent bundle of the moduli space $\mathcal{M}_{g,n}$ of genus $g$ Riemann surfaces with $n$ marked points (for any $g\geq 1$, $n\geq 1$) were shown by Brock-Masur-Minsky \cite{BrockMasurMinsky2010}.  Moreover, Hamenst\"adt \cite{Hamenstadt2010} proved the \emph{ergodic version} of the denseness of periodic orbits, i.e., the denseness of the subset of ergodic probability measures supported on periodic orbits in the set of all ergodic WP flow invariant probability measures.

The ergodicity of WP flow (mentioned in item (i)) was first studied by Pollicott-Weiss \cite{PollicottWeiss2009} in the particular case of the unit cotangent bundle $T^1\mathcal{M}_{1,1}$ of the moduli space $\mathcal{M}_{1,1}$ of once-punctured tori: they showed that \emph{if} the first two derivatives of the WP flow on $T^1\mathcal{M}_{1,1}$ are suitably bounded, \emph{then} this flow is ergodic. More recently, Burns-Masur-Wilkinson \cite{BurnsMasurWilkinson} were able to control \emph{in general} the first derivatives of WP flow and they used their estimates to show the following theorem:

\begin{theorem}[Burns-Masur-Wilkinson]\label{t.BMW} The WP flow on the unit cotangent bundle $T^1\mathcal{M}_{g,n}$ of the moduli space $\mathcal{M}_{g,n}$ of Riemann surfaces of genus $g$ with $n$ marked points is ergodic with respect to the Liouville measure $\mu_{WP}$ of the WP metric whenever $3g-3+n\geq 1$. Actually, it is Bernoulli (i.e., it is measurably isomorphic to a Bernoulli shift) and, \emph{a fortiori}, mixing. Furthermore, its metric entropy $h(\mu_{WP})$ is positive and finite.
\end{theorem}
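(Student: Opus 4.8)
The plan is to deduce Theorem \ref{t.BMW} from a general ergodicity criterion for the geodesic flow of a (possibly incomplete) negatively curved manifold, and then to verify the hypotheses of that criterion for the WP metric on Teichm\"uller space using the asymptotic analysis of the geometry near the boundary strata. Writing $\mathcal{M}_{g,n}$ as the quotient of the Teichm\"uller space $\mathcal{T}_{g,n}$ by the mapping class group, the key geometric input is that the WP metric is negatively curved (with sectional curvatures bounded away from $0$ on compact parts) but \emph{incomplete}: its metric completion is the augmented Teichm\"uller space, whose quotient is the Deligne--Mumford compactification $\overline{\mathcal{M}}_{g,n}$, and the added boundary strata correspond to nodal (pinched) surfaces. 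The whole difficulty is that both the incompleteness of the metric and the degeneration of the curvature (which tends to $-\infty$ and to $0$ near the boundary) obstruct a direct application of the classical Hopf argument and of Pesin theory \cite{Pesinthy}.

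First I would isolate the abstract statement: if $(M,g)$ is a smooth, possibly incomplete, negatively curved manifold whose metric completion $\overline{M}$ is compact, and if the geometry is \emph{polynomially controlled} near the boundary $\partial M=\overline{M}\setminus M$ --- meaning that the sectional curvature, its first covariant derivatives, and the reciprocal of the injectivity radius are all bounded by a fixed power of the inverse distance to $\partial M$ --- then the Liouville measure $\mu$ is finite and the geodesic flow is ergodic, and in fact Bernoulli, with respect to $\mu$. The proof of this criterion is a Hopf-type argument: negative curvature yields, $\mu$-almost everywhere, stable and unstable manifolds (this is where the nonuniform hyperbolicity and Pesin theory enter, since the hyperbolicity is not uniform), the polynomial control guarantees the \emph{absolute continuity} of the stable and unstable foliations, and the Hopf argument then shows that any flow-invariant $L^2$ function is almost everywhere constant along stable and unstable leaves, hence constant. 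Upgrading ergodicity to the Bernoulli property uses the Ornstein--Weiss machinery together with the $K$-property that follows from the local product structure.

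The second, and genuinely delicate, ingredient is to check that the WP metric satisfies this polynomial control hypothesis. Here I would rely on Wolpert's expansions for the WP metric, its connection, and its curvature tensor in the Fenchel--Nielsen type coordinates adapted to a pinching curve, together with McMullen's K\"ahler-potential description \cite{McMullen2008}; these give precise rates at which the geometry blows up as a simple closed geodesic is pinched, and one reads off from them that the curvature and its derivatives are polynomially bounded in the inverse distance to the stratum. One must also use that $\overline{\mathcal{M}}_{g,n}$ is compact and that $\partial\overline{\mathcal{M}}_{g,n}$ has the right codimension, so that neighborhoods of it have controlled volume; this is what makes $\mu_{WP}$ finite and, combined with Wolpert's result that the flow is defined almost everywhere \cite{Wolpert2003}, lets one discard the measure-zero set of geodesics that reach the boundary in finite time.

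I expect the main obstacle to be controlling the \emph{recurrence of typical geodesics to the boundary}: because hyperbolicity degenerates there, the Hopf argument and the absolute-continuity estimates survive only if a full-measure set of orbits does not approach $\partial\mathcal{M}_{g,n}$ too fast or too often. Quantifying this --- a ``no rapid approach'' estimate bounding the time an orbit spends within distance $\delta$ of the boundary --- is the technical heart of the argument, and it is precisely what the first-derivative bounds for the WP flow established in \cite{BurnsMasurWilkinson} are designed to provide. Finally, the positivity of $h(\mu_{WP})$ follows from nonvanishing Lyapunov exponents (a consequence of negative curvature on a set of positive measure) via Pesin's entropy formula, while its finiteness --- despite the topological entropy being infinite --- follows from the integrability near the boundary of the logarithm of the relevant derivative bounds.
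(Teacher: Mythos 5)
Your strategy coincides with the paper's: reduce Theorem \ref{t.BMW} to the Burns--Masur--Wilkinson ergodicity criterion for geodesic flows on incomplete, negatively curved manifolds with polynomially controlled geometry (Theorem \ref{t.BMW-ergodicity-criterion}), and then verify its hypotheses for the WP metric using Wolpert's asymptotic expansions near the boundary strata together with McMullen's holomorphic realization of the WP form (for the record, the relevant McMullen reference is the K\"ahler-hyperbolicity paper \cite{McMullen2000}, not \cite{McMullen2008}; and the criterion requires polynomial bounds on the curvature tensor \emph{and its first two derivatives}, which is exactly why the Cauchy-estimate argument applied to the bounded holomorphic $1$-form $\theta_{WP}$ is needed).

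There is, however, a concrete gap in your reduction. The criterion applies to quotients $N=M/\Gamma$ where $\Gamma$ acts \emph{freely} and properly discontinuously, so that $N$ is a manifold. The mapping class group does not act freely on Teichm\"uller space: Riemann surfaces with extra symmetries have nontrivial finite stabilizers (already visible in $\mathcal{M}_{1,1}=\mathbb{H}/SL(2,\mathbb{Z})$ at the square and hexagonal tori), so $\mathcal{M}_{g,n}$ is only an orbifold and you cannot take $N=T^1\mathcal{M}_{g,n}$, $\Gamma=MCG_{g,n}$ in the criterion. The paper repairs this in two steps that your proposal is missing. First, one replaces $MCG(S)$ by the level-$3$ congruence subgroup $MCG(S)[3]$, which is torsion-free by a theorem of Serre and hence acts freely, so that $\mathcal{M}(S)[3]=Teich(S)/MCG(S)[3]$ is a genuine manifold finitely covering $\mathcal{M}_{g,n}$; the six hypotheses are then verified for $T^1\mathcal{M}(S)[3]$. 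Second, one must \emph{descend} ergodicity, Bernoullicity and the entropy statement from the cover to $T^1\mathcal{M}_{g,n}$, and this requires showing that the orbifold locus has zero Liouville measure: the paper proves that the fixed-point sets $F(h)$ of finite-order elements $h\in MCG(S)$ form a closed, locally finite union of subsets of real codimension at least $2$ (using properness of the action and Rauch's dimension count), so the flow on the cover projects to the flow downstairs on a full-measure set. Without this step, ergodicity upstairs does not formally transfer. Two lesser points: you omit hypothesis (I), the geodesic convexity of $Teich(S)$, which is a separate theorem of Wolpert; and the ``no rapid approach'' estimate you describe as the technical heart is not what hypothesis (VI) supplies --- (VI) is a bound on $\|D\varphi_t\|$ in terms of the distance of the orbit segment to the boundary, while the control on excursions near the boundary comes from the cusplike volume bound (III) via a Borel--Cantelli argument inside the proof of the criterion itself.
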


The Teichm\"uller-theoretical aspects of this theorem will occupy the next two sections of this text. For now, we will just try to describe the general lines of Burns-Masur-Wilkinson arguments in Subsection \ref{s.BMW-outline} below.

However, before passing to this topic, let us make some comments about item (k) above on the rate of mixing of Teichm\"uller and WP flows.

Generally speaking, it is expected that the rate of mixing of a system (diffeomorphism or flow) displaying a ``reasonable'' amount of \emph{hyperbolicity} is exponential: for example, the property of exponential rate of mixing was shown  by Dolgopyat \cite{Dolgopyat} (see also this article of Liverani \cite{Liverani}) for a large class of contact Anosov flows\footnote{Including certain geodesic flows on compact Riemannian manifolds with negative curvature.}, and by Avila-Gou\"ezel-Yoccoz \cite{AvilaGouezelYoccoz} and Avila-Gou\"ezel \cite{AvilaGouezel} for the Teichm\"uller flow equipped with ``nice'' measures.

Here, we recall that the \emph{rate of mixing}/\emph{decay of correlations} of a mixing flow $\psi^t$ is the speed of convergence to zero of the correlations functions $C_t(f,g):=\int f\cdot g\circ\psi^t - \left(\int f\right)\left(\int g\right)$ as $t\to\infty$ (for choices of ``sufficiently smooth'' observables $f$ and $g$). Intuitively, the rate of mixing is a quantitative measurement of how fast the flow $\psi^t$ mix distinct regions of the phase space (such as the supports of the observables $f$ and $g$). See, e.g., Subsection 6.16 of Hasselblatt's lecture notes \cite{Hasselblatt} for more comments.

In this context, given the ergodicity and mixing theorem of Burns-Masur-Wilkinson stated above, it is natural to try to ``determine'' the rate of mixing of WP flow. In this direction, we obtained the following result (cf. \cite{BurnsMasurMatheusWilkinson}):

\begin{theorem}[Burns-Masur-M.-Wilkinson]\label{t.BMMW} The rate of mixing of WP flow on $T^1\mathcal{M}_{g,n}$ (for ``reasonably smooth'' observables) is
\begin{itemize}
\item at most \textbf{polynomial} for $3g-3+n>1$ and
\item \textbf{rapid} (super-polynomial) for $3g-3+n=1$.
\end{itemize}
\end{theorem}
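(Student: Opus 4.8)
The plan is to establish the two halves of the dichotomy by completely different mechanisms, each driven by the geometry of the Weil-Petersson metric near the boundary of the moduli space, where the curvature degenerates as recorded in item (d) above.

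For the polynomial upper bound when $3g-3+n>1$, the strategy is to exploit the presence of nearly-flat directions near the stratum of the boundary where two or more distinct short geodesics pinch simultaneously. First I would recall Wolpert's asymptotic expansions for the WP metric in Fenchel-Nielsen coordinates near a pinching curve, which show that along a pair of independent pinching directions the sectional curvature tends to $0$ (rather than to $-\infty$). Geometrically, this produces a region of phase space that looks like a product of two cusp-like pieces, on which the flow behaves like a nearly integrable (parabolic-type) system with very weak hyperbolicity. The key quantitative step is to construct an explicit family of observables supported near this degenerate locus and to compute a lower bound for their correlation functions $C_t(f,g)$; the slow escape of trajectories from a neighborhood of the almost-flat locus forces $C_t$ to decay no faster than a fixed negative power of $t$. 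The condition $3g-3+n>1$ is exactly what guarantees the complex dimension is at least two, so that two independent pinching curves coexist and the product-of-cusps picture is available. The main obstacle here is turning Wolpert's local curvature and metric asymptotics into a genuine quantitative lower bound on correlations: one must control how long typical orbits spend near the degenerate set, which requires sharp estimates on the invariant measure of shrinking neighborhoods of the boundary and on the return-time statistics of the flow.

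For the rapid-mixing statement when $3g-3+n=1$, the dimension is minimal ($\mathcal{M}_{1,1}$ or $\mathcal{M}_{0,4}$), so only a single geodesic can pinch and the troublesome product-of-flat-directions phenomenon cannot occur. Here the plan is to implement a coding of the flow via a suitable cross-section and first-return map, and to show that the tails of the return-time function decay super-polynomially. Concretely, I would build a Young tower (in the sense of the thermodynamic/inducing formalism) modeling excursions of the WP flow into the single cusp, use Wolpert's one-variable asymptotics to estimate the measure of the sets of points with long excursions, and verify that these tail probabilities decay faster than every polynomial. Feeding such super-polynomial tail bounds into the abstract machinery relating return-time tails to decay of correlations (the flow analogues of the operator-renewal and tower estimates) then yields the rapid-mixing conclusion for sufficiently smooth observables.

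I expect the genuinely hard part to be the upper-bound (polynomial) direction for $3g-3+n>1$, and specifically the construction of observables witnessing slow mixing together with the control of orbit excursions into the multiply-degenerate boundary region. Unlike the single-cusp case, where the one-dimensional asymptotics are essentially sharp, the multi-cusp case involves the interaction between several pinching parameters and the anisotropy of the metric (some curvatures $\to -\infty$, others $\to 0$ along different planes), so one must carefully isolate the almost-flat two-plane responsible for the slow decay and show that its contribution is not washed out by the strongly hyperbolic directions. Once the correct test functions and the measure-theoretic estimates on dwell times near this locus are in place, the lower bound on $C_t(f,g)$ should follow, but making these heuristics rigorous is the crux of the argument.
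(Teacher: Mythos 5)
Your plan for the polynomial upper bound when $3g-3+n>1$ is essentially the paper's argument: fix a smooth observable on a compact ball, build a second family of observables supported on a set of unit vectors that hug the boundary and travel ``almost parallel'' to it, prove a dwell-time estimate, and read off a lower bound on correlations at time $t\sim 1/\varepsilon$. Two corrections of detail. First, the paper does \emph{not} work at the corner where two curves pinch simultaneously: it works over the locus where a \emph{single} non-separating curve $\alpha$ is short, and what the hypothesis $3g-3+n>1$ buys is that the stratum $\mathcal{T}_{\alpha}/MCG_{g,n}$ is positive-dimensional, so that \emph{unit} vectors with small projection $r_{\alpha}(v)$ onto the complex line $\textrm{span}\{\lambda_{\alpha},J\lambda_{\alpha}\}$ exist at all (when $3g-3+n=1$ the tangent space \emph{is} that complex line, which is exactly why the construction dies there). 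Your two-cusp version is a subset of this set and would also work, but with smaller volume and hence a weaker exponent. Second, the quantitative engine you leave unspecified is not a curvature estimate or return-time statistics of typical orbits, but Wolpert's Clairaut-type relation $r_{\alpha}'(t)=O(f_{\alpha}(t)^3)$ together with $|f_{\alpha}'|\leq r_{\alpha}$: starting from $f_{\alpha}\leq\varepsilon$, $r_{\alpha}\leq\varepsilon^2$, the geodesic keeps $f_{\alpha}\leq 2\varepsilon$ for time $\geq 1/C_0\varepsilon$. With $\textrm{vol}(V_{\varepsilon})\simeq\varepsilon^8$ and smoothed indicators of $C^1$-norm $O(\varepsilon^{-2})$, this yields the explicit bound $\gamma\leq 10$.

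For the rapid-mixing half there is a genuine gap. Your plan rests on showing that ``the tails of the return-time function decay super-polynomially,'' but this is the wrong difficulty: near the cusp of $\mathcal{M}_{1,1}$ the metric is modeled on the surface of revolution of $v=u^3$, whose curvature blows up to $-\infty$, and a key fact (which the paper emphasizes) is that \emph{every} geodesic entering the cusp region, other than those aimed exactly at the cusp, returns to the cross-section in uniformly \emph{bounded} time: the roof function satisfies $T(\beta)=O(1)$ for all $\beta\neq 0$. So there are no long excursions, tail estimates are vacuous, and bounded return times by themselves do not give rapid mixing of a suspension flow. The actual crux is the \emph{singular} behavior of the first-return map $F$ and of $T$ at the circle $\Sigma$ of vectors pointing straight into the cusp: the expansion and the distortion of $F$ blow up there, and $T$ is not Lipschitz (its derivative grows like $\beta^{-1-}$). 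The paper handles this by (i) introducing homogeneity strips $H_k=\{(k+1)^{-\nu}<|\beta|<k^{-\nu}\}$, (ii) proving, via Jacobi-field and Riccati comparison arguments on the surface of revolution, the one-step growth condition and the distortion bounds of Chernov--Zhang, so that $F$ is modeled by a Young tower with \emph{exponential} tails, and (iii) proving uniform H\"older bounds for $T$ restricted to each $H_k$; only then does the abstract machinery apply, namely B\'alint--Melbourne's rapid-mixing theorem for \emph{contact} suspension flows, where the contact structure is essential (tower estimates alone do not control the mixing rate of a flow). None of these steps is present in your outline. Relatedly, your assessment of where the difficulty lies is inverted relative to the paper: the polynomial upper bound is the short, soft part, while the rapid-mixing statement is the technical heart of the proof.
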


We will present a sketch of proof of this result in the last section of this text. For now, we will content ourselves with a vague description of the \emph{geometrical reason} for the difference in the rate of mixing of the Teichm\"uller and WP flows in Subsection \ref{s.BMMW} below.

\subsection{Ergodicity of WP flow: outline of proof}\label{s.BMW-outline}

The initial idea to prove Burns-Masur-Wilkinson theorem is the ``usual'' argument for the proof of ergodicity of a system exhibiting some hyperbolicity, namely, \emph{Hopf's argument}.

\subsubsection{A quick review of Hopf's argument} Traditionally, Hopf's argument runs as follows (cf. Subsection 4.3 of Hasselblatt's lecture notes \cite{Hasselblatt}). Given a smooth flow $(\psi^t)_{t\in\mathbb{R}}:X\to X$ on a compact Riemannian manifold $(X,d)$ preserving the corresponding volume measure $\mu$ and a continuous observable $f:X\to\mathbb{R}$, we consider the future and past \emph{Birkhoff averages}:
$$f^+(x):=\lim\limits_{T\to+\infty}\frac{1}{T}\int_0^T f(\psi^s(x))\,ds \quad \textrm{and} \quad f^-(x):=\lim\limits_{T\to-\infty}\frac{1}{T}\int_0^T f(\psi^s(x))\,ds$$
By \emph{Birkhoff's ergodic theorem} (cf. Subsection 6.3 of \cite{Hasselblatt}),  for $\mu$-almost every $x\in X$, the quantities $f^+(x)$ and $f^-(x)$ exist and, actually, they coincide $f^+(x)=f^-(x):=\widetilde{f}(x)$. In the literature, a point $x$ such that $f^+(x)$, $f^-(x)$ exist and $f^+(x)=f^-(x)=\widetilde{f}(x)$ is called a \emph{Birkhoff generic} point (with respect to $\mu$).

By definition, the ergodicity of $\psi^t$ (with respect to $\mu$) is equivalent to the fact that the functions $f^+$ and $f^-$ are \emph{constant} at $\mu$-almost every point.

In order to show the ergodicity of a flow $\psi^t$ with some hyperbolicity, Hopf \cite{Hopf} observes that the function $f^+$, resp. $f^-$, is constant along stable, resp. unstable, sets
$$W^s(x):=\{y: \lim\limits_{t\to+\infty}d(\psi^t(y),\psi^t(x))=0\}, \textrm{resp.} W^u(x)=\{y: \lim\limits_{t\to-\infty}d(\psi^t(y),\psi^t(x))=0\},$$
i.e., $f^+(x)=f^+(y)$ whenever $y\in W^s(x)$, resp. $f^-(x)=f^-(z)$ whenever $z\in W^u(x)$. We leave the verification of this fact as an exercise to the reader.

In the case of an \emph{Anosov flow} $\psi^t$ on $X$, we know that the stable and unstable sets are immersed \emph{submanifolds} (cf. Subsection 5.5 of Hasselblatt's notes \cite{Hasselblatt}). Moreover, if one forgets about the flow direction, the stable and unstable manifolds have complementary dimensions and intersect transversely. Hence, given two points $p, q\in X$ (lying in distinct orbits of $\psi^t$), we can connect them using pieces of stable and unstable manifolds as shown in the figure below:

\begin{figure}[htb!]
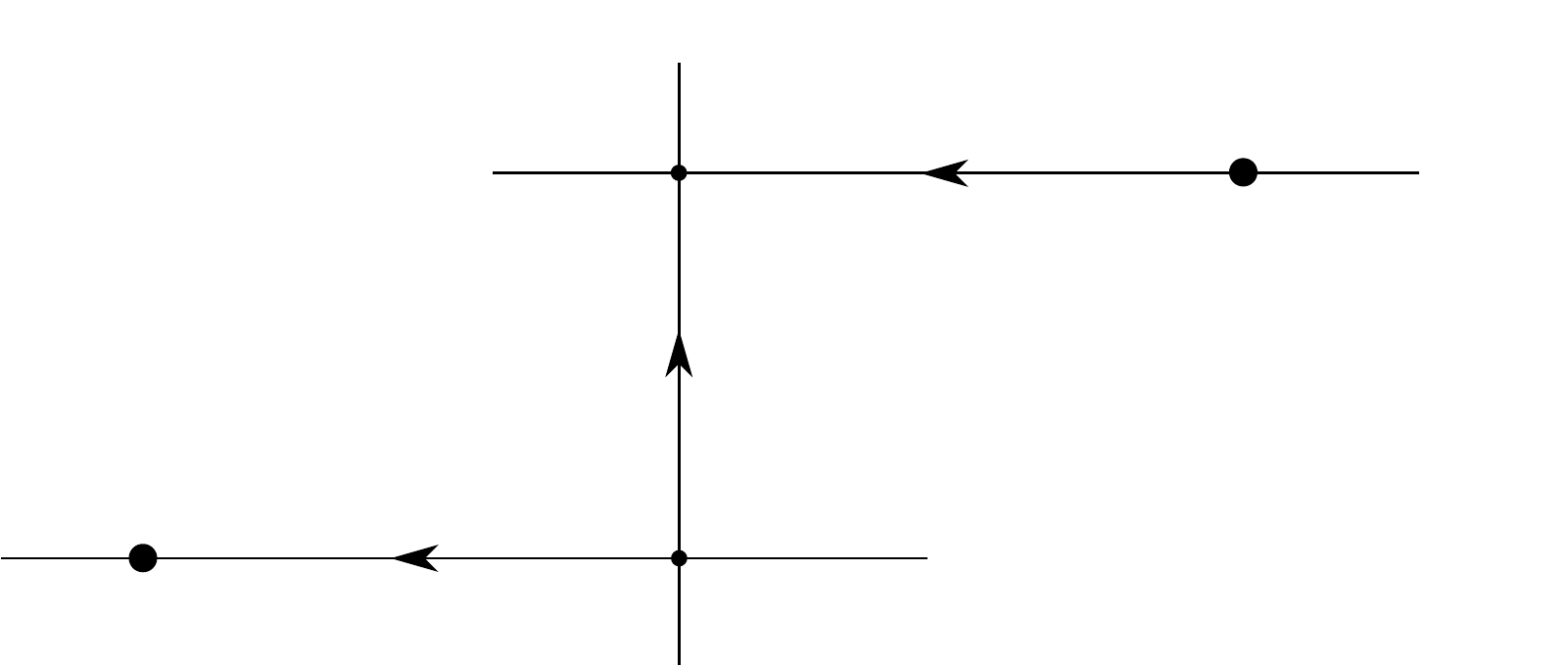
\caption{Connecting $p$ and $q$ with pieces of stable and unstable manifolds.}\label{f.1}
\end{figure}

In particular, this \emph{indicates} that a volume-preserving Anosov flow $\psi^t$ is ergodic because the functions $f^+$ and $f^-$ are constant along stable and unstable manifolds, they coincide almost everywhere and any pair of points can be connected via pieces of stable and unstable manifolds. However, this argument towards ergodicity of $\psi^t$ is \emph{not} complete yet: indeed, one needs to know that the intersection points $z_1,\dots, z_n$ between the pieces of stable and unstable manifolds connecting $p$  and $q$ are Birkhoff generic in order to conlude that $\widetilde{f}(p)=\widetilde{f}(z_1)=\dots=\widetilde{f}(z_n)=\widetilde{f}(q)$.

In the original context of his article, Hopf \cite{Hopf} studies a geodesic flow $\psi^t$ of a compact surface of \emph{constant} negative curvature, and he uses the fact that the stable and unstable manifolds form $C^1$ foliations to deduce that the intersection points $z_1,\dots, z_n$ can be taken to be Birkhoff generic points. Indeed, since the invariant foliations are $C^1$ in his context, Hopf applies \emph{Fubini's theorem} to the set $\mathcal{B}$ of full $\mu$-volume consisting of Birkhoff generic points in order to ensure that almost all stable and unstable manifolds $W^s(x)$ and $W^u(x)$ intersect $\mathcal{B}$ in a subset of total length measure of $W^s(x)$ and $W^u(x)$ (compare with the proof of Proposition 4.10 of \cite{Hasselblatt}).

On the other hand, it is known that the stable and unstable manifolds of a \emph{general} Anosov flow (such as geodesic flows on compact manifolds of \emph{variable} negative curvature) do \emph{not} form necessarily a $C^1$-foliation, but only H\"older continuous foliations (see e.g. the papers of Anosov \cite{Anosov} and/or Hasselblatt \cite{Hasselblatt1994} for concrete examples). In particular, this is an obstacle to the argument \emph{\`a la Fubini} of the previous paragraph. Nevertheless, Anosov \cite{Anosov} showed that the stable and unstable foliations of a smooth Anosov flow are always \emph{absolutely continuous}, so that one can still apply Fubini's theorem to conclude ergodicity along the lines of Hopf's argument presented.

In summary, we know that a smooth ($C^2$) volume-preserving Anosov flow on a compact manifold is ergodic thanks to Hopf's argument and the absolute continuity of stable and unstable foliations.

\begin{remark} Robinson-Young \cite{RobinsonYoung} showed that the stable and unstable foliations of a $C^1$ Anosov system are not necessarily absolutely continuous. In particular, the smoothness ($C^2$) assumption on the Anosov flow is necessary for the ergodicity argument described above.
\end{remark}

\begin{remark} The absolute continuity of a foliation invariant under some system depends on some hyperbolicity. In fact, Shub-Wilkinson \cite{ShubWilkinson} constructed examples of invariant central (along which the dynamics is neutral) foliations of certain partially hyperbolic diffeomorphisms failing to satisfy Fubini's theorem: each leaf of these central foliations intersects a set of full volume exactly at one point! This phenomenon is sometimes referred to as \emph{Fubini's nightmare} in the literature (see, e.g., this article of Milnor \cite{Milnor}) and sometimes a foliation ``failing'' Fubini's theorem is called a \emph{pathological foliation}.
\end{remark}

After this brief sketch of Hopf's argument for the ergodicity of smooth volume-preserving Anosov flows on compact manifolds, let us explain the difficulties of extending this argument to the setting of WP flow.

\subsubsection{Hopf's argument in the context of WP flow} As we already mentioned (cf. item (d) of the table above), the WP flow is singular hyperbolic. In a nutshell, this means that, even though WP flow is not Anosov, it is (morally) non-uniformly hyperbolic in the sense of Pesin theory and it satisfies some hyperbolicity estimates along pieces of orbits staying in compact parts of moduli space.

In particular, thanks to (Katok-Strelcyn \cite{KatokStrelcyn} version of) \emph{Pesin's stable manifold theorem} \cite{Pesinthy}, the stable and unstable sets of almost every point are immersed submanifolds, and, if we forget about the flow direction, the stable and unstable manifolds have complementary dimensions. Furthermore, the stable and unstable manifolds are part of absolutely continuous laminations. Here, it is important that the dynamics is sufficiently smooth (see, e.g., this paper of Pugh \cite{Pugh}, and this preprint of Bonatti-Crovisier-Shinohara \cite{BCS}).

Thus, this gives hopes that Hopf's argument \emph{could be} applied to show the ergodicity of volume-preserving non-uniformly hyperbolic systems.

However, by inspecting the figure \ref{f.1} above, we see that Hopf's argument relies on the fact that stable and unstable manifolds of Anosov flows have a nice, well-controlled, geometry.

For instance, if we start with a point $p$ and we want to connect it with pieces of stable and unstable manifolds to a point $q$ at a \emph{large} distance, we have to make sure that the pieces of stable and unstable manifolds used in figure \ref{f.1} are ``uniform'', e.g., they are graphs of \emph{definite size} and \emph{bounded curvature} with respect to the splitting into stable and unstable directions, and, moreover, the angles between the stable and unstable directions are \emph{uniformly bounded away from zero}.

Indeed, if the pieces of stable and unstable manifolds get shorter and shorter, and/or if they ``curve'' a lot, and/or the angles between stable and unstable directions are not bounded away from zero, one might not be able to reach/access $q$ from $p$ with stable and unstable manifolds:

\begin{figure}[htb!]
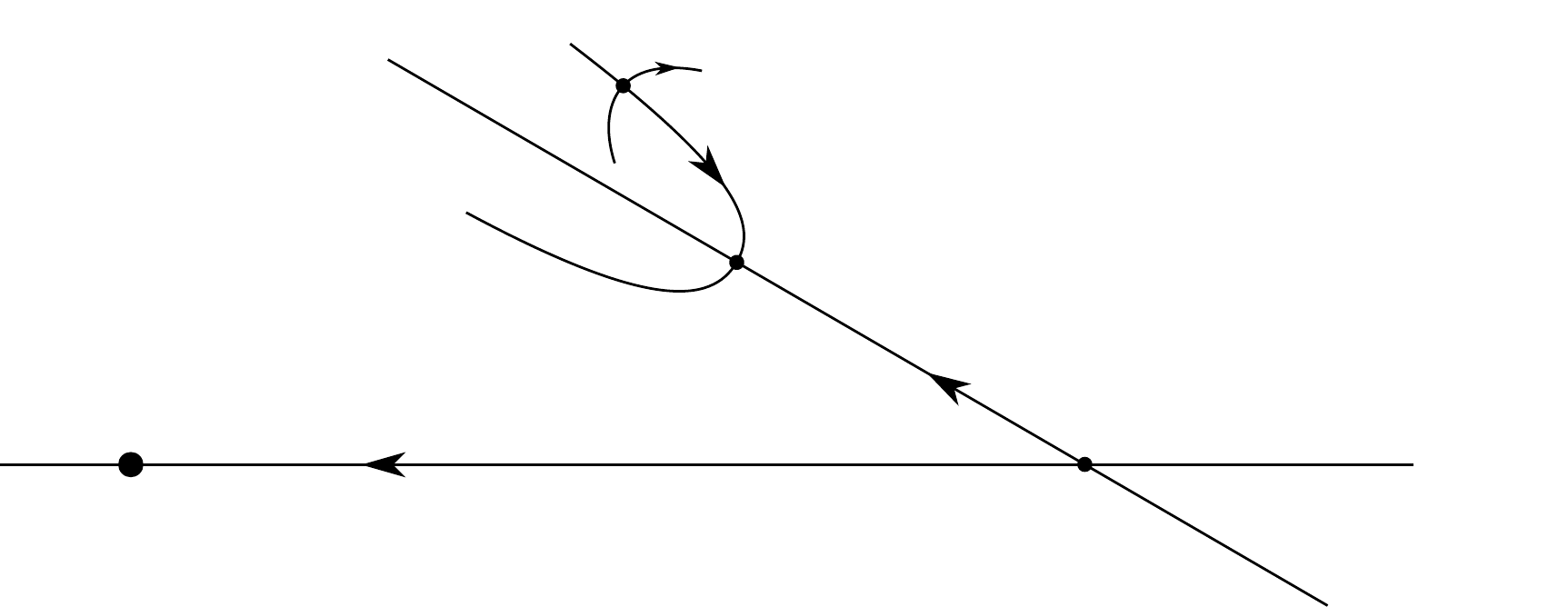
\caption{Pesin stable and unstable manifolds with ``bad'' geometry.}\label{f.2}
\end{figure}

As it turns out, while these kinds of non-uniformity do not occur for Anosov flows, they can actually occur for \emph{certain} non-uniformly hyperbolic systems. More precisely, the sizes and curvatures of stable and unstable manifolds, and the angles between stable and unstable directions of a general non-uniformly hyperbolic system vary only \emph{measurably} from point to point.

In particular, this excludes \emph{a priori} a naive generalization of Hopf's ergodicity argument for non-uniformly hyperbolic systems, and, in fact, there are concrete examples\footnote{As a matter of fact, these examples are ``sharp'': Pugh-Shub \cite{PughShub89} showed that a volume-preserving non-uniformly hyperbolic system has \emph{at most} countably many ergodic components.} by Dolgopyat-Hu-Pesin \cite{DolgopyatHuPesin} of volume-preserving non-uniformly hyperbolic systems with countably many ergodic components consisting of invariant sets of positive volumes that are essentially open.

In summary, the ergodicity of a non-uniformly hyperbolic system \emph{depends} on the particular dynamical features of the given system.

In this direction, there is an important literature dedicated to the construction of large classes of ergodic non-uniformly hyperbolic systems: for example, the ergodicity of several classes of \emph{billiards} was shown by Sinai \cite{Sinai70}, Bunimovich \cite{Bunimovich74}, Bunimovich-Chernov-Sinai \cite{BunimovichChernovSinai91} among others (see also Chernov-Markarian's book \cite{ChernovMarkarian}) and the ergodicity of non-uniformly hyperbolic systems exhibiting \emph{partial hyperbolicity} (or \emph{dominated splitting}) was shown by Pugh-Shub \cite{PughShub00}, Rodriguez-Hertz \cite{RodriguezHertz}, Tahzibi \cite{Tahzibi}, Burns-Wilkinson \cite{BurnsWilkinson}, Rodriguez-Hertz-- Rodriguez-Hertz--Ures \cite{RHRHU} among others.

For the proof of their ergodicity result for the WP flow, Burns-Masur-Wilkinson take part of their inspiration from the work of Katok-Strelcyn \cite{KatokStrelcyn} where Pesin's theory \cite{Pesinthy} (of existence and absolute continuity of stable manifolds) is extended to singular hyperbolic systems.

In a nutshell, the basic philosophy behind Katok-Strelcyn's work is the following. Given a non-uniformly hyperbolic system with some non-trivial singular set, all dynamical features predicted by Pesin theory in virtue of the (non-uniform) \emph{exponential} contraction and expansion are \emph{not} affected \emph{if} the loss of control on the system is at most \emph{polynomial} as one approaches the singular set. In other terms, the \emph{exponential} (hyperbolic) behavior of a singular system is \emph{not} disturbed by the presence of a singular set where the first two derivatives of the system lose control in a \emph{polynomial} way. In particular, this hints that Hopf's argument can be extended to singular hyperbolic systems with polynomially bad singular sets.

In this context, Burns-Masur-Wilkinson shows the following \emph{ergodicity criterion} for singular hyperbolic geodesic flows (cf. Theorem 3.1 of \cite{BurnsMasurWilkinson}).

Let $N$ be the quotient $N=M/\Gamma$ of a contractible, negatively curved, possibly incomplete, Riemannian manifold $M$ by a subgroup $\Gamma$ of isometries of $M$ acting freely and properly discontinuously. By slightly abusing notation, we denote by $d$ the metrics on $N$ and $M$ induced by the Riemannian metric of $M$.

We consider $\overline{N}$ the (Cauchy) \emph{metric completion} of the metric space $(N, d)$, i.e., the (complete) metric space consisting of all equivalence classes of Cauchy sequences $\{x_n\}\subset N$ under the relation $\{x_n\}\sim\{y_n\}$ if and only if $\lim\limits_{n\to\infty} d(x_n,y_n)=0$ equipped with the metric $d(\{x_n\},\{z_n\})=\lim\limits_{n\to\infty} d(x_n, z_n)$, and we define the (Cauchy) \emph{boundary} $\partial N:=\overline{N}-N$.

\begin{theorem}[Burns-Masur-Wilkinson ergodicity criterion for geodesic flows]\label{t.BMW-ergodicity-criterion} Let $N=M/\Gamma$ be a manifold as above. Suppose that:
\begin{itemize}
\item[(I)] the universal cover $M$ of $N$ is \textbf{geodesically convex}, i.e., for every $p,q\in M$, there exists an unique geodesic segment \textbf{in} $M$ connecting $p$ and $q$.
\item[(II)] the metric completion $\overline{N}$ of $(N,d)$ is \textbf{compact}.
\item[(III)] the boundary $\partial N$ is \textbf{volumetrically cusplike}, i.e., for some constants $C>1$ and $\nu>0$, the volume of a $\rho$-neighborhood of the boundary satisfies
$$\textrm{Vol}(\{x\in N: d(x,\partial N)<\rho\})\leq C \rho^{2+\nu}$$
for every $\rho>0$.
\item[(IV)] $N$ has \textbf{polynomially controlled curvature}, i.e., there are constants $C>1$ and $\beta>0$ such that the curvature tensor $R$ of $N$ and its first two derivatives satisfy the following polynomial bound
$$\max\{\|R(x)\|,\|\nabla R(x)\|,\|\nabla^2 R(x)\|\}\leq C d(x,\partial N)^{-\beta}$$
for every $x\in N$.
\item[(V)] $N$ has \textbf{polynomially controlled injectivity radius}, i.e., there are constants $C>1$ and $\beta>0$ such that
$$\textrm{inj}(x)\geq (1/C) d(x,\partial N)^{\beta}$$
for every $x\in N$ (where $\textrm{inj}(x)$ denotes the injectivity radius at $x$).
\item[(VI)] The \textbf{first derivative of the geodesic flow} $\varphi_t$ is \textbf{polynomially controlled}, i.e., there are constants $C>1$ and $\beta>0$ such that, for every infinite geodesic $\gamma$ on $N$ and every $t\in [0,1]$:
$$\|D_{\stackrel{.}{\gamma}(0)}\varphi_t\|\leq C d(\gamma([-t,t]),\partial N)^{\beta}$$
\end{itemize}

Then, the Liouville (volume) measure $m$ of $N$ is finite, the geodesic flow $\varphi_t$ on the unit cotangent bundle $T^1N$ of $N$ is defined at $m$-almost every point for all time $t$, and the geodesic flow $\varphi_t$ is \textbf{non-uniformly hyperbolic} (in the sense of Pesin's theory) and \textbf{ergodic}.

Actually, the geodesic flow $\varphi_t$ is Bernoulli and, furthermore, its metric entropy $h(\varphi_t)$ is positive, finite and $h(\varphi_t)$ is given by Pesin's entropy formula (i.e., $h(\varphi_t)$ is the sum of positive Lyapunov exponents of $\varphi_t$ counted with multiplicities).
\end{theorem}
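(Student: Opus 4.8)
The plan is to reduce the whole statement to the framework of Katok--Strelcyn's extension of Pesin theory to systems with singularities, using hypotheses (III)--(VI) to verify the quantitative ``polynomial control'' that this framework demands, and then to run a version of Hopf's argument adapted to the incomplete setting. First I would dispose of the two preliminary claims. The \textbf{finiteness} of the Liouville measure $m$ follows by splitting $N$ into a fixed neighborhood of the boundary and its complement: the latter is relatively compact in $\overline{N}$ by (II), hence of finite volume, while the former has finite volume by the cusplike bound (III). The assertion that $\varphi_t$ is \textbf{defined $m$-almost everywhere for all time} amounts to showing that the set of unit (co)vectors whose geodesic reaches $\partial N$ in finite time is $m$-null. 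I would prove this by a Borel--Cantelli argument: use the derivative bound (VI) to estimate, over each unit time window, the measure of the set of geodesics entering the $\rho$-neighborhood of $\partial N$, and sum this against the cusplike estimate (III) along a dyadic sequence $\rho=2^{-k}$. Geodesic convexity (I) ensures geodesics are single-valued, so the flow is genuinely well defined off this null set.

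Next I would install the \textbf{non-uniformly hyperbolic (Pesin) structure}. Because the universal cover $M$ is negatively curved, the geodesic flow carries, wherever defined, a continuous invariant splitting into stable, unstable and flow directions, and strict negativity of the curvature forces the Lyapunov exponents transverse to the flow to be nonzero on the full-measure Oseledets set --- this is the source of the non-uniform hyperbolicity. The real content is verifying the hypotheses of the Katok--Strelcyn theorem, which require that the singular set (the vectors whose geodesics reach $\partial N$) be approached with only \emph{polynomial} loss of control of the first two derivatives and with controlled volume. These are precisely (IV) (polynomial curvature bound), (V) (polynomial lower bound on injectivity radius), (VI) (polynomial derivative bound) and (III) (volume). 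Granting them, Katok--Strelcyn yields local stable and unstable manifolds at $m$-almost every point and, crucially, that these assemble into \textbf{absolutely continuous} laminations, so that Fubini-type arguments survive the passage to the singular setting.

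With the Pesin structure in hand I would carry out \textbf{Hopf's argument}. Birkhoff's theorem supplies a full-measure set $\Bc$ of Birkhoff generic points, and absolute continuity guarantees that $m$-almost every local stable (resp. unstable) manifold meets $\Bc$ in a subset of full induced measure, so that $\widetilde{f}$ is constant along $m$-almost every leaf. The remaining task is the \emph{global} step: to show the ergodic components are essentially open, whence connectedness of $N$ forces a single component. Here geodesic convexity (I) is indispensable, as it permits one to connect nearby Birkhoff generic points through chains of stable and unstable manifolds. The hard part will be exactly what Figure \ref{f.2} illustrates: near $\partial N$ the Pesin manifolds may shrink, curve sharply, and meet at degenerating angles, so that naive chaining can fail. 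Overcoming this is where the polynomial controls (IV)--(VI) must be used \emph{quantitatively} --- to guarantee that, on a set of full measure, the exponential hyperbolicity dominates the at-most-polynomial degeneration, leaving the stable and unstable manifolds with enough uniform size, bounded curvature and transversality to effect the connection. This local-to-global ergodicity step is, in my view, the principal obstacle of the entire proof.

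Finally, the stronger conclusions follow by standard but singular-system upgrades. The \textbf{Bernoulli} property comes from ergodicity together with hyperbolicity and absolute continuity via the Ornstein--Weiss and Pesin machinery for $K$-systems adapted to maps with singularities. For the \textbf{entropy}, Ruelle's inequality --- valid under integrability of $\log^+\|D\varphi_t\|$, itself a consequence of (VI) and the almost-everywhere completeness --- gives finiteness; positivity comes from the nonzero positive Lyapunov exponents; and the identification with the \textbf{Pesin entropy formula} is the equality case of Ruelle's inequality for the smooth invariant (Liouville) measure, which holds in the Katok--Strelcyn setting.
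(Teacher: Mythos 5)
A point of order first: this paper does \emph{not} prove Theorem \ref{t.BMW-ergodicity-criterion}. Immediately after the statement, the text declares that the proof ``was one of the main motivations of Burns' lectures,'' refers the reader to \cite{Burns} and to the original article \cite{BurnsMasurWilkinson}, and announces that the criterion will be \emph{assumed} in the sequel; the paper's actual content is the verification of hypotheses (I)--(VI) for the Weil--Petersson metric. So your proposal can only be compared with the informal outline of Subsection \ref{s.BMW-outline}, and with that outline it agrees: finiteness of the Liouville measure from (II)+(III), almost-everywhere completeness of the flow, the Katok--Strelcyn extension of Pesin theory \cite{KatokStrelcyn} as the source of stable/unstable manifolds and of their absolute continuity under polynomial control of the singular set (which is what (III)--(VI) encode), Hopf's argument \cite{Hopf} for ergodicity, and the standard upgrades to the Bernoulli property and to Pesin's entropy formula. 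This is indeed the architecture of the Burns--Masur--Wilkinson proof.

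Judged as a proof rather than a roadmap, however, your proposal has a genuine gap, and you name it yourself: the local-to-global step of Hopf's argument. Saying that the polynomial controls (IV)--(VI) ``must be used quantitatively'' to guarantee that Pesin manifolds have uniform size, bounded curvature and transversality on a large set is a statement of the problem, not a solution; it is precisely the difficulty illustrated by Figure \ref{f.2}, and overcoming it (estimating the measure of the sets where stable and unstable manifolds have definite size, and chaining density points across such sets despite the degeneration near $\partial N$) is the actual mathematical content of \cite{BurnsMasurWilkinson}. Until that step is executed, ergodicity is not established. Two smaller remarks. First, your Borel--Cantelli argument for almost-everywhere completeness does not really need (VI): flow-invariance of $m$, the unit speed of geodesics, and the exponent $2+\nu>1$ in (III) suffice --- a geodesic reaching $\partial N$ by time $T$ must spend a time interval of length $\approx 2^{-k}$ inside the $2^{-k+1}$-neighborhood of $\partial N$, so discretizing time in steps of $2^{-k}$ bounds the bad set by $(T2^{k}+1)\cdot C\,2^{-(k-1)(2+\nu)}\to 0$. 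Second, the Katok--Strelcyn hypotheses require control of the \emph{second} derivative of the flow near the singular set, which is not hypothesis (VI); it has to be extracted from the bounds on $\nabla R$ and $\nabla^{2}R$ in (IV) via Jacobi-field estimates, a derivation your sketch passes over in silence.
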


The proof of this ergodicity criterion for geodesic flows was one of the main motivations of Burns' lectures (see \cite{Burns}) and, for this reason, we will not discuss it here. Instead, we will always \emph{assume} Theorem \ref{t.BMW-ergodicity-criterion} in the sequel, so that the proof of Theorem \ref{t.BMW} (ergodicity of the WP flow) will be complete\footnote{\label{footnote-A}Actually, there is a \emph{subtle} point in the reduction of Theorem \ref{t.BMW} to Theorem \ref{t.BMW-ergodicity-criterion} related to the \emph{orbifoldic} nature of moduli spaces. We will discuss this later in Subsection \ref{ss.ergodicity-orbifold-manifold}.} once we show that the moduli space of Riemann surfaces equipped with the WP metric satisfies the six items (I) to (VI) above.

\subsubsection{A brief comment on the verification of the ergodicity criterion for WP flow} In comparison with previously known results in the literature, some of the main novelties in Burns-Masur-Wilkinson work \cite{BurnsMasurWilkinson} concern the verification of items (IV) and (VI) for the WP metric: in fact, those items are the most delicate to check and their verifications are strongly based on  important previous works of McMullen \cite{McMullen2000} and Wolpert \cite{Wolpert2003}, \cite{Wolpert2008}, \cite{Wolpert2009}, \cite{Wolpert2011}.

In any case, this completes our outline of the proof of Burns-Masur-Wilkinson theorem on the ergodicity of WP flow.

\subsection{Rates of mixing of WP flow}\label{s.BMMW}

As we mentioned above, both Teichm\"uller and WP flows are uniformly hyperbolic in compact parts of the moduli space of curves. Since an uniformly hyperbolic system is (usually) exponentially mixing, the sole obstacle preventing an exponential rate of mixing for these flows is the possibility that a ``big'' set of orbits spends a ``lot'' of time near infinity (or rather the boundary of the moduli space) before coming back to the compact parts.

In the case of Teichm\"uller flow, the volume in Teichm\"uller metric of a $\rho$-neighborhood of the boundary of moduli space is exponentially small\footnote{Its order is $O(e^{-(2-)\rho})$ where $2-$ denotes any fixed  positive real number strictly smaller than $2$; cf. Corollary 2.16 of Avila-Gou\"ezel-Yoccoz paper \cite{AvilaGouezelYoccoz}.}.

Intuitively, this says that the ``probability'' that an orbit spends a long time near the boundary of moduli space is exponentially small (cf. Theorem 2.15 of Avila-Gou\"ezel-Yoccoz paper \cite{AvilaGouezelYoccoz}). In particular, the excursions near infinity of most orbits is not long enough to disrupt the exponential rate of mixing ``imposed'' by hyperbolic dynamics of the Teichm\"uller flow on compact parts. Of course, this is merely a vague intuition behind the exponential mixing of the Techm\"uller flow and the curious reader is encouraged to consult the articles of Avila-Gou\"ezel-Yoccoz \cite{AvilaGouezelYoccoz} and Avila-Gou\"ezel \cite{AvilaGouezel} for detailed explanations.

On the other hand, in the context of the WP flow, we will see that the volume in WP metric of $\rho$-neighborhood of the boundary of moduli space is $\simeq \rho^4$ (compare with Lemma 6.1 of  \cite{BurnsMasurWilkinson}).

Therefore, the ``probability'' that an orbit of WP flow spends a long time near infinity \emph{could} be \emph{only} polynomially small but \emph{not} exponentially small. In particular, this possibility \emph{might} conspire against an exponential mixing of WP flow.

In fact, in our joint work \cite{BurnsMasurMatheusWilkinson} with Burns, Masur and Wilkinson, we construct a subset $A_{\rho}$ of volume $\simeq \rho^{8}$ of orbits of WP flow staying near infinity for a time $\simeq 1/\rho$ (at least). For this sake, we use some estimates of Wolpert \cite{Wolpert2009} (see also Propositions 4.11, 4.12 and 4.13 in Burns-Masur-Wilkinson paper \cite{BurnsMasurWilkinson}) saying that the geometry of WP metric on the moduli space of Riemann surfaces of genus $g\geq 2$ looks like a \emph{product} of the WP metrics on the moduli spaces of curves of lower genera $1\leq g'<g$. In particular, the set $A_{\rho}$ is chosen to correspond to geodesics travelling \emph{almost parallel} to one of the factors of the product for a relatively \emph{long time}.

Of course, the existence of such sets $A_{\rho}$ means that the rate of mixing of WP flow $\psi^t$ can \emph{not} be very fast. Indeed, by taking $g_{\rho}$ a ``smooth approximation'' of the characteristic function of $A_{\rho}$ (i.e., $0\leq g_{\rho}\leq 1$ supported on $A_{\rho}$ and $\int g_{\rho}\simeq\rho^8$), and by letting $f$ be a \emph{fixed} smooth function supported on the compact part (away from infinity), we see that
$$|C_t(f,g_{\rho})|:=\left|\int f\cdot g_{\rho}\circ\psi^t -\left(\int f\right) \left(\int g_{\rho}\right)\right| = \left(\int f\right)\left(\int g_{\rho}\right)\simeq \rho^8$$
for $0\leq t\leq 1/\rho$. In fact, the second equality follows because $f$ is supported in the compact part of the moduli space, $g_{\rho}\circ\psi^t$ is supported on $\psi^{-t}(A_{\rho})$ and the set $\psi^{-t}(A_{\rho})$ is \emph{disjoint} from the compact part for $0\leq t\leq 1/\rho$ (by construction of $A_{\rho}$), so that $f\cdot g_{\rho}\circ\psi^t\equiv 0$ for $0\leq t\leq 1/\rho$. Therefore, at time $t=1/\rho$, we deduce that $C_t(f,g_{\rho})\simeq 1/t^8$, and, hence, the correlation functions associated to WP flow $\psi^t$ can \emph{not} decay faster than a polynomial function of degree $>8$ of $1/t$ as the time $t\to\infty$. In particular, this explains the first part of the statement of Theorem \ref{t.BMMW}.

Finally, let us remark that this argument does not work in genus $g=1$ because the crucial fact (in the construction of the set $A_{\rho}$) that the WP metric looks like the product of WP metrics in moduli spaces of lower genera \emph{breaks down} in genus $g=1$. Indeed, in this situation, the moduli space is naturally compactified by adding a single point (because the moduli space in lower genus $g=0$ is trivial) and so the WP metric does not behave like a product (or, more precisely, \emph{no} sectional curvature approaches zero as we get close to infinity). In this case, one can exploit this ``absence of zero curvatures at infinity'' to show that the rate of mixing of the WP flow on the moduli space of torii is \emph{rapid}, i.e., faster than any polynomial function of $1/t$. In particular, this explains the second part of the statement of Theorem \ref{t.BMMW}.

Concluding this Subsection, let us observe that Theorem \ref{t.BMMW} does not claim that the rate of mixing of the WP flow on moduli space of curves of genus $g\geq 2$ is \emph{genuinely} polynomial.

Indeed, recall that the naive intuition says that the rate of mixing is polynomial if we can show that most orbits do not spend long time near infinity.

Of course, this would \emph{not} be the case if the WP metric is \emph{very close} to a product metric, or, more precisely, if some sectional curvatures of WP metric are very close to zero: in fact, the structure of a product metric near infinity would allow for several orbits to travel almost parallel to the factors of the product (and, hence, near infinity) for a very long time.

So, we need estimates saying how \emph{fast} the sectional curvatures of WP metric approach zero as one gets close to infinity, and, \emph{unfortunately}, the best formulas for the sectional curvatures of WP metric near infinity available so far (due to Wolpert \cite{Wolpert2009}) do not give this type of information (because of certain \emph{potential} cancellations in Wolpert's calculations).

\subsection{Organization of the text} The remainder of these lectures notes are divided into three sections. Section \ref{s.2} contains introductory material on moduli spaces and WP metrics. Section \ref{s.wp-geometry} is dedicated to the proof of Theorem \ref{t.BMW}. Finally, Section \ref{s.mixing-rate-WP} gives a sketch of the proof of Theorem \ref{t.BMMW}. 

\section{Moduli spaces of Riemann surfaces and the Weil-Petersson metric}\label{s.2}

The main purposes of this section are the following. In the next seven subsections below, we recall the definitions and basic properties of the moduli spaces of Riemann surfaces and their cotangent bundles, and we introduce the Weil-Petersson (and Teichm\"uller) metric(s). In particular, the definition of the main actor of these lecture notes, namely the Weil-Petersson geodesic flow, is presented in details in Subsection \ref{ss.Teich-WP-metrics}. The basic reference for these subsections is Hubbard's book \cite{Hubbard}.

Finally, we fulfill in the last subsection the promise made in footnote \ref{footnote-A} to explain the \emph{subtle} point in the reduction of the ergodicity of WP flow (Theorem \ref{t.BMW}) to the ergodicity criterion for geodesic flows (Theorem \ref{t.BMW-ergodicity-criterion}) related to the \emph{orbifoldic} nature of moduli spaces (cf. Subsection \ref{ss.ergodicity-orbifold-manifold}). Of course, this is a technicality about moduli spaces and the reader might wish to skip this subsection in a first reading of this text.

\subsection{Definition and examples of moduli spaces}

Let $S$ be a fixed topological surface of genus $g\geq0$ with $n\geq 0$ punctures. The \emph{moduli space} $\mathcal{M}(S)=\mathcal{M}_{g,n}$ is the set of Riemann surface structures on $S$ \emph{modulo} biholomorphisms (conformal equivalences).

\begin{example}[Moduli space of triply punctured spheres]\label{ex.M03} The moduli space $\mathcal{M}_{0,3}$ of triply punctured spheres consists of a single point
$$\mathcal{M}_{0,3}=\{\overline{\mathbb{C}}-\{0,1,\infty\}\}$$
where $\overline{\mathbb{C}}$ denotes the Riemann sphere. Indeed, this is a consequence of the fact that the group of biholomorphisms (M\"obius transformations) of the Riemann sphere $\overline{\mathbb{C}}$ is \emph{simply 3-transitive}, i.e., given $3$ points $x,y,z\in\overline{\mathbb{C}}$, there exists an unique biholomorphism of $\overline{\mathbb{C}}$ sending $x$, $y$ and $z$ (resp.) to $0$, $1$ and $\infty$ (resp.).
\end{example}

\begin{example}[Moduli space of once punctured torii]\label{ex.moduli-torii} The moduli space $\mathcal{M}_{1,1}$ of once punctured torii is
$$\mathcal{M}_{1,1}=\mathbb{H}/SL(2,\mathbb{Z})$$
where $SL(2,\mathbb{Z})$ acts on the hyperbolic half-plane $\mathbb{H}:=\{z\in\mathbb{C}:\textrm{Im}(z)>0\}$ via M\"obius transformations, i.e., $\left(\begin{array}{cc}a& b \\ c & d \end{array}\right)\in SL(2,\mathbb{Z})$ acts on $\mathbb{H}$ via $$\left(\begin{array}{cc}a& b \\ c & d \end{array}\right)z:=\frac{az+b}{cz+d}$$ Indeed, this follows from the facts that:
\begin{itemize}
\item a complex torus with a marked point is biholomorphic to a ``normalized'' lattice $\mathbb{C}/(\mathbb{Z}\oplus\mathbb{Z}z)$ for some $z\in\mathbb{H}$ (with the marked point corresponding to the origin), and
\item two ``normalized'' lattices $\mathbb{C}/(\mathbb{Z}\oplus\mathbb{Z}z)$ and $\mathbb{C}/(\mathbb{Z}\oplus\mathbb{Z}w)$ are biholomorphic if and only if $w=\frac{az+b}{cz+d}$ for some $\left(\begin{array}{cc}a& b \\ c & d \end{array}\right)\in SL(2,\mathbb{Z})$.
\end{itemize}
\end{example}

The second example reveals an interesting feature of $\mathcal{M}_{1,1}$: it is \emph{not} a manifold, but only an \emph{orbifold}. In fact, the \emph{stabilizer} of the action of $SL(2,\mathbb{Z})$ on $\mathbb{H}$ at a \emph{typical} point is trivial, but it has order $2$ at $i\in\mathbb{H}$ and order $3$ at $\exp(\pi i/3)\in\mathbb{H}$ (this happens because a typical torus has no symmetry, but the square and hexagonal torii have some symmetries). In particular, $\mathcal{M}_{1,1}$ is topologically an once punctured sphere with two conical singularities at $i$ and $\exp(\pi i/3)$. The figure below is a classical fundamental domain of the action of $SL(2,\mathbb{Z})$ on $\mathbb{H}$ together with the actions of the matrices $T=\left(\begin{array}{cc} 1 & 1 \\ 0 & 1 \end{array}\right)$ and $J=\left(\begin{array}{cc} 0 & -1 \\ 1 & 0 \end{array}\right)$:

\begin{figure}[htb!]
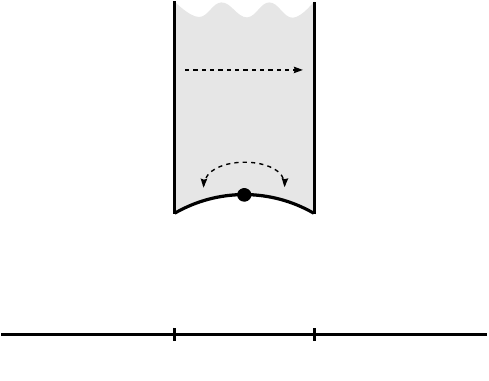
\caption{Fundamental domain $\{z\in\mathbb{H}: |\textrm{Re}(z)|\leq1/2, |z|\geq 1\}$ for $\mathbb{H}/SL(2,\mathbb{Z})$.}\label{f.modular-curve}
\end{figure}

As it turns out, all moduli spaces $\mathcal{M}_{g,n}$ are complex orbifolds. In order to see this fact, we need to introduce some auxiliary structures (including the notions of \emph{Teichm\"uller spaces} and \emph{mapping class groups}).

\begin{remark} \textbf{From now on}, we will restrict our attention to the case of a topological surface $S$ of genus $g\geq 0$ with $n\geq 0$ punctures such that $3g-3+n>0$. In this case, the \emph{uniformization theorem} says that a Riemann surface structure $X$ on $S$ is conformally equivalent to a quotient $\mathbb{H}/\Gamma$ of the hyperbolic upper-half plane $\mathbb{H}$ by a discrete subgroup of $SL(2,\mathbb{R})$ (isomorphic to the fundamental group of $S$). Moreover, the hyperbolic metric $\widetilde{\rho}=\frac{|dz|}{\textrm{Im}(z)}$ on $\mathbb{H}$ descends to a finite area hyperbolic metric $\rho$ on $\mathbb{H}/\Gamma$ and, in fact, $\rho$ is the unique Riemannian metric of constant curvature $-1$ on $X$ inducing the \emph{same} conformal structure. (See, e.g., Hubbard's book \cite{Hubbard} for more details)
\end{remark}

\subsection{Teichm\"uller metric}

Let us start by endowing the moduli spaces with the structure of complete metric spaces.

By definition, a metric on $\mathcal{M}(S)$ corresponds to a way to measure the distance between two points in $\mathcal{M}(S)$. A natural way of telling how far apart are two conformal structures on $S$ is by the means of \emph{quasiconformal maps}.

\emph{Very roughly speaking}, the idea is that even though by definition there is no conformal maps (biholomorphisms) between conformal structures $S_0$ and $S_1$ corresponding two distinct points of $\mathcal{M}(S)$, one has several \emph{quasiconformal maps} between them, that is, $f:S_0\to S_1$ such that the quantity
$$K(f)=\sup\limits_{x\in S_0}\frac{|\partial f(x)/\partial z|+|\partial f(x)/\partial \overline{z}|}{|\partial f(x)/\partial z|-|\partial f(x)/\partial \overline{z}|}\geq 1$$
is \emph{finite}.

Here, it is worth to point out that $K(f)$ is measuring the largest possible eccentricity among all infinitesimal ellipses in the tangent planes $T_{f(x)}S_1$ obtained as images under $Df(x)$ of infinitesimal circles  on the tangent planes $T_x S_0$, and, moreover, $f:S_0\to S_1$ is conformal if and only if $K(f)=1$. See Hubbard's book \cite{Hubbard} for details (including some pictures of the geometrical meaning of $K(f)$).

This motivates measuring the ``distance'' between $S_0$ and $S_1$ via the formula:
$$d_T(S_0,S_1)=\inf_{f:S_0\to S_1 \textrm{ quasiconformal }}\log K(f)$$

This function $d_T(.,.)$ is the so-called \emph{Teichm\"uller metric} and, as the nomenclature suggests, it can be shown that $d_T(.,.)$ is a metric on $\mathcal{M}(S)$.

The moduli space $\mathcal{M}(S)$ endowed with $d_T(.,.)$ is a \emph{complete} metric space.

\begin{example} The Teichm\"uller metric on the moduli space $\mathcal{M}_{1,1}=\mathbb{H}/SL(2,\mathbb{Z})$ of once-punctured torii can be shown to coincide with the hyperbolic metric induced by Poincar\'es metric on $\mathbb{H}$ (see Hubbard's book).
\end{example}

\subsection{Teichm\"uller spaces and mapping class groups}

Once we know that the moduli spaces are topological spaces (and, actually, complete metric spaces), we can start the discussion of its (orbifold) universal cover.

In this direction, we need to describe the ``fiber'' in the universal cover of a point $X$ of $\mathcal{M}(S)$ (i.e., a Riemann surface structure on $S$). In other terms, we need to add ``extra information'' to $X$. As it turns out, this ``extra information'' has topological nature and it is called a \emph{marking}.

More precisely, a \emph{marked complex structure} (on $S$) is the data of a Riemann surface $X$ together with a homeomorphism $f:S\to X$ (called marking).

By analogy with the notion of moduli spaces, we define the \emph{Teichm\"uller space} $Teich(S)$ is the set of Teichm\"uller equivalence classes of marked complex structures, where two marked complex structures $f:S\to X_1$ and $g:S\to X_2$ are \emph{Teichm\"uller equivalent} whenever there exists a conformal map $h:X_1\to X_2$ isotopic to $g\circ f^{-1}$. In other words, the Teichm\"uller space is the ``moduli space of marked complex structures''.

The Teichm\"uller metric $d_T(.,.)$ also makes sense on the Teichm\"uller space $Teich(S)$ and the metric space $(Teich(S), d_T)$ is also complete.

From the definitions, we see that one can recover the moduli space from the Teichm\"uller space by forgetting the ``extra information'' given by the markings. Equivalently, we have that $\mathcal{M}(S)=Teich(S)/MCG(S)$ where $MCG(S)=MCG_{g,n}$ is the so-called \emph{mapping class group} of isotopy classes of orientation-preserving homeomorphisms of $S$.

The mapping class group is a discrete group acting on $Teich(S)$ by isometries of the Teichm\"uller metric $d_T$. Moreover, by  \emph{Hurwitz theorem} (and our standing assumption that $3g-3+n>0$), the $MCG(S)$-stabilizer of any point of $Teich(S)$ is finite (of cardinality $\leq 84(g-1)$ when $g>1$), but it might vary from point to point because some Riemann surfaces are more symmetric than others (see, e.g., the paragraph after Example \ref{ex.moduli-torii} above).

\begin{example} The Teichm\"uller space $Teich_{1,1}$ of once-punctured torii is $$Teich_{1,1}\simeq\mathbb{H}.$$
Indeed, as we already mentioned (cf. Example \ref{ex.moduli-torii}), the set of once-punctured torii is parametrized by normalized lattices $\Lambda(w)=\mathbb{Z}\oplus \mathbb{Z}w$, $w\in\mathbb{H}$, and there is a conformal map between $\mathbb{C}/\Lambda(w)$ and $\mathbb{C}/\Lambda(w')$ if and only if $w'=\frac{aw+b}{cw+d}$, $\left(\begin{array}{cc}a&b\\ c&d\end{array}\right)\in SL(2,\mathbb{Z})$. From this, one can check that $Teich_{1,1}=\mathbb{H}$ and $MCG_{1,1}=SL(2,\mathbb{Z})$ (because the conformal map associated to $\left(\begin{array}{cc}a&b\\ c&d\end{array}\right)$ is isotopic to the identity if and only if $\left(\begin{array}{cc}a&b\\ c&d\end{array}\right)= Id$).
\end{example}

The Teichm\"uller space $Teich(S)$ is the (orbifold) universal cover of $\mathcal{M}(S)$ and $MCG(S)$ is the (orbifold) fundamental group of $\mathcal{M}(S)$ (compare with the example above). A common way to see this fact passes through showing that $Teich(S)$ is \emph{simply connected} (and even \emph{contractible}) because it admits a \emph{global} system of coordinates called \emph{Fenchel-Nielsen coordinates} (providing an homemorphism between $Teich(S)$ and $\mathbb{R}^{6g-6+n}$). The discussion of these coordinates is the topic of the next subsection.

\subsection{Fenchel-Nielsen coordinates}

In order to introduce the Fenchel-Nielsen coordinates, we need the notion of \emph{pants decomposition}. A pants (trouser) decomposition of $S$ is a collection $\{\alpha_1,\dots,\alpha_{3g-3+n}\}$ of $3g-3+n$ simple closed curves on $S$ that are pairwise disjoint, homotopically non-trivial (i.e., not homotopic to a point) and \emph{non-peripheral} (i.e., not homotopic to a small loop around one of the possible punctures of $S$). The picture below illustrates a pants decomposition of a compact surface of genus $2$:

\begin{figure}[htb!]
\includegraphics[scale=0.7]{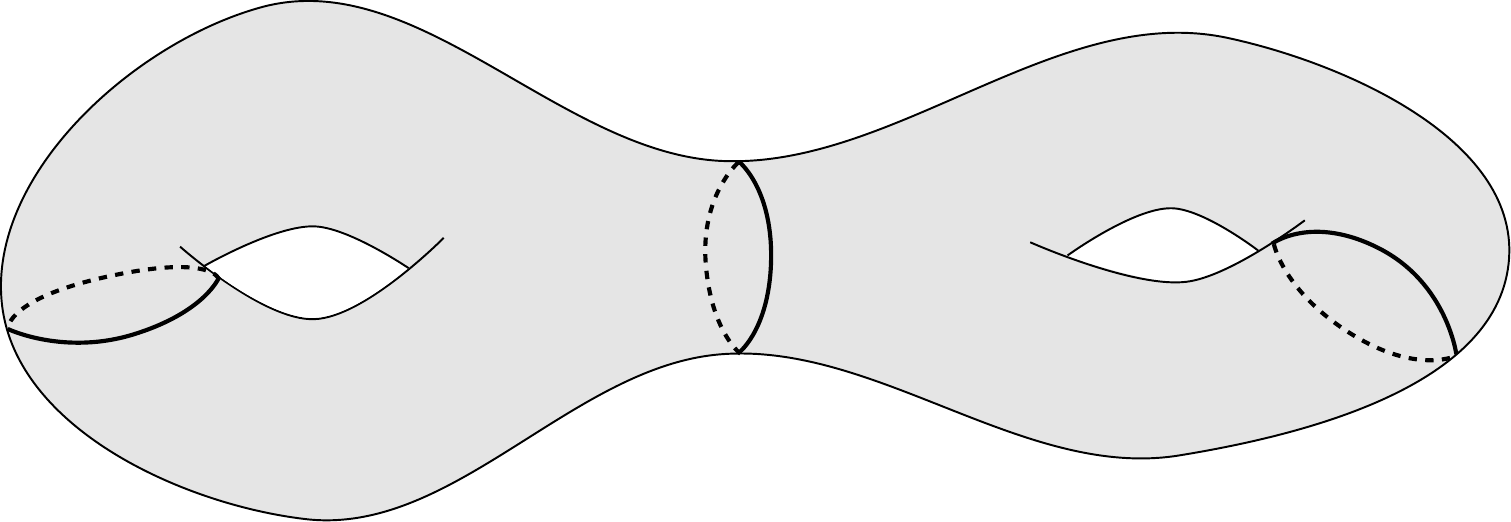}
\end{figure}

The nomenclature ``pants decomposition'' comes from the fact that if we cut $S$ along the curves $\alpha_{j}$, $j=1,\dots,3g-3+n$ (i.e., we consider the connected components of the complement of these curves), then we see ``pairs of pants'' (topologically equivalent to a triply punctured sphere):

\begin{figure}[htb!]
\includegraphics[scale=0.5]{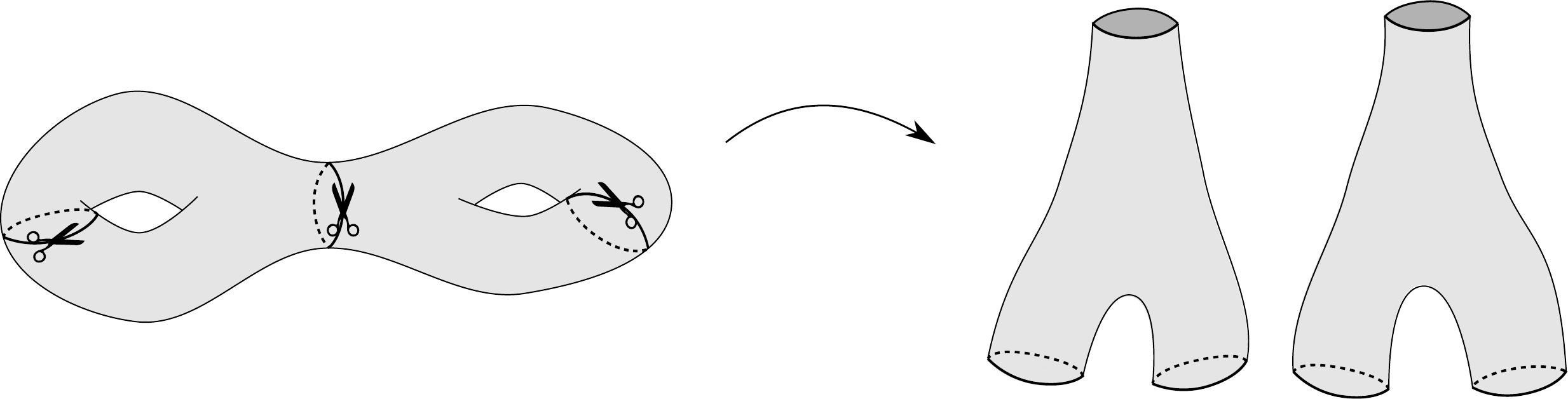}
\end{figure}

A remarkable fact about pair of pants is that hyperbolic structures on them are \emph{uniquely} determined by the lengths of their boundary components. In other terms, a trouser with $j$ boundary circles ($j=1, 2$ or $3$) has a $j$-dimensional space of hyperbolic structures (parametrized by the lenghts of these $j$-circles). Alternatively, one can construct trousers out of right-angled hexagons in the hyperbolic plane (see, e.g., Theorem 3.5.8 in Hubbard's book \cite{Hubbard}).

In this setting, the Fenchel-Nielsen coordinates can be described as follows. We fix $P=\{\alpha_1,\dots,\alpha_{3g-3+n}\}$ a pants decomposition and we consider
$$\mathcal{FN}_{P}: Teich(S)\to(\mathbb{R}_+\times\mathbb{R})^{3g-3+n}$$
defined by $\mathcal{FN}_P(f:S\to X)=(\ell_{\alpha_1},\tau_{\alpha_1},\dots, \ell_{\alpha_{3g-3+n}}, \tau_{\alpha_{3g-3+n}})$, where $\ell_{\alpha}$ is the hyperbolic length of $\alpha\in P$ with respect to the hyperbolic structure associated to the marked complex structure $f:S\to X$, and $\tau_{\alpha}$ is a \emph{twist parameter} measuring the ``relative displacement'' of the pairs of pants glued at $\alpha$.

A detailed description of twist parameters can be found in Section 7.6 of Hubbard's book \cite{Hubbard}, but, for now, let us just make some quick comments about them. First, we fix (in an \emph{arbitrary} way) a collection of simple arcs joining the boundaries of the pairs of pants determined by $P$ such that these arcs land at the same point whenever they come from opposite sides of $\alpha_{j}\in P$.

\begin{figure}[htb!]
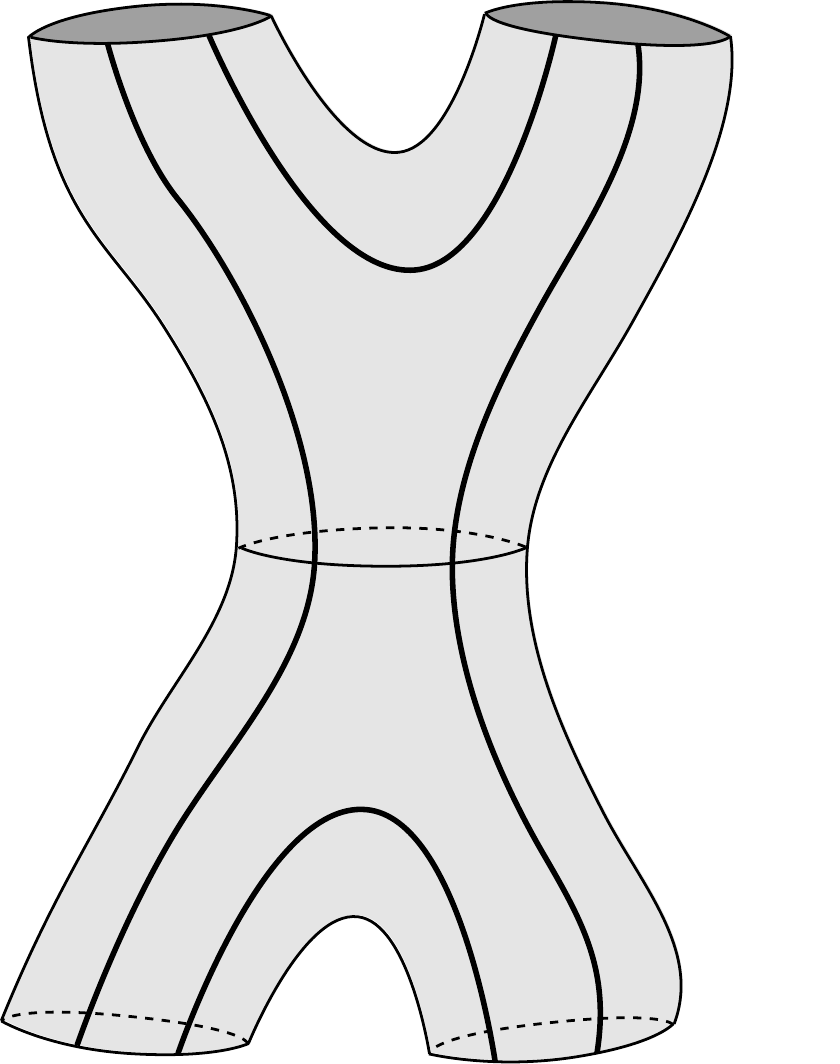
\end{figure}

From these arcs, we get a collection $P^*$ of simple closed curves on $S$ looking like this:

\begin{figure}[htb!]
\includegraphics[scale=0.7]{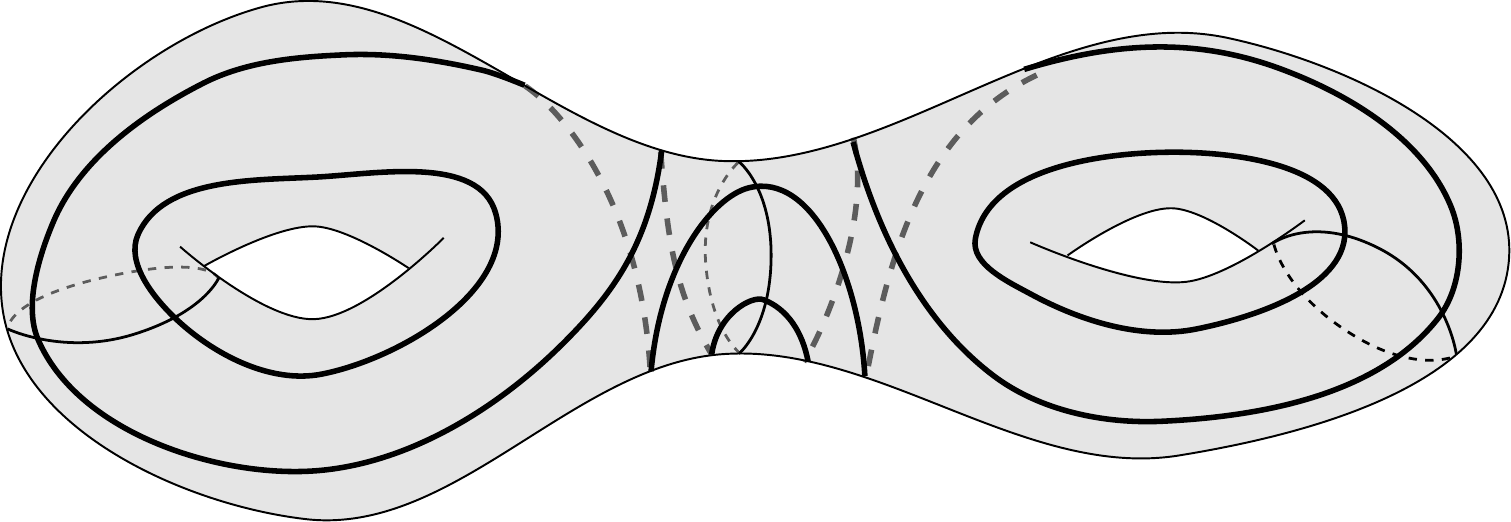}
\end{figure}

Consider now a pair of trousers sharing a curve $\alpha\in P$ (they might be the same trouser) and let $\gamma^*$ be an arc of a curve in $P^*$ joining two boundary components $A(\gamma^*)$ and $B(\gamma^*)$ of the union of these trousers:

\begin{figure}[htb!]
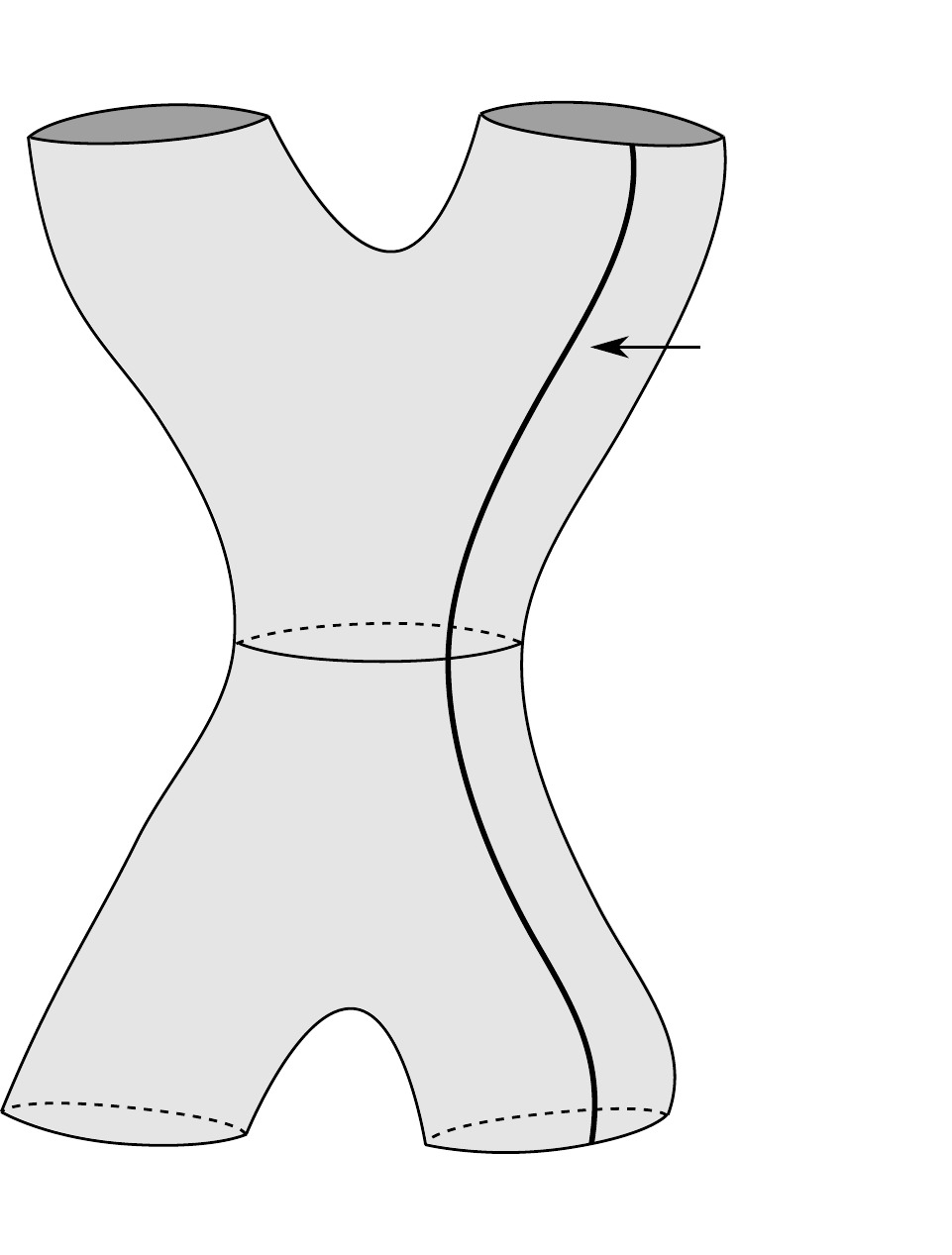
\end{figure}

Given a marked complex structure $f:S\to X$, consider the unique arc $\alpha(\gamma^*)$ on $X$ homotopic to $f(\gamma^*)$ (relative to the boundary of the union of the pair of trousers) consisting of two \emph{minimal} geodesic arcs connecting $\alpha\in P$ to $A(\gamma^*)$ and $B(\gamma^*)$ and an immersed geodesic $\delta(\gamma^*)$ moving inside $\alpha\in P$. We define the twist parameter $\tau_{\alpha}(f:S\to X)$ as the oriented length of $\delta(\gamma^*)$ counted as positive if it turns to the right and negative if it turns to the left.

\begin{remark} Since the definition of twist parameters \emph{depend} on the choice of $P^*$, these parameters are well-defined only \emph{up to an additive constant}. Nevertheless, this technical difficulty does not lead to any serious issue.
\end{remark}

The figure \ref{f.concrete-twist} below illustrates two markings $f:S\to X$ and $g:S\to Y$ whose twist parameters differ by
$$\tau_{\alpha}(g:S\to Y)=\tau_{\alpha}(f:S\to X)+2\ell_{\alpha}(f:S\to X)$$

\begin{figure}[htb!]
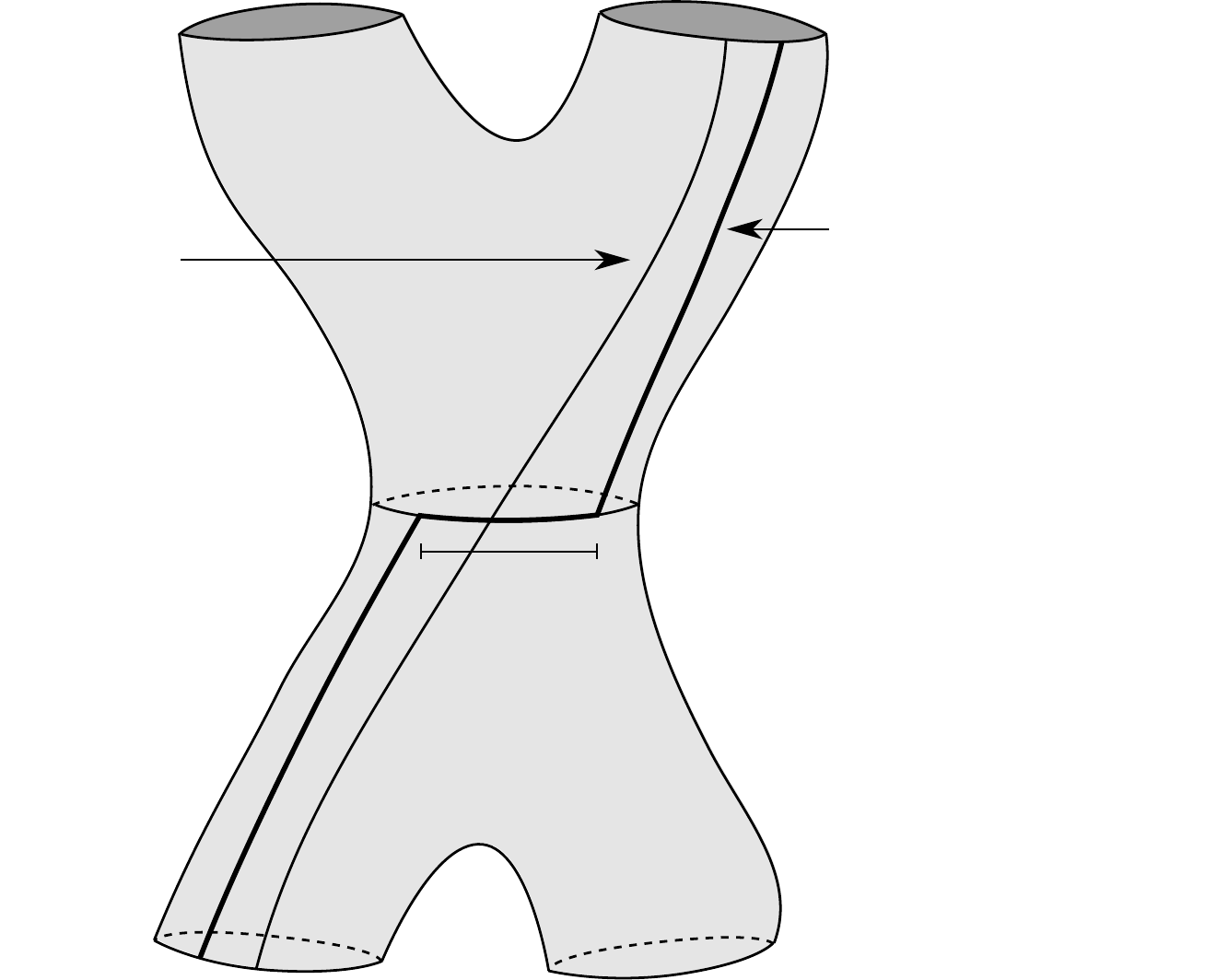
\end{figure}

\begin{figure}[htb!]
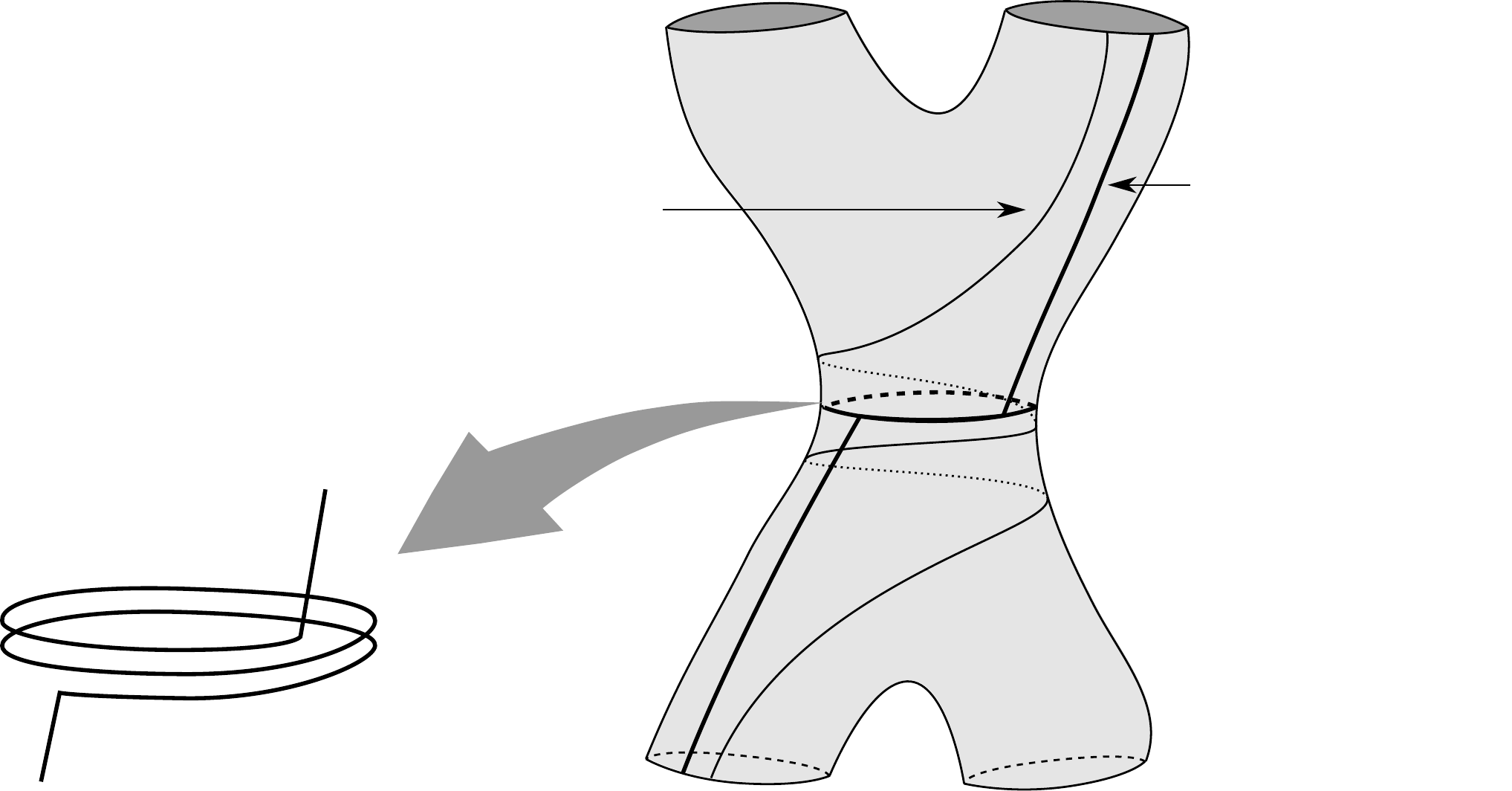
\caption{Concrete calculation of twist parameters.}\label{f.concrete-twist}
\end{figure}

In any case, it is possible to show the Fenchel-Nielsen coordinates $\mathcal{FN}_P$ associated to any pants decomposition $P$ is a \emph{global} homeomorphism (see, e.g., Theorem 7.6.3 in Hubbard's book \cite{Hubbard}). In particular, the Teichm\"uller space $Teich(S)$ is simply connected (as it is homeomorphic to $\mathbb{R}^{6g-6+2n}$). Hence, it is the orbifold universal cover of the moduli space $\mathcal{M}(S)$ (and the mapping class group $MCG(S)$ is the orbifold fundamental group of $\mathcal{M}(S)=Teich(S)/MCG(S)$).

This partly explain why one discusses the properties of $\mathcal{M}(S)$ and $Teich(S)$ at the same time.

\subsection{Cotangent bundle to moduli spaces of Riemann surfaces}

Another reason for studying $\mathcal{M}(S)$ and $Teich(S)$ together is because $Teich(S)$ is a manifold while $\mathcal{M}(S)$ is only an orbifold. In fact, the Teichm\"uller spaces $Teich(S)$ are \emph{real-analytic manifolds}. Indeed, the real-analytic structure on $Teich(S)$ comes from the uniformization theorem. More precisely, given a marked complex structure $f:S\to X$, we can apply the uniformization theorem to write $X=\mathbb{H}/\Gamma$ where $\Gamma\subset SL(2,\mathbb{R})$ is a discrete subgroup isomorphic to the fundamental group $\pi_1(S)$ of $S$. In other words, from a marked complex structure $f:S\to X$, we have a representation of $\pi_1(S)$ on $SL(2,\mathbb{R})$ (well-defined \emph{modulo conjugation}), and this permits to identify $Teich(S)$ with an open component of the \emph{character variety} of homomorphisms from $\pi_1(S)$ to $SL(2,\mathbb{R})$ modulo conjugacy. In particular, the pullback of the real-analytic structure of this representation variety to endow $Teich(S)$ with its own real-analytic structure.

Actually, as it turns out, this real-analytic structure of $Teich(S)$ can be ``upgraded'' to a \emph{complex-analytic structure}. One way of seeing this uses a ``generalization'' of the construction of the real-analytic structure above based on the complex-analytic structure on the representation variety of $\pi_1(S)$ in $SL(2,\mathbb{C})$ and \emph{Bers simultaneous uniformization theorem} \cite{Bers}. We will discuss this point later (in Section \ref{s.wp-geometry}) .

\begin{remark} This should be compared with the following ``toy model'' situation.

Let $E$ be a real vector space of dimension $2n$ and denote by $\mathcal{J}(E)$ the set of \emph{linear complex structures}\footnote{I.e., $\mathbb{R}$-linear maps $J:E\to E$ with $J^2=-Id$.} on $E$. It is possible to check that a linear complex structure on $E$ is \emph{equivalent} to the data of a complex subspace $K\subset \mathbb{C}\otimes_{\mathbb{R}} E$ of the complexification $\mathbb{C}\otimes_{\mathbb{R}}E$ of $E$ such that $\textrm{dim}_{\mathbb{C}}K=n$ and $K\cap \overline{K}=\{0\}$ (i.e., $\mathbb{C}\otimes_{\mathbb{R}} E = K\oplus\overline{K}$) where $\overline{K}$ is the complex conjugate of $K$.

Since the Grassmanian manifold $Gr_n(\mathbb{C}\otimes_{\mathbb{R}}E)$ of complex subspaces of $\mathbb{C}\otimes_{\mathbb{R}}E$ of complex dimension $n$ is naturally a complex manifold and the condition $K\cap \overline{K}=\{0\}$ is \emph{open} in $Gr_n(\mathbb{C}\otimes_{\mathbb{R}}E)$, we obtain that the set $\mathcal{J}(E)$ parametrizing complex structures on $E$ is itself a complex manifold.
\end{remark}

Let us now sketch the relationship between the \emph{quadratic differentials} on Riemann surfaces and the cotangent bundle to Teichm\"uller and moduli spaces.

\subsection{Integrable quadratic differentials}

The Teichm\"uller metric was defined via the notion of quasiconformal mappings $f:S_0\to S_1$. By inspecting the nature of this notion, we see that the quantities $k(f,x)=\frac{|\partial f(x)/\partial\overline z|}{|\partial f(x)/\partial z|}$ (related to the eccentricities of infinitesimal ellipses obtained as the images under $Df(x)$ of infinitesimal circles) play an important role in the definition of the Teichm\"uller distance between $S_0$ and $S_1$.

The \emph{measurable Riemann mapping theorem} of Alhfors and Bers (see, e.g., page 149 of Hubbard's book \cite{Hubbard}) says that the quasiconformal map $f$ can be \emph{recovered} from the quantities $k(f,x)$ \emph{up to composition with conformal maps}. More precisely, by collecting the quantities $k(f,x)$ in a globally defined \emph{tensor} of type $(-1,1)$
$$\mu(x)=\frac{(\partial f(x)/\partial\overline z)d\overline{z}}{(\partial f(x)/\partial z)dz}$$
with $\|\mu\|_{L^{\infty}}<1$ called \emph{Beltrami differential}, one can ``recover'' $f$ by solving \emph{Beltrami's equation}
$$(\partial f/\partial z) = \mu \cdot (\partial f/\partial\overline{z})$$
in the sense that there is always a solution to ths equation and, furthermore, two solutions $f$ and $g$ differ by a conformal map (i.e., $g=f\circ\phi$).

In other terms, the deformations of complex structures are intimately related to Beltrami differentials and it is not surprising that Beltrami differentials can be used to describe the tangent bundle of $Teich(S)$. In this setting, we can obtain the cotangent bundle $T^*Teich(S)$ by noticing that there is a natural pairing between bounded ($L^{\infty}$) Beltrami differentials $\mu$ and integrable ($L^1$) \emph{quadratic differentials} $q$ (i.e., a tensor of type $(2,0)$, $q=q(z)dz^2$):
$$\langle\mu, q\rangle=\int \mu q  = \int \mu(z)q(z)\frac{d\overline{z}}{dz}dz^2 = \int \mu(z)q(z)dz\,d\overline{z}$$
because $dz\,d\overline{z}$ is an area form and $\mu(z)q(z)$ is integrable. In this way, it can be shown that the cotangent space $T^*_XTeich(S)$ at a point $f:S\to X$ of $Teich(S)$ is naturally identified to the space $Q(X)$ of integrable quadratic differentials on $X$.

Note that the space of integrable quadratic differentials $Q(X)$ provides a concrete way of manipulating the complex structure of $Teich(S)$: in this setting, the complex structure is just the multiplication by $i$ on the space of quadratic differentials.

\begin{remark} By a theorem of Royden (see Hubbard's book), the mapping class group $MCG(S)$ is the group of complex-analytic automorphisms of $Teich(S)$. In particular, the moduli space $\mathcal{M}(S)=Teich(S)/MCG(S)$ is a complex orbifold.
\end{remark}

\subsection{Teichm\"uller and Weil-Petersson metrics}\label{ss.Teich-WP-metrics}

The description of the cotangent bundle of $Teich(S)$ in terms of quadratic differentials allows us to define the Teichm\"uller and Weil-Petersson metrics in the following way.

Given a point $f:S\to X$ of $Teich(X)$, we endow the cotangent space $T^*_XTeich(S)\simeq Q(X)$ with the $L^p$-norm:
$$\|\psi\|_{p}:=\left(\int \rho^{2-2p}|\psi|^p\right)^{1/p}$$
where $\rho$ is the hyperbolic metric associated to the conformal structure $X$ and $\psi$ is a quadratic differential (i.e., a tensor of type $(2,0)$).

\begin{remark} More generally, we define the $L^p$-norm of a tensor $\psi$ of type $(r,s)$ (i.e., $\psi=\psi(z)dz^r\,d\overline{z}^s$) as:
$$\|\psi\|_{p}:=\left(\int \rho^{2-p(r+s)}|\psi|^p\right)^{1/p}$$
\end{remark}

In this notation, the \emph{infinitesimal Teichm\"uller metric} is the family of $L^1$-norms on the fibers $T^*_X Teich(S)$ of the cotangent bundle of $Teich(S)$. Here, the nomenclature ``infinitesimal Teichm\"uller metric'' is justified by the fact that the ``global'' Teichm\"uller metric (defined by the infimum of the eccentricity factors $K(f)$ of quasiconformal maps $f:X_0\to X_1$) is the Finsler metric induced by the ``infinitesimal'' Teichm\"uller metric (see, e.g., Theorem 6.6.5 of Hubbard's book).

In a similar vein, the \emph{Weil-Petersson (WP) metric} is the family of $L^2$-norms on the fibers $T^*_X Teich(S)$ of the cotangent bundle of $Teich(S)$.

\begin{remark} In the definition of the WP metric, it was implicit that an integrable quadratic differential has finite $L^2$-norm (and, actually, all $L^p$-norms are finite, $1\leq p\leq\infty$). This fact is obvious when the $S$ is compact, but it requires a (simple) computation when $S$ has punctures. See, e.g., Proposition 5.4.3 of Hubbard's book for the details.
\end{remark}

For later use, we will denote the (infinitesimal) Teichm\"uller metric, resp., Weil-Petersson metric, as $\|.\|_T$, resp. $\|.\|_{WP}$.

The Teichm\"uller metric $\|.\|_{T}$ is a Finsler metric: the family of $L^1$-norms on the fibers of $T^*Teich(S)$ vary in a $C^1$ but not $C^2$ way (cf. Lemma 7.4.3 and Proposition 7.4.4 in Hubbard's book).

\begin{remark} The first derivative of the Teichm\"uller metric is not hard to compute. Given two cotangent vectors $p,q\in Q(X)$ with $\|q\|_{T}\neq 0$, we affirm that
$$D\|.\|_{T}(q)\cdot p=\int_X\textrm{Re}\left(\frac{\overline{q}}{|q|}p\right)$$
Indeed, the first derivative is $D\|.\|_{T}(q)\cdot p:=\lim\limits_{t\to 0}\frac{1}{t}\int_X (|q+tp|-|q|)$. Since $|q+tp|-|q|\leq t|p|$ and $p\in Q(X)$ is bounded (i.e., its $L^{\infty}$ norm is finite), we can use the dominated convergence theorem to obtain that
$$D\|.\|_{T}(q)\cdot p=\int_X\lim\limits_{t\to0}\frac{|q+tp|-|q|}{t} = \int_X\textrm{Re}\left(\frac{\overline{q}}{|q|}p\right)$$
\end{remark}

The Weil-Petersson metric $\|.\|_{WP}$ is induced by the \emph{Hermitian} inner product
$$\langle q_1, q_2\rangle_{WP} := \int_X \frac{\overline{q_1} q_2}{\rho^2}$$

As usual, the real part $g_{WP}:=\textrm{Re}\langle.,.\rangle_{WP}$ induces a \emph{real} inner product (also inducing the WP metric), while the imaginary part $\omega_{WP}:=\textrm{Im}\langle.,.\rangle_{WP}$ induces a \emph{symplectic form} (i.e., an anti-symmetric bilinear form).

By definition, the Weil-Petersson metric $g_{WP}$ relates to the Weil-Petersson symplectic form $\omega_{WP}$ and the complex structure $J$ on $Teich(S)$ (i.e., multiplication by $i$ of elements of $Q(X)$) via:
$$g_{WP}(q_1,q_2)=\omega_{WP}(q_1,Jq_2)$$

Furthermore, as it was firstly discovered by Weil \cite{Weil} by means of a ``simple-minded calculation'' (``\emph{calcul idiot}'') and later confirmed by others, it is possible to show that the Weil-Petersson metric  is \emph{K\"ahler}, i.e., the Weil-Petersson symplectic form $\omega_{WP}$ is closed (that is, its exterior derivative vanishes: $d\omega_{WP}=0$). See, e.g., Section 7.7 of Hubbard's book for more details.

We will come back later (in Section \ref{s.wp-geometry}) to the K\"ahler property of the WP metric, but for now let us just mention that this property enters into the proof of a beautiful theorem of Wolpert \cite{Wolpert1983} saying that the Weil-Petersson symplectic form has a simple expression in terms of Fenchel-Nielsen coordinates:
$$\omega_{WP}=\frac{1}{2}\sum\limits_{\alpha\in P} d\ell_{\alpha}\wedge d\tau_{\alpha}$$
where $P$ is an arbitrary pants decomposition of $S$. Here, it is worth to mention that an important step in the proof of this formula (cf. Step 2 in the proof of Theorem 7.8.1 in Hubbard's book \cite{Hubbard}) is the fact discovered by Wolpert that the infinitesimal generator $\partial/\partial\tau_{\alpha}$ of the Dehn twist about $\alpha$ is of the symplectic gradient of the Hamiltonian function $\frac{1}{2}\ell_{\alpha}$, that is,
$$\frac{1}{2}d\ell_{\alpha} = \omega_{WP}(.,\partial/\partial \tau_{\alpha}) \quad (i.e., \textrm{grad}\,\ell_{\alpha}=-2J(\partial/\partial\tau_{\alpha}))$$
This equation is the starting point of several Wolpert's expansion formulas for the Weil-Petersson metric that we will discuss later in this series of posts.

Before proceeding further, let us briefly discuss the Teichm\"uller and WP metrics on the moduli spaces of once-punctured torii $\mathcal{M}_{1,1}\simeq\mathbb{H}/SL(2,\mathbb{Z})$.

\begin{example}\label{ex.wolpert-asymptotics} The Teichm\"uller metric on $\mathcal{M}_{1,1}\simeq\mathbb{H}/SL(2,\mathbb{Z})$ is the quotient of the hyperbolic metric $\rho(z) = \frac{|dz|}{|\textrm{Im}(z)|}$ of $\mathbb{H}$.

On the other hand, the Fenchel-Nielsen coordinates $(\ell,\tau)$ on $Teich_{1,1}$ have first-order expansion
$$\ell(z)\sim\frac{1}{\textrm{Im}(z)}=\frac{1}{y} \quad \textrm{and} \quad \tau(z)\sim\frac{\textrm{Re}(z)}{\textrm{Im}(z)}=\frac{x}{y}$$
where $z=x+iy$. Thus, we see from Wolpert's formula that
$$\omega_{WP}=\frac{1}{2}d\ell\wedge d\tau\sim\left(-\frac{1}{y}dy\right)\wedge\left(\frac{1}{y}dx-\frac{x}{y^2}dy\right) = \frac{1}{y^3}dx\wedge dy =
\frac{1}{\textrm{Im}(z)^3}dz\wedge d\overline{z}.$$

Since the complex structure on $Teich_{1,1}$ is the standard complex structure of $\mathbb{H}$, we see that the Weil-Petersson metric $g_{WP}$ has asymptotic expansion
$$g_{WP}^2\sim \frac{|dz|^2}{\textrm{Im}(z)^3},$$
that is, the Weil-Petersson $g_{WP}$ on the moduli space $\mathcal{M}_{1,1}\simeq\mathbb{H}/SL(2,\mathbb{Z})$ near the cusp at infinity is modeled\footnote{Recall that, in general, a surface of revolution obtained by rotation of the curve $v=f(u)$ has the metric $g^2=(1+f'(u)^2)du^2+f(u)^2dv^2$.} by the \emph{surface of revolution} obtained by rotating the curve $v=u^3$ (for $0< u\leq 1$ say).
\begin{figure}[htb!]
\includegraphics[scale=0.5]{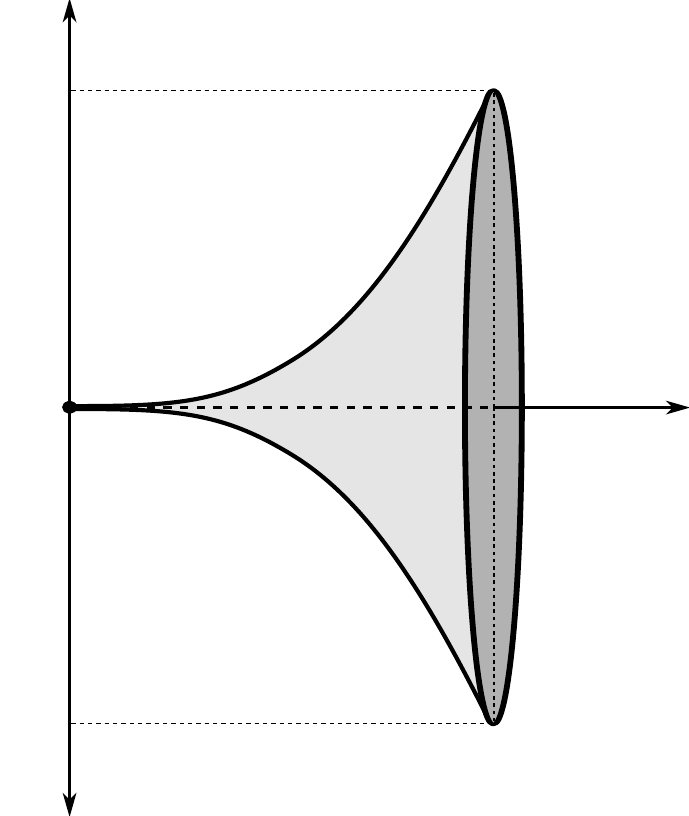}
\end{figure}

This is in contrast with the fact that the Teichm\"uller metric is the hyperbolic metric and hence it is modeled by surface of revolution obtained by rotation the curve $v=e^{-u}$ (for $1<x<\infty$ say).

From this asymptotic expansion of $g_{WP}$, we see that it is \emph{incomplete}: indeed, a vertical ray to the cusp at infinity starting at a point $z$ in the line $\textrm{Im}(z)=y_0$ has Weil-Petersson length $\sim 2y_0^{-1/2}\sim 2\ell(z)^{1/2}$. Moreover, the curvature $K$ satisfies $K(z)\sim -3/2\ell(z)$, and, in particular, $K\to-\infty$ as $\textrm{Im}(z)\to\infty$.
\end{example}

The previous example (WP metric on $\mathcal{M}_{1,1}$) already contains several features of the WP metric on \emph{general} moduli spaces $\mathcal{M}_{g,n}$. For example, we will see later that the Weil-Petersson metric is incomplete because it is possible to shrink a simple closed curve $\alpha$ to a point and leave Teichm\"uller space along a Weil-Petersson geodesic in time $\sim \ell_{\alpha}^{1/2}$. Also, some sectional curvatures might approach $-\infty$ as one leaves Teichm\"uller space.

Nevertheless, an interesting feature of the Weil-Petersson metric in $Teich_{g,n}$ and $\mathcal{M}_{g,n}$ for $3g-3+n>1$ \emph{not} occuring in the case of $\mathcal{M}_{1,1}$ is the fact that some sectional curvatures might also approach $0$ as one leaves Teichm\"uller space. Indeed, as we will see later, this happens because the ``boundary'' of $\mathcal{M}_{g,n}$ is sufficiently ``large'' when $3g-3+n>1$ so that it is possible form some Weil-Petersson geodesics to travel ``almost parallel'' to certain parts of the ``boundary'' for a certain time (while the same is \emph{not} possible for $\mathcal{M}_{1,1}$ because the ``boundary'' consists of a single point).

Concluding this subsection, let us mention that our main dynamical object in these notes -- the \emph{Weil-Petersson geodesic flow} -- is simply the geodesic flow induced by the WP metric on the unit cotangent bundle to $\mathcal{M}_{g,n}$.

\subsection{Ergodicity of WP flow: outline of proof revisited}\label{ss.ergodicity-orbifold-manifold}

By the end of Subsection \ref{s.BMW-outline} above, we mentioned that the proof of Burns-Masur-Wilkinson theorem of ergodicity of the WP geodesic flow (Theorem \ref{t.BMW}) can be essentially reduced to show that the WP metric satisfies the six conditions of Burns-Masur-Wilkinson ergodicity criterion for geodesic flows (Theorem \ref{t.BMW-ergodicity-criterion}).

Indeed, at first sight, it is tempting to say that Theorem \ref{t.BMW} follows from Theorem \ref{t.BMW-ergodicity-criterion} after checking items (I) to (VI) of the latter theorem for the case $M=T^1 Teich_{g,n}$ (the cotangent bundle of $Teich_{g,n}$), $N=T^1\mathcal{M}_{g,n}$ (the cotangent bundle of $\mathcal{M}_{g,n}$) and $\Gamma=MCG_{g,n}$ (the mapping class group).

However, a closer inspection of the statement of the ergodicity criterion (Theorem \ref{t.BMW-ergodicity-criterion}) reveals that this is not quite true: the moduli spaces $\mathcal{M}_{g,n}$ and their unit cotangent bundles $N=T^1\mathcal{M}_{g,n}$ are not \emph{manifolds} but only \emph{orbifolds}, while the ergodicity criterion (Theorem \ref{t.BMW-ergodicity-criterion}) assumes that the phase space $N$ of the geodesic flow is a manifold.

In other words, the orbifoldic nature of moduli spaces imposes a technical difficulty in the reduction of Theorem \ref{t.BMW} to Theorem \ref{t.BMW-ergodicity-criterion}. Fortunately, a solution to this technical issue is very well-known to algebraic geometers and it consists into taking an adequate \emph{finite} cover of the moduli space  in order to ``kill'' the orbifold points (i.e., points with large stabilizers for the mapping class group).

More precisely, for each $k\in\mathbb{N}$, one considers the following \emph{finite-index} subgroup of the mapping class group $MCG(S)$:
$$MCG(S)[k]=\{\phi\in MCG(S): \phi_* = 0 \textrm{ acting on } H_1(S,\mathbb{Z}/k\mathbb{Z})\}$$
where $\phi_*$ is the action on homology of $\phi$. Equivalently, an element $\phi$ of $MCG(S)$ belongs to $MCG(S)[k]$ whenever its action $\phi_*$ on the absolute homology group $H_1(S,\mathbb{Z})$ corresponds to a (symplectic) integral $2g\times 2g$ matrix congruent to the identity matrix modulo $k$.

\begin{example} In the case of once-punctured torii, the mapping class group is $MCG_{1,1}=SL(2,\mathbb{Z})$ and
$$MCG_{1,1}[k]=\left\{\left(\begin{array}{cc} a & b \\ c & d \end{array}\right)\in SL(2,\mathbb{Z}): a\equiv d\equiv 1 (\textrm{mod }k), b\equiv c\equiv 0 (\textrm{mod }k)  \right\}$$
In the literature, $MCG_{1,1}[k]$ is called the \emph{principal congruence subgroup} of $SL(2,\mathbb{Z})$ of level $k$.
\end{example}

\begin{remark} The index of $MCG_{g,n}[k]$ in $MCG_{g,n}$ can be computed explicitly. For instance, the natural map from $MCG_g$ to $Sp(2g,\mathbb{Z})$ is surjective (see, e.g., Farb-Margalit's book), so that the index of $MCG_g[k]$ is the cardinality of $Sp(2g,\mathbb{Z}/k\mathbb{Z})$, and, for $k=p$ prime, one has
$$\# Sp(2g,\mathbb{Z}/k\mathbb{Z}) = p^{g^2}(p^2-1)(p^4-1)\dots(p^{2g}-1)=p^{2g^2+g}+O(p^{2g^2+g-2}),$$
cf. Dickson's paper \cite{Dickson}.
\end{remark}

It was shown by Serre (see \cite{Serre} for the original proof or Farb-Margalit's book \cite{FarbMargalit} for an alternative exposition) that $MCG(S)[k]$ is torsion-free for $k\geq 3$ and, \emph{a fortiori}, it acts freely and properly discontinuous on $Teich(S)$ for $k\geq 3$. In other terms, the finite cover of $\mathcal{M}(S)=Teich(S)/MCG(S)$ given by
$$\mathcal{M}(S)[k]=Teich(S)/MCG(S)[k]$$
is a \emph{manifold} for $k\geq 3$.

\begin{remark} Serre's result is sharp: the principal congruence subgroup $MCG_{1,1}[2]$ of level $2$ of $SL(2,\mathbb{Z})$ contains the torsion element $-Id$.
\end{remark}

Once one disposes of an appropriate manifold $\mathcal{M}(S)[3]$ finitely covering the moduli space $\mathcal{M}(S)$, the reduction of Theorem \ref{t.BMW} to Theorem \ref{t.BMW-ergodicity-criterion} consists into two steps:
\begin{itemize}
\item[(a)] the verification of items (I) to (VI) in the statement of Theorem \ref{t.BMW-ergodicity-criterion} in the case of the unit cotangent bundle $N=T^1\mathcal{M}(S)[3]$ of $\mathcal{M}(S)[3]$.
\item[(b)] the deduction of the ergodicity (and mixing, Bernoullicity, and positivity and finiteness of metric entropy) of the Weil-Petersson geodesic flow on $T^1\mathcal{M}(S)$ from the corresponding fact(s) for the Weil-Petersson geodesic flow on $T^1\mathcal{M}(S)[3]$.
\end{itemize}

For the remainder of this section, we will discuss item (b) while leaving item (a) (i.e., items (I) to (VI) of Theorem \ref{t.BMW-ergodicity-criterion} for $N=T^1\mathcal{M}(S)[3]$) for the next section.

For ease of notation, we will denote $Teich(S)=\mathcal{T}$, $\mathcal{M}(S)=\mathcal{M}$ and $\mathcal{M}(S)[3]=\mathcal{M}[3]$. Assuming that the Weil-Petersson flow is ergodic (and Bernoulli, and its metric entropy is positive and finite) on $T^1\mathcal{M}[3]$, the ``obstruction'' to show the same fact(s) for the Weil-Petersson flow on $T^1\mathcal{M}$ is the possibility that the orbifold points of $\mathcal{M}$ form a ``large'' set.

Indeed, if we can show that the set of orbifold points of $\mathcal{M}$ is ``small'' (e.g., they form a set of zero measure), then the geodesic flow on $T^1\mathcal{M}[3]$ covers the geodesic flow on $T^1\mathcal{M}$ on a set of full measure. In particular, if $E$ is a (Weil-Petersson flow) invariant set of positive measure on $T^1\mathcal{M}$, then its lift $\widetilde{E}$ to $T^1\mathcal{M}[3]$ is also a (Weil-Petersson flow) invariant set of positive measure. Therefore, by the ergodicity of the Weil-Petersson flow on $T^1\mathcal{M}[3]$, we have that $\widetilde{E}$ has full measure, and, \emph{a fortiori}, $E$ has full measure. Moreover, the fact that the Weil-Petersson flow on $T^1\mathcal{M}[3]$ covers the Weil-Petersson flow on $T^1\mathcal{M}$ on a full measure set also allows to deduce Bernoullicity and positivity and finiteness of metric entropy of the latter flow from the corresponding properties for the former flow.

At this point, this subsection is complete once we check that the orbifold points of $\mathcal{M}(S)$ form a subset of zero measure (for the Liouville/volume measure of the Weil-Petersson metric). This is an immediate consequence of the following lemma:

\begin{lemma}
Let $F$ be the subset of $Teich(S)$ corresponding to orbifoldic points, i.e., $F$ is the (countable) union of the subsets $F(h)$ of fixed points of the natural action on $Teich(S)$ of all elements $h\in MCG(S)$ of finite order, \textbf{excluding} the genus $2$ hyperelliptic involution. Then, $F$ is a closed subset of real codimension $\geq 2$.
\end{lemma}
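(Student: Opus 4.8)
The plan is to analyze the lemma one finite-order mapping class at a time and then assemble the pieces. First I would fix a finite-order element $h\in MCG(S)$ and use that $MCG(S)$ acts on $Teich(S)$ by biholomorphic isometries of the Weil-Petersson metric, so that the cyclic group $\langle h\rangle$ is a \emph{finite} group of holomorphic automorphisms. Near any fixed point of $h$, Cartan's linearization theorem (average a Hermitian metric so that the $\langle h\rangle$-action becomes linear in suitable holomorphic coordinates) shows that $F(h)$ coincides locally with the fixed subspace of a linear automorphism; hence each $F(h)$ is a closed \emph{complex} submanifold of $Teich(S)$, and, being the fixed set of an isometry, each of its components is totally geodesic for the WP metric. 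Equivalently, one may identify $F(h)$ with the Teichm\"uller space of the quotient orbifold $S/\langle h\rangle$ through the Nielsen realization picture, which makes the complex-submanifold structure transparent.

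Next I would prove that $F=\bigcup_h F(h)$ is closed by showing that the family $\{F(h)\}$ is locally finite. This uses only the proper discontinuity of the $MCG(S)$-action on $Teich(S)$: for any compact set $K$, the set $\{h:\ h(K)\cap K\neq\emptyset\}$ is finite, and whenever $F(h)\cap K\neq\emptyset$ there is a point $x\in K$ with $h(x)=x$, whence $x=h(x)\in h(K)\cap K$. Thus only finitely many $F(h)$ can meet $K$, so on each compact set $F$ is a finite union of closed sets; since a locally finite union of closed sets is closed, $F$ is closed.

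For the codimension statement I would exploit that the real codimension of a complex submanifold is even, so it suffices to show that every $F(h)$ entering $F$ has complex codimension at least $1$, i.e. that $F(h)$ is a \emph{proper} submanifold of the connected complex manifold $Teich(S)$. A closed complex submanifold of full dimension in a connected manifold is the whole space, so $F(h)=Teich(S)$ holds precisely when $h$ fixes every marked Riemann surface, that is, when $h$ lies in the kernel of the action. I would then invoke the classical computation of this kernel: it is trivial for $g\geq 3$ and in the sphere case, and for $g=2$ it is generated by the hyperelliptic involution, which is carried by \emph{every} genus-$2$ surface; in the low-complexity torus case one works with the effective action (i.e. $PSL(2,\mathbb{Z})$ rather than $SL(2,\mathbb{Z})$), so the elliptic involution is not counted separately. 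Since the genus-$2$ hyperelliptic involution is explicitly excluded from $F$, every remaining $F(h)$ is proper, hence of complex codimension $\geq 1$ and real codimension $\geq 2$; taking the minimum over the locally finite family shows that $F$ is contained in a locally finite union of complex submanifolds of real codimension $\geq 2$, as claimed (in particular $F$ has zero volume, which is what the application requires).

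The main obstacle is this last step: correctly identifying which finite-order classes act trivially on all of $Teich(S)$ and isolating the genus-$2$ hyperelliptic involution as the unique offender. Concretely, one must argue that a nontrivial mapping class fixing every point of $Teich(S)$ would force a common nontrivial automorphism on \emph{every} Riemann surface of type $(g,n)$, which is impossible once the generic surface of that type has trivial automorphism group -- the sole exception being $g=2$, where the generic surface retains exactly the hyperelliptic involution. Making this genericity statement precise, and handling the exceptional low-dimensional surfaces with the correct convention for $MCG(S)$, is the delicate point; everything else is soft and follows from Cartan linearization together with proper discontinuity.
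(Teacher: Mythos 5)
Your overall architecture is sound and genuinely different from the paper's in the key step. The paper establishes the codimension bound by identifying $F(h)$ with the Teichm\"uller space of the quotient orbifold and quoting Rauch's explicit dimension estimates ($\dim_{\mathbb{C}}F(h)\leq 3g-5$ in the compact case, $\leq 3g-4$ in the punctured case), whereas you replace this by a soft complex-geometric dichotomy: $F(h)$ is a closed complex submanifold, so either it is proper (hence of complex codimension $\geq 1$, i.e.\ real codimension $\geq 2$) or it equals all of $Teich(S)$, the latter happening exactly when $h$ lies in the kernel of the action. This is cleaner and avoids Rauch's dimension counts entirely; your local-finiteness/closedness argument via proper discontinuity is the same as the paper's (in fact stated more sharply: $F(h)\cap K\neq\emptyset$ forces $h(K)\cap K\neq\emptyset$).

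However, the step you yourself flag as delicate --- the classification of the kernel --- is wrong as stated, and this is a genuine gap. The kernel of $MCG(S)\to\textrm{Aut}(Teich(S))$ is \emph{not} trivial ``in the sphere case'': for $S_{0,4}$, every sphere with marked points $\{0,1,\infty,\lambda\}$ carries the Klein four-group of M\"obius transformations permuting the marked points in pairs (e.g.\ $z\mapsto\lambda/z$), so the corresponding hyperelliptic involutions of $S_{0,4}$ fix $Teich_{0,4}$ pointwise. Likewise for $S_{1,2}$: the involution $z\mapsto p_1+p_2-z$ is a conformal automorphism of every twice-marked torus, so the hyperelliptic involution of $S_{1,2}$ acts trivially on $Teich_{1,2}$. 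The exceptional list is therefore $(g,n)\in\{(2,0),(1,1),(1,2),(0,4)\}$, not just genus $2$ plus the once-punctured-torus convention. Taken literally, your argument would ``prove'' that $F$ has real codimension $\geq 2$ in $Teich_{0,4}$ even though $F$ then contains $F(h)=Teich_{0,4}$ for the hyperelliptic $h$ --- a false conclusion. The repair is to apply the effective-action convention (which you invoke only for the torus) uniformly to all four exceptional types, i.e.\ to exclude \emph{all} kernel elements rather than only the genus-$2$ hyperelliptic involution. To be fair, the paper's own proof is equally silent about $(0,4)$ and $(1,2)$ (the quoted dimension bounds also fail there), so this is a lacuna in the lemma as stated; but since your entire codimension argument reduces to ``proper if and only if not in the kernel'', getting the kernel exactly right is indispensable for your route, whereas the paper's route degrades more gracefully outside the exceptional types.
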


\begin{proof} For each $h\in MCG(S)$ of finite order, $F(h)$ is the Teichm\"uller space of the quotient orbifold $X/\langle h\rangle$. From this, one can show that:
\begin{itemize}
\item if $S$ is compact and $h$ is not the hyperelliptic involution in genus $2$, then $F(h)$ has complex dimension $\leq 3g-5$;
\item if $S$ has punctures, then $F(h)$ has complex dimension $\leq 3g-4$;
\item if $h$ is the hyperelliptic involution in genus $2$, then $F(h)=Teich(S)$.
\end{itemize}
See, e.g., Rauch's paper \cite{Rauch} for more details.

In particular, the proof of the lemma is complete once we verify that $F$ is a \emph{locally finite} union of the real codimension $\geq 2$ subsets $F(h)$, $h\in MCG(S)$.

Keeping this goal in mind, we fix a compact subset $K$ of $Teich(S)$ and we recall that the mapping class group $MCG(S)$ acts in a properly discontinuous manner on $Teich(S)$. Therefore, it is not possible for an infinite sequence $(h_n)_{n\in\mathbb{N}}\subset MCG(S)$ of distinct finite order elements to satisfy $F(h_n)\cap K\neq\emptyset$ for all $n\in\mathbb{N}$. In other words, $F\cap K$ is the subset of finitely many $F(h)$, i.e., $F$ is a locally finite union of $F(h)$, $h\in MCG(S)$.
\end{proof}

\begin{example} In the case of once-punctured torii, the subset $F\subset Teich_{1,1}$ consists of the $SL(2,\mathbb{Z})$-orbits of the points $i\in\mathbb{H}$ and $j=\exp(2\pi i/3)\in\mathbb{H}$.
\end{example}

\section{Geometry of the Weil-Petersson metric}\label{s.wp-geometry}

This section is devoted to the verification of items (I) to (VI) of Burns-Masur-Wilkinson ergodicity criterion (Theorem \ref{t.BMW-ergodicity-criterion}) in the context of the Weil-Petersson metric on $Teich(S)$ and $\mathcal{M}(S)[3]$. In other terms, as it was explained in Subsection \ref{ss.ergodicity-orbifold-manifold} above, this section covers (some of) the Teichm\"uller-theoretical aspects of the proof of Burns-Masur-Wilkinson theorem on the ergodicity of the WP geodesic flow on moduli spaces (Theorem \ref{t.BMW}) \emph{assuming} Burns-Masur-Wilkinson  ergodicity criterion (Theorem \ref{t.BMW-ergodicity-criterion}).

\subsection{Items (I) and (II) of Theorem \ref{t.BMW-ergodicity-criterion} for WP metric}

The item (I) in the statement of Theorem \ref{t.BMW-ergodicity-criterion} in the context of the Weil-Petersson metric (i.e., the geodesic convexity of the WP metric on $Teich(S)$) was proved by Wolpert \cite{Wolpert2008}, but we will not attempt to discuss this topic here (for the sake of making comments on other aspects of the geometry of WP metric).

Next, let us discuss the item (II) of Theorem \ref{t.BMW-ergodicity-criterion} in the context of the WP metric, that is, the compactness of the metric completions of moduli spaces $\mathcal{M}(S)$ equipped with WP metrics.

We start by recalling that the metric completion of the Teichm\"uller space $Teich(S)$ with respect to the WP metric was determined by Masur \cite{Masur}. Indeed, Masur exploited the fact that we can \emph{leave} $Teich(S)$ along a WP geodesic in \emph{finite time} of order $\sim\ell_{\alpha}^{1/2}$ by pinching a closed geodesic $\alpha$ of hyperbolic length $\ell_{\alpha}$ to show that the WP metric completion of the $Teich(S)$ is the so-called \emph{augmented Teichm\"uller space} $\overline{Teich}(S)$.

The augmented Teichm\"uller space $\overline{Teich}(S)$ is a stratified space obtained by adjoining lower-dimensional Teichm\"uller spaces of \emph{noded} Riemann surfaces. The combinatorial structure of the stratification of $\overline{Teich}(S)$ is encoded by the \emph{curve complex} $\mathcal{C}(S)$ (sometimes also called \emph{complex of curves} or \emph{graph of curves}).

More precisely, the curve complex $\mathcal{C}(S)$ is a $(3g-4+n)$-simplicial complex defined as follows. The vertices of $\mathcal{C}(S)$ are homotopy classes of homotopically non-trivial, non-peripheral, simple closed curves on $S$. We put an edge between two vertices whenever the corresponding homotopy classes have \emph{disjoint} representatives. In general, a $k$-simplex $\sigma\in\mathcal{C}(S)$ consists of $k+1$ distinct vertices possessing mutually disjoint representatives.

\begin{remark} $\mathcal{C}(S)$ is a $(3g-4+n)$-simplicial complex because a maximal collection $P$ of distinct vertices possessing disjoint representatives is a pants decomposition of $S$ and, hence, $\#P=3g-3+n$.
\end{remark}

\begin{example} In the case of once-punctured torii, the curve complex $\mathcal{C}(S)$ consists of an infinite discrete set of vertices (because there is no pair of disjoint homotopically distinct curves).
However, some authors define the curve complex $\mathcal{C}(S)$ of once-punctured torii by putting an edge between vertices corresponding to curves intersecting minimally (i.e., only once). In this alternative setting, the curve complex of once-punctured torii becomes the \emph{Farey graph}.
\end{example}

The curve complex $\mathcal{C}(S)$ is a connected locally infinite complex, except for the cases $(g,n)=(0,4)$ or $(1,1)$. Also, the mapping class group $MCG(S)$ naturally acts on $\mathcal{C}(S)$. Moreover, Masur-Minsky \cite{MasurMinsky} showed that $\mathcal{C}(S)$ is a $\delta$-\emph{hyperbolic metric space} for some $\delta=\delta(S)>0$.

Using the curve complex $\mathcal{C}(S)$, we can define the augmented Teichm\"uller space $\overline{Teich}(S)$ as follows.

A \emph{noded Riemann surface} is a compact topological surface equipped with the structure of a complex space with at most isolated singularities called \emph{nodes} such that each of these singularities possess a neighborhood biholomorphic to a neighborhood of $(0,0)$ in the singular curve
$$\{(z,w)\in\mathbb{C}^2: zw=0\}$$

Removing the nodes of a noded Riemann surface $Y$ yields to a possibly disconnected Riemann surface denoted by $\widehat{Y}$. The connected components of $\widehat{Y}$ are called the \emph{pieces} of $Y$.

For example, the noded Riemann surface of genus $g$ of the figure below has two pieces (of genera $g-1$ and 1 resp.).

\begin{figure}[htb!]
\includegraphics[scale=0.6]{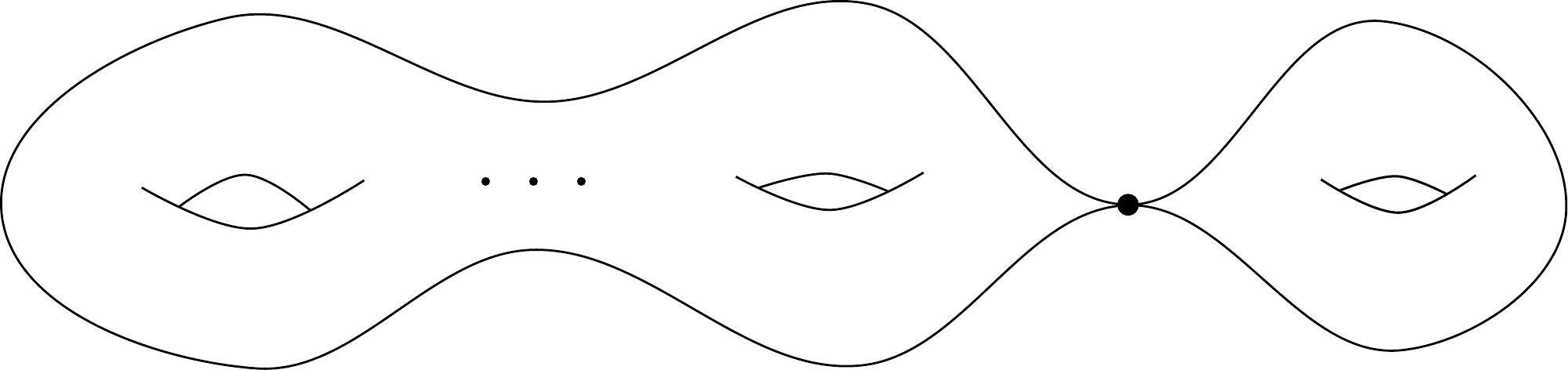}
\end{figure}

Given a simplex $\sigma\in\mathcal{C}(S)$, we will adjoint a Teichm\"uller space $\mathcal{T}_{\sigma}$ to $Teich(S)$ in the following way. A \emph{marked noded Riemann surface} with nodes at $\sigma$ is a noded Riemann surface $X_{\sigma}$ equipped with a continuous map $f:S\to X_{\sigma}$ such that the restriction of $f$ to $S-\sigma$ is a homeomorphism to $\widehat{X_{\sigma}}$. We say that two marked noded Riemann surfaces $f:S\to X_{\sigma}^1$ and $g:S\to X_{\sigma}^2$ are \emph{Teichm\"uller equivalent} if there exists a biholomorphic node-preserving map $h:X_{\sigma}^1\to X_{\sigma}^2$ such that $f\circ h$ is isotopic to $g$. The Teichm\"uller space $\mathcal{T}_{\sigma}$ associated to $\sigma$ is the set of Teichm\"uller equivalence classes $f:S\to X_{\sigma}$ marked noded Riemann surfaces with nodes at $\sigma$.

In this context, the augmented Teichm\"uller space is
$$\overline{Teich}(S)=Teich(S)\cup\bigcup\limits_{\sigma\in\mathcal{C}(S)}\mathcal{T}_{\sigma}$$

The topology on $\overline{Teich}(S)$ is given by the following neighborhoods of points $f:S\to X_{\sigma}$. Given $\sigma\in\mathcal{C}(S)$, we consider $P$ a maximal simplex (pants decomposition of $S$) containing $\sigma$ and we let $(\ell_{\alpha},\tau_{\alpha})_{\alpha\in P}$ be the corresponding Fenchel-Nielsen coordinates on $Teich(S)$. We \emph{extend} these coordinates by allowing $\ell_{\alpha}=0$ whenever $\alpha$ is pinched in a node and we take the quotient by identifying noded Riemann surfaces corresponding to parameters $(\ell_{\alpha},\tau_{\alpha})=(0,t)$ and $(\ell_{\alpha},\tau_{\alpha})=(0,t')$ whenever $\alpha\in\sigma$.

\begin{remark} The augmented Teichm\"uller space $\overline{Teich}(S)$ is not locally compact: indeed, a neighborhood of a noded Riemann surface allows for arbitrary twists $\tau_{\alpha}$ corresponding to curves $\alpha\in\sigma$.
\end{remark}

The quotient of $\overline{Teich}(S)$ by the natural action of $MCG(S)$ (through the corresponding action on $\mathcal{C}(S)$) is the so-called \emph{Deligne-Mumford compactification} $\overline{\mathcal{M}}(S)=\overline{Teich}(S)/MCG(S)$ of the moduli space of $\mathcal{M}(S)$. The space $\overline{\mathcal{M}}(S)$ was originally introduced by Deligne-Mumford \cite{DeligneMumford} and, as the nomenclature suggests, $\overline{\mathcal{M}}(S)$ is compact (see also Hubbard-Koch's paper \cite{HubbardKoch} for more details).

Since $\overline{Teich}(S)$ is the metric completion of $Teich(S)$ with respect to the WP metric and $MCG(S)[k]$ is a \emph{finite-index} subgroup of $MCG(S)$, it follows from the compactness of $\overline{\mathcal{M}}(S)$ that the the metric completion $\overline{Teich}(S)/MCG(S)[k]$ of $\mathcal{M}(S)[k]$ with respect to the WP metric is also compact (because it is a \emph{finite} cover of $\overline{\mathcal{M}}(S)$).

In particular, $\mathcal{M}(S)[3]$ satisfies the item (II) in the statement of Theorem \ref{t.BMW-ergodicity-criterion}.

\begin{remark} It is worth to notice that the Deligne-Mumford compactification in the case of the once-punctured torii is just one point\footnote{Because geometrically by pinching one curve in a punctured torus we get a thrice-punctured sphere in the limit and the moduli space of thrice-punctured spheres is trivial (cf. Example \ref{ex.M03}).} while it is stratified in non-trivial lower-dimensional moduli spaces in general. Moreover, as we will see later, some asymptotic formulas of Wolpert tells that the WP metric ``looks'' like a product of the WP metrics on these lower-dimensional moduli spaces.

In particular, as we will discuss in the last section of this text, some WP geodesics to travel ``almost parallel'' to these lower-dimensional moduli spaces for a long time and this will give a polynomial rate of mixing for this flow in general. On the other hand, since it is not possible to travel almost parallel to a point for a long time, this arguments breaks down in the case of the WP metric in the case of the moduli space of once-punctured torii.
\end{remark}

\subsection{Item (III) of Theorem \ref{t.BMW-ergodicity-criterion} for WP metric}

Let us now quickly check that $\mathcal{M}(S)[3]$ also satisfies the item (III) in the statement of Theorem \ref{t.BMW-ergodicity-criterion}, i.e., its boundary $\partial\mathcal{M}(S)[3]$ is volumetrically cusp-like.

In this direction, given $X\in Teich(S)$, let us denote by $\rho_0(X)$ the Weil-Petersson distance between $X$ and $\partial Teich(S):=\overline{Teich}(S)-Teich(S)$. Our current task is to prove that there are constants $C>0$ and $\nu>0$ such that
$$\textrm{vol}(E_{\rho})\leq C\rho^{2+\nu}$$
where $E_{\rho}:=\{X\in Teich(S)/MCG(S)[3]: \rho_0(X)\leq \rho\}$.

As we are going to see now, one can actually take $\nu=2$ in the estimate above thanks to some asymptotic formulas of Wolpert for the Weil-Petersson metric near the boundary $\partial\mathcal{T}=\bigcup\limits_{\sigma\in\mathcal{C}(S)}\mathcal{T}_{\sigma}$ of augmented Teichm\"uller space.

\begin{lemma}\label{l.wp-DM-bdry-nbhd-vol} One has $\textrm{vol}(E_{\rho})\simeq \rho^4$.
\end{lemma}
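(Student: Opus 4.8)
The plan is to reduce the statement to a Fenchel--Nielsen bookkeeping computation, exploiting that Wolpert's formula $\omega_{WP}=\frac12\sum_{\alpha\in P}d\ell_\alpha\wedge d\tau_\alpha$ makes the Weil--Petersson volume form \emph{exactly} a constant multiple of the Lebesgue measure $\prod_{\alpha\in P}d\ell_\alpha\,d\tau_\alpha$ in these coordinates. This is a major simplification: no asymptotic expansion of the metric is needed for the \emph{volume}, only a correct description of the region $E_\rho$ and of the coordinate ranges in the quotient.

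First I would identify $E_\rho$ geometrically. Using Masur's identification of $\overline{Teich}(S)$ together with the fact (recalled above, cf.\ Example~\ref{ex.wolpert-asymptotics}) that one leaves $Teich(S)$ by pinching a curve $\alpha$ in Weil--Petersson time comparable to $\ell_\alpha^{1/2}$, I would establish the two-sided bound $\rho_0(X)\asymp\sqrt{\ell_{\min}(X)}$, where $\ell_{\min}(X)=\min_\alpha\ell_\alpha(X)$ is the length of the shortest closed geodesic. The point is that the nearest boundary stratum is always reached by pinching the single shortest curve: pinching a simplex $\sigma$ of disjoint curves costs, by the product structure of the WP metric near $\partial\mathcal{T}$, a (Pythagorean) distance $\asymp\sqrt{\sum_{\alpha\in\sigma}\ell_\alpha}\ge\sqrt{\ell_{\min}}$. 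Consequently $E_\rho$ is sandwiched, up to multiplicative constants in $\rho$, between the thin parts $\{X:\ell_{\min}(X)\le c\rho^2\}=\bigcup_\alpha\{\ell_\alpha\le c\rho^2\}$, so it suffices to estimate the volume of such thin parts.

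Next I would turn this union into a finite one and integrate. For $\rho$ small the curves with $\ell_\alpha\le c\rho^2$ are simple and pairwise disjoint by the collar lemma; since there are only finitely many topological types of simple closed curves and $MCG(S)[3]$ has finite index in $MCG(S)$, the thin part in $\mathcal{M}(S)[3]$ is a \emph{finite} union of collar regions, one per $MCG(S)[3]$-orbit. For a fixed $\alpha$, complete it to a pants decomposition $P$; the collar $\{\ell_\alpha\le c\rho^2\}$ then factors, in Fenchel--Nielsen coordinates, as the $(\ell_\alpha,\tau_\alpha)$-plane times the coordinates of the complementary lower-complexity moduli space, and the exact Wolpert volume form factors accordingly. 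The crucial input is the twist range: a suitable power $T_\alpha^m\in MCG(S)[3]$ of the Dehn twist shifts $\tau_\alpha$ by $m\ell_\alpha$, so in the quotient $\tau_\alpha$ runs over an interval of length $\asymp\ell_\alpha$. Hence
$$\textrm{vol}\{\ell_\alpha\le c\rho^2\}\;\asymp\; V_\alpha\int_0^{c\rho^2}\!\!\Big(\int_0^{m\ell_\alpha}\!d\tau_\alpha\Big)\,d\ell_\alpha\;=\;m\,V_\alpha\int_0^{c\rho^2}\!\!\ell_\alpha\,d\ell_\alpha\;\asymp\;\rho^4,$$
where $V_\alpha<\infty$ is the finite Weil--Petersson volume swept by the complementary coordinates. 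Summing over the finitely many orbits gives $\textrm{vol}(E_\rho)\lesssim\rho^4$, while a single collar already yields the matching lower bound $\textrm{vol}(E_\rho)\gtrsim\rho^4$; regions in which two or more curves are simultaneously short contribute a product of two such factors, hence $O(\rho^8)$, and are negligible. Note the exponent $4$ arises precisely from $\int_0^{\rho^2}\ell\,d\ell\asymp\rho^4$, the extra power of $\ell_\alpha$ being supplied by the twist direction.

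The main obstacle is the geometric input $\rho_0(X)\asymp\sqrt{\ell_{\min}(X)}$: the clean volume computation hinges entirely on correctly identifying the distance-to-boundary function. The upper bound $\rho_0\lesssim\ell_\alpha^{1/2}$ is the easy pinching estimate, but the lower bound --- ensuring that no path reaches the boundary substantially faster than by pinching the shortest curve --- is the delicate half and rests on Wolpert's asymptotics of the WP metric near $\partial\mathcal{T}$. Everything downstream (the finite-union structure, the decisive twist range $\asymp\ell_\alpha$, and the negligibility of deeper strata) is then routine.
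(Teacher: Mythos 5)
Your proposal is correct, and it reaches the estimate by a route that differs from the paper's in one substantive way. The paper's proof works with Wolpert's asymptotic expansion of the \emph{Riemannian} metric near a stratum, $g_{WP}\sim \sum_{\alpha\in\sigma}(4\,dx_{\alpha}^2+x_{\alpha}^6\,d\tau_{\alpha}^2)$ with $x_{\alpha}=\ell_{\alpha}^{1/2}/\sqrt{2\pi^2}$, reads off from it both the volume element $\sim\prod_{\alpha\in\sigma}x_{\alpha}^3$ and the comparison $\rho_0(X)\simeq\min_{\alpha\in\sigma}x_{\alpha}(X)$, and integrates. You instead exploit the K\"ahler property: the Riemannian volume of a K\"ahler metric is its symplectic volume, so Wolpert's formula $\omega_{WP}=\frac{1}{2}\sum_{\alpha\in P}d\ell_{\alpha}\wedge d\tau_{\alpha}$ gives the volume form \emph{exactly} in Fenchel--Nielsen coordinates, and the metric asymptotics are needed only for the identification $\rho_0(X)\asymp\sqrt{\ell_{\min}(X)}$ --- the same input the paper takes from Wolpert (compare the distance estimate $d(X,\mathcal{T}_{\sigma})=\sqrt{2\pi\sum_{\alpha\in\sigma}\ell_{\alpha}(X)}+O(\sum_{\alpha\in\sigma}\ell_{\alpha}^{5/2})$ in Theorem \ref{t.Wolpert-expansions}). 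Your route buys a clean separation of the two inputs and makes explicit the quotient bookkeeping that the paper's terse proof leaves implicit: the twist range $\asymp\ell_{\alpha}$ in $\mathcal{M}(S)[3]$ (a power of the Dehn twist, e.g.\ $T_{\alpha}^3$, lies in $MCG(S)[3]$), the finiteness of the number of orbits of short curves, and the $O(\rho^8)$ contribution of multi-curve thin parts; the exponent then appears transparently as $\int_0^{\rho^2}\ell\,d\ell\asymp\rho^4$, which matches the paper's $\int_0^{\rho}x^3\,dx$ under $\ell\asymp x^2$ (the paper's $\tau_{\alpha}$ being the angle-normalized twist of bounded range, as in the surface-of-revolution picture of Example \ref{ex.wolpert-asymptotics}). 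What the paper's route buys is brevity, and its expansion is reused anyway for items (IV)--(VI). One point deserves an extra sentence in your write-up: the fundamental domain is not literally a product of the $(\ell_{\alpha},\tau_{\alpha})$-region with a complementary region, so the uniform finiteness and positivity of your constant $V_{\alpha}$ should be justified (e.g.\ by comparison with the finite WP volume of the lower-complexity moduli space, or by noting injectivity of the collar quotient into $\mathcal{M}(S)[3]$ away from the multi-curve locus for the lower bound); this is standard but is precisely where a sketch could hide an error.
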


\begin{proof} It was shown by Wolpert (in page 284 of \cite{Wolpert2008}) that the Weil-Petersson metric $g_{WP}$ has asymptotic expansion
$$g_{WP}\sim \sum\limits_{\alpha\in\sigma} (4\, dx_{\alpha}^2 + x_{\alpha}^6 d\tau_{\alpha}^2)$$
near $\mathcal{T}_{\sigma}$, where $x_{\alpha}=\ell_{\alpha}^{1/2}/\sqrt{2\pi^2}$ and $\ell_{\alpha}$, $\tau_{\alpha}$ are the Fenchel-Nielsen coordinates associated to $\alpha\in\sigma$.

This gives that the volume element $\sqrt{\det(g_{WP})}$ of the Weil-Petersson metric near $\mathcal{T}_{\sigma}$ is $\sim\prod\limits_{\alpha\in\sigma} x_{\alpha}^3$. Furthermore, this aymptotic expansion of $g_{WP}$ also says that the distance $\rho_0(X)$ between $X$ and $\mathcal{T}_{\sigma}$ is comparable to $\min_{\alpha\in\sigma} x_{\alpha}(X)$. By putting these two facts together, we see that
$$\textrm{vol}(E_{\rho})\simeq \rho^4$$
This proves the lemma.
\end{proof}

\begin{remark} The properties that $\overline{\mathcal{M}}(S)$ is compact and $\mathcal{M}(S)$ is volumetrically cusp-like imply that the Liouville measure (volume) is finite.

Recently, Mirzakhani \cite{Mirzakhani2013} studied the total mass $V_{g,n}$ of $\mathcal{M}(S)$ with respect to the WP metric and she showed that there exists a constant $M>0$ such that
$$g^{-M}\leq \frac{V_{g,n}}{(4\pi^2)^{2g+n-3}(2g+n-3)!}\leq g^M$$
\end{remark}

\subsection{Item (IV) of Theorem \ref{t.BMW-ergodicity-criterion} for WP metric}

Recall that the item (IV) of Theorem \ref{t.BMW-ergodicity-criterion} asks for polynomial bounds in the sectional curvatures and their first two derivatives.

In the context of the Weil-Petersson (WP) metric, the desired polynomial bounds on the sectional curvatures themselves follow from the work of Wolpert.

\subsubsection{Wolpert's formulas for the curvatures of the WP metric}

We will give now a \emph{compte rendu} of some estimates of Wolpert for the behavior of the WP metric near the boundary $\partial \mathcal{T}$ of the Teichm\"uller space $\mathcal{T}=Teich(S)$.

Before stating Wolpert's formulas, we need an \emph{adapted} system of coordinates (called \emph{combined length basis} in the literature) near the strata $\mathcal{T}_{\sigma}$, $\sigma\in\mathcal{C}(S)$, of $\partial\mathcal{T}$, where $\mathcal{C}(S)$ is the curve complex of $S$.

Denote by $\mathcal{B}$ the set of pairs (``basis'') $(\sigma,\chi)$ where $\sigma\in\mathcal{C}(S)$ is a simplex of the curve complex and $\chi$ is a collection of simple closed curves such that each $\beta\in\chi$ is disjoint from all $\alpha\in\sigma$. Here, we allow that two curves $\beta, \beta'\in\chi$ \emph{intersect} (i.e., one might have $\beta\cap\beta'\neq\emptyset$) and also the case $\chi=\emptyset$ is \emph{not} excluded.

Following the nomenclature introduced by Wolpert, we say that $(\sigma,\chi)\in\mathcal{B}$ is a \emph{combined length basis} at a point $X\in\mathcal{T}$ whenever the set of tangent vectors
$$\{\lambda_{\alpha}(X), J\lambda_{\alpha}(X), \textrm{grad}\ell_{\beta}(X)\}_{\alpha\in\sigma, \beta\in\chi}$$
is a basis of $T_X\mathcal{T}$, where $\ell_{\gamma}$ is the length parameter in the Fenchel-Nielsen coordinates and $\lambda_{\alpha}:=\textrm{grad} \ell_{\alpha}^{1/2}$.

\begin{remark} The length parameters $\ell_{\gamma}$ and their square-roots $\ell_{\gamma}^{1/2}$ are natural for the study of the WP metric: for instance, Wolpert showed that these functions are convex along WP geodesics (see, e.g., Wolpert \cite{Wolpert2008}, \cite{Wolpert2009a} and Wolf \cite{Wolf2012}).
\end{remark}

The name \emph{combined length basis} comes from the fact that we think of $(\sigma,\chi)$ as a combination of a collection $\sigma\in\mathcal{C}(S)$ of \emph{short} curves (indicating the boundary stratum that one is close to), and a collection $\chi$ of \emph{relative} curves to $\sigma$ allowing to complete the set $\{\lambda_{\alpha}\}_{\alpha\in\sigma}$ into a basis of the tangent space to $\mathcal{T}$ in which one can write nice formulas for the WP metric.

This notion can be ``extended'' to a stratum $\mathcal{T}_{\sigma}$ of $\mathcal{T}$ as follows. We say $\chi$ is a \emph{relative basis} at a point $X_{\sigma}\in\mathcal{T}_{\sigma}$ whenever $(\sigma,\chi)\in\mathcal{B}$ and the length parameters $\{\ell_{\beta}\}_{\beta\in\chi}$ is a \emph{local} system of coordinates for $\mathcal{T}_{\sigma}$ near $X_{\sigma}$.

\begin{remark} The stratum $\mathcal{T}_{\sigma}$ is (isomorphic to) a product of the Teichm\"uller spaces of the pieces of $X_{\sigma}\in\mathcal{T}_{\sigma}$. In particular, $\mathcal{T}_{\sigma}$ carries a ``WP metric'', namely, the product of the WP metrics on the Teichm\"uller spaces of the pieces of $X_{\sigma}$. In this setting, $\chi$ is a relative basis at $X_{\sigma}\in\mathcal{T}_{\sigma}$ if and only if $\{\textrm{grad} \ell_{\beta}\}_{\beta\in\chi}$ is a basis of $T_{X_{\sigma}}\mathcal{T}_{\sigma}$.
\end{remark}

\begin{remark} Contrary to the Fenchel-Nielsen coordinates, the length parameters $\{\ell_{\beta}\}_{\beta\in\chi}$ associated to a relative basis $\chi$ might not be a \emph{global} system of coordinates for $\mathcal{T}_{\sigma}$. Indeed, this is so because we allow the curves in $\chi$ to intersect non-trivially: geometrically, this means that there are points $X_0$ in $\mathcal{T}_{\sigma}$ where the geodesic representatives of such curves meet orthogonally, and, at such points, the system of coordinates induced by $\{\ell_{\beta}\}_{\beta\in\chi}$ hits a singularity.
\end{remark}

The relevance of the concept of combined length basis to the study of the WP metric is explained by the following theorem of Wolpert \cite{Wolpert2008}:

\begin{theorem}[Wolpert]\label{t.Wolpert2008} For any point $X_{\sigma}\in\mathcal{T}_{\sigma}$, $\sigma\in\mathcal{C}(S)$ , there exists a relative length basis $\chi$. Furthermore, the WP metric $\langle.,.\rangle_{WP}$ can be written as
$$\langle.,.\rangle_{WP}\sim \sum\limits_{\alpha\in\sigma}\left((d\ell_{\alpha}^{1/2})^2+(d\ell_{\alpha}^{1/2}\circ J)^2\right) + \sum\limits_{\beta\in\chi}(d\ell_{\beta})^2$$
where the implied comparison constant is uniform in a neighborhood $U\subset\overline{\mathcal{T}}$ of $X_{\sigma}$.

In particular, there exists a neighborhood $V\subset\overline{\mathcal{T}}$ of $X_{\sigma}$ such that $(\sigma,\chi)$ is a combined length basis at any $X\in V\cap\mathcal{T}$.
\end{theorem}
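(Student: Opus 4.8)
The plan is to obtain the relative basis $\chi$ from the product structure of the boundary stratum, and then to establish the metric comparison by proving that the proposed basis $\{\lambda_\alpha, J\lambda_\alpha, \textrm{grad}\,\ell_\beta\}_{\alpha\in\sigma,\beta\in\chi}$ is, uniformly near $X_\sigma$, an approximately orthogonal basis whose vectors have WP-norms comparable to fixed constants. I would organize this into four steps.

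First I would construct $\chi$. As noted in the remarks above, $\mathcal{T}_\sigma$ is isomorphic to the product $\prod_j \mathcal{T}(S_j)$ of the Teichm\"uller spaces of the pieces $S_j$ of $X_\sigma$, carries the product WP metric, and $\chi$ is a relative basis exactly when $\{\textrm{grad}\,\ell_\beta\}_{\beta\in\chi}$ is a basis of $T_{X_\sigma}\mathcal{T}_\sigma$. Now the differentials (equivalently the WP-gradients) of geodesic length functions of simple closed curves span the cotangent space of a Teichm\"uller space at every point, since a tangent vector annihilating every $d\ell_\gamma$ would be a nontrivial deformation preserving all geodesic lengths. Hence on each factor one may select finitely many curves whose length gradients form a basis, and collecting them yields $\chi$ with $|\chi|=\dim_{\mathbb R}\mathcal{T}_\sigma$ and each $\beta\in\chi$ disjoint from every $\alpha\in\sigma$ (the $\beta$'s living on the pieces). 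Here it is essential to allow the curves of $\chi$ to intersect one another: lengths of mutually disjoint curves furnish only a pants decomposition's worth of directions (half the dimension), so capturing the remaining ``twist'' directions forces $\chi$ to contain intersecting curves.

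Second, a dimension count gives that $\{\lambda_\alpha, J\lambda_\alpha\}_{\alpha\in\sigma}\cup\{\textrm{grad}\,\ell_\beta\}_{\beta\in\chi}$ has cardinality $2|\sigma|+\dim_{\mathbb R}\mathcal{T}_\sigma=\dim_{\mathbb R}\mathcal{T}$, so it suffices to show that this family is linearly independent with a well-conditioned Gram matrix near $X_\sigma$; this is the analytic heart. Using Wolpert's duality $\textrm{grad}\,\ell_\alpha=-2J(\partial/\partial\tau_\alpha)$ one finds $\lambda_\alpha=-\ell_\alpha^{-1/2}J(\partial/\partial\tau_\alpha)$ and $J\lambda_\alpha=\ell_\alpha^{-1/2}\,\partial/\partial\tau_\alpha$, so $\lambda_\alpha,J\lambda_\alpha$ span exactly the ``pinching plane'' of $\alpha$; moreover $\langle\lambda_\alpha,J\lambda_\alpha\rangle_{WP}=0$ since $J$ is skew-adjoint for $\langle.,.\rangle_{WP}$, and $\|J\lambda_\alpha\|_{WP}=\|\lambda_\alpha\|_{WP}$ since $J$ is a WP-isometry. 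Wolpert's expansion $\|\textrm{grad}\,\ell_\alpha\|_{WP}^2=\tfrac{2}{\pi}\ell_\alpha+O(\ell_\alpha^2)$ then gives $\|\lambda_\alpha\|_{WP}^2=\tfrac14\ell_\alpha^{-1}\|\textrm{grad}\,\ell_\alpha\|_{WP}^2\to\tfrac1{2\pi}$, a universal positive constant, as $\ell_\alpha\to0$. The remaining, and most delicate, estimates are the decoupling bounds: for distinct $\alpha,\alpha'\in\sigma$ and for $\alpha\in\sigma$, $\beta\in\chi$, Wolpert's clustering estimates for the WP pairing of length gradients of disjoint curves force $\langle\lambda_\alpha,\lambda_{\alpha'}\rangle_{WP}$, $\langle\lambda_\alpha,J\lambda_{\alpha'}\rangle_{WP}$, $\langle\lambda_\alpha,\textrm{grad}\,\ell_\beta\rangle_{WP}$ and $\langle J\lambda_\alpha,\textrm{grad}\,\ell_\beta\rangle_{WP}$ all to tend to $0$ as $X\to X_\sigma$, whereas $\langle\textrm{grad}\,\ell_\beta,\textrm{grad}\,\ell_{\beta'}\rangle_{WP}$ converges to the product-WP pairing on $T_{X_\sigma}\mathcal{T}_\sigma$, which is non-degenerate by Step~1. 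I expect these decoupling estimates --- showing that the short-curve block and the relative block are asymptotically WP-orthogonal, uniformly in a neighborhood --- to be the main obstacle; they rest on Wolpert's fine analysis via plumbing coordinates and harmonic Beltrami differentials, which I would invoke rather than reprove.

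Finally I would assemble these estimates. By Steps~1--2 the Gram matrix $G(X)$ of the basis with respect to $\langle.,.\rangle_{WP}$ is, on a small enough neighborhood $U$ of $X_\sigma$, approximately block-diagonal with pinching blocks close to $\tfrac1{2\pi}I_2$ and a relative block close to the non-degenerate product-WP Gram matrix; thus $G(X)$ is uniformly positive-definite, the family is genuinely a basis, and $(\sigma,\chi)$ is a combined length basis on $V\cap\mathcal{T}$ for a possibly smaller neighborhood $V$. On the other hand, the model form $\sum_{\alpha\in\sigma}\big((d\ell_\alpha^{1/2})^2+(d\ell_\alpha^{1/2}\circ J)^2\big)+\sum_{\beta\in\chi}(d\ell_\beta)^2$ is \emph{designed} to be diagonalized by the same basis: since $d\ell_\alpha^{1/2}(\lambda_\alpha)=\|\lambda_\alpha\|_{WP}^2$, $d\ell_\alpha^{1/2}(J\lambda_\alpha)=\langle\lambda_\alpha,J\lambda_\alpha\rangle_{WP}=0$, and $(d\ell_\alpha^{1/2}\circ J)(J\lambda_\alpha)=-\|\lambda_\alpha\|_{WP}^2$, the model also has an approximately block-diagonal, uniformly positive-definite Gram matrix in this basis (with the same decoupling errors controlling the cross terms). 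Hence, in the common basis both quadratic forms have Gram matrices that are uniformly positive-definite with uniformly bounded condition number on $U$, so the two forms are uniformly comparable there, which is precisely the asserted expansion. Shrinking $U$ to where all of Wolpert's asymptotic estimates hold with definite constants furnishes the uniform comparison constant and completes the proof.
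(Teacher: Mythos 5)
The first thing to note is that the paper offers no proof of this statement: it is quoted as a theorem of Wolpert, with a citation to \cite{Wolpert2008}, and the notes move straight on to quote further expansions of Wolpert (Theorem \ref{t.Wolpert-expansions}, from \cite{Wolpert2009}), equally without proof. So your proposal is being compared against a bare citation rather than an argument, and on its own terms it is a correct outline --- indeed it is essentially a reduction of the quoted theorem to those later-quoted expansions. Your ``decoupling bounds'' are exactly the items $\langle\lambda_\alpha,\lambda_{\alpha'}\rangle=\frac{1}{2\pi}\delta_{\alpha,\alpha'}+O((\ell_\alpha\ell_{\alpha'})^{3/2})$, $\langle\lambda_\alpha,J\lambda_{\alpha'}\rangle=\langle J\lambda_\alpha,\mathrm{grad}\,\ell_\beta\rangle=0$, $\langle\lambda_\alpha,\mathrm{grad}\,\ell_\beta\rangle=O(\ell_\alpha^{3/2})$, and $\langle\mathrm{grad}\,\ell_\beta,\mathrm{grad}\,\ell_{\beta'}\rangle\sim 1$ with continuous extension to $\mathcal{T}_\sigma$, of Theorem \ref{t.Wolpert-expansions}; and your remaining steps --- the construction of $\chi$ from the spanning property of length differentials on the pieces, the identification $\lambda_\alpha=-\ell_\alpha^{-1/2}J(\partial/\partial\tau_\alpha)$, the K\"ahler identities $\langle\lambda_\alpha,J\lambda_\alpha\rangle=0$ and $\|J\lambda_\alpha\|_{WP}=\|\lambda_\alpha\|_{WP}$, the limit $\|\lambda_\alpha\|_{WP}^2\to\frac{1}{2\pi}$, and the comparison of two quadratic forms whose Gram matrices in a common basis are uniformly positive definite and uniformly bounded --- are all sound linear algebra and standard Teichm\"uller theory.

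Two caveats should be made explicit. First, non-circularity: the expansions of Theorem \ref{t.Wolpert-expansions} are stated for pairs $(\sigma,\chi)\in\mathcal{B}$ on the Bers regions $\Omega(\sigma,\chi,c)$, and you use them to prove that the associated vectors form a basis; this is legitimate only because membership in $\mathcal{B}$ and the definition of $\Omega(\sigma,\chi,c)$ involve nothing but disjointness and length bounds (not the basis property), and because a $\overline{\mathcal{T}}$-neighborhood of $X_\sigma$ meets $\mathcal{T}$ inside some $\Omega(\sigma,\chi,c)$, since the lengths $\ell_\beta$, $\beta\in\chi$, extend continuously to $\overline{\mathcal{T}}$ and are positive at $X_\sigma$. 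Your write-up should say this. Second, the analytic heart --- the inner-product asymptotics themselves, which Wolpert proves via harmonic Beltrami differentials and plumbing/degenerating-surface analysis --- is invoked, not proved, so your argument trades one black box (the quoted theorem) for another (the quoted expansions). That trade is still worthwhile: it exhibits the mechanism by which the uniform comparison follows, which is more than the notes' citation does; but it should not be mistaken for an independent proof of Wolpert's theorem.
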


The statement above is just the beginning of a series of formulas of Wolpert for the WP metric and its sectional curvatures written in terms of the local system of coordinates induced by a combined length basis $(\sigma,\chi)$.

In order to write down the next list of formulas of Wolpert, we need the following notations. Given $\mu$ an arbitrary collection of simple closed curves on $S$, we define
$$\underline{\ell}_{\mu}(X):=\min\limits_{\alpha\in\mu}\ell_{\alpha}(X)\quad \textrm{and}\quad \overline{\ell}_{\mu}(X):=\max\limits_{\alpha\in\mu}\ell_{\alpha}(X)$$
where $X\in\mathcal{T}=Teich(S)$. Also, given a constant $c>1$ and a basis $(\sigma,\chi)\in\mathcal{B}$, we will consider the following (Bers) region of Teichm\"uller space:
$$\Omega(\sigma,\chi,c):=\{X\in\mathcal{T}: 1/c<\underline{\ell}_{\chi}(X) \textrm{ and } \overline{\ell}_{\sigma\cup\chi}(X)<c\}$$

Wolpert \cite{Wolpert2009} provides several estimates for the WP metric $\langle.,.\rangle_{WP}=\langle.,.\rangle$ and its sectional curvatures in terms of the basis $\lambda_{\alpha}=\textrm{grad}\ell_{\alpha}^{1/2}$, $\alpha\in\sigma$ and $\textrm{grad}\ell_{\beta}$, $\beta\in\chi$, which are uniform on the regions $\Omega(\sigma,\chi,c)$.

\begin{theorem}[Wolpert]\label{t.Wolpert-expansions} Fix $c>1$. Then, for any $(\sigma,\chi)\in\mathcal{B}$, and any $\alpha,\alpha'\in\sigma$ and $\beta,\beta'\in\chi$, the following estimates hold uniformly on $\Omega(\sigma,\chi,c)$
\begin{itemize}
\item $\langle\lambda_{\alpha},\lambda_{\alpha'}\rangle = \frac{1}{2\pi}\delta_{\alpha,\alpha'}+O((\ell_{\alpha}\ell_{\alpha'})^{3/2}) = \langle J\lambda_{\alpha}, J\lambda_{\alpha'}\rangle$
where $\delta_{\ast,\ast\ast}$ is Kronecker's delta.
\item $\langle\lambda_{\alpha}, J\lambda_{\alpha'}\rangle=\langle J\lambda_{\alpha}, \textrm{grad} \ell_{\beta}\rangle=0$
\item $\langle\textrm{grad} \ell_{\beta}, \textrm{grad} \ell_{\beta'}\rangle\sim 1$
and, furthermore, $\langle\textrm{grad} \ell_{\beta}, \textrm{grad} \ell_{\beta'}\rangle$ extends continuosly to the boundary stratum $\mathcal{T}_{\sigma}$.
\item $\langle\lambda_{\alpha},\textrm{grad}\ell_{\beta}\rangle = O(\ell_{\alpha}^{3/2})$
\item the distance from $X\in\Omega(\sigma,\chi,c)$ to the boundary stratum $\mathcal{T}_{\sigma}$ is
$$d(X,\mathcal{T}_{\sigma}) = \sqrt{2\pi\sum\limits_{\alpha\in\sigma}\ell_{\alpha}(X)} + O\left(\sum\limits_{\alpha\in\sigma}\ell_{\alpha}^{5/2}(X)\right)$$
\item for any vector $v\in T\Omega(\sigma,\chi,c)$,
$$\left\|\nabla_v \lambda_{\alpha} - \frac{3}{2\pi\ell_{\alpha}^{1/2}}\langle v, J\lambda_{\alpha}\rangle J\lambda_{\alpha}\right\|_{WP} =
O(\ell_{\alpha}^{3/2}\|v\|_{WP})$$
\item $\|\nabla_{\lambda_{\alpha}}\textrm{grad} \ell_{\beta}\|_{WP} = O(\ell_{\alpha}^{1/2})$ and
$\|\nabla_{\lambda_{\alpha}}\textrm{grad} \ell_{\beta}\|_{WP}=O(\ell_{\alpha}^{1/2})$
\item $\nabla_{\textrm{grad}\ell_{\beta}}\textrm{grad}\ell_{\beta'}$ extends continuously to the boundary stratum
$\mathcal{T}_{\sigma}$
\item the sectional curvature of the complex line (real two-plane) $\{\lambda_{\alpha}, J\lambda_{\alpha}\}$ is
$$\langle R(\lambda_{\alpha}, J\lambda_{\alpha})J\lambda_{\alpha}, \lambda_{\alpha}\rangle = \frac{3}{16\pi^2\ell_{\alpha}} + O(\ell_{\alpha})$$
\item for any quadruple $(v_1, v_2, v_3, v_4)$, $v_i\in\{\lambda_{\alpha}, J\lambda_{\alpha}, \textrm{grad}\ell_{\beta}\}_{\alpha\in\sigma, \beta\in\chi}$ distinct from a curvature-preserving permutation of $(\lambda_{\alpha}, J\lambda_{\alpha}, J\lambda_{\alpha}, \lambda_{\alpha})$, one has
$$\langle R(v_1,v_2)v_3, v_4\rangle=O(1),$$
and, moreover, each $v_i$ of the form $\lambda_{\alpha}$ or $J\lambda_{\alpha}$ introduces a multiplicative factor $O(\ell_{\alpha})$ in the estimate above.
\end{itemize}
\end{theorem}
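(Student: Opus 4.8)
The plan is to reduce every estimate to an explicit integral over the degenerating collars of the curves in $\sigma$, resting on two analytic inputs: Gardiner's formula for the WP gradient of a geodesic-length function, and Wolpert's integral formula for the WP curvature tensor. Throughout I identify, via the WP metric, the tangent space $T_X\mathcal{T}$ with the harmonic Beltrami differentials $\mu=\overline q/\rho^2$, $q\in Q(X)$. First I would record the gradients: by Gardiner's formula $d\ell_\alpha$ is paired, through the WP cometric, with an explicit relative Poincar\'e-series quadratic differential $\Theta_\alpha$, so $\textrm{grad}\,\ell_\alpha$ is the harmonic Beltrami differential dual to $\tfrac2\pi\Theta_\alpha$ and $\lambda_\alpha=\tfrac1{2\ell_\alpha^{1/2}}\textrm{grad}\,\ell_\alpha$. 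As $\ell_\alpha\to0$ a long collar forms about $\alpha$; in the standard collar coordinate $\Theta_\alpha$ has an explicit leading profile and its WP self-pairing localizes there. This is Riera's and Wolpert's expansion $\langle\textrm{grad}\,\ell_\alpha,\textrm{grad}\,\ell_{\alpha'}\rangle=\tfrac2\pi\ell_\alpha\delta_{\alpha\alpha'}+O((\ell_\alpha\ell_{\alpha'})^2)$; dividing by $4(\ell_\alpha\ell_{\alpha'})^{1/2}$ gives the first bullet, and the $J\lambda$-version follows because $J$ is a WP isometry.

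The vanishings in the second bullet are \emph{exact}. From Wolpert's duality $\textrm{grad}\,\ell_\gamma=-2J\,\partial/\partial\tau_\gamma$ one gets $J\lambda_{\alpha'}=\ell_{\alpha'}^{-1/2}\,\partial/\partial\tau_{\alpha'}$, so $\langle\lambda_\alpha,J\lambda_{\alpha'}\rangle$ and $\langle J\lambda_\alpha,\textrm{grad}\,\ell_\beta\rangle$ are multiples of the twist derivatives $\partial\ell_\alpha/\partial\tau_{\alpha'}$ and $\partial\ell_\beta/\partial\tau_\alpha$; by Wolpert's cosine formula these equal $\sum_{p\in\alpha\cap\alpha'}\cos\theta_p$ and $\sum_{p\in\alpha\cap\beta}\cos\theta_p$, which vanish since the relevant curves are disjoint by the definition of $\mathcal B$. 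For $\langle\lambda_\alpha,\textrm{grad}\,\ell_\beta\rangle$, disjointness of $\beta$ from $\alpha$ makes $\Theta_\beta$ negligible in the $\alpha$-collar, so the leading $O(\ell_\alpha)$ part of $\langle\textrm{grad}\,\ell_\alpha,\textrm{grad}\,\ell_\beta\rangle$ cancels, leaving $O(\ell_\alpha^2)$, i.e.\ $O(\ell_\alpha^{3/2})$ after dividing by $2\ell_\alpha^{1/2}$. The pairings $\langle\textrm{grad}\,\ell_\beta,\textrm{grad}\,\ell_{\beta'}\rangle\sim1$ and their continuity to $\mathcal T_\sigma$ hold because $\ell_\beta$ stays bounded on $\Omega(\sigma,\chi,c)$ and limits to the intrinsic length function on the pieces. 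Integrating the metric expansion along the WP-shortest path to $\mathcal T_\sigma$, using $\|\lambda_\alpha\|^2=\tfrac1{2\pi}$, then gives $d(X,\mathcal T_\sigma)=\sqrt{2\pi\sum_\alpha\ell_\alpha}+O(\sum_\alpha\ell_\alpha^{5/2})$.

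The covariant-derivative bullets follow from the Koszul formula applied to these expansions: $\lambda_\alpha$ varies appreciably only in the twist direction, and differentiating the collar integral behind $\|\lambda_\alpha\|^2=\tfrac1{2\pi}$ pins the coefficient $\tfrac3{2\pi}$ in $\nabla_v\lambda_\alpha=\tfrac{3}{2\pi\ell_\alpha^{1/2}}\langle v,J\lambda_\alpha\rangle J\lambda_\alpha+O(\ell_\alpha^{3/2}\|v\|_{WP})$, while the $O(\ell_\alpha^{3/2})$ coupling between $\alpha$- and $\beta$-data yields $\|\nabla_{\lambda_\alpha}\textrm{grad}\,\ell_\beta\|_{WP}=O(\ell_\alpha^{1/2})$ and the continuity of $\nabla_{\textrm{grad}\,\ell_\beta}\textrm{grad}\,\ell_{\beta'}$. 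For the curvature bullets the key tool is Wolpert's formula for the WP Riemann tensor on harmonic Beltrami differentials,
\begin{equation*}
\langle R(\mu_1,\mu_2)\mu_3,\mu_4\rangle=\int_X D(\mu_1\overline{\mu_2})\,\mu_3\overline{\mu_4}+\int_X D(\mu_1\overline{\mu_4})\,\mu_3\overline{\mu_2},
\end{equation*}
where $D$ is built from the resolvent $(\Delta-2)^{-1}$ of the hyperbolic Laplacian and is, up to sign, a positive integral operator with rapidly decaying off-diagonal kernel. Inserting the harmonic Beltrami differential $\mu_\alpha$ dual to $\lambda_\alpha$ and evaluating the localized collar integral produces the leading coefficient $\tfrac{3}{16\pi^2\ell_\alpha}$; for every other quadruple the kernel's decay forces the integrand to couple \emph{different} collars or to pair a $\lambda$-vector with a $\textrm{grad}\,\ell_\beta$, giving $O(1)$, and each factor $\lambda_\alpha$ confines its integrand to the $\alpha$-collar, where $\|\mu_\alpha\|\sim\ell_\alpha^{1/2}$ supplies the stated multiplicative $O(\ell_\alpha)$.

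The main obstacle is precisely this last step: extracting the sharp leading coefficient and the uniform per-$\lambda_\alpha$ gain requires controlling $(\Delta-2)^{-1}$ on a surface carrying one or several arbitrarily long degenerating collars, and in particular showing that its kernel does not smear across a collar. Securing such bounds \emph{uniformly} over the whole Bers region $\Omega(\sigma,\chi,c)$, where every curve of $\sigma$ may pinch at once, is what makes Theorem \ref{t.Wolpert-expansions} delicate; the remaining estimates reduce to bookkeeping with the collar model and the Koszul formula.
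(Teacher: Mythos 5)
A point of reference first: the paper does not prove this theorem. It is stated as a quoted result of Wolpert, attributed to \cite{Wolpert2008} and \cite{Wolpert2009}, and then used as a black box in the verification of items (IV) and (VI) of Theorem \ref{t.BMW-ergodicity-criterion}. So the only meaningful comparison is with Wolpert's own arguments in those references, and there your outline is faithful to the actual route: Gardiner's formula and collar localization of the relative Poincar\'e series (Riera's formula) for the pairings $\langle\lambda_{\alpha},\lambda_{\alpha'}\rangle$; the twist--length duality $\textrm{grad}\,\ell_{\gamma}=-2J(\partial/\partial\tau_{\gamma})$ together with the cosine formula for the \emph{exact} vanishings (disjointness kills every $\cos\theta_p$, and $\partial\ell_{\alpha}/\partial\tau_{\alpha}=0$ handles the diagonal case); and the representation of the curvature tensor through the operator $D=-2(\Delta-2)^{-1}$ acting on products of harmonic Beltrami differentials. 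The conversion arithmetic you do carry out is correct: $\tfrac{2}{\pi}\ell_{\alpha}\delta_{\alpha\alpha'}+O((\ell_{\alpha}\ell_{\alpha'})^2)$ for the gradients becomes $\tfrac{1}{2\pi}\delta_{\alpha,\alpha'}+O((\ell_{\alpha}\ell_{\alpha'})^{3/2})$ for the $\lambda$'s, and the cross-pairing $O(\ell_{\alpha}^2)$ becomes $O(\ell_{\alpha}^{3/2})$ after dividing by $2\ell_{\alpha}^{1/2}$.

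As a proof, however, there is a genuine gap, and you name it yourself. Everything that makes the theorem hard --- the sharp coefficient $\tfrac{3}{16\pi^2\ell_{\alpha}}$, the coefficient $\tfrac{3}{2\pi}$ in the expansion of $\nabla_v\lambda_{\alpha}$, and above all the claim that each factor $\lambda_{\alpha}$ or $J\lambda_{\alpha}$ in a non-diagonal curvature evaluation contributes a multiplicative $O(\ell_{\alpha})$ \emph{uniformly} on $\Omega(\sigma,\chi,c)$ --- rests on quantitative control of the kernel of $(\Delta-2)^{-1}$ on surfaces where several collars degenerate simultaneously: pointwise decay across collars, uniform in all the pinching parameters at once. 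You defer exactly this step as ``the main obstacle,'' but it is not bookkeeping; it is the analytic core of Wolpert's papers (mean-value estimates for the resolvent, comparison with the model operator on the collar), and without it none of the curvature bullets, nor the connection bullets that later feed into Lemma \ref{l.Clairaut-ODE} and the item (VI) verification, are actually established. The same caveat applies, less severely, to your first bullet: Riera's identity is exact, but the error term $O((\ell_{\alpha}\ell_{\alpha'})^2)$ is itself a nontrivial estimate on the group sum, which you invoke rather than derive. So your proposal should be read as a correct and well-organized road map of Wolpert's proof --- the reductions and exact vanishings are right --- rather than as a proof: the uniform estimates on which every remaining bullet rests are asserted, not proved.
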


These estimates of Wolpert give a very good understanding of the geometry of the WP metric in terms of combined length basis. For instance, one infers from the last two items above that, as one approaches the boundary stratum $\mathcal{T}_{\sigma}$, the sectional curvatures of the WP metric along the complex lines $\{\lambda_{\alpha}, J\lambda_{\alpha}\}$ converge to $-\infty$ with speed $\sim -\ell_{\alpha}^{-1}\sim -d(X,\mathcal{T}_{\sigma})^{-2}$, while the sectional curvatures of the WP metric associated to quadruples of the form $(\lambda_{\alpha}, J\lambda_{\alpha}, J\lambda_{\alpha'}, \lambda_{\alpha'})$ with $\alpha, \alpha'\in\sigma$, $\alpha\neq\alpha'$, converge to $0$ with speed $\sim O(\ell_{\alpha}^2 \ell_{\alpha'}^2) = O(d(X,\mathcal{T}_{\sigma})^8)$ \emph{at least}.

In particular, these formulas of Wolpert allow to show ``one third of item (IV) of Theorem \ref{t.BMW-ergodicity-criterion}'' for the WP metric, that is,
\begin{equation}\label{e.(IV)-1/3-WP}
\|R_{WP}(x)\|_{WP}\leq C d(x,\partial\mathcal{T})^{-2}
\end{equation}
for all $x\in\mathcal{T}$.

\begin{remark} Observe that the formulas of Wolpert provide \emph{asymmetric} information on the sectional curvatures of the WP metric: indeed, while we have precise estimates on how these sectional curvarutures can approach $-\infty$, the same is not true for the sectional curvatures approaching zero (where one disposes of lower bounds but no upper bounds for the speed of convergence).
\end{remark}

\begin{remark}\label{r.wp-almost-zero-curvatures} From the discussion above, we see that there are sectional curvatures of the WP metric on $Teich(S)$ approaching zero whenever $\sigma\in\mathcal{C}(S)$ contains two distinct curves. In other words, the WP metric has sectional curvatures approaching zero whenever the genus $g$ and the number of punctures $n$ of $S=S_{g,n}$ satisfy $3g-3+n>1$, i.e., except in the cases of once-punctured torii $S_{1,1}$ and four-times puncture spheres $S_{0,4}$. This qualitative difference on the geometry of the WP metric on $Teich_{g,n}$ in the cases $3g-3+n>1$ and $3g-3+n=1$ (i.e., $(g,n)=(0,4)$ or $(1,1)$) will be important in the last post of this series when we will discuss the rates of mixing of the WP geodesic flow.
\end{remark}

\begin{remark}\label{r.wp-m11-surface-revolution} As it was pointed out by Wolpert \cite{Wolpert2011}, these estimates permit to think of the WP metric on the moduli space $\mathcal{M}_{1,1}\simeq\mathbb{H}^2/PSL(2,\mathbb{Z})$ in a $\varepsilon$-neighborhood of the cusp at infinity as a $C^2$-pertubation of the metric $\pi^3(4dr^2+r^6d\theta)$ of the surface of revolution of the profile $\{y=x^3\}$ modulo multiplicative factors of the form $1+O(r^4)$.
\end{remark}

Now, we will investigate the remaining ``two thirds of item (IV) of Theorem \ref{t.BMW-ergodicity-criterion}'' for the WP metric, i.e., polynomial bounds for the first two derivatives $\nabla R$ and $\nabla^2 R$ of the curvature operator $R$ of the WP metric.

\subsubsection{Bounds for the first two derivatives of WP metric: overview}

As it was \emph{recently} pointed out to us by Wolpert (in a private communication), it is possible to deduce very good bounds for the derivatives of the WP metric (and its curvature tensor) by refining the formulas for the WP metric in some of his works.

Nevertheless, by the time Burns-Masur-Wilkinson's paper \cite{BurnsMasurWilkinson} was written, it was not clear at all that Wolpert's delicate calculations for the WP metric could be extended to provide useful information about the derivatives of this metric.

For this reason, Burns-Masur-Wilkinson decided to implement the following alternative strategy.

At first sight, our task \emph{reminds} the setting of \emph{Cauchy's inequality} in Complex Analysis where one estimates the derivatives of a holomorphic function in terms of given bounds for the $C^0$-norm of this function via the \emph{Cauchy integral formula}. In fact, our current goal is to estimate the first two derivatives of a ``function'' (actually, the curvature tensor of the WP metric) defined on the complex-analytic manifold $Teich(S)$ knowing that this ``function'' already has nice bounds (cf. Equation \eqref{e.(IV)-1/3-WP}).

However, one can \emph{not} apply the argument described in the previous paragraph \emph{directly} to the curvature tensor of the WP metric because this metric is \emph{only} a real-analytic (but \emph{not} a complex-analytic/holomorphic) object on the complex-analytic manifold $Teich(S)$.

Fortunately, as it was observed by Burns-Masur-Wilkinson, this idea of using the Cauchy inequalities can still be shown to work \emph{after} one adds some results of McMullen \cite{McMullen2000} into the picture. In a nutshell, McMullen showed that the WP metric is closely related to a \emph{holomorphic} object: very roughly speaking, using the so-called \emph{Bers simultaneous uniformization theorem}, one can think of the Teichm\"uller space $Teich(S)$ as a \emph{totally real} submanifold of the so-called quasi-Fuchsian locus $QF(S)$, and, in this setting, the Weil-Petersson symplectic $2$-form $\omega_{WP}$ is the restriction to $Teich(S)$ of the differential of a \emph{holomorphic} $1$-form $\theta_{WP}$ globally defined on the quasi-Fuchsian locus $QF(S)$. In particular, it is possible to use Cauchy's inequalities to the holomorphic object $\theta_{WP}$ to get some estimates for the first two derivatives of the WP metric.

\begin{remark} A \emph{caricature} of the previous paragraph is the following. We want to estimate the first two derivatives of a real-analytic function $f:\mathbb{R}\to\mathbb{C}$ (``WP metric'') knowing some bounds for the values of $f$. In principle, we can not do this by simply applying Cauchy's estimates to $f$, but in our context we know (``by the results of McMullen'') that the natural embedding $\mathbb{R}\subset \mathbb{C}=\mathbb{R}\oplus i\mathbb{R}$ of $\mathbb{R}$ as a totally real submanifold of $\mathbb{C}$ allows to think of $f:\mathbb{R}\to\mathbb{C}$ as the restriction of a holomorphic function $g:\mathbb{C}\to\mathbb{C}$ and, thus, we can apply Cauchy inequalities to $g$ to get some estimates for $f$.
\end{remark}

In what follows, we will explain the ``Cauchy inequality idea'' of Burns-Masur-Wilkinson in two steps. Firstly, we will describe the embedding of $Teich(S)$ into the quasi-Fuchsian locus $QF(S)$ and the holomorphic $1$-form $\theta_{WP}$ of McMullen whose differential restricts to the WP symplectic $2$-form on $Teich(S)$. After that, we will show how the Cauchy inequalities can be used to give the remaining ``two thirds of item (IV) of Theorem \ref{t.BMW-ergodicity-criterion}'' for the WP metric.

\subsubsection{Quasi-Fuchsian locus $QF(S)$ and McMullen's $1$-forms $\theta_{WP}$}

Given a hyperbolic Riemann surface $S=\mathbb{H}/\Gamma$, $\Gamma<PSL(2,\mathbb{R})$, the \emph{quasi-Fuchsian locus} $QF(S)$ is defined as
$$QF(S) = Teich(S)\times Teich(\overline{S})$$
where $\overline{S}$ is the \emph{conjugate} Riemann surface of $S$, i.e., $\overline{S}$ is the quotient $\overline{S}=\mathbb{L}/\Gamma$ of the \emph{lower-half plane} $\mathbb{L}=\{z\in\mathbb{C}:\textrm{Im}(z)<0\}$ by $\Gamma$. The \emph{Fuchsian locus} $F(S)$ is the image of $Teich(S)$ under the \emph{anti-diagonal} embedding
$$\widehat{\alpha}:Teich(S)\to QF(S), \quad \widehat{\alpha}(X)=(X,\overline{X})$$

Geometrically, we can think of elements $(X,Y)\in QF(S)$ as follows. Recall that $X$ and $Y$ are related to $S$ and $\overline{S}$ via (extremal) quasiconformal mappings determined by the solutions of Beltrami equations associated to $\Gamma$-invariant Beltrami differentials (coefficients) $\mu_X$ and $\mu_{Y}$ on $\mathbb{H}$ and $\mathbb{L}$. Now, we observe that $\mathbb{H}$ and $\mathbb{L}$ live naturally on the Riemann sphere $\overline{\mathbb{C}}=\mathbb{C}\cup\{\infty\}$. Since the real axis/circle at infinity/equator $\mathbb{R}_{\infty}=\overline{\mathbb{C}}-(\mathbb{H}\cup\mathbb{L})$ has zero Lebesgue measure, we see that $\mu_X$ and $\mu_Y$ induce a Beltrami differential $\mu_{(X,Y)}$ on $\overline{\mathbb{C}}$. By solving the corresponding Beltrami equation, we obtain a quasiconformal map $f_{X,Y}$ on $\overline{\mathbb{C}}$ and, by conjugating, we obtain a \emph{quasi-Fuchsian subgroup}
$$\Gamma(X,Y)=\{f_{(X,Y)}\circ\gamma\circ f_{(X,Y)}^{-1}: \gamma\in\Gamma\}<PSL(2,\mathbb{C}),$$
i.e., a \emph{Kleinian subgroup} whose domain of discontinuity $\Omega(X,Y)\subset\overline{\mathbb{C}}$ consists of two connected components $A$ and $B$ such that $X\simeq A/\Gamma(X,Y)$ and $Y\simeq B/\Gamma(X,Y)$.

The following picture summarizes the discussion of the previous paragraph:


\begin{figure}[htb!]
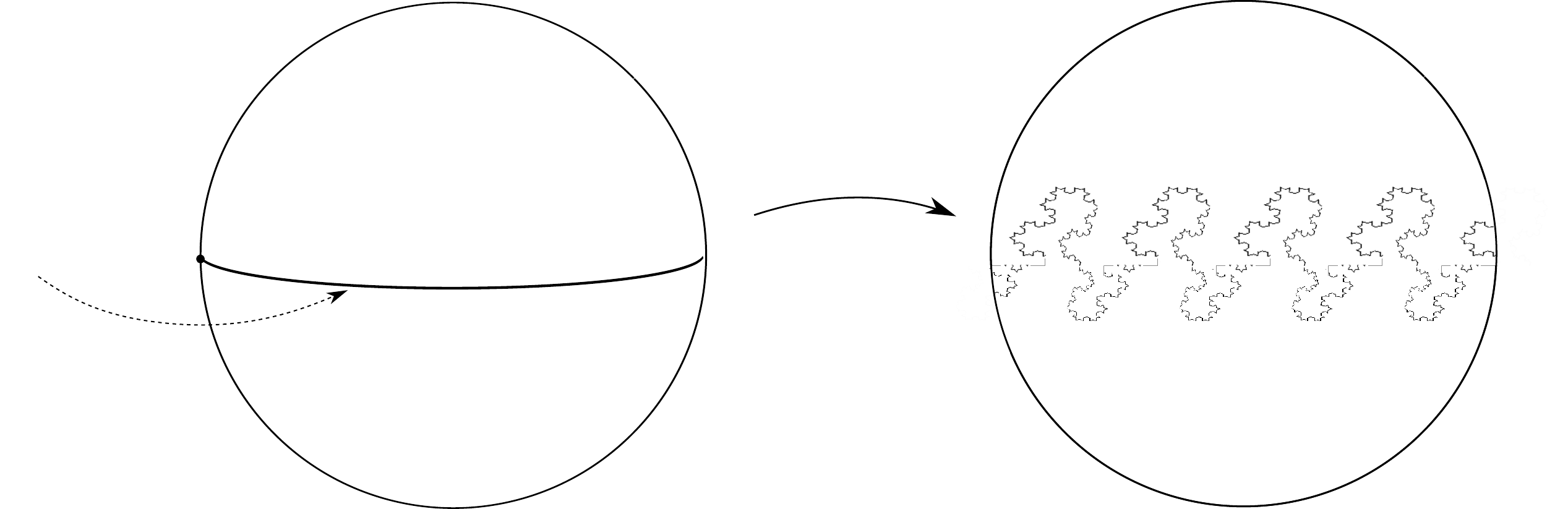
\end{figure}

\begin{remark} The Jordan curve given by the image $f_{(X,Y)}(\mathbb{R}_{\infty})$ of the equator $\mathbb{R}_{\infty}$ under the quasiconformal map $f_{(X,Y)}$ is ``wild'' in general, e.g., it has Hausdorff dimension $>1$ (as the picture above tries to represent). In fact, this happens because a typical quasiconformal map is merely a H\"older continuous, and, hence, it might send ``nice'' curves (such as the equator) into curves with ``intricate geometries''.
\end{remark}

The data of the quasi-Fuchsian subgroup $\Gamma(X,Y)$ attached to $(X,Y)\in QF(S)=Teich(S)\times Teich(\overline{S})$ permits to assign (marked) \emph{projective structures} to $X$ and $Y$. More precisely, by writing $X\simeq A/\Gamma(X,Y)$ and $Y\simeq B/\Gamma(X,Y)$ with $A, B\subset\overline{\mathbb{C}}$ and $\Gamma(X,Y)<PSL(2,\mathbb{C})$, we are equipping $X$ and $Y$ with projective structures, that is, atlases of charts to $\mathbb{C}$ whose changes of coordinates are M\"oebius transformations (i.e., elements of $PSL(2,\mathbb{C})$). Furthermore, by recalling that $X$ and $Y$ come with markings $f:S\to X$ and $g:\overline{S}\to Y$ (because they are points in Teichm\"uller spaces), we see that the projective structures above are marked.

In summary, we have a natural \emph{quasi-Fuchsian uniformization} map
$$\sigma:QF(S)\to Proj(S)\times Proj(\overline{S})$$
assigning to $(X,Y)$ the marked projective structures
$$\sigma(X,Y):=(\sigma_{QF}(X,Y),\overline{\sigma}_{QF}(X,Y))$$ Here, $Proj(S)$ is the ``Teichm\"uller space of projective structures'' on $S$, i.e., the space of ``Teichm\"uller'' equivalence classes of marked projective structures $f:S\to X$ where two marked projective structures $f_1:S\to X_1$ and $f_2:S\to X_2$ are ``Teichm\"uller'' equivalent whenever there is a projective isomorphism $h:X_1\to X_2$ homotopic to $f_2\circ f_1^{-1}$.

\begin{remark} This procedure due to Bers \cite{Bers} of attaching a quasi-Fuchsian subgroup $\Gamma(X,Y)$ to a pair of hyperbolic surfaces $X$ and $Y$ is called \emph{Bers simultaneous uniformization} because the knowledge of $\Gamma(X,Y)$ allows to equip \emph{at the same time} $X$ and $Y$ with natural projective structures.
\end{remark}

Note that $\sigma$ is a section of the natural projection
$$Proj(S)\times Proj(\overline{S})\to QF(S)=Teich(S)\times Teich(\overline{S})$$
obtained by sending each pair of (marked) projective structures $(X,Y)$, $X\in Proj(S)$, $Y\in Proj(\overline{S})$, to the unique pair of (marked) compatible conformal structures $(\pi(X), \overline{\pi}(Y))$, $\pi(X)\in Teich(S)$, $\overline{\pi}(Y)\in Teich(\overline{S})$.

We will now describe how the (affine) structure of the fibers $Proj_X(S)=\pi^{-1}(X)$ of the projection $\pi: Proj(S)\to Teich(S)$ and the section $\sigma$ can be used to construct McMullen's primitives/potentials of the Weil-Petersson symplectic form $\omega_{WP}$.

Given two projective structures $p_1, p_2\in Proj_X(S)$ in the same of the projection $\pi: Proj(S)\to Teich(S)$, one can measure how far apart from each other are $p_1$ and $p_2$ using the so-called \emph{Schwarzian derivative}.

More precisely, the fact that $p_1$ and $p_2$ induce the same conformal structure means that the charts of atlases associated to them can be thought as some families of maps $f_1:U\to\overline{\mathbb{C}}$ and $f_2:U\to\overline{\mathbb{C}}$ from (small) open subsets $U\subset X$ to the Riemann sphere $\overline{\mathbb{C}}$, and we can measure the ``difference'' $p_2-p_1$ by computing how ``far'' from a M\"obius transformation (in $PSL(2,\mathbb{C})$) is $f_2\circ f_1^{-1}$.

Here, given a point $z\in U$, one observes that there exists an \emph{unique} M\"oebius transformation $A\in PSL(2,\mathbb{C})$ such that $f_2$ and $A\circ f_1$ \emph{coincide} at $z$ up to \emph{second order} (i.e., $f_2$ and $A\circ f_1$ have the same value and the same first and second derivatives at $z$). Hence, it is natural to measure how far from a M\"obius transformation is $f_2\circ f_1^{-1}$ by understanding the difference between the \emph{third derivatives} of $f_2$ and $A\circ f_1$ at $z\in U$, i.e., $D^3(f_2-A\circ f_1)(z)$.

Actually, this is \emph{almost} the definition of the \emph{Schwarzian derivative}: since the derivatives of $f_2$ and $A\circ f_1$ map $T_z U$ to $T_{f_2(z)}\overline{\mathbb{C}}$, in order to recover an object from $T_zU$ to \emph{itself}, it is a better idea to ``correct'' $D^3(f_2-A\circ f_1)(z)$ with $Df_2^{-1}(z)$, i.e., we define the Schwarzian derivative $S\{f_2,f_1\}(z)$ of $f_2$ and $f_1$ at $z$ as
$$S\{f_2,f_1\}(z):=6\left(Df(z)^{-1}\circ D^3(f_2-A\circ f_1)(z)\right)$$
Here, the factor $6$ shows up for historical reasons\footnote{That is, this factor makes $S\{f_2,f_1\}(z)$ coincide with the classical definition of Schwarzian derivative in the literature.}.

By definition, the Schwarzian derivative $S\{f_2, f_1\}$ is a field of quadratic forms on $U$ (since its definition involves taking third order derivatives). In other terms, $S\{f_2, f_1\}$ is a \emph{quadratic differential} on $U$, that is, the ``difference'' $p_2-p_1$ between two projective structures $p_1, p_2\in Proj_X(S)$ in the same fiber of the projection $\pi:Proj(S)\to Teich(S)$ is given by a quadratic differential $p_2-p_1=S\{p_2, p_1\}\in Q(X)$. In particular, the fibers $Proj_X(S)$ are affine spaces modeled by the space $Q(X)$ of quadratic differentials on $X$.

\begin{remark} The reader will find more explanations about the Schwarzian derivative in Section 6.3 of Hubbard's book \cite{Hubbard}.
\end{remark}

\begin{remark} The idea of ``measuring'' the distance between projective structures (inducing the same conformal structure) by computing how far they are from M\"obius transformations via the Schwarzian derivative is close in some sense to the idea of measuring the distance between two points in Teichm\"uller space $Teich(S)$ by computing the eccentricities of quasiconformal maps between these points.
\end{remark}

Using this affine structure on $Proj_X(S)$ and the fact that $Q(X)\simeq T_X^*Teich(S)$ is the cotangent space of $Teich(S)$ at $X$, we see that, for each $Y, Z\in Teich(\overline{S})$, the map
$$X\in Teich(S)\mapsto \sigma_{QF}(X,Y)-\sigma_{QF}(X,Z)\in Q(X)$$
defines a (holomorphic) $1$-form on $Teich(S)$. Note that, by letting $Y\in Teich(\overline{S})$ vary and by fixing $Z\in Teich(\overline{S})$, we have a map $\tau_Z=\tau$ given by
$$(X,Y)\in Teich(S)\times Teich(\overline{S})\mapsto \tau(X,Y):=\sigma_{QF}(X,Y)-\sigma_{QF}(X,Z)\in Q(X)$$
Since $QF(S)=Teich(S)\times Teich(\overline{S})$ (so that $T^*QF(S)=T^*Teich(S)\oplus T^*Teich(\overline{S})$) and $Q(X)\simeq T^*_X Teich(S)$, we can think of $\tau$ as a (holomorphic) $1$-form on $QF(S)$.

For later use, let us notice that the $1$-form $\tau:QF(S)\to T^*Teich(S)$ is \emph{bounded} with respect to the Teichm\"uller metric on $Teich(S)$. Indeed, this is a consequence of \emph{Nehari's bound} stating that if $U\subset\overline{\mathbb{C}}$ is a round disc (i.e., the image of the unit disc $\mathbb{D}\subset \mathbb{C}\subset\overline{\mathbb{C}}$ under a M\"oebius transformation) equipped with its hyperbolic metric $\rho$ and $f:U\to\mathbb{C}$ is an injective complex-analytic map, then
$$\|S\{f,z\}\|_{L^{\infty}}\leq 3/2.$$

In this setting, McMullen constructed primitives/potentials for the WP symplectic form $\omega_{WP}$ as follows. The Teichm\"uller space $Teich(S)$ sits in the quasi-Fuchsian locus $QF(S)$ as the \emph{Fuchsian locus} $F(S)=\widehat{\alpha}(Teich(S))$ where $\widehat{\alpha}$ is the \emph{anti-diagonal} embedding
$$\widehat{\alpha}:Teich(S)\to QF(S), \quad \widehat{\alpha}(X)=(X,\overline{X})$$
By pulling back the $1$-form $\tau$ under $\widehat{\alpha}$, we obtain a bounded $1$-form
$$\theta_{WP}(X):=\widehat{\alpha}^*(\tau)(X)=\sigma_{QF}(X,\overline{X})-\sigma_{QF}(X,Z)$$
\begin{remark} This form $\theta_{WP}=\widehat{\alpha}^*(\tau)$ is closely related to a classical object in Teichm\"uller theory called \emph{Bers embedding}: in our notation, the Bers embedding is $$\beta_X(\overline{Z})=\sigma_{QF}(X,Z)-\sigma_{QF}(X,\overline{X})=-\widehat{\alpha}^*(\tau)(X)=-\theta_{WP}(X)$$
\end{remark}
McMullen \cite{McMullen2000} showed that the bounded $1$-forms $i\theta_{WP}$ are primitives/potentials of the WP symplectic $2$-form $\omega_{WP}$, i.e.,
$$d(i\theta_{WP})=\omega_{WP}$$
See also Section 7.7 of Hubbard's book \cite{Hubbard} for a nice exposition of this theorem of McMullen. Equivalently, the restriction of the holomorphic $1$-form $\tau$ to the Fuchsian locus $F(S)$ (a \emph{totally real} sublocus of $QF(S)$) permits to construct (Teichm\"uller bounded) primitives for the WP symplectic form on $F(S)$.

At this point, we are ready to implement the ``Cauchy estimate'' idea of Burns-Masur-Wilkinson to deduce bounds for the first two derivatives of the curvature operator of the WP metric.

\subsubsection{``Cauchy estimate'' of $\omega_{WP}$ after Burns-Masur-Wilkinson}

Following Burns-Masur-Wilkinson, we will need the following local coordinates in $Teich(S)$:
\begin{proposition}[McMullen \cite{McMullen2000}]\label{p.McMullen-coordinates} There exists an universal constant $C_0=C_0(g,n)\geq 1$ such that, for any $X_0\in Teich(S)=Teich_{g,n}$, one has a holomorphic embedding
$$\psi=\psi_{X_0}:\Delta^N\to Teich(S)$$
of the Euclidean unit polydisc $\Delta^N:=\{(z_1,\dots,z_N)\in \mathbb{C}^N: |z_j|<1\,\,\,\,\forall\,j=1,\dots,N\}$ (where $N=3g-3+n=\textrm{dim}(Teich(S))$) sending $0\in\Delta^N$ to $X_0=\psi(0)$ and satisfying
$$\frac{1}{C_0}\|v\|\leq \|D\psi(v)\|_T\leq C_0\|v\|, \quad\forall v\in T\Delta^N,$$
where $\|.\|_T$ is the Teichm\"uller norm and $\|.\|$ is the Euclidean norm on $\Delta^N$.
\end{proposition}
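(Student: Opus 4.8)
The plan is to construct $\psi$ as the inverse of the \emph{Bers embedding} based at $X_0$ and to control the Teichm\"uller norm of its derivative by comparing the Teichm\"uller metric with the flat metric of the model domain through the \emph{Kobayashi metric}. Recall that the Bers embedding $\beta=\beta_{X_0}\colon Teich(S)\to Q(X_0)\cong\mathbb{C}^N$ (with $N=3g-3+n$) is a holomorphic embedding with $\beta(X_0)=0$, and that it is natural to measure $Q(X_0)$ in the sup-norm $\|\phi\|_\infty:=\sup_{X}\rho^{-2}|\phi|$ figuring in Nehari's bound. The first step is to observe that the image $\Omega:=\beta(Teich(S))$ is \emph{sandwiched} between two sup-norm balls: Nehari's inequality (quoted above) gives $\Omega\subseteq B_\infty(0,3/2)$, while the Ahlfors--Weill theorem gives the reverse inclusion $B_\infty(0,1/2)\subseteq\Omega$. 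The crucial point is that this double inclusion holds with \emph{universal} constants, simultaneously for \emph{every} basepoint $X_0$.

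Next I would convert this geometric sandwich into a metric comparison that is uniform throughout a definite neighborhood of $X_0$, not merely at the center. Here one invokes Royden's theorem identifying the Teichm\"uller metric with the Kobayashi metric $k$, together with two elementary features of $k$: its invariance under biholomorphisms and its monotonicity under inclusion of domains, plus the fact that the infinitesimal Kobayashi metric of a norm-ball $B_\infty(0,r)$ \emph{at its center} equals $\|\cdot\|_\infty/r$ (the Minkowski gauge of a balanced convex body). For any $p\in B_\infty(0,1/4)$ the triangle inequality yields
\[
B_\infty(p,1/4)\subseteq B_\infty(0,1/2)\subseteq\Omega\subseteq B_\infty(0,3/2)\subseteq p+B_\infty(0,7/4),
\]
so monotonicity (after translating $p$ to the center) gives
\[
\tfrac{4}{7}\|v\|_\infty\le k_\Omega(p,v)\le 4\|v\|_\infty
\]
for all $v$. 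Thus on $\beta^{-1}(B_\infty(0,1/4))$ the Teichm\"uller norm is comparable to the Bers sup-norm with universal constants.

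The last step is to replace $\|\cdot\|_\infty$ by a genuine Euclidean (Hermitian) norm on $Q(X_0)$. Since the dimension $N$ is fixed by $(g,n)$, John's theorem furnishes a Hermitian inner product whose unit ball $E$ satisfies $\|\cdot\|_\infty\le\|\cdot\|_E\le c_N\|\cdot\|_\infty$ with $c_N=c_N(g,n)$. Identifying $(Q(X_0),\|\cdot\|_E)$ isometrically with $(\mathbb{C}^N,\|\cdot\|)$ by a linear map $\iota$, and choosing a dimensional scale $\lambda_N$ so that $\lambda_N\iota(\Delta^N)\subseteq B_\infty(0,1/4)$, one sets $\psi(z):=\beta^{-1}\big(\lambda_N\iota(z)\big)$. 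By construction $\psi$ is a holomorphic embedding of $\Delta^N$ into $Teich(S)$ with $\psi(0)=X_0$; biholomorphic invariance of $k$ turns $\|D\psi(v)\|_T$ into $k_\Omega(p,V)$ with $V=\lambda_N\iota(v)$, and combining the displayed Kobayashi bounds with the John comparison produces the required two-sided estimate $\tfrac1{C_0}\|v\|\le\|D\psi(v)\|_T\le C_0\|v\|$ with $C_0$ depending only on $N$ (hence on $(g,n)$) through $c_N$.

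The main obstacle is conceptual rather than computational: one must secure a comparison of the Teichm\"uller metric with the flat model metric that is \emph{uniform over all basepoints} and \emph{valid on a whole neighborhood} of $X_0$, and this rests on two substantial inputs — the Ahlfors--Weill lower inclusion $B_\infty(0,1/2)\subseteq\Omega$ and Royden's equality of the Teichm\"uller and Kobayashi metrics. Once these are in hand, the passage from center to neighborhood is handled cheaply by the translation-and-monotonicity trick for $k$, and the only $(g,n)$-dependence of the final constant enters through the dimensional constant $c_N$ of John's theorem.
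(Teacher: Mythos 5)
Your argument is correct and, in substance, it is McMullen's own proof of this result, which the notes simply cite for Proposition~\ref{p.McMullen-coordinates} without reproducing: the Bers-embedding sandwich $B_\infty(0,1/2)\subseteq\beta_{X_0}(Teich(S))\subseteq B_\infty(0,3/2)$ given by Ahlfors--Weill and Nehari, Royden's identification of the Teichm\"uller and Kobayashi metrics, and the translation/monotonicity estimate for the Kobayashi metric of a domain pinched between norm balls, all with constants independent of the basepoint $X_0$. The final passage from the Bers sup-norm to a Hermitian norm via John's theorem costs only a dimensional constant, so $C_0=C_0(g,n)$ as required.
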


Also, since the statement of Proposition \ref{p.McMullen-coordinates} involves the Teichm\"uller norm
$\|.\|_T$ and we are interested in the Weil-Petersson norm $\|.\|_{WP}$, the following comparison (from Lemma 5.4 of Burns-Masur-Wilkinson paper \cite{BurnsMasurWilkinson}) between $\|.\|_T$ and $\|.\|_{WP}$ will be helpful:
\begin{lemma}\label{l.comparison-T-WP} There exists an universal constant $C=C(g,n)\geq 1$ such that, for any $X\in Teich(S)$ and any cotangent vector $\phi\in Q(X)\simeq T_X^*Teich(S)$, one has
$$\|\phi\|_{WP}\leq C\frac{1}{\underline{\ell}(X)}\|\phi\|_T$$
where $\underline{\ell}(X)$ is the systole of $X$(i.e., the length of the shortest closed simple hyperbolic geodesics on $X$). In particular, for any $X\in Teich(S)$ and any tangent vector $\mu\in T_X Teich(S)$, one has
$$\|\mu\|_{T}\leq C\frac{1}{\underline{\ell}(X)}\|\mu\|_{WP}$$
\end{lemma}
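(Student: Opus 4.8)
The plan is to reduce the inequality to a pointwise bound on the hyperbolic ($L^\infty$) norm of quadratic differentials, and then to obtain the tangent-vector statement by duality. Recall from the definition of the $L^p$-norms that for $\phi=\phi(z)\,dz^2\in Q(X)$ one has $\|\phi\|_T=\|\phi\|_1=\int_X|\phi|$, $\|\phi\|_{WP}^2=\|\phi\|_2^2=\int_X\rho^{-2}|\phi|^2$, and $\|\phi\|_\infty=\esssup_X\rho^{-2}|\phi|$, where $\rho$ is the hyperbolic metric and $\rho^{-2}|\phi|$ is the coordinate-free pointwise hyperbolic norm of $\phi$. The first move I would make is to split $\rho^{-2}|\phi|^2=(\rho^{-2}|\phi|)\cdot|\phi|$ and bound the first factor by its essential supremum, which gives at once
$$\|\phi\|_{WP}^2=\int_X\rho^{-2}|\phi|^2\leq\Big(\esssup_X\rho^{-2}|\phi|\Big)\int_X|\phi|=\|\phi\|_\infty\,\|\phi\|_T.$$
Squaring the asserted inequality then shows that it suffices to prove a pointwise bound of the shape $\|\phi\|_\infty\leq C\,\underline{\ell}(X)^{-2}\,\|\phi\|_T$.

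\textbf{The sup bound (the core step).} This is where I expect the real work to be. Fix $p\in X$ and lift $\phi$ to the universal cover, which I normalize as the unit disc $\mathbb{D}$ with $p$ corresponding to $0$ and hyperbolic density $\rho(0)$ a universal constant. In a holomorphic coordinate the coefficient $\phi(z)$ is holomorphic, so $|\phi(z)|$ is subharmonic, and the sub-mean-value inequality over an embedded Euclidean disc of radius $r$ about $0$ yields $|\phi(0)|\leq(\pi r^2)^{-1}\int_{|z|<r}|\phi|\leq(\pi r^2)^{-1}\|\phi\|_T$. Choosing $r\asymp\textrm{inj}(p)$ (so the disc embeds, using $r=\tanh(\textrm{inj}(p)/2)$) and dividing by the constant $\rho(0)^2$ converts this into $(\rho^{-2}|\phi|)(p)\leq C\,\textrm{inj}(p)^{-2}\|\phi\|_T$. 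It then remains to bound $\textrm{inj}(p)$ below by $\underline{\ell}(X)$: by the Collar Lemma, at any point in the thick part or inside the collar of a short geodesic one has $\textrm{inj}(p)\gtrsim\underline{\ell}(X)$, giving the bound at such points. The only remaining points lie deep in the cusp regions, but there the integrability of $\phi$ forces $\rho^{-2}|\phi|\to0$, so the supremum is controlled there as well. Assembling these pieces yields $\|\phi\|_\infty\leq C\,\underline{\ell}(X)^{-2}\|\phi\|_T$, and combined with the first display this proves the first inequality of the lemma.

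\textbf{The tangent inequality by duality.} The second inequality I would obtain formally. The Teichm\"uller norms on $T_X Teich(S)$ and on $Q(X)\simeq T_X^*Teich(S)$ are dual to each other under the pairing $\langle\mu,\phi\rangle=\int\mu\phi$, and the two WP norms are likewise mutually dual since they come from an inner product. The inequality $\|\phi\|_{WP}\leq C\,\underline{\ell}(X)^{-1}\|\phi\|_T$ just established says that the Teichm\"uller unit ball of $Q(X)$ is contained in the WP-ball of radius $C\,\underline{\ell}(X)^{-1}$, so taking the supremum of $|\langle\mu,\cdot\rangle|$ over these nested balls gives $\|\mu\|_T\leq C\,\underline{\ell}(X)^{-1}\|\mu\|_{WP}$ for every $\mu\in T_X Teich(S)$.

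The hard part will be the second step: the sub-mean-value argument is elementary, but the uniform lower bound $\textrm{inj}(p)\gtrsim\underline{\ell}(X)$ on the \emph{conformal} size of an embedded disc genuinely uses the Collar Lemma, together with the separate (and easy) observation that $\rho^{-2}|\phi|$ decays in the cusps so that nothing escapes there. The reduction to the sup bound and the duality argument are routine.
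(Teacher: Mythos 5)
Your strategy is essentially the paper's: both arguments rest on the mean-value/Cauchy estimate for the holomorphic lift of $\phi$, on the collar lemma to relate the relevant length scale to $\underline{\ell}(X)$, and on the interpolation $\|\phi\|_{WP}^2\leq\|\phi\|_\infty\|\phi\|_T$ (the paper phrases this inside a Dirichlet fundamental domain and, instead of shrinking the disc to injectivity-radius scale, keeps a ball of \emph{fixed} radius and counts its overlaps with $\sim 1/\underline{\ell}(X)$ copies of the domain; that overlap-counting is why the paper's computation secretly yields the sharper factor $1/\sqrt{\underline{\ell}(X)}$ of Remark \ref{r.Wolpert-factor-1/2}, whereas your embedded-disc version gives exactly $1/\underline{\ell}(X)$ --- both suffice for the statement). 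Your duality derivation of the tangent-vector inequality is correct, and is a step the paper leaves implicit.

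There is, however, a genuine gap at the cusps. You assert that since integrability forces $\rho^{-2}|\phi|\to 0$ at each puncture, ``the supremum is controlled there as well.'' Pointwise decay at the puncture does not control the supremum over the cusp region: a priori $\rho^{-2}|\phi|$ could have a large spike at a point $p$ in the middle of the cusp, where $\textrm{inj}(p)\ll\underline{\ell}(X)$, and decay only further in; at such a point your embedded-disc estimate gives only $(\rho^{-2}|\phi|)(p)\lesssim \textrm{inj}(p)^{-2}\|\phi\|_T$, which is much worse than $\underline{\ell}(X)^{-2}\|\phi\|_T$, so nothing you have written rules the spike out. The missing ingredient is a maximum principle. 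In a cusp coordinate $w$ (hyperbolic metric $|dw|/(|w|\log(1/|w|))$), integrability forces $\phi$ to have at most a simple pole, so $g(w):=w\,\phi(w)$ extends holomorphically across $w=0$; since $\rho^{-2}|\phi|=|w|\log^2(1/|w|)\,|g(w)|$ and $t\log^2(1/t)$ is increasing for $t<e^{-2}$, the maximum principle applied to $g$ shows that $\sup\{\rho^{-2}|\phi|(w):|w|\leq r_0\}$ is attained on the boundary horocycle $\{|w|=r_0\}$, where the injectivity radius is bounded below by a universal constant and your thick-part estimate applies. With that paragraph inserted, your proof is complete. (In fairness, the paper's write-up is glossy at exactly the same spot: its claim that a fixed-radius ball about \emph{any} point of the fundamental domain is covered by $1/\underline{\ell}(X)$ copies of the domain also fails for points deep in a cusp, where the number of overlaps is unbounded; the cusp must be treated by the maximum-principle argument in either approach.)
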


\begin{proof} Given $X\in Teich(S)$, let us write $X\simeq \mathbb{H}^2/\Gamma$ where $\Gamma<PSL(2,\mathbb{R})$ is ``normalized'' to contain the element $T(z)=\lambda z$ where $\lambda=\log\underline{\ell}(X)$.

Fix $D\subset\mathbb{H}$ a \emph{Dirichlet fundamental domain} of the action of $\Gamma$ centered at the point $i\in\mathbb{H}$. By the \emph{collaring theorem}\footnote{Saying that a closed simple hyperbolic geodesic $\gamma$ of length $\ell$ has a collar $A(\gamma,\eta(\ell))$ (tubular neighborhood) of radius $\eta(\ell):=(1/2)\log((\cosh(\ell/2)+1)/(\cosh(\ell/2)-1))$ isometrically embedded in $X$, and two of these collars $A(\gamma_1,\eta(\ell_1))$ and $A(\gamma_2,\eta(\ell_2))$ are disjoint whenever $\gamma_1$ and $\gamma_2$ are disjoint (see, e.g., Theorem 3.8.3 in Hubbard's book \cite{Hubbard}).}, we have that the union of $1/\underline{\ell}(X)$ isometric copies of $D$ contains a ball $B$ of fixed (universal) radius $c=c(g,n)>0$ around any point $z\in D$.

By combining the Cauchy integral formula with the fact stated in the previous paragraph, we see that
$$|\phi(z)|\leq \frac{1}{2\pi c}\int_B |\phi|\leq \frac{1}{2\pi c\underline{\ell}(X)} \int_D |\phi|=\frac{1}{2\pi c\underline{\ell}(X)}\|\phi\|_T$$

Since the hyperbolic metric $\rho$ is bounded away from $0$ on $D$, we can use the $L^{\infty}$-norm estimate on $\phi$ above to deduce that
$$\|\phi\|_{WP}^2:=\int_D\frac{|\phi|^2}{\rho^2} \leq \frac{C}{\underline{\ell}(X)^2}\|\phi\|_T^2$$
for some constant $C=C(g,n)>0$. This completes the proof of the lemma.
\end{proof}

\begin{remark}\label{r.Wolpert-factor-1/2} The factor $1/\underline{\ell}(X)$ in the previous lemma can be replaced by $1/\sqrt{\underline{\ell}(X)}$ via a refinement of the argument above. However, we will not prove this here because this refined estimate is not needed for the proof of the main results of Burns-Masur-Wilkinson.
\end{remark}

Using the local coordinates from Proposition \ref{p.McMullen-coordinates} (and the comparison between Teichm\"uller and Weil-Petersson norms in the previous lemma), we are ready to use Cauchy's inequalities to estimate ``$g_{ij}$'s'' of the WP metric. More concretely, denoting by $\psi=\psi_{X_0}$ ``centered at some $X_0\in Teich(S)$'' in Proposition \ref{p.McMullen-coordinates}, let $z_k=x_k+iy_k$, $k=1,\dots, N$ and consider the vector fields
$$e_{\ell}:=\left\{\begin{array}{cl} \partial/\partial x_{\ell}, & \textrm{if } \ell=1,\dots, N \\
\partial/\partial y_{\ell-N}, & \textrm{if } \ell=N+1,\dots, 2N\end{array}\right.$$
on $\Delta^N$. In setting, we denote by $G_{ij}(z)=\psi^*g_{WP}(z)(e_i,e_j)$ the ``$g_{ij}$'s'' of the WP metric $g_{WP}$ in the local coordinate $\psi$ and by $G^{-1}(z)=(G^{ij}(z))_{1\leq i,j\leq 2N}$ the inverse of the matrix $(G_{ij}(z))_{1\leq i,j\leq 2N}$.

\begin{proposition}\label{p.WP-metric-coordinates} There exists an universal constant $C=C(g,n)\geq 1$ such that, for any $X_0\in Teich(S)$, the pullback $G=\psi^*g_{WP}$ of the WP metric $g_{WP}$ local coordinate $\psi=\psi_{X_0}:\Delta^N\to Teich(S)$ ``centered at $X_0$'' in Proposition \ref{p.McMullen-coordinates} verifies the following estimates:
$$\|G^{-1}(z)\|\leq C/\underline{\ell}(X_0)^2 \quad \forall z\in\Delta^N, \, \|z\|<1/2,$$
and
$$\max\limits_{(\xi_1,\dots,\xi_k)\in\{x_1,\dots, x_N, y_1,\dots, y_N\}^k}\frac{1}{k!}\left|\frac{\partial^k G_{ij}}{\partial\xi_1\dots\partial\xi_k}(z)\right|\leq C$$
for all $1\leq i,j\leq 2N$, $k\geq 0$ and $z\in\Delta^N$, $\|z\|<1/2$.
\end{proposition}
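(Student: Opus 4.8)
The plan is to prove the two displayed estimates separately: the bound on $\|G^{-1}\|$ by soft comparison of the Teichm\"uller and Weil-Petersson norms, and the derivative bounds by the ``Cauchy inequality'' mechanism described above applied to McMullen's holomorphic $1$-form.

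I would first dispose of the estimate $\|G^{-1}(z)\|\le C/\underline{\ell}(X_0)^2$. For a Euclidean unit vector $v\in T\Delta^N$ one has $G(z)(v,v)=\|D\psi(v)\|_{WP}^2$, so it suffices to bound $\|D\psi(v)\|_{WP}$ from below. Proposition \ref{p.McMullen-coordinates} gives $\|D\psi(v)\|_T\ge \|v\|/C_0$, and the second inequality of Lemma \ref{l.comparison-T-WP} gives $\|D\psi(v)\|_{WP}\ge C^{-1}\underline{\ell}(\psi(z))\,\|D\psi(v)\|_T$; combining these bounds the smallest eigenvalue of $G(z)$ from below by a universal multiple of $\underline{\ell}(\psi(z))^2$. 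To replace $\underline{\ell}(\psi(z))$ by $\underline{\ell}(X_0)$ I would invoke the standard fact (Wolpert) that each $\log\ell_{\gamma}$, and hence $\log\underline{\ell}$, is $2$-Lipschitz for the Teichm\"uller metric: since $\|z\|<1/2$ and $\psi$ has Teichm\"uller distortion $\le C_0$, the point $\psi(z)$ lies within uniformly bounded Teichm\"uller distance of $X_0=\psi(0)$, so $\underline{\ell}(\psi(z))$ and $\underline{\ell}(X_0)$ are comparable with universal constants. This yields the first estimate.

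For the derivative bounds the decisive observation is that in the \emph{holomorphic} chart $\psi$ the complex structure $J$ pulls back to the constant standard structure $J_{\mathrm{std}}$ on $\Delta^N\subset\mathbb{C}^N$. Since $g_{WP}(\cdot,\cdot)=\omega_{WP}(\cdot,J\cdot)$, the pulled-back metric satisfies $G_{ij}=\Omega(e_i,J_{\mathrm{std}}e_j)$ with $\Omega=\psi^*\omega_{WP}$; as $J_{\mathrm{std}}$ has constant entries, it is enough to bound every partial derivative of the coefficients of $\Omega$. Using McMullen's identity $d(i\theta_{WP})=\omega_{WP}$ and naturality of $d$ under pullback, $\Omega=d(i\Theta)$ with $\Theta=\psi^*\theta_{WP}=(\widehat{\alpha}\circ\psi)^*\tau$, so the coefficients of $\Omega$ are first partial derivatives of those of $\Theta$, and it suffices to bound all partial derivatives of $\Theta$. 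Here I would complexify: let $\overline{\psi}:\Delta^N\to Teich(\overline S)$ be the conjugate chart $\overline{\psi}(w):=\overline{\psi(\bar w)}$, which is holomorphic with the same distortion bound $C_0$ (the Teichm\"uller metric being conjugation invariant). Then $\Psi:=\psi\times\overline{\psi}:\Delta^N\times\Delta^N\to QF(S)=Teich(S)\times Teich(\overline S)$ is holomorphic and $\widehat{\alpha}(\psi(z))=(\psi(z),\overline{\psi(z)})=\Psi(z,\bar z)$, so $\widehat{\alpha}\circ\psi$ is the restriction of the holomorphic map $\Psi$ to the totally real diagonal $\{(z,\bar z)\}$. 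Now $H:=\Psi^*\tau$ is a holomorphic $1$-form in the $2N$ variables $(z,w)$ whose coefficients are bounded on the unit polydisc by a universal constant $M=M(g,n)$, because $\tau$ is bounded in the Teichm\"uller cometric (Nehari's bound $\le 3/2$) and $D\Psi$ is Teichm\"uller-bounded by $C_0$. Cauchy's inequalities on a polydisc of radius $\tfrac12$ centered at any point with $\|z\|<\tfrac12$ then control $\tfrac{1}{k!}$ times every $k$-th holomorphic derivative of these coefficients by $M\,2^{k}$ — a universal constant for each fixed order, and only the orders needed to estimate $R$, $\nabla R$, $\nabla^2R$ are relevant. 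Restricting to the diagonal $w=\bar z$ and applying the chain rule (each $\partial/\partial x_k$, $\partial/\partial y_k$ becomes a fixed linear combination of $\partial/\partial z_k$, $\partial/\partial w_k$ evaluated at $(z,\bar z)$) transfers these bounds to all real partial derivatives of $\Theta$, hence of $\Omega$, hence of $G$, with constants independent of $X_0$, giving the second estimate in the $\tfrac1{k!}$-normalization produced by Cauchy's inequalities.

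The main obstacle is the complexification step: one must genuinely exhibit the real-analytic object $\widehat{\alpha}\circ\psi$ as the restriction, to a totally real submanifold, of a \emph{holomorphic} map into $QF(S)$ to which Cauchy's inequalities apply, and then verify that all resulting constants are \emph{universal}, i.e.\ uniform over the basepoint $X_0$. This uniformity rests entirely on the uniformity of the distortion constant $C_0$ in Proposition \ref{p.McMullen-coordinates} together with the fact that Nehari's bound on $\tau$ is an absolute constant; granting these, the passage from holomorphic to real derivatives across the totally real diagonal is routine, and the whole argument is exactly the rigorous form of the ``Cauchy estimate'' caricature of a real-analytic function extended holomorphically off the real axis.
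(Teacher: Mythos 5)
Your proof follows essentially the same route as the paper's: the first estimate by combining Proposition \ref{p.McMullen-coordinates} with Lemma \ref{l.comparison-T-WP}, and the second via the holomorphic complexification $\Psi(z,w)=(\psi(z),\overline{\psi(\overline w)})$, the Nehari/Teichm\"uller boundedness of McMullen's form $\tau$, Cauchy's inequalities on $\Delta^N\times\Delta^N$, and restriction to the (anti-)diagonal $w=\overline{z}$. If anything, you are more careful than the paper on two points it glosses over: the replacement of $\underline{\ell}(\psi(z))$ by $\underline{\ell}(X_0)$ for $\|z\|<1/2$ (which you justify via the Lipschitz property of $\log\underline{\ell}$ in the Teichm\"uller metric), and the observation that the Cauchy bound of order $M\,2^k$ is universal only for each fixed order $k$ -- which suffices, since only the finitely many orders needed for $R$, $\nabla R$, $\nabla^2 R$ are ever used.
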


\begin{proof} The first inequality
$$\|G^{-1}(z)\|\leq C/\underline{\ell}(X_0)^2$$
follows from Proposition \ref{p.McMullen-coordinates} and Lemma \ref{l.comparison-T-WP}. Indeed, by letting $v=\sum\limits_{i=1}^{2N} v_i e_i$, we see from Proposition \ref{p.McMullen-coordinates} and Lemma \ref{l.comparison-T-WP} that
$$\|v\|^2\leq C_0^2\|D\psi(v)\|_T^2\leq \frac{C}{\underline{\ell}(X_0)^2}\|D\psi(v)\|_{WP}^2$$
Since
\begin{eqnarray*}\|D\psi(v)\|_{WP}^2&=&\langle D\psi(v), D\psi(v)\rangle_{WP}=\sum v_i v_j\langle D\psi(e_i), D\psi(e_j)\rangle_{WP} \\ &=& \sum v_i v_j G_{ij}=\langle v, Gv\rangle \\ &\leq& \|v\|\cdot\|Gv\|,
\end{eqnarray*}
we deduce that
$$\|v\|^2\leq \frac{C}{\underline{\ell}(X_0)^2}\|v\|\cdot\|Gv\|,$$
i.e., $\|G^{-1}\|\leq C/\underline{\ell}(X_0)^2$.

For the proof of second inequality (estimates of the $k$-derivatives of $G_{ij}$'s), we begin by ``rephrasing'' the construction of McMullen's $\theta_{WP}$-form in terms of the local coordinate $\psi=\psi_{X_0}$ introduced in Proposition \ref{p.McMullen-coordinates}.

The composition $\widehat{\alpha}\circ\psi$ of the local coordinate $\psi:\Delta^N\to Teich(S)$ with the anti-diagonal embedding $\widehat{\alpha}:Teich(S)\to QF(S)$ of the Teichm\"uller space in the quasi-Fuchsian locus can be rewritten as
$$\widehat{\alpha}\circ\psi = \Psi\circ \alpha$$
where $\alpha:\Delta^N\to\Delta^N\times\Delta^N$ is the anti-diagonal embedding
$$\alpha(z)=(z,\overline{z})$$
and the local coordinate $\Psi:\Delta^N\times\Delta^N\to QF(S)$ given by
$$\Psi(z,w)=(\psi(z),\overline{\psi(\overline{w})}).$$

In this setting, the pullback by $\Psi$ of the holomorphic $1$-form
$\tau(X,Y)=\sigma_{QF}(X,Y)-\sigma_{QF}(X,Z)$ gives a holomorphic $1$-form $\kappa=\Psi^*\tau$ on $\Delta^N\times\Delta^N$. Moreover, since the Euclidean metric on $\Delta^N\times\Delta^N$ is comparable to the pullback by $\Psi$ of the Teichm\"uller metric (cf. Proposition \ref{p.McMullen-coordinates}), $\tau$ is bounded in Teichm\"uller metric and $d(i\theta_{WP})=\omega_{WP}$ where $\theta_{WP}=\widehat{\alpha}^*\tau$, we see that
$$\alpha^*\Omega=\psi^*\omega_{WP}$$
where $\Omega:=d(i\kappa)$ and $\kappa:=\Psi^*\tau$ is a holomorphic bounded (in the Euclidean norm) $1$-form on $\Delta^N\times\Delta^N$.

Let us write $\kappa=\sum\limits_{j=1}^N a_j dz_j$ in complex coordinates $(z_1,\dots,z_N,w_1,\dots,w_N)\in\Delta^N\times\Delta^N$, where $a_j:\Delta^N\times\Delta^N\to\mathbb{C}$ are bounded holomorphic functions. Hence,
$$\Omega=d(i\kappa)=i\left(\sum\limits_{j,k=1}\frac{\partial a_j}{\partial z_k} dz_k\wedge dz_j + \sum\limits_{j,k=1}\frac{\partial a_j}{\partial w_k} dw_k\wedge dz_j\right)$$
and, a fortiori,
$$\psi^*\omega_{WP}=\alpha^*\Omega=i\left(\sum\limits_{j,k=1}\frac{\partial a_j}{\partial z_k} dz_k\wedge dz_j + \sum\limits_{j,k=1}\frac{\partial a_j}{\partial \overline{z}_k} d\overline{z}_k\wedge dz_j\right)$$

Since $\psi^*\omega_{WP}$ is the K\"ahler form of the metric $G=\psi^*g_{WP}$, we see that the coefficients of $G$ are linear combinations of the $\alpha$-pullbacks of $\partial a_j/\partial z_k$ and $\partial a_j/\partial w_k$. Because $a_j$ are (universally) bounded holomorphic functions, we can use Cauchy's inequalities to see that the derivatives of $a_j$ are (universally) bounded at any $(z,w)\in\Delta^N$ with $\|(z,w)\|<1/2$. It follows from the boundedness of the (\emph{non-holomorphic}) anti-diagonal embedding $\alpha$ that the $k$-derivatives of $G_{ij}$'s satisfy the desired bound.
\end{proof}

The estimates in Proposition \ref{p.WP-metric-coordinates} (controlling the WP metric in the local coordinates constructed in Proposition \ref{p.McMullen-coordinates}) permit to deduce the remaining ``two thirds of item (IV) of Theorem \ref{t.BMW-ergodicity-criterion}'' for the WP metric:

\begin{theorem}[Burns-Masur-Wilkinson]\label{t.BMW-Cauchy} There are constants $C>0$ and $\beta>0$ such that, for any $X_0\in \mathcal{T}=Teich(S)$, the curvature tensor $R_{WP}$ of the WP metric satisfies
$$\max\{\|\nabla R_{WP}(X_0)\|, \|\nabla^2 R_{WP}(X_0)\|\}\leq C d(X_0,\partial\mathcal{T})^{-\beta}$$
\end{theorem}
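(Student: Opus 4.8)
The plan is to carry out the entire estimate inside the holomorphic coordinates $\psi=\psi_{X_0}:\Delta^N\to Teich(S)$ of Proposition \ref{p.McMullen-coordinates} centered at $X_0$, and to read off the bound from the two estimates of Proposition \ref{p.WP-metric-coordinates} on the pullback metric $G=\psi^*g_{WP}$. The organizing observation is that, in \emph{any} fixed coordinate chart, the components of the curvature tensor and of its successive covariant derivatives are \emph{universal} polynomial expressions in the entries of the inverse matrix $G^{-1}$ and in the coordinate partial derivatives of the entries $G_{ij}$. Schematically,
$$\Gamma\sim G^{-1}\ast\partial G,\qquad R\sim G^{-1}\ast\partial^2 G+G^{-1}\ast G^{-1}\ast\partial G\ast\partial G,$$
and since $\nabla T=\partial T+\Gamma\ast T$, each application of $\nabla$ adds one coordinate derivative of $G$ together with one further contraction against a Christoffel symbol, hence one more factor of $G^{-1}$. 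Consequently $\nabla R_{WP}$ is a fixed polynomial in $G^{-1}$, with at most $m_1$ inverse-metric factors, and in the partial derivatives of $G$ up to order $3$; likewise $\nabla^2 R_{WP}$ involves at most $m_2$ factors of $G^{-1}$ and derivatives of $G$ up to order $4$, for integers $m_1,m_2$ depending only on $N=3g-3+n$.

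First I would substitute the estimates of Proposition \ref{p.WP-metric-coordinates}, evaluated at $z=0$ (which lies well inside the region $\|z\|<1/2$), into these expressions. The second estimate of that proposition guarantees that every coordinate derivative of the $G_{ij}$ appearing in the formulas, up to order $4$, is bounded by a universal constant \emph{regardless} of the proximity of $X_0$ to $\partial\mathcal{T}$; this is precisely the ``Cauchy inequality'' gain stemming from the fact that $\psi^*\omega_{WP}$ is the exterior derivative of the pullback of McMullen's \emph{bounded holomorphic} $1$-form. The only factors that can degenerate near the boundary are the powers of $G^{-1}$, and the first estimate of the proposition controls these by $\|G^{-1}(0)\|\leq C/\underline{\ell}(X_0)^2$. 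Collecting terms, I obtain that the Euclidean coordinate components of $\nabla R_{WP}$ and $\nabla^2 R_{WP}$ at $z=0$ are bounded by $C\,\underline{\ell}(X_0)^{-2m_1}$ and $C\,\underline{\ell}(X_0)^{-2m_2}$ respectively.

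To pass from these coordinate bounds to the intrinsic WP-operator norms appearing in the statement, I would contract the components against $G$ and $G^{-1}$ to raise and lower indices. Since $\|G(0)\|$ is bounded above by a universal constant (the case $k=0$ of the derivative estimate) while $\|G^{-1}(0)\|\leq C/\underline{\ell}(X_0)^2$, these contractions introduce only a further bounded number of factors of $1/\underline{\ell}(X_0)$, so $\|\nabla R_{WP}(X_0)\|$ and $\|\nabla^2 R_{WP}(X_0)\|$ are again bounded by a fixed power $\underline{\ell}(X_0)^{-M}$. Finally, I would convert the systole into the distance to the boundary using the distance expansion of Theorem \ref{t.Wolpert-expansions}: applying $d(X,\mathcal{T}_\sigma)\sim(2\pi\sum_{\alpha\in\sigma}\ell_\alpha)^{1/2}$ to the stratum obtained by pinching the shortest geodesic of $X_0$ (and noting that any other stratum is at least as far, since its sum of lengths dominates the systole) gives $d(X_0,\partial\mathcal{T})\simeq\underline{\ell}(X_0)^{1/2}$. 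Hence $\underline{\ell}(X_0)^{-M}\simeq d(X_0,\partial\mathcal{T})^{-2M}$, and taking $\beta=2M$ yields the claim; away from the boundary the systole is bounded below and the inequality is trivial.

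The only genuinely non-routine ingredient is Proposition \ref{p.WP-metric-coordinates} itself, which is assumed here; granting it, the main point requiring care is the \emph{bookkeeping of the powers of} $G^{-1}$. One must check that the number of inverse-metric factors entering the coordinate formulas for $\nabla R$ and $\nabla^2 R$ is bounded by a constant depending only on the tensor type, and not on $X_0$, so that the only source of degeneration near $\partial\mathcal{T}$ is polynomial. The remaining steps — the universal form of the covariant-derivative formulas, the norm conversion, and the comparison $\underline{\ell}(X_0)\simeq d(X_0,\partial\mathcal{T})^2$ — are standard, so I expect the argument to be short once Proposition \ref{p.WP-metric-coordinates} is in hand.
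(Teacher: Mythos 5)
Your proposal is correct and follows essentially the same route as the paper: work in McMullen's coordinates $\psi_{X_0}$ from Proposition \ref{p.McMullen-coordinates}, observe that the components of $\nabla^k R$ are universal polynomials in $G^{ij}$ and the partial derivatives of $G_{ij}$ (with a bounded number of inverse-metric factors), feed in the two estimates of Proposition \ref{p.WP-metric-coordinates}, contract with $G$ and $G^{-1}$ to recover the intrinsic WP norm, and convert $\underline{\ell}(X_0)$ into $d(X_0,\partial\mathcal{T})^2$ via Wolpert's distance expansion. This is precisely the paper's argument, including the bookkeeping of the degree in $G^{ij}$ (the paper bounds it by $k+2$ for $\nabla^k R$) and the final passage from systole to boundary distance.
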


\begin{proof} Fix $X_0\in Teich(S)$ and consider the local coordinate $\psi=\psi_{X_0}$ provided by Proposition \ref{p.McMullen-coordinates}. Since $\|D\psi\|$ and $\|D\psi^{-1}\|$ are uniformly bounded, our task is reduced to estimate the first two derivatives of the curvature tensor $R$ of the metric $G(z)=\psi^*g_{WP}(z)=(G_{ij}(z))$ at the origin $0\in \Delta^N$.

Recall that the \emph{Christoffel symbols} of $G_{ij} = G_{ij}(z)$ are
$$\Gamma^{m}_{ij}=\frac{1}{2}\sum\limits_{k}G^{mk}\left(\frac{\partial G_{ki}}{\partial \xi_j} + \frac{\partial G_{kj}}{\partial \xi_i} - \frac{\partial G_{ij}}{\partial \xi_k}\right)$$
or
$$\Gamma^{m}_{ij}=\frac{1}{2}G^{mk}(G_{ki,m} + G_{kj,i} - G_{ij,k})$$
in \emph{Einstein summation convention}, and, in terms of the Christoffel symbols, the coefficients of the \emph{curvature tensor} are
$$R^{l}_{ijk}=\frac{\partial\Gamma^{l}_{ik}}{\partial \xi_j} - \frac{\partial\Gamma^{l}_{ij}}{\partial \xi_k} + \Gamma^{l}_{js}\Gamma^{s}_{ik} - \Gamma^{l}_{ks}\Gamma^{s}_{ij}$$

Therefore, we see that the coefficients of the $k$-derivative $\nabla^k R$ is a polynomial function of $G^{ij}$  and the first $k+2$ partial derivatives $G_{ij}$ whose ``degree''\footnote{Because of the formula $DG^{-1}(0)=-G^{-1}(0)\cdot DG(0)\cdot G^{-1}(0)$.} in the ``variables'' $G^{ij}$ is $\leq k+2$.

By Proposition \ref{p.WP-metric-coordinates}, each $G^{ij}(0)$ has order $O(\underline{\ell}(X_0)^{-2})$ and the first $k+2$ partial derivatives of $G_{ij}$ at $0$ are bounded by a constant depending only on $k$. It follows that
\begin{eqnarray*}
\|\nabla^k R(0)\|^2&\leq& C(k)\sum\limits_{i_1,\dots,i_{k+3},j_1,\dots,j_{k+3},l,m}(\nabla^k R)^{l}_{i_1\dots i_{k+3}} (\nabla^k R)^{m}_{j_1\dots j_{k+3}} G^{i_1j_1}\dots G^{i_{k+3}j_{k+3}} G_{lm} \\ &\leq& C(k)
\frac{1}{\underline{\ell}(X_0)^{2(k+2)}}\frac{1}{\underline{\ell}(X_0)^{2(k+2)}}\frac{1}{\underline{\ell}(X_0)^{2(k+3)}} = C(k)\frac{1}{\underline{\ell}(X_0)^{6k+14}},
\end{eqnarray*}
and, consequently,
$$\max\{\|\nabla R_{WP}(X_0)\|,\|\nabla^2 R_{WP}(X_0)\|\}\leq C/\underline{\ell}(X_0)^{26/2}=C/d(X_0,\partial{T})^{26}.$$
This completes the proof.
\end{proof}

At this point, we have that Theorem \ref{t.Wolpert-expansions} (or, more precisely, its consequence in Equation \eqref{e.(IV)-1/3-WP}) and Theorem \ref{t.BMW-Cauchy} imply the validity of item (IV) of Theorem \ref{t.BMW-ergodicity-criterion} for the WP metric.

\begin{remark} The estimates for the derivatives of the curvature tensor $R_{WP}$ appearing in the proof of Theorem \ref{t.BMW-Cauchy} are \emph{not} sharp with respect to the exponent $\beta$. For instance, the WP metric on the moduli space $\mathcal{M}_{1,1}$ of once-punctured torii has curvature $\sim -1/\ell\sim -1/d^2$ where $d=d(X_0,\infty)$ is the WP distance between $X_0$ and the boundary  $\partial \mathcal{M}_{1,1}=\{\infty\}$, so that one expects tha the $kth$-derivatives of the curvature behave like $\sim -1/d^{k+2}$ (i.e., the exponent $6k+14$ above should be $k+2$).

In a very recent private communication, Wolpert indicated that it is possible to derive the \emph{sharp} estimates of the form
$$\|\nabla^k R_{WP}(X_0)\|\leq C(k)/d(X_0,\partial\mathcal{T})^{k+2}$$
for the derivatives of the curvature tensor of the WP metric from his works.
\end{remark}

\subsection{Item (V) of Theorem \ref{t.BMW-ergodicity-criterion} for WP metric}

The main result of this subsection is the following theorem implying item (V) of Theorem \ref{t.BMW-ergodicity-criterion} for WP metric.
\begin{theorem}\label{t.injectivity-WP} There exists a constant $c>0$ such that for all $X\in\mathcal{M}[k]=\mathcal{T}/MCG[k]$, $k\geq 3$, one has the following polynomial lower bound
$$inj(X)\geq c \cdot d_{WP}(X,\partial\mathcal{M}[k])^3$$
on the injectivity radius of the WP metric at $X$.
\end{theorem}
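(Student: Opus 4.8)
The plan is to reduce the injectivity radius to a minimal--displacement quantity for the $MCG[k]$--action and then estimate that displacement near the boundary using Wolpert's asymptotics. By item (I) of Theorem~\ref{t.BMW-ergodicity-criterion} the universal cover $\mathcal{T}=Teich(S)$ is geodesically convex, and the WP metric has negative sectional curvature; together these make $(\mathcal{T},g_{WP})$ a space in which geodesics between interior points are unique, stay interior, and have no conjugate points. Since $MCG[k]$ acts freely (being torsion--free for $k\geq 3$ by Serre's theorem) and properly discontinuously by isometries, the exponential map on $\mathcal{M}[k]=\mathcal{T}/MCG[k]$ is a local diffeomorphism without conjugate points, whence
$$inj(X)=\frac{1}{2}\min_{\gamma\in MCG[k]\setminus\{e\}} d_{WP}(\widetilde X,\gamma\widetilde X),$$
where $\widetilde X\in\mathcal{T}$ is any lift of $X$. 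Thus the theorem is equivalent to a uniform lower bound on the translation lengths of nontrivial mapping classes.

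Next I would replace $d_{WP}(X,\partial\mathcal{M}[k])=d_{WP}(\widetilde X,\partial\mathcal{T})$ by the systole $\underline{\ell}(\widetilde X)$. The shortest simple closed geodesic $\alpha_0$ realizing $\underline{\ell}$ spans a simplex $\{\alpha_0\}\in\mathcal{C}(S)$, and the distance formula (fifth item of Theorem~\ref{t.Wolpert-expansions}) gives $d_{WP}(\widetilde X,\mathcal{T}_{\{\alpha_0\}})=\sqrt{2\pi\,\ell_{\alpha_0}}+O(\ell_{\alpha_0}^{5/2})$. As $\mathcal{T}_{\{\alpha_0\}}\subset\partial\mathcal{T}$, this already yields the one inequality I need, namely $d_{WP}(\widetilde X,\partial\mathcal{T})\leq C\,\underline{\ell}(\widetilde X)^{1/2}$. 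Consequently it suffices to prove the displacement bound $\min_{\gamma\neq e} d_{WP}(\widetilde X,\gamma\widetilde X)\geq c\,\underline{\ell}(\widetilde X)^{3/2}$; combined with the previous sentence this gives $inj(X)\geq (c/(2C^{3}))\,d_{WP}(X,\partial\mathcal{M}[k])^{3}$, the claim with exponent $3$.

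The core is then a Margulis--type dichotomy for nontrivial $\gamma$, exploiting the near--product structure of the WP metric near $\mathcal{T}_\sigma$ (Theorem~\ref{t.Wolpert2008}, Theorem~\ref{t.Wolpert-expansions}, and the expansion in the proof of Lemma~\ref{l.wp-DM-bdry-nbhd-vol}): in a combined length basis the metric splits, up to uniformly controlled error, as $\sum_{\alpha\in\sigma}(4\,dx_\alpha^2+x_\alpha^6\,d\theta_\alpha^2)$ on the pinching/twisting directions plus the WP metric of the boundary stratum on the \emph{thick} directions, where $x_\alpha\asymp\ell_\alpha^{1/2}$ and $\theta_\alpha$ is the twist coordinate on which the Dehn twist $T_\alpha$ acts by a full period (compare the surface--of--revolution model of Remark~\ref{r.wp-m11-surface-revolution}). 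If the WP geodesic from $\widetilde X$ to $\gamma\widetilde X$ stays inside a fixed thin collar of $\alpha_0$ (the collar lemma makes this collar wide and disjoint from the collars of other short curves), then $\gamma$ must fix $\alpha_0$ and act on the collar only by twisting; a full twist $\theta_{\alpha_0}\mapsto\theta_{\alpha_0}+2\pi$ has WP length $\asymp x_{\alpha_0}^3\asymp \ell_{\alpha_0}^{3/2}=\underline{\ell}(\widetilde X)^{3/2}$, and any nonzero power, possibly composed with a nontrivial mapping class of the thick part, only lengthens the displacement because of the approximate orthogonal (Pythagorean) splitting $d_{WP}^2\gtrsim(\text{twist})^2+(\text{thick})^2$. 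If instead the geodesic leaves the collar, it must cross a collar boundary where the systole is bounded below by a universal Margulis constant over a definite WP distance, so $d_{WP}(\widetilde X,\gamma\widetilde X)\geq c_0>0$; since the metric completion of $\mathcal{M}[k]$ is compact (item (II)), $d_{WP}(X,\partial\mathcal{M}[k])\leq\rho_{\max}$, and a universal bound $c_0$ trivially dominates $c_0\rho_{\max}^{-3}\,d_{WP}(X,\partial\mathcal{M}[k])^{3}$.

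I expect the main obstacle to be the rigorous justification of this dichotomy, i.e.\ a genuine Margulis lemma for the WP metric asserting that mapping classes moving $\widetilde X$ by less than a universal constant must be twists about the short curves. This requires quantifying that altering the marking beyond twisting — crossing out of a thin collar or nucleating a new short curve — costs definite WP length \emph{uniformly over all} $\sigma\in\mathcal{C}(S)$, and combining this with the near--orthogonality of the twist and thick directions to bound composite displacements below by their worst single factor. The subordinate technical points (uniformity of the expansions of Theorem~\ref{t.Wolpert-expansions} over all strata, control of the $O(\cdot)$ errors in the splitting, and the Pythagorean lower bound, with Lemma~\ref{l.comparison-T-WP} available to convert WP lengths to Teichm\"uller lengths where convenient) are where those estimates would be invoked.
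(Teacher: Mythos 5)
Your skeleton matches the paper's: reduce $\mathrm{inj}(X)$ to the minimal displacement of the $MCG[k]$-action, observe that twists about short curves move points by $\asymp \underline{\ell}^{3/2} \asymp d_{WP}(\cdot,\partial\mathcal{T})^3$ (the paper quotes this displacement bound directly from Wolpert \cite{Wolpert2003}, for the whole twist group $\Gamma(\sigma)$, rather than re-deriving it from the product model as you do), and then prove a Margulis-type statement that elements of small displacement are essentially such twists. The problem is that your proof of the dichotomy — the step you yourself flag as the main obstacle — has a genuine gap, and it is larger than a matter of ``quantifying'' collar-crossing costs. An isometry $\gamma$ whose displacement at $\widetilde X$ is tiny need not ``fix $\alpha_0$ and act on the collar only by twisting.'' First, it can permute the collection of short curves (the paper handles this by passing to a power bounded by $(3g-3+n)!$). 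Second, and more seriously, even after it fixes each short curve, the Nielsen--Thurston classification only tells you that its restriction to each complementary piece is a composition of twists about $\sigma$ with either a finite-order or a pseudo-Anosov mapping class of that piece. Your Pythagorean estimate $d_{WP}^2\gtrsim(\text{twist})^2+(\text{thick})^2$ cannot dispose of these cases: for the pseudo-Anosov case, the ``thick'' contribution is exactly the displacement of a pseudo-Anosov element acting on the boundary stratum $\mathcal{T}_\sigma$, and a uniform positive lower bound for that quantity is a substantive theorem (Daskalopoulos--Wentworth \cite{DaWe}), not a consequence of the splitting; a priori nothing in your argument excludes partial pseudo-Anosovs of arbitrarily small WP translation length. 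For the finite-order case the thick contribution can literally vanish (at fixed points in the stratum), and such elements also shift twist parameters in a marking-dependent way, so the twist term is not simply that of a power of $T_{\alpha_0}$; the paper kills these pieces by raising $\phi$ to a further bounded power (this is why the constant $J_0$, the product of $(3g-3+n)!$ with the maximal orders of torsion in lower-complexity mapping class groups, appears), and then recovers the bound for $\phi$ itself from the bound for $\phi^J$ via the triangle inequality.

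So the missing ideas, concretely, are: (i) the bounded-power reduction ($\phi \mapsto \phi^J$, $J\leq J_0$ universal) that converts a small-displacement element into a genuine element of the twist group $\Gamma(\sigma)$, and (ii) the Daskalopoulos--Wentworth uniform lower bound on WP displacement of elements that are pseudo-Anosov on some piece, which is the only known way to exclude the remaining Nielsen--Thurston type. The paper's proof of the lemma is a compactness/contradiction argument (sequences $X_m\to X_\sigma\in\partial\mathcal{T}$, $d(X_m,\phi_m(X_m))\to 0$) organized precisely around these two inputs; without them your dichotomy, as stated, does not close. A minor additional point: on an incomplete manifold the identity $\mathrm{inj}(X)=\tfrac12\min_{\gamma\neq e}d_{WP}(\widetilde X,\gamma\widetilde X)$ must be weakened to a minimum with $d_{WP}(X,\partial\mathcal{M}[k])$, since geodesics may reach the metric boundary before the exponential map fails to be injective; this is harmless here because $d\geq d^3$ for $d$ bounded, but it should be said.
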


The proof of this result also relies on the work of Wolpert. More precisely, Wolpert \cite{Wolpert2003} showed that there exists a constant $c>0$ such that, for any $\sigma\in\mathcal{C}(S)$ and $X\in\mathcal{T}$ with $\overline{\ell}(X)\ll 1$,
$$d_{WP}(X,\Gamma(\sigma)(X))\geq c d(X,\partial\mathcal{T})^3$$
where $\Gamma(\sigma)\subset MCG(S)[k]$ is the Abelian subgroup of the ``level $k$'' mapping class group $MCG(S)[k]$ generated by the \emph{Dehn twists} $\tau_{\alpha}$ about the curves $\alpha\in\sigma$.

This reduces the proof of Theorem \ref{t.injectivity-WP} to the following lemma:

\begin{lemma} There exists an universal constant $J_0=J_0(g,n)\geq 1$ with the following property. For each $\varepsilon>0$, there exists $\delta>0$ such that, for any $X\in\mathcal{T}$ with
$$d_{WP}(X,\phi(X))<\delta$$
for some non-trivial $\phi\in MCG(S)[k]$, one can find $\sigma\in\mathcal{C}(S)$ so that $\overline{\ell}_{\sigma}(X)<\varepsilon$ and $\phi^j\in\Gamma(\sigma)$ for some $1\leq j\leq J_0$.
\end{lemma}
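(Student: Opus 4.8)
The plan is to argue by \emph{contradiction} via a compactness argument adapted to the thin part of moduli space, exploiting that the whole statement is $MCG(S)$-equivariant. Fix $J_0$ (to be pinned down at the end; it will depend only on $(g,n)$), fix $\varepsilon>0$, and set the threshold $\varepsilon':=\min(\varepsilon,\varepsilon_M/3)$, where $\varepsilon_M$ is the Margulis constant. If no $\delta$ worked, there would be $\delta_n\to 0$, points $X_n\in\mathcal{T}$ and non-trivial $\phi_n\in MCG(S)[k]$ with $d_{WP}(X_n,\phi_n X_n)<\delta_n$ for which the conclusion fails. Since $\Gamma(g\sigma)=g\Gamma(\sigma)g^{-1}$ and $\overline{\ell}_{g\sigma}(gX)=\overline{\ell}_\sigma(X)$ for $g\in MCG(S)[k]$, the failure of the conclusion is preserved under replacing $(X_n,\phi_n)$ by $(g_nX_n,g_n\phi_ng_n^{-1})$. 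Using that $\overline{\mathcal{M}}[k]=\overline{Teich}(S)/MCG(S)[k]$ is \emph{compact} (cf. the discussion of item (II)), I choose $g_n$ so that $g_nX_n$ converges in $\overline{Teich}(S)$ to a limit $X_\infty$ lying in a boundary stratum $\mathcal{T}_\sigma$, where $\sigma\in\mathcal{C}(S)$ is exactly the multicurve \emph{pinched} in the limit. After relabelling I assume $X_n\to X_\infty\in\mathcal{T}_\sigma$, and since $d_{WP}(X_n,\phi_nX_n)\to 0$ the metric completion forces $\phi_nX_n\to X_\infty$ too.

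If $\sigma=\emptyset$, then $X_\infty\in\mathcal{T}$ and both $X_n,\phi_nX_n$ stay in a compact part; by proper discontinuity a subsequence has $\phi_n=\phi$ constant with $\phi X_\infty=X_\infty$, and torsion-freeness of $MCG(S)[k]$ for $k\ge 3$ gives $\phi=\mathrm{id}$, contradicting non-triviality. So $\sigma\ne\emptyset$, and $\overline{\ell}_\sigma(X_n)\to 0<\varepsilon'$, so $\sigma$ is an eligible simplex. The pinched multicurve is a geometric invariant of the limit, so the set of curves pinched at $\phi_nX_n$ is simultaneously $\phi_n\sigma$ and $\sigma$; hence $\phi_n\sigma=\sigma$ for all large $n$. (Equivalently, this follows from the uniform WP-Lipschitz bound on $\ell_\alpha^{1/2}$ coming from $\|\mathrm{grad}\,\ell_\alpha^{1/2}\|_{WP}=(2\pi)^{-1/2}+O(\ell_\alpha^{3/2})$ in Theorem \ref{t.Wolpert-expansions}.) Thus $\phi_n$ permutes the $\le 3g-3+n$ components of $\sigma$, so $\psi_n:=\phi_n^{m_n}$ fixes each component of $\sigma$ for some $m_n\le (3g-3+n)!$; moreover $d_{WP}(X_n,\psi_nX_n)\le m_n\delta_n\to 0$, because $\phi_n$ is a WP-isometry.

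It remains to promote ``fixing each $\alpha\in\sigma$'' to membership of a \emph{bounded} power in $\Gamma(\sigma)$. The mapping classes fixing each component of $\sigma$ form a stabilizer $\mathrm{Stab}(\sigma)$ fitting in the cutting exact sequence $1\to\Gamma(\sigma)\to\mathrm{Stab}(\sigma)\to MCG(S\setminus\sigma)\to 1$ (see Farb--Margalit \cite{FarbMargalit}), whose kernel is generated by Dehn twists about $\sigma$. Let $\overline{\psi}_n\in MCG(S\setminus\sigma)$ be the image of $\psi_n$. Near $\mathcal{T}_\sigma$ the WP metric is, by Theorems \ref{t.Wolpert2008} and \ref{t.Wolpert-expansions}, uniformly comparable to the product of the plumbing metric $\sum_{\alpha\in\sigma}\big(4\,dx_\alpha^2+x_\alpha^6\,d\tau_\alpha^2\big)$ in the normal $(x_\alpha,\tau_\alpha)$-directions with the WP metric of the pieces on the factor $\mathcal{T}_\sigma$. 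Consequently the nearest-point projection $\pi_\sigma$ onto the stratum is coarsely Lipschitz and $MCG(S\setminus\sigma)$-equivariant, the twists of $\Gamma(\sigma)$ act purely in the $\tau_\alpha$-directions, and $\pi_\sigma(\psi_nX_n)=\overline{\psi}_n\,\pi_\sigma(X_n)$. Hence $d_{\mathcal{T}_\sigma}\big(\pi_\sigma(X_n),\overline{\psi}_n\pi_\sigma(X_n)\big)\lesssim d_{WP}(X_n,\psi_nX_n)\to 0$, with $\pi_\sigma(X_n)\to Y_\infty\in\mathcal{T}_\sigma$ inside a compact set. Proper discontinuity of $MCG(S\setminus\sigma)$ on $\mathcal{T}_\sigma$ then yields, along a subsequence, $\overline{\psi}_n=\overline{\psi}$ constant with $\overline{\psi}Y_\infty=Y_\infty$; since point stabilizers in $MCG(S\setminus\sigma)$ are finite of order bounded by a Hurwitz-type constant $P=P(g,n)$, we get $\overline{\psi}^P=\mathrm{id}$, i.e. $\psi_n^P=\phi_n^{m_nP}\in\Gamma(\sigma)$ with $m_nP\le (3g-3+n)!\,P=:J_0$. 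This contradicts the failure of the conclusion and proves the lemma.

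The main obstacle is the last paragraph: turning Wolpert's \emph{asymptotic} product description of the WP metric near $\mathcal{T}_\sigma$ into an honest, uniformly controlled projection $\pi_\sigma$ that is equivariant under $\mathrm{Stab}(\sigma)$, so that the displacement of $\pi_\sigma(X_n)$ by $\overline{\psi}_n$ is genuinely comparable to $d_{WP}(X_n,\psi_nX_n)$ and the twist directions (along which $\Gamma(\sigma)$ acts) are cleanly decoupled from the stratum directions. One must also arrange that the compactness normalisation keeps the $X_n$ inside a single Bers region $\Omega(\sigma,\chi,c)$ where the estimates of Theorem \ref{t.Wolpert-expansions} are uniform; the book-keeping of the two thresholds $\varepsilon'$ and $\varepsilon_M$ needed to identify the pinched simplex $\sigma$ unambiguously is then routine.
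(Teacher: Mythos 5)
Your proposal is correct in outline and, up to the point where $\psi_n=\phi_n^{m_n}$ fixes each curve of $\sigma$, it follows the same path as the paper's proof: the same contradiction/compactness setup in the augmented Teichm\"uller space (the paper normalizes with elements of $\Gamma(\sigma)$ rather than general $g_n$, but this is immaterial), the same observation that $\phi_n$ must permute $\sigma$, and the same factorial bound $m_n\leq(3g-3+n)!$. The endgame, however, is genuinely different. The paper applies the Nielsen--Thurston classification to the restrictions of $\psi_n$ to the pieces of the limiting noded surface and splits into two cases: ``twists plus finite order'' (absorbed into the definition of $J_0$) and ``pseudo-Anosov on some piece'', which is excluded by Daskalopoulos--Wentworth's theorem \cite{DaWe} that such elements have uniformly positive WP translation length on the stratum. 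You instead push everything to the stratum via the cutting sequence of \cite{FarbMargalit}: the image $\overline{\psi}_n\in MCG(S\setminus\sigma)$ has displacement tending to zero at points converging to $X_\infty$, so proper discontinuity forces $\overline{\psi}_n$ to be eventually constant and to fix $X_\infty$, hence to have finite order bounded by a constant depending only on $(g,n)$. This trades Nielsen--Thurston and the Daskalopoulos--Wentworth input for softer properness and finite-stabilizer facts (a pseudo-Anosov piece is ruled out automatically, since a pseudo-Anosov cannot fix a point of Teichm\"uller space), at the cost of needing to know how $\psi_n$ interacts with the stratum.

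That interaction is the one soft spot, which you flag yourself: the ``coarsely Lipschitz, equivariant'' projection $\pi_\sigma$ does not follow as stated from Wolpert's expansions (Theorems \ref{t.Wolpert2008} and \ref{t.Wolpert-expansions} are uniform only on Bers regions, and a coordinate projection depends on a choice of pants decomposition completing $\sigma$, so its $\mathrm{Stab}(\sigma)$-equivariance is unclear). But you do not need the projection at all. Each $\psi_n$ extends to an isometry of the completion $\overline{\mathcal{T}}$ preserving the stratum, on which it acts through $\overline{\psi}_n$ (the twists in $\Gamma(\sigma)$ act trivially there), and the strata are totally geodesic in the completion (Wolpert \cite{Wolpert2003}, \cite{Wolpert2008}), so the completion distance restricted to $\mathcal{T}_\sigma$ is its intrinsic WP distance. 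Hence
$$d(\overline{\psi}_nX_\infty,X_\infty)=d(\psi_nX_\infty,X_\infty)\leq d(\psi_nX_\infty,\psi_nX_n)+d(\psi_nX_n,X_n)+d(X_n,X_\infty)=2\,d(X_n,X_\infty)+d(\psi_nX_n,X_n)\to 0,$$
so the displacement of the single point $X_\infty$ under $\overline{\psi}_n$ already tends to zero, and your proper-discontinuity/finite-stabilizer argument applies verbatim. (Alternatively one can legitimize $\pi_\sigma$ as the nearest-point projection onto the convex set $\overline{\mathcal{T}_\sigma}$ in the CAT(0) completion, which is $1$-Lipschitz and equivariant, but the direct estimate above is simpler.) With this substitution your argument is complete and constitutes a valid alternative to the paper's proof.
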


\begin{proof} We begin the proof of the lemma by recalling that the mapping class group $MCG(S)[k]$ acts on $\mathcal{T}$ in a properly discontinuous way with no fixed points. Therefore, for each $\varepsilon>0$, there exists $\delta>0$ such that if $d_{WP}(X,\phi(X))<\delta$ for some non-trivial $\phi\in MCG(S)[k]$ (i.e., some non-trivial element of the mapping class group has an ``almost fixed point''), then $\overline{\ell}_{\sigma}(X)<\varepsilon$ (i.e., the ``almost fixed point'' is close to the boundary of $\mathcal{T}$).

Let us show now that in the setting of the previous paragraph, $\phi^j\in \Gamma(\sigma)$ for some $1\leq j\leq J_0$.

In this direction, let $J_0=J_0(g,n)\in\mathbb{N}$ be the product of $(3g-3+n)!$ and the maximal orders of all finite order elements of the mapping class groups of ``lower complexity'' surfaces. By contradiction, let us assume that there exist infinite sequences $X_m\in\mathcal{T}$, $\phi_m\in MCG(S)[k]$, $m\in\mathbb{N}$, such that $\overline{\ell}_{\sigma}(X_m)\ll 1$ for some $\sigma\in\mathcal{C}(S)$ and
$$\lim\limits_{m\to\infty}d(X_m,\phi_m(X_m))=0$$
but $\phi_m^j\notin\Gamma(\sigma)$ for all $m\in\mathbb{N}$, $1\leq j\leq J_0$.

Passing to a subsequence (and applying appropriate elements of $\phi_m\in\Gamma(\sigma)$), we can assume that the sequence $X_m\in\mathcal{T}$ converges to some noded Riemann surface $X_{\sigma}\in\partial\mathcal{T}_{\sigma}$. Because $d(X_m,\phi_m(X_m))\to 0$ as $m\to\infty$, we see that ,for each $\beta\in\sigma$,
$$\ell_{\phi_m(\beta)}(\phi_m(X))=\ell_{\beta}(X_m)\to 0.$$
It follows that, for all $m$ sufficiently large, $\phi_m$ sends any curve $\beta\in\sigma$ to another curve $\phi_m(\beta)\in\sigma$. Therefore, for each $m$ sufficiently large, there exists
$$1\leq j=j(m) \leq \#\sigma!\leq (3g-3+n)!\leq J_0$$
such that $\phi_m^j$ \emph{fixes} each $\beta\in\sigma$ (i.e., $\phi_m^j$ is a \emph{reducible} element of the mapping class group). By the \emph{Nielsen-Thruston classification} of elements of the mapping class groups, the restrictions of $\phi_m^j$ to each piece of $X_{\sigma}$ are given by compositions of Dehn twists about the boundary curves with either a \emph{pseudo-Anosov} or a periodic (finite order) element (in a surface of ``lower complexity'' than $S$).

It follows that we have only two possibilities for $\phi_m^j$: either the restrictions of $\phi_m^j$ to \emph{all} pieces of $X_{\sigma}$ are compositions of Dehn twists about certain curves in $\sigma$ and \emph{finite order} elements, or the restriction of $\phi_m^j$ to \emph{some} piece of $X_{\sigma}$ is the composition of Dehn twists about certain curves in $\sigma$ and a \emph{pseudo-Anosov} element.

In the first scenario, by the definition of $J_0$, we can replace $\phi_m^j$ by an adequate power $\phi_m^{J}$ with $1\leq J\leq J_0$ to ``kill'' the finite order elements and ``keep'' the Dehn twists. In other terms, $\phi_m^{J}\in \Gamma(\sigma)$ (with $1\leq J\leq J_0$), a contradiction with our choice of the sequence $\phi_m$.

This leaves us with the second scenario. In this case, by definition of $J_0$, we can replace $\phi_m^j$ by an adequate power $\phi_m^J$ with $1\leq J\leq J_0$ such that the restriction of $\phi_m^J$ to some piece of $X_{\sigma}$ is pseudo-Anosov. However, Daskalopoulos-Wentworth \cite{DaWe} showed that there exists an \emph{uniform} positive lower bound for
$$d_{WP}(X_{\sigma}, \phi_m^J(X_{\sigma}))$$
when $\phi_m^J$ is pseudo-Anosov on some piece of $X_{\sigma}$. Since $1\leq J\leq J_0$ and $J_0$ is an universal constant, it follows that there exists an uniform positive lower bound for
$$d_{WP}(X_m,\phi_m(X_m))$$
for all $m$ sufficiently large, a contradiction with our choice of the sequences $X_m\in\mathcal{T}$ and $\phi_m\in MCG(S)[k]$.

These contradictions show that the sequences $X_m\in\mathcal{T}$ and $\phi_m\in MCG(S)[k]$ with the properties described above can't exist.

This completes the proof of the lemma.
\end{proof}

\subsection{Item (VI) of Theorem \ref{t.BMW-ergodicity-criterion} for WP flow}

We complete in this subsection our discussion of the proof of Theorem \ref{t.BMW} \emph{modulo} Theorem \ref{t.BMW-ergodicity-criterion} by verifying the item (VI) of Theorem \ref{t.BMW-ergodicity-criterion} for the WP geodesic flow $\varphi_t$. More precisely, we will show the following result:

\begin{theorem}\label{t.WPflow-derivative} There are constants $C\geq 1$, $\beta>0$, $\delta>0$ and $\rho_0>0$ such that
$$\|D_v\varphi_{\tau}\|_{WP}\leq C/\rho_{\tau}(v)^{\beta}$$
for any $0\leq\tau\leq\delta$ and any $v\in T^1\mathcal{T}$ with
$$0<\rho_{\tau}(v):=\min\{d_{WP}(\varphi_t(v),\partial\mathcal{T}): t\in[-\tau,\tau]\}<\rho_0.$$
\end{theorem}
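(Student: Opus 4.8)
The plan is to pass to the Jacobi equation and track the growth of Jacobi fields along the geodesic, the only genuine difficulty being to convert the inverse-square curvature blow-up \eqref{e.(IV)-1/3-WP} into a \emph{polynomial}, rather than exponential, bound for $\|D_v\varphi_\tau\|_{WP}$. First I would identify $D_v\varphi_\tau$ with the solution operator $(J(0),\dot J(0))\mapsto (J(\tau),\dot J(\tau))$ of the Jacobi equation $\ddot J+\mathcal R(t)J=0$, where $\mathcal R(t):=R_{WP}(\,\cdot\,,\dot\gamma(t))\dot\gamma(t)$ along the unit-speed geodesic $\gamma(t)=\pi(\varphi_t(v))$, so that $\|D_v\varphi_\tau\|_{WP}$ is comparable to the operator norm of this map. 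Since $d(\gamma(t),\partial\mathcal T)\ge\rho:=\rho_\tau(v)$ for every $t\in[-\tau,\tau]$, the bound \eqref{e.(IV)-1/3-WP} gives $\|\mathcal R(t)\|\le C\rho^{-2}$ on the whole segment. The crude Gronwall estimate then yields only $\|D_v\varphi_\tau\|_{WP}\le\exp(C\tau\rho^{-2})$, which is useless; the refinement is that the right quantity to integrate is $\sqrt{\|\mathcal R(t)\|}$ rather than $\|\mathcal R(t)\|$, combined with the fact that the geodesic cannot linger in the region of genuinely large curvature.

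The heart of the argument is therefore a \emph{profile estimate} of the form $d(\gamma(t),\partial\mathcal T)\ge c\max(\rho,|t-t_*|)$, where $t_*\in[-\tau,\tau]$ is the time of closest approach to the relevant boundary stratum $\mathcal T_\sigma$. To establish it I would use Wolpert's expansions (Theorem \ref{t.Wolpert-expansions}). Writing $h_\alpha:=\ell_\alpha^{1/2}$ for the short curves $\alpha\in\sigma$, one has $\dot h_\alpha=\langle\lambda_\alpha,\dot\gamma\rangle$, and the formula for $\nabla_v\lambda_\alpha$ gives the second-derivative identity $\ddot h_\alpha=\tfrac{3}{2\pi h_\alpha}\langle\dot\gamma,J\lambda_\alpha\rangle^2+O(\ell_\alpha^{3/2})$. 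This exhibits $h_\alpha$, and hence (via $d(\,\cdot\,,\mathcal T_\sigma)\sim\sqrt{2\pi\sum_\alpha\ell_\alpha}$) the distance to the boundary, as a convex function of $t$. Moreover the large eigenvalue $\sim\ell_\alpha^{-1}$ of $\mathcal R(t)$ is felt only in the complex line $\{\lambda_\alpha,J\lambda_\alpha\}$ and only when $|\langle\dot\gamma,J\lambda_\alpha\rangle|$ is of order one, which by the identity above forces $\ddot h_\alpha\gtrsim h_\alpha^{-1}\sim\rho^{-1}$; thus exactly where the effective curvature is large, the distance is convex with second derivative $\gtrsim\rho^{-1}$, producing the quadratic-then-linear growth $d(\gamma(t),\partial\mathcal T)\gtrsim\rho+c(t-t_*)^2/\rho\gtrsim\max(\rho,|t-t_*|)$. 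The complementary regime, in which the velocity is essentially tangent to $\mathcal T_\sigma$ (or $t_*$ is an endpoint), is handled by noting that there $\|\mathcal R(t)\|=O(1)$, the large factors $O(\ell_\alpha)$ in Theorem \ref{t.Wolpert-expansions} killing every off-diagonal contribution. A Clairaut-type conservation law for $\langle\dot\gamma,J\lambda_\alpha\rangle$, read off from the same covariant-derivative formulas, makes the winding bookkeeping precise and uniform across the several curves of $\sigma$.

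Granting the profile estimate, one gets $\int_{-\tau}^{\tau}\sqrt{\|\mathcal R(t)\|}\,dt\le\int_{-\tau}^{\tau}\frac{C}{\max(\rho,|t-t_*|)}\,dt=O(\log(1/\rho))$. I would then close the argument with the adapted energy $W(t):=\|\dot J(t)\|_{WP}+k(t)\|J(t)\|_{WP}$, where $k(t):=C'/\max(\rho,|t-t_*|)$ is an explicit majorant of $\sqrt{1+\|\mathcal R(t)\|}$ (by the profile bound) satisfying $|\dot k|\le k^2/C'$. A direct computation using $\tfrac{d}{dt}\|\dot J\|_{WP}\le\|\ddot J\|_{WP}\le\|\mathcal R\|\,\|J\|_{WP}\le k^2\|J\|_{WP}$ gives $\dot W\le C''\,k\,W$, whence $W(\tau)\le W(0)\exp\!\big(C''\!\int_{-\tau}^{\tau}k\big)\le W(0)\,\rho^{-\beta}$ for a suitable $\beta>0$; the desired bound on the operator norm follows after absorbing the (polynomial, $O(\rho^{-1})$) discrepancy between $W$ and $\|J\|_{WP}+\|\dot J\|_{WP}$ into $\beta$.

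The main obstacle is the profile estimate itself: proving rigorously, and uniformly in $v$, that the geodesic must leave the high-curvature region at a definite rate. This is precisely where Wolpert's fine asymptotics for the metric \emph{and} its first covariant derivatives (Theorem \ref{t.Wolpert-expansions}) are indispensable, and where one must simultaneously juggle the product structure along several pinching curves $\alpha\in\sigma$, the distinction between the radial ($\lambda_\alpha$) and winding ($J\lambda_\alpha$) components of $\dot\gamma$ at the closest approach, and the boundary cases $t_*=\pm\tau$. Once this geometric input is secured, the $\sqrt{\|\mathcal R\|}$-integration and the adapted-energy Gronwall are routine.
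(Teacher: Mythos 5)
Your proposal is correct in substance and follows essentially the same route as the paper: the paper's proof combines a general Riccati-comparison bound for the derivative of a geodesic flow (Theorem \ref{t.geodesic-flow-derivative}) with a curvature majorant $\kappa(t)=C\max_{\alpha\in\sigma}\left\{1,\frac{r_{\alpha}(t_{\alpha})}{r_{\alpha}(t_{\alpha})|t-t_{\alpha}|+f_{\alpha}(t_{\alpha})}\right\}$ (Theorem \ref{t.BMW-Prop-4-22}) built from exactly your two ingredients --- the Clairaut-type law $r_{\alpha}'=O(f_{\alpha}^3)$ (Lemma \ref{l.Clairaut-ODE}) and the convexity identity $r_{\alpha}^2=f_{\alpha}'^2+\frac{2\pi}{3}f_{\alpha}f_{\alpha}''+O(f_{\alpha}^4)$ --- whose integral is $O(|\log\rho|)$ and whose endpoint values are $O(1/\rho)$, and your adapted-energy Gronwall with the controlled majorant $k$ is equivalent to the paper's use of the $Q$-controlled property to deduce $u\le Q\kappa$ for the Riccati solution before exponentiating. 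The one correction needed is that the majorant must carry a separate closest-approach time $t_{\alpha}$ for \emph{each} short curve $\alpha\in\sigma$, and the robust form of your ``profile estimate'' is the per-curve ratio bound $r_{\alpha}(t)/f_{\alpha}(t)\lesssim\min\left\{r_{\alpha}/f_{\alpha}(t_{\alpha}),1/|t-t_{\alpha}|\right\}$ rather than linear growth of $d(\gamma(t),\partial\mathcal{T})$ itself, which as literally stated fails for geodesics traveling nearly parallel to a stratum (precisely the geodesics exploited in Section \ref{s.mixing-rate-WP}); your own convexity computation, carried out curve by curve, already yields this corrected bound.
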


The proof of this result in \cite{BurnsMasurWilkinson} is naturally divided into two steps.

In the first step, one shows a \emph{general} result providing an estimate for the first derivative of the geodesic flow $\varphi_t$ on \emph{arbitrary} negatively curved manifold:

\begin{theorem}\label{t.geodesic-flow-derivative} Let $M$ be a negatively curved manifold. Consider $\gamma:[-\tau,\tau]\to M$ a geodesic where $0\leq \tau\leq 1$ and suppose that for every $-\tau\leq t\leq\tau$ the sectional curvatures of any plane containing $\dot{\gamma}(t)\in T^1M$ is greater than $-\kappa(t)^2$ for some Lipschitz function $\kappa:[-\tau,\tau]\to\mathbb{R}_+$.

Then,
$$\|D_{\dot{\gamma}(0)}\varphi_{\tau}\|\leq 1+2(1+u(0)^2)(1+\sqrt{1+u(\tau)^2})\exp\left(\int_0^\tau u(s) ds\right)$$
where $u:[-\tau,\tau]\to[0,\infty)$ is the solution of Riccati equation
$$u'+u^2=\kappa^2$$
with initial data $u(-\tau)=0$.
\end{theorem}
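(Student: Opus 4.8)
The plan is to realize $D_{\dot\gamma(0)}\varphi_\tau$ as the time-advance map of the Jacobi equation along $\gamma$, and then to control its operator norm by a Riccati comparison against the scalar model whose logarithmic derivative is exactly the given function $u$. Using the Sasaki metric on $T^1M$, a tangent vector $\xi\in T_{\dot\gamma(0)}(T^1M)$ corresponds bijectively to a Jacobi field $J$ along $\gamma$ via $\xi\leftrightarrow\big(J(0),\tfrac{D}{dt}J(0)\big)$, with $\|\xi\|^2=\|J(0)\|^2+\|\tfrac{D}{dt}J(0)\|^2$, and the geodesic flow acts by $D_{\dot\gamma(0)}\varphi_\tau\colon(J(0),J'(0))\mapsto(J(\tau),J'(\tau))$. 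Writing $J=J^\top+J^\perp$ with $J^\top(t)=(a+bt)\dot\gamma(t)$ tangent to the orbit and $J^\perp$ normal, the tangential part solves the Jacobi equation trivially and, since $0\le\tau\le1$, grows at most linearly; this accounts for the additive $1$ (the flow direction is preserved), and the problem reduces to bounding the propagator of the normal Jacobi equation $(J^\perp)''+R(t)J^\perp=0$, where $R(t)=R(\cdot,\dot\gamma(t))\dot\gamma(t)$ is the symmetric curvature operator on the normal bundle satisfying $R(t)\ge-\kappa(t)^2\,\mathrm{Id}$.

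Next I would introduce the scalar comparison solution $j>0$ of $j''=\kappa^2 j$ with $j(-\tau)=1$ and $j'(-\tau)=0$; a direct computation shows its logarithmic derivative $u=j'/j$ solves $u'+u^2=\kappa^2$ with $u(-\tau)=0$, and $j(t)=\exp\!\big(\int_{-\tau}^t u\big)$, so that $j(\tau)/j(0)=\exp\!\big(\int_0^\tau u\big)$. Anchoring the comparison at the left endpoint $-\tau$, where we impose $j'(-\tau)=0$, is what makes the initial condition $u(-\tau)=0$ natural. The heart of the matter is then a Sturm/Riccati comparison: because $R(t)\ge-\kappa(t)^2\,\mathrm{Id}$, no normal Jacobi field can expand faster than the model, and one bounds $\|J^\perp(\tau)\|$ and $\|(J^\perp)'(\tau)\|$ in terms of $\|J^\perp(0)\|,\|(J^\perp)'(0)\|$ by the model growth $\exp\!\big(\int_0^\tau u\big)$. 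The boundary factors $(1+u(0)^2)$ and $(1+\sqrt{1+u(\tau)^2})$ arise from changing between the Sasaki frame $(J,J')$ and the frame adapted to the comparison field $(j,j')$: passing from $(J(0),J'(0))$ at $t=0$ costs a factor controlled by $u(0)$, while reading off the derivative component $(J^\perp)'(\tau)$ at the endpoint produces the norm of the vector $(1,u(\tau))$, i.e.\ $\sqrt{1+u(\tau)^2}$.

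The main obstacle is the \emph{shear}: $R(t)$ is a matrix and $(J^\perp)'$ is in general not aligned with $J^\perp$, so a naive scalar differential inequality for $\|J^\perp(t)\|$ fails — the term $\|(J^\perp)'\|^2-\big(\tfrac{d}{dt}\|J^\perp\|\big)^2\ge0$ is not controlled by the curvature lower bound alone and can inflate the growth of the norm. I would resolve this by working with the symmetric matrix Riccati equation: along a matrix-valued solution $\mathcal J$ of the normal Jacobi equation (on a subinterval where $\mathcal J$ is invertible), the operator $U=\mathcal J'\mathcal J^{-1}$ satisfies $U'+U^2+R=0$, and the comparison principle for matrix Riccati equations, applied with $R\ge-\kappa^2\,\mathrm{Id}$ and matched initial data at $-\tau$, yields $U\le u\,\mathrm{Id}$. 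This gives the growth bound $\exp\!\big(\int_0^\tau u\big)$ for the fundamental matrix of the normal system.

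Reassembling the columns of the fundamental matrix together with the boundary change-of-frame factors then produces the stated inequality. The delicate bookkeeping step — and the point where the precise constants $(1+u(0)^2)$ and $(1+\sqrt{1+u(\tau)^2})$ must be tracked carefully — is passing through the matrix Riccati flow across the instants where $\mathcal J$ degenerates and $U$ blows up, since a general Jacobi field prescribed by its Sasaki data at $t=0$ need not be the particular ``unstable'' solution with $\mathcal J'(-\tau)=0$ for which the comparison is cleanest; the degeneracies must be absorbed into the endpoint factors rather than into the exponential growth rate.
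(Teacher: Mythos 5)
First, some context: the paper itself does not prove Theorem \ref{t.geodesic-flow-derivative}. It states it and, in the remark immediately following, defers the proof to Burns--Masur--Wilkinson \cite{BurnsMasurWilkinson} (and to \cite{Burns}), saying only that it uses Jacobi fields, the matrix Riccati equation and the Sasaki metric. Your toolkit is therefore exactly the intended one, and several of your reductions are correct: the Sasaki identification of $D_{\dot\gamma(0)}\varphi_\tau$ with the Jacobi propagator $(J(0),J'(0))\mapsto(J(\tau),J'(\tau))$, the splitting off of the flow direction to produce the additive $1$, the scalar model $j''=\kappa^2 j$, $j(-\tau)=1$, $j'(-\tau)=0$ whose logarithmic derivative is the stated $u$, and the matrix Riccati comparison $U\le u\,\mathrm{Id}$ for $U=\mathcal{J}'\mathcal{J}^{-1}$ when $U(-\tau)=0$ (here your sign conventions and the direction of the comparison are right).

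There is, however, a genuine gap at the decisive step. The comparison $U\le u\,\mathrm{Id}$ controls only the $(n-1)$-dimensional ``unstable'' family of normal Jacobi fields with $J'(-\tau)=0$; it does \emph{not} give, as you assert, ``the growth bound $\exp\left(\int_0^\tau u\right)$ for the fundamental matrix of the normal system,'' and your earlier sentence ``no normal Jacobi field can expand faster than the model'' is false. Already in constant curvature $-\kappa^2$ the field $J(t)=\sinh(\kappa(t+\tau))\,w$, which vanishes at $-\tau$, has logarithmic derivative $\kappa\coth(\kappa(t+\tau))>\kappa\tanh(\kappa(t+\tau))=u(t)$, so on $[0,\tau]$ it expands by the factor $2\cosh(\kappa\tau)$, strictly larger than the model factor $e^{\int_0^\tau u}=\cosh(2\kappa\tau)/\cosh(\kappa\tau)$. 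The entire content of the theorem --- and the origin of the prefactors $(1+u(0)^2)$ and $(1+\sqrt{1+u(\tau)^2})$ --- is that this excess expansion of the complementary fields is \emph{bounded}; bounding it is not bookkeeping, it is the proof. Moreover your proposed mechanism, ``passing through the matrix Riccati flow across the instants where $\mathcal{J}$ degenerates and $U$ blows up,'' is not workable as stated. What closes the gap is to avoid blowup altogether: in negative curvature $U\ge 0$, so the unstable solution $\mathcal{J}$ never degenerates and $t\mapsto\mathcal{J}^{\ast}\mathcal{J}$ is nondecreasing; one then represents the complementary solutions by reduction of order, $\mathcal{K}(t)=\mathcal{J}(t)\int_0^t(\mathcal{J}^{\ast}\mathcal{J})^{-1}\,ds$, and writes a general normal field as $J=\mathcal{J}a+\mathcal{K}b$ with $a=\mathcal{J}(0)^{-1}J(0)$, $b=\mathcal{J}(0)^{\ast}\bigl(J'(0)-U(0)J(0)\bigr)$. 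Monotonicity of $\mathcal{J}^{\ast}\mathcal{J}$ gives $|\mathcal{K}(t)b|\le t\,e^{\int_0^t u}\,|J'(0)-U(0)J(0)|$ and $|(\mathcal{J}(t)^{\ast})^{-1}b|\le|J'(0)-U(0)J(0)|\le\sqrt{1+u(0)^2}\,\|(J(0),J'(0))\|$, while $J'(t)=U(t)J(t)+(\mathcal{J}(t)^{\ast})^{-1}b$; since $\tau\le1$, assembling these yields a bound of exactly the stated shape. (Equivalently, one can treat the fields vanishing at $0$ by a Rauch--Berger comparison against $h''=\kappa^2h$, $h(0)=0$, $h'(0)=1$, together with the Wronskian identity $h(t)=j(t)\,j(0)\int_0^t j^{-2}\le t\,e^{\int_0^t u}$.) Without this step, or some substitute for it, your outline does not yield the inequality.
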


\begin{remark} The proof of this theorem involves classical objects in Differential Geometry (e.g., Jacobi fields,  matrix Riccati equation, Sasaki metric, etc.), but we will not make more comments on this topic because it is not directly related to the geometry of moduli spaces of Riemann surfaces. Instead, we refer the curious reader to the original article \cite{BurnsMasurWilkinson} of Burns-Masur-Wilkinson (or the paper \cite{Burns} in this volume).
\end{remark}

In the second step, one uses the works of Wolpert to exhibit an adequate bound $\kappa(t)$ for the sectional curvatures of the WP metric along WP geodesics $\gamma(t)$. More concretely, one has the following theorem:

\begin{theorem}\label{t.BMW-Prop-4-22} There are constants $Q, P, L\geq 2$ and $0<\delta<1$ such that for any $0<\delta'<\delta$ and any geodesic segment $\gamma:(-\delta',\delta')\to\mathcal{T}$ there exists a positive Lipschitz function $\kappa:(-\delta',\delta)\to \mathbb{R}_+$ with
\begin{itemize}
\item[(a)] $\sup\limits_{v\in T^1_{\gamma(t)}\mathcal{T}} -\langle R_{WP}(v,\dot{\gamma}(t)) \dot{\gamma}(t), v\rangle_{WP}\leq \kappa^2(t)$ for all $t\in(-\delta',\delta')$;
\item[(b)] $\kappa$ is $Q$-controlled in the sense that $\kappa$ has a right-derivative $D^+\kappa$ satisfying
$$D^+\kappa\geq \frac{1-Q^2}{Q}\kappa^2$$
\item[(c)] $\int_{-\delta'}^{\delta'} \kappa(s) ds\leq L|\log\rho_{\delta'}(\dot{\gamma}(0))|$;
\item[(d)] $\max\{\kappa(0), \kappa(\delta')\}\leq P/\rho_{\delta'}(\dot{\gamma}(0))$.
\end{itemize}
where $\rho_{\delta'}(\dot{\gamma}(0))$ is the distance between the geodesic segment $\gamma([-
\delta', \delta])$ and $\partial\mathcal{T}$.
\end{theorem}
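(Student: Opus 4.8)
The plan is to take $\kappa(t)^2$ to be, up to a universal additive constant, the exact leading singular part of the sectional curvature along $\dot{\gamma}(t)$, and to extract (b)--(d) from Wolpert's covariant-derivative asymptotics (Theorem \ref{t.Wolpert-expansions}) together with the convexity of length functions. For (a), fix $t$, let $\sigma\in\mathcal{C}(S)$ be the curves that are short at $\gamma(t)$, and work in a combined length basis $(\sigma,\chi)$, which is uniformly comparable to an orthonormal one by the first items of Theorem \ref{t.Wolpert-expansions}. Expanding $-\langle R_{WP}(v,\dot{\gamma}(t))\dot{\gamma}(t),v\rangle$ multilinearly and using that $\langle R_{WP}(\lambda_\alpha,J\lambda_\alpha)J\lambda_\alpha,\lambda_\alpha\rangle=\frac{3}{16\pi^2\ell_\alpha}+O(\ell_\alpha)$ is the only unbounded quadruple, while all others are $O(1)$ (with an extra factor $O(\ell_\alpha)$ per entry of type $\lambda_\alpha$ or $J\lambda_\alpha$), the supremum over unit $v$ is
$$\sup_v\,-\langle R_{WP}(v,\dot{\gamma}(t))\dot{\gamma}(t),v\rangle\le\sum_{\alpha\in\sigma}\frac{A\big(\langle\dot{\gamma},\lambda_\alpha\rangle^2+\langle\dot{\gamma},J\lambda_\alpha\rangle^2\big)}{\ell_\alpha}+B .$$
I define $\kappa(t)^2$ to be this right-hand side. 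The essential point, invisible in the cruder bound $A/\underline{\ell}_\sigma+B$, is the weighting by the components of $\dot{\gamma}$ in the pinching plane $\{\lambda_\alpha,J\lambda_\alpha\}$: it is exactly this weighting that will make the integral in (c) logarithmic rather than of order $1/\rho_{\delta'}$, and it encodes the fact that a geodesic running almost parallel to $\mathcal{T}_\sigma$ feels almost no curvature.

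The key reformulation is that, writing $f_\alpha(t):=\ell_\alpha(\gamma(t))^{1/2}$, one has $\langle\dot{\gamma},\lambda_\alpha\rangle=f_\alpha'$ because $\lambda_\alpha=\textrm{grad}\,\ell_\alpha^{1/2}$; moreover $f_\alpha$ is convex along the geodesic, since the formula $\nabla_v\lambda_\alpha=\frac{3}{2\pi\ell_\alpha^{1/2}}\langle v,J\lambda_\alpha\rangle J\lambda_\alpha+O(\ell_\alpha^{3/2}\|v\|)$ and $\nabla_{\dot{\gamma}}\dot{\gamma}=0$ give $f_\alpha''=\frac{3}{2\pi\ell_\alpha^{1/2}}\langle\dot{\gamma},J\lambda_\alpha\rangle^2+O(\ell_\alpha^{3/2})\ge0$. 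Thus the first (``radial'') term of $\kappa^2$ is $\sum_\alpha A(f_\alpha'/f_\alpha)^2=\sum_\alpha A\big((\log f_\alpha)'\big)^2$, while the second (``angular'') term is comparable to $\sum_\alpha\frac{2\pi}{3}A\,f_\alpha''/f_\alpha$. Both the near-$(2\pi)^{-1/2}$ Lipschitz bound $|f_\alpha'|\le\|\lambda_\alpha\|=(2\pi)^{-1/2}+O(\ell_\alpha^3)$ and this convexity feed directly into (b)--(d).

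Property (d) follows since at the endpoints $\ell_\alpha\gtrsim\rho_{\delta'}^2$, giving $\kappa\le P/\rho_{\delta'}$. Property (c) is where the two terms are handled separately. For the radial term, convexity forces $f_\alpha'$ to change sign at most once, so $\int_{-\delta'}^{\delta'}|(\log f_\alpha)'|\,dt$ telescopes to $\log\!\big(f_\alpha(-\delta')f_\alpha(\delta')/\min_t f_\alpha(t)^2\big)\le C|\log\rho_{\delta'}|$, using that $\min_t f_\alpha\ge c\,\rho_{\delta'}$ and the endpoint values are $O(1)$. For the angular term, differentiating $\eta_\alpha:=\langle\dot{\gamma},J\lambda_\alpha\rangle$ with the fact that $J$ is parallel (the WP metric is K\"ahler) and the same $\nabla\lambda_\alpha$ formula yields $\eta_\alpha'=-\frac{3}{2\pi}(\log f_\alpha)'\eta_\alpha+O(\ell_\alpha^{3/2})$, a Clairaut-type law forcing $|\eta_\alpha|$ to be comparable to a negative power of $f_\alpha$; integrating $\sqrt{f_\alpha''/f_\alpha}$ against this monotone relation gives an $O(1)$ contribution. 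Finally (b) follows because $\kappa$ is governed by the convex functions $f_\alpha$ of uniformly bounded gradient: near a dive, where $\kappa\asymp\max_\alpha c/f_\alpha$, the identity $D^+\kappa=-c\,D^+f_\alpha/f_\alpha^2$ and the bound $D^+f_\alpha\le(2\pi)^{-1/2}+o(1)$ convert into the one-sided Riccati inequality $D^+\kappa\ge\frac{1-Q^2}{Q}\kappa^2$ with $Q$ universal.

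The main obstacle is to fit these pieces into a single Lipschitz $\kappa$ satisfying the one-sided inequality (b) uniformly, when several curves $\alpha\in\sigma$ are short and dive at different times and the background $O(1)$ curvature is present. The weighted bound is forced on us by (c): the crude bound $A/\underline{\ell}_\sigma+B$ would give $\int\kappa\sim\delta'/\rho_{\delta'}$ and violate (c) precisely in the regime of geodesics travelling almost parallel to $\mathcal{T}_\sigma$ that is responsible for the slow mixing of Theorem \ref{t.BMMW}. Neither the pointwise maximum nor a root-sum-of-squares of the individual contributions automatically inherits the Riccati inequality from the convex $f_\alpha$, which is exactly why (b) is phrased with a right-derivative $D^+\kappa$: one arranges that at each instant only the currently ``active'' shortest curve governs $D^+\kappa$, and controls the finitely many transition times where the active curve switches while keeping $Q,P,L,\delta$ universal. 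Establishing the refined weighted curvature estimate and simultaneously the logarithmic integral bound through these transitions --- rather than in the transparent one-variable surface-of-revolution model $\{y=x^3\}$ of Remark \ref{r.wp-m11-surface-revolution}, where $\dot{\gamma}$ has no stratum-parallel component --- is the technical heart of the argument.
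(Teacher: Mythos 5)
Your construction runs on the same Wolpert fuel as the paper's proof --- the weighted curvature estimate \eqref{e.BMW-Prop-4-22-a}, the covariant-derivative formula for $\nabla\lambda_\alpha$, and the two ODEs $r_\alpha'=O(f_\alpha^3)$ (Lemma \ref{l.Clairaut-ODE}) and $r_\alpha^2=(f_\alpha')^2+\tfrac{2\pi}{3}f_\alpha f_\alpha''+O(f_\alpha^4)$ (Lemma 4.16 of \cite{BurnsMasurWilkinson}) --- but it distributes the labor dually. The paper does \emph{not} take $\kappa^2$ to be the curvature expression itself: it freezes the data at the time $t_\alpha$ where $f_\alpha$ is minimal and sets $\kappa(t)=C\max_\alpha\bigl\{1,\frac{r_\alpha(t_\alpha)}{r_\alpha(t_\alpha)|t-t_\alpha|+f_\alpha(t_\alpha)}\bigr\}$. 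For that explicit majorant, (b) is automatic (each branch satisfies $u'=\pm u^2$, and a maximum of functions with $D^+u\geq -u^2$ inherits the inequality), (c) is an explicit logarithmic integral, (d) is read off, and \emph{all} the analytic work goes into (a), namely the comparison $r_\alpha(t)/f_\alpha(t)\leq A\max\{1,\cdot\}$ proved from the two ODEs. In your version (a) and (d) are free and the work migrates to (b) and (c). Your (c) is correct and is in substance the same computation: the radial part telescopes logarithmically by the (only approximate, but harmlessly so) convexity of $f_\alpha$, and the angular part is $O(1)$ because $|\eta_\alpha|\propto f_\alpha^{-3/(2\pi)}$ makes $\sqrt{f_\alpha''/f_\alpha}$ decay like $|t-t_\alpha|^{-1-3/(2\pi)}$, which is integrable.

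The genuine gap is in (b), where your argument as written fails. First, ``near a dive, $\kappa\asymp\max_\alpha c/f_\alpha$'' drops exactly the weight $r_\alpha$ your construction is built around: with your definition $\kappa\asymp\max\{\sqrt{A}\,r_\alpha/f_\alpha,\sqrt{B}\}$. Second, and decisively, the uniform bound $D^+f_\alpha\leq(2\pi)^{-1/2}+o(1)$ is not enough: it only yields $D^+\kappa\gtrsim -r_\alpha/f_\alpha^2$, and $r_\alpha/f_\alpha^2$ is \emph{not} $O(\kappa^2)=O(Ar_\alpha^2/f_\alpha^2+B)$ in the regime $r_\alpha\asymp f_\alpha\to 0$ (the left side is $\asymp 1/f_\alpha\to\infty$ while the right side stays bounded) --- precisely the almost-parallel regime you rightly single out as critical. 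Third, the ``active shortest curve / transition times'' device belongs to a max-type definition and is incompatible with the sum you actually defined; it is also unnecessary, since your $\kappa$ is smooth. The repair stays entirely inside your framework: use the pointwise bound $|f_\alpha'|\leq r_\alpha$ (immediate from $r_\alpha^2=(f_\alpha')^2+\langle\dot{\gamma},J\lambda_\alpha\rangle^2$) instead of the uniform one. Then, with Clairaut, $v_\alpha:=r_\alpha/f_\alpha$ satisfies $v_\alpha'=r_\alpha'/f_\alpha-r_\alpha f_\alpha'/f_\alpha^2\geq -O(f_\alpha^2)-v_\alpha^2$, and --- contrary to your closing assertion --- the root-sum-of-squares \emph{does} inherit the Riccati inequality: $\kappa'=A\sum_\alpha v_\alpha v_\alpha'/\kappa\geq -O(1)/\kappa-A\sum_\alpha v_\alpha^3/\kappa\geq -\bigl(O(1)B^{-3/2}+A^{-1/2}\bigr)\kappa^2$, using $\sum_\alpha v_\alpha^3\leq\bigl(\sum_\alpha v_\alpha^2\bigr)^{3/2}$, $v_\alpha O(f_\alpha^2)=O(1)$, and $\kappa\geq\sqrt{B}$. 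With this correction (and choosing $Q$ accordingly), your proposal becomes a complete and genuinely workable variant of the paper's argument.
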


Using Theorems \ref{t.geodesic-flow-derivative} and \ref{t.BMW-Prop-4-22}, we can easily complete the proof of Theorem \ref{t.WPflow-derivative} (i.e., the verification of item (VI) of Theorem \ref{t.BMW-ergodicity-criterion} for the WP metric):

\begin{proof}[Proof of Theorem \ref{t.WPflow-derivative}] Denote by $\kappa$ the ``WP curvature bound'' function provided by Theorem \ref{t.BMW-Prop-4-22} and let $u:[-\delta,\delta]\to\mathbb{R}_+$ be the solution of Riccati's equation
$$u'+u^2=\kappa^2$$
with initial data $u(-\delta)=0$.

Since $\kappa$ is $Q$-controlled (in the sense of item (b) of Theorem \ref{t.BMW-Prop-4-22}), it follows that $u(t)\leq Q\kappa(t)$ for all $t\in[-\delta,\delta]$: indeed, this is so because $u(-\delta)=0\leq Q\kappa(-\delta)$, and, if $u(t_0)=Q\kappa(t_0)$ for some $t_0\in[-\delta,\delta]$, then
$$u'(t_0)=\kappa(t_0)^2-u(t_0)^2=(1-Q^2)\kappa(t_0)^2\leq Q\cdot D^+\kappa(t_0).$$

Therefore, by applying Theorem \ref{t.geodesic-flow-derivative} in this setting, we deduce that
$$\|D_{\dot{\gamma}(0)}\varphi_{\tau}\|_{WP}\leq C/\rho_{\tau}(\dot{\gamma}(0))^{\beta}$$
for $\beta=L+3$ and some constant $C=C(P,Q)\geq 1$. This completes the proof of Theorem \ref{t.WPflow-derivative}.
\end{proof}

Closing this subsection, let us \emph{sketch} the proof of Theorem \ref{t.BMW-Prop-4-22} while referring to Subsection 4.4 of Burns-Masur-Wilkinson paper \cite{BurnsMasurWilkinson} (especially Proposition 4.22 of this article) for more details.

We start by describing how the function $\kappa$ is defined. For this sake, we will use Wolpert's formulas in Theorem \ref{t.Wolpert-expansions} above.

More precisely, since the sectional curvatures of the WP metric approach $0$ or $-\infty$ only near the boundary, we can assume\footnote{Formally, as Burns-Masur-Wilkinson explain in page 883 of \cite{BurnsMasurWilkinson}, one must use Proposition 4.7 of their article to produce a nice ``thick-thin'' decomposition of the Teichm\"uller space $\mathcal{T}$.} that our geodesic segment $\gamma:[-\delta',\delta']\to\mathcal{T}$ in the statement of Theorem \ref{t.BMW-Prop-4-22} is ``relatively close'' to a boundary stratum $\mathcal{T}_{\sigma}$, $\sigma\in\mathcal{C}(S)$.

In this setting, for each $\alpha\in\sigma$, we consider the functions $f_{\alpha}(t):=\sqrt{\ell_{\alpha}(t)}$ and
$$r_{\alpha}(t):=\sqrt{\langle \lambda_{\alpha}, \dot{\gamma}(t)\rangle^2 + \langle J\lambda_{\alpha}, \dot{\gamma}(t)\rangle^2}$$
(where $\lambda_{\alpha}:=\textrm{grad}\,\ell_{\alpha}^{1/2}$) along our geodesic segment $\gamma:I\to\mathcal{T}$, $I=(-\delta',\delta')$. Notice that it is natural to consider these functions in view of the statements in Wolpert's formulas in Theorem \ref{t.Wolpert-expansions}.

The WP sectional curvatures of planes containing the tangent vectors to $\gamma(I)$ are controlled in terms of $r_{\alpha}$ and $f_{\alpha}$. Indeed, given $v\in T_{\gamma(t)}^1\mathcal{T}$, we can use a combined length basis $(\sigma,\chi)\in\mathcal{B}$ to write
$$v:=\sum\limits_{\alpha\in\sigma}(a_{\alpha}\lambda_{\alpha}+b_{\alpha} J\lambda_{\alpha}) + \sum\limits_{\beta\in\chi} c_{\beta}\textrm{grad}\,\ell_{\beta}$$
Similarly, let us write
$$\dot{\gamma}(t)=\dot{\gamma}:=\sum\limits_{\alpha\in\sigma}(A_{\alpha}\lambda_{\alpha}+B_{\alpha} J\lambda_{\alpha}) + \sum\limits_{\beta\in\chi} C_{\beta}\textrm{grad}\,\ell_{\beta}$$

By Theorem \ref{t.Wolpert-expansions}, we obtain the following facts. Firstly, since $v$ and $\dot{\gamma}$ are WP-unit vectors, the coefficients $a_{\alpha}, b_{\alpha}, c_{\alpha}, A_{\alpha}, B_{\alpha}, C_{\alpha}$ are
$$a_{\alpha}, b_{\alpha}, c_{\alpha}, A_{\alpha}, B_{\alpha}, C_{\alpha} = O(1)$$
Secondly, by definition of $r_{\alpha}$, we have that
$$r_{\alpha}^2=\frac{1}{4\pi^2}(A_{\alpha}^2+B_{\alpha}^2)+O(f_{\alpha}^3)$$
Finally,
\begin{eqnarray*}
-\langle R_{WP}(v,\dot{\gamma})\dot{\gamma}, v\rangle_{WP} &=& \sum\limits_{\alpha\in\sigma} (a_{\alpha}^2B_{\alpha}^2+A_{\alpha}^2 b_{\alpha}^2)
\langle R_{WP}(\lambda_{\alpha}, J\lambda_{\alpha})J\lambda_{\alpha}, \lambda_{\alpha}\rangle_{WP}+O(1) \\
&=&\sum\limits_{\alpha\in\sigma} O\left(\frac{r_{\alpha}^2}{f_{\alpha}^2}\right)+O(1)
\end{eqnarray*}
In summary, Wolpert's formulas (Theorem \ref{t.Wolpert-expansions}) imply that
\begin{equation}\label{e.BMW-Prop-4-22-a}\sup\limits_{v\in T^1_{\dot{\gamma}(t)}\mathcal{T}} -\langle R_{WP}(v,\dot{\gamma}(t))\dot{\gamma}(t), v\rangle_{WP} = \sum\limits_{\alpha\in\sigma} O\left(\frac{r_{\alpha}(t)^2}{f_{\alpha}(t)^2}\right)
\end{equation}
(cf. Lemma 4.17 in \cite{BurnsMasurWilkinson}).

Now, we want convert the expressions $r_{\alpha}(t)/f_{\alpha}(t)$ into a positive Lipschitz function satisfying the properties described in items (b), (c), and (d) of Theorem \ref{t.BMW-Prop-4-22}, i.e., a $Q$-controlled function with appropriately bounded total integral and values at $0$ and $\delta'$. We will not give full details on this (and we refer the curious reader to Subsection 4.4 of \cite{BurnsMasurWilkinson}), but, as it turns out, the function
$$\kappa(t):=C\max_{\alpha\in\sigma}\left\{1,\frac{r_{\alpha}(t_{\alpha})}{r_{\alpha}(t_{\alpha})|t-t_{\alpha}|+f_{\alpha}(t_{\alpha})}\right\}$$
where $t_{\alpha}\in[-\delta',\delta']$ is the (unique) time with $f_{\alpha}(t)\geq f_{\alpha}(t_{\alpha})$ for all $t\in[-\delta',\delta']$ and $C\geq 1$ is a sufficiently large constant satisfies the conditions in items (a), (b), (c) and (d) of Theorem \ref{t.BMW-Prop-4-22}. Here, the basic idea is these properties are consequences of the features of two ODE's (cf. Lemmas 4.15 and 4.16 in \cite{BurnsMasurWilkinson}) for $r_{\alpha}$ and $f_{\alpha}$. For instance, the verification of item (a) (i.e., the fact that $\kappa$ controls certain WP sectional curvatures along $\gamma$) relies on the fact that these two ODE's permit to prove that
$$\frac{r_{\alpha}(t)}{f_{\alpha}(t)}\leq A\max\left\{1,\frac{r_{\alpha}(t_{\alpha})}{r_{\alpha}(t_{\alpha})|t-t_{\alpha}|+f_{\alpha}(t_{\alpha})}\right\}$$
for some sufficiently large constant $A\geq 1$. In particular, by plugging this into \eqref{e.BMW-Prop-4-22-a}, we obtain that
$$\sup\limits_{v\in T^1_{\dot{\gamma}(t)}\mathcal{T}} -\langle R_{WP}(v,\dot{\gamma}(t))\dot{\gamma}(t), v\rangle_{WP}\leq \kappa^2(t),$$
i.e.,, the estimate required by item (a) of Theorem \ref{t.BMW-Prop-4-22}.

Concluding this sketch of proof of Theorem \ref{t.BMW-Prop-4-22}, let us indicate the two ODE's on $r_{\alpha}$ and $f_{\alpha}$.

\begin{lemma}[Lemma 4.15 of \cite{BurnsMasurWilkinson}]\label{l.Clairaut-ODE} $r_{\alpha}'(t)=O(f_{\alpha}^3(t))$.
\end{lemma}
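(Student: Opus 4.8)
The plan is to differentiate $r_\alpha^2$ rather than $r_\alpha$ directly, and to exploit a cancellation coming from the K\"ahler structure of the WP metric. Write $a_\alpha(t):=\langle\lambda_\alpha,\dot{\gamma}(t)\rangle$ and $b_\alpha(t):=\langle J\lambda_\alpha,\dot{\gamma}(t)\rangle$, so that $r_\alpha^2=a_\alpha^2+b_\alpha^2$ is a smooth function of $t$ and $r_\alpha r_\alpha'=a_\alpha a_\alpha'+b_\alpha b_\alpha'$. Thus the whole problem reduces to computing $a_\alpha'$ and $b_\alpha'$ along the geodesic $\gamma$.

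First I would compute these derivatives using three ingredients: that $\gamma$ is a geodesic (so $\nabla_{\dot{\gamma}}\dot{\gamma}=0$), that the Levi-Civita connection is metric compatible, and that the WP metric is K\"ahler (so $J$ is parallel, $\nabla J=0$). Metric compatibility together with the geodesic equation give $a_\alpha'=\langle\nabla_{\dot{\gamma}}\lambda_\alpha,\dot{\gamma}\rangle$, and, using $\nabla_{\dot{\gamma}}(J\lambda_\alpha)=J\nabla_{\dot{\gamma}}\lambda_\alpha$, also $b_\alpha'=\langle J\nabla_{\dot{\gamma}}\lambda_\alpha,\dot{\gamma}\rangle$. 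Now I insert Wolpert's covariant-derivative formula from Theorem \ref{t.Wolpert-expansions}, namely $\nabla_{\dot{\gamma}}\lambda_\alpha=\frac{3}{2\pi\ell_\alpha^{1/2}}\langle\dot{\gamma},J\lambda_\alpha\rangle J\lambda_\alpha+E_\alpha$ with $\|E_\alpha\|_{WP}=O(\ell_\alpha^{3/2})$ (recall $\|\dot{\gamma}\|_{WP}=1$). Since $\langle\dot{\gamma},J\lambda_\alpha\rangle=b_\alpha$ and $J^2=-\Id$, this yields
$$a_\alpha'=\frac{3}{2\pi f_\alpha}\,b_\alpha^2+\langle E_\alpha,\dot{\gamma}\rangle,\qquad b_\alpha'=-\frac{3}{2\pi f_\alpha}\,a_\alpha b_\alpha+\langle JE_\alpha,\dot{\gamma}\rangle,$$
where $f_\alpha=\ell_\alpha^{1/2}$ and each inner product with $\dot{\gamma}$ is $O(\ell_\alpha^{3/2})$ by Cauchy--Schwarz.

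The key point is the cancellation: substituting into $r_\alpha r_\alpha'=a_\alpha a_\alpha'+b_\alpha b_\alpha'$, the two singular $O(1/f_\alpha)$ contributions $\frac{3}{2\pi f_\alpha}a_\alpha b_\alpha^2$ and $-\frac{3}{2\pi f_\alpha}a_\alpha b_\alpha^2$ cancel exactly, leaving $r_\alpha r_\alpha'=a_\alpha\langle E_\alpha,\dot{\gamma}\rangle+b_\alpha\langle JE_\alpha,\dot{\gamma}\rangle$. To pass from this to a bound on $r_\alpha'$ I would use the elementary inequalities $|a_\alpha|\le r_\alpha$ and $|b_\alpha|\le r_\alpha$, which give $|r_\alpha r_\alpha'|\le(|a_\alpha|+|b_\alpha|)\,O(\ell_\alpha^{3/2})\le 2r_\alpha\,O(\ell_\alpha^{3/2})$. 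Dividing by $r_\alpha$ at points where it is nonzero then produces $|r_\alpha'|=O(\ell_\alpha^{3/2})=O(f_\alpha^3)$, as claimed.

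The main obstacle here is conceptual rather than computational: one must recognize that the leading singular terms cancel, and this cancellation depends crucially on the K\"ahler identity $\nabla_{\dot{\gamma}}(J\lambda_\alpha)=J\nabla_{\dot{\gamma}}\lambda_\alpha$ together with the precise leading coefficient $\frac{3}{2\pi f_\alpha}$ in Wolpert's formula. A secondary technical point is that $r_\alpha=\sqrt{a_\alpha^2+b_\alpha^2}$ need not be differentiable at its zeros; the factorization above handles this, since the estimate is obtained after cancelling a factor of $r_\alpha$ rather than by dividing a generic quantity by a possibly vanishing $r_\alpha$, so the bound holds wherever $r_\alpha'$ exists and, by continuity of $r_\alpha$, certifies the Lipschitz estimate $O(f_\alpha^3)$ throughout the segment.
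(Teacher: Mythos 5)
Your proposal is correct and follows essentially the same route as the paper's proof: differentiate $r_\alpha^2$, use the geodesic equation together with the K\"ahler (parallel $J$) property, insert Wolpert's formula for $\nabla_{\dot\gamma}\lambda_\alpha$ from Theorem \ref{t.Wolpert-expansions}, observe that the two singular $O(1/f_\alpha)$ terms cancel, and then divide out the factor $r_\alpha$ using $|\langle\lambda_\alpha,\dot\gamma\rangle|,|\langle J\lambda_\alpha,\dot\gamma\rangle|\le r_\alpha$. Your explicit handling of the error vector $E_\alpha$ via Cauchy--Schwarz and of the non-differentiability of $r_\alpha$ at its zeros are minor refinements of details the paper leaves implicit.
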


\begin{proof} By differentiating $r_{\alpha}(t)^2=\langle \lambda_{\alpha}, \dot{\gamma}(t)\rangle^2 + \langle J\lambda_{\alpha}, \dot{\gamma}(t)\rangle^2$, we see that
$$2r_{\alpha}(t)r_{\alpha}'(t) = 2 \langle \lambda_{\alpha}, \dot{\gamma}(t)\rangle \langle \nabla_{\dot{\gamma}(t)}\lambda_{\alpha}, \dot{\gamma}(t)\rangle + 2 \langle J\lambda_{\alpha}, \dot{\gamma}(t)\rangle \langle J\nabla_{\dot{\gamma}(t)}\lambda_{\alpha}, \dot{\gamma}(t)\rangle.$$
Here, we used the fact that the WP metric is K\"ahler, so that $J$ is \emph{parallel} (``commutes with $\nabla$'').

Now, we observe that, by Wolpert's formulas (cf. Theorem \ref{t.Wolpert-expansions}), one can write $\nabla_{\dot{\gamma}(t)}\lambda_{\alpha}$ and $J\nabla_{\dot{\gamma}(t)}\lambda_{\alpha}$ that
$$\nabla_{\dot{\gamma}(t)}\lambda_{\alpha}=\frac{3\langle\dot{\gamma}(t), J\lambda_{\alpha}\rangle}{2\pi f_{\alpha}(t)} J\lambda_{\alpha} + O(f_{\alpha}(t)^3)$$
and
$$J\nabla_{\dot{\gamma}(t)}\lambda_{\alpha}=-\frac{3\langle\dot{\gamma}(t), J\lambda_{\alpha}\rangle}{2\pi f_{\alpha}(t)} \lambda_{\alpha} + O(f_{\alpha}(t)^3)$$

Since $\max\{|\langle\dot{\gamma}(t), \lambda_{\alpha}|, |\langle\dot{\gamma}(t), J\lambda_{\alpha}\rangle|\}\leq r_{\alpha}(t)$ (by definition), we conclude from the previous equations that
\begin{eqnarray*}
2r_{\alpha}(t)r_{\alpha}'(t) &=& \frac{3}{\pi f_{\alpha}(t)}
(\langle \lambda_{\alpha}, \dot{\gamma}(t)\rangle \langle J\lambda_{\alpha}, \dot{\gamma}(t)\rangle^2 - \langle \lambda_{\alpha}, \dot{\gamma}(t)\rangle \langle J\lambda_{\alpha}, \dot{\gamma}(t)\rangle^2) \\ &+& O(r_{\alpha}(t) f_{\alpha}(t)^3) \\
&=& 0 + O(r_{\alpha}(t)f_{\alpha}(t)^3).
\end{eqnarray*}
This proves the lemma.
\end{proof}

\begin{remark} This ODE is an analogue for the WP metric of \emph{Clairaut's relation} for the ``model metric'' on the surface of revolution of the profil $y=x^3$.
\end{remark}

\begin{lemma}[Lemma 4.16 of \cite{BurnsMasurWilkinson}] $$r_{\alpha}(t)^2 = f_{\alpha}'(t)^2 + \frac{2\pi}{3}f_{\alpha}(t) f_{\alpha}''(t)+O(f_{\alpha}(t)^4)$$
\end{lemma}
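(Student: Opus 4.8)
The plan is to compute $f_\alpha'$ and $f_\alpha''$ along the geodesic directly, expressing them in terms of the two inner products that define $r_\alpha^2$, using nothing beyond the geodesic equation and the covariant-derivative estimate for $\lambda_\alpha$ already recorded in Theorem \ref{t.Wolpert-expansions}. The starting observation is that, since $f_\alpha=\ell_\alpha^{1/2}$ and $\lambda_\alpha=\textrm{grad}\,\ell_\alpha^{1/2}=\textrm{grad}\,f_\alpha$, the first derivative of $f_\alpha$ along $\gamma$ is simply the directional derivative of $f_\alpha$ in the direction $\dot{\gamma}$:
$$f_\alpha'(t)=\langle \textrm{grad}\,f_\alpha,\dot{\gamma}(t)\rangle=\langle \lambda_\alpha,\dot{\gamma}(t)\rangle.$$
In particular $f_\alpha'(t)^2=\langle \lambda_\alpha,\dot{\gamma}(t)\rangle^2$, which already accounts for one of the two terms of $r_\alpha^2$.

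Next I would differentiate once more. Using that the WP connection is metric, $f_\alpha''(t)=\langle \nabla_{\dot{\gamma}}\lambda_\alpha,\dot{\gamma}\rangle+\langle \lambda_\alpha,\nabla_{\dot{\gamma}}\dot{\gamma}\rangle$, and since $\gamma$ is a geodesic the term $\nabla_{\dot{\gamma}}\dot{\gamma}$ vanishes, leaving $f_\alpha''(t)=\langle \nabla_{\dot{\gamma}}\lambda_\alpha,\dot{\gamma}\rangle$. Substituting the Wolpert formula from Theorem \ref{t.Wolpert-expansions},
$$\nabla_{\dot{\gamma}}\lambda_\alpha=\frac{3\langle \dot{\gamma},J\lambda_\alpha\rangle}{2\pi f_\alpha}\,J\lambda_\alpha+O(f_\alpha^3),$$
where the remainder is uniform because $\dot{\gamma}$ is WP-unit, and pairing with $\dot{\gamma}$ gives
$$f_\alpha''(t)=\frac{3}{2\pi f_\alpha}\langle J\lambda_\alpha,\dot{\gamma}\rangle^2+O(f_\alpha^3).$$
Multiplying by $\frac{2\pi}{3}f_\alpha$ then yields $\frac{2\pi}{3}f_\alpha f_\alpha''=\langle J\lambda_\alpha,\dot{\gamma}\rangle^2+O(f_\alpha^4)$, the second term of $r_\alpha^2$.

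Adding the two computations gives
$$f_\alpha'^2+\frac{2\pi}{3}f_\alpha f_\alpha''=\langle \lambda_\alpha,\dot{\gamma}\rangle^2+\langle J\lambda_\alpha,\dot{\gamma}\rangle^2+O(f_\alpha^4)=r_\alpha^2+O(f_\alpha^4),$$
which is exactly the claim. The only delicate point is the bookkeeping of error terms: I must verify that multiplying the $O(f_\alpha^3)$ remainder in $\nabla_{\dot{\gamma}}\lambda_\alpha$ by the factor $f_\alpha$ genuinely contributes $O(f_\alpha^4)$, and that the estimates of Theorem \ref{t.Wolpert-expansions} apply with uniform constants all along the segment, i.e. that $\gamma$ remains inside a single Bers region $\Omega(\sigma,\chi,c)$ adapted to a fixed stratum $\mathcal{T}_\sigma$. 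Both are guaranteed once one has reduced, as in the sketch preceding the lemma, to a geodesic segment relatively close to $\mathcal{T}_\sigma$; so the argument is in the end just the chain rule together with a single application of Wolpert's covariant-derivative formula.
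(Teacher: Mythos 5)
Your proposal is correct and follows essentially the same route as the paper's own proof: compute $f_\alpha'=\langle\lambda_\alpha,\dot\gamma\rangle$, use the geodesic equation to reduce $f_\alpha''$ to $\langle\nabla_{\dot\gamma}\lambda_\alpha,\dot\gamma\rangle$, and apply Wolpert's covariant-derivative estimate from Theorem \ref{t.Wolpert-expansions}, with the same error bookkeeping. The remark about working inside a Bers region $\Omega(\sigma,\chi,c)$ to get uniform constants matches the reduction the paper makes before stating the lemma, so nothing is missing.
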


\begin{proof} By definition, $\lambda_{\alpha}=\textrm{grad}\,\ell_{\alpha}^{1/2}$, so that
$$f_{\alpha}'(t)=\langle\lambda_{\alpha},\dot{\gamma}(t)\rangle.$$
Differentiating this equality and using Wolpert's formulas (Theorem \ref{t.Wolpert-expansions}), we see that
$$f_{\alpha}''(t)=\langle\nabla_{\dot{\gamma}(t)}\lambda_{\alpha}, \dot{\gamma}(t)\rangle =
\frac{3}{2\pi f_{\alpha}(t)}\langle\dot{\gamma}(t), J\lambda_{\alpha}\rangle^2 + O(f_{\alpha}(t)^3)$$
(Here, we used in the first equality the fact that $\gamma$ is a geodesic, i.e., $\ddot{\gamma}(t)=0$.)

It follows that
\begin{eqnarray*}
\frac{2\pi}{3}f_{\alpha}(t)f_{\alpha}''(t) + f_{\alpha}'(t)^2 &=& \langle\dot{\gamma}(t), J\lambda_{\alpha}\rangle^2 +\langle\dot{\gamma}(t), \lambda_{\alpha}\rangle^2 + O(f_{\alpha}(t)^4) \\
&=:& r_{\alpha}(t)^2+O(f_{\alpha}(t)^4).
\end{eqnarray*}
This proves the lemma.
\end{proof}

At this point, the conclusion is that the WP metric (on $\mathcal{M}(S)[3]$) satisfies items (I) to (VI) of Theorem \ref{t.BMW-ergodicity-criterion}, so that the desired ergodicity (and mixing) result of Theorem \ref{t.BMW} follows. 

\section{Decay of correlations for the Weil-Petersson geodesic flow}\label{s.mixing-rate-WP}

Our goal in this section is to discuss the proof of Theorem \ref{t.BMMW} on the rates of mixing of the Weil-Petersson (WP) geodesic flow on the unit tangent bundle $T^1\mathcal{M}_{g,n}$ of the moduli space $\mathcal{M}_{g,n}$ of Riemann surfaces of genus $g\geq 0$ with $n\geq 0$ punctures for $3g-3+n\geq 1$. 

Let us recall that, by Burns-Masur-Wilkinson theorem (cf. Theorem \ref{t.BMW}), the WP flow $\varphi_t$ on $T^1\mathcal{M}_{g,n}$ is \emph{mixing} with respect to the Liouville measure $\mu$ whenever $3g-3+n\geq 1$. 

By definition of the mixing property, this means that the correlation function $C_t(f,g):=\int f \cdot g\circ\varphi_t d\mu - \left(\int f d\mu\right)\left(\int g d\mu\right)$ converges to $0$ as $t\to\infty$ for any given $L^2$-integrable observables $f$ and $g$. (See, e.g., Hasselblatt's text \cite{Hasselblatt})

Given this scenario, it is natural to ask how \emph{fast} the correlation function $C_t(f,g)$ converges to zero. In general, the correlation function $C_t(f,g)$ can decay to $0$ (as a function of $t\to\infty$) in a very slow way depending on the choice of the observables. Nevertheless, it is often the case (for mixing flows with some hyperbolicity) that the correlation function $C_t(f,g)$ decays to $0$ with a \emph{definite} (e.g., polynomial, exponential, etc.) speed when restricting the observables to appropriate spaces of ``reasonably smooth'' functions. 

In other words, given a mixing flow (with some hyperbolicity), it is usually possible to choose appropriate functional (e.g., H\"older, $C^r$, Sobolev, etc.) spaces $X$ and $Y$ such that 
\begin{itemize}
\item $|C_t(f,g)|\leq C\|f\|_{X} \|g\|_{Y} t^{-n}$ for some constants $C>0$, $n\in\mathbb{N}$ and for all $t\geq 1$ (\emph{polynomial decay}), 
\item or $|C_t(f,g)|\leq C\|f\|_{X} \|g\|_{Y} e^{-ct}$ for some constants $C>0$, $c>0$ and for all $t\geq 1$ (\emph{exponential decay}).
\end{itemize}

Evidently, the ``precise'' rate of mixing of the flow (i.e., the sharp values of the constants $C>0$, $n\in\mathbb{N}$ and/or $c>0$ above) depend on the choice of the functional spaces $X$ and $Y$ (as they might change if we replace $C^1$ observables by $C^2$ observables say). On the other hand, the \emph{qualitative} speed of decay of $C_t(f,g)$, that is, the fact that $C_t(f,g)$ decays polynomially or exponentially as $t\to\infty$ whenever $f$ and $g$ are ``reasonably smooth'', tends to remain \emph{unchanged} if we select $X$ and $Y$ from a well-behaved scale of functional (like $C^r$ spaces, $r\in\mathbb{N}$, or $H^s$ spaces, $s>0$). In particular, this partly explains why in the Dynamical Systems literature one simply says that a given mixing flow $\varphi_t$ has ``polynomial decay'' or ``exponential decay'': usually we are interested in the qualitative behavior of the correlation function for reasonably smooth observables, but the particular choice of functional spaces $X$ and $Y$ is normally treated as a ``technical detail''.

After this brief description of the notion of rate of mixing (speed of decay of correlation functions), let us re-state Theorem \ref{t.BMMW} as two separate results (for the sake of convenience)

\begin{theorem}\label{t.BMMW-1} The rate of mixing of the WP flow $\varphi_t$ on $T^1\mathcal{M}_{g,n}$ is at most polynomial when $3g-3+n>1$.
\end{theorem}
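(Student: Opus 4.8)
The plan is to \emph{rule out} super-polynomial decay by exhibiting, for each small $\rho>0$, a pair of observables $(f,g_\rho)$ and a time $t=1/\rho$ at which the correlation is bounded below by a fixed power of $\rho$, while the relevant $C^r$-norm of $g_\rho$ grows only polynomially in $1/\rho$. Concretely, I would fix a nonnegative smooth bump $f$ supported in a compact (``thick'') part of $T^1\mathcal{M}_{g,n}$ with $\int f\,d\mu>0$, and construct a family $A_\rho\subset T^1\mathcal{M}_{g,n}$ of unit vectors whose geodesics remain in the ``thin part'' (near $\partial\mathcal{M}$, hence outside $\supp f$) for all times $|t|\le 1/\rho$, with $\textrm{vol}(A_\rho)\simeq\rho^8$. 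Taking $g_\rho$ a smooth approximation of $\mathbf{1}_{A_\rho}$ with $0\le g_\rho\le 1$ and $\int g_\rho\,d\mu\simeq\rho^8$, the supports of $f$ and $g_\rho\circ\varphi_t$ are disjoint for $0\le t\le 1/\rho$, so
\begin{equation*}
|C_t(f,g_\rho)|=\left(\int f\,d\mu\right)\left(\int g_\rho\,d\mu\right)\simeq\rho^8,
\end{equation*}
and at $t=1/\rho$ this reads $|C_t(f,g_\rho)|\simeq t^{-8}$.

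The core of the argument is the construction of $A_\rho$, and here the hypothesis $3g-3+n>1$ enters. Using Theorem \ref{t.Wolpert2008} and Theorem \ref{t.Wolpert-expansions}, near a boundary stratum $\mathcal{T}_\sigma$ the WP metric is comparable to a product, and $\mathcal{T}_\sigma$ is \emph{positive-dimensional} precisely because $3g-3+n>1$ (this is exactly what fails for $\mathcal{M}_{1,1}$, whose relevant stratum is a single point). I would pick $\sigma=\{\alpha\}$ together with a relative basis $\chi\ne\varnothing$, and set, for $v\in T^1\mathcal{T}$ with footpoint projection $\pi(v)$,
\begin{equation*}
A_\rho=\left\{v:\ f_\alpha\big(\pi(v)\big)\le \rho,\ \ r_\alpha(v)\le \rho^2\right\},
\end{equation*}
where $f_\alpha=\ell_\alpha^{1/2}$ and $r_\alpha(v)^2=\langle\lambda_\alpha,v\rangle^2+\langle J\lambda_\alpha,v\rangle^2$ measures the component of $v$ transverse to the stratum. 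The point is that a \emph{unit} vector can have $r_\alpha\simeq\rho^2\ll 1$ only because its WP-norm can be carried by the stratum-parallel directions $\textrm{grad}\,\ell_\beta$, $\beta\in\chi$; in the $\mathcal{M}_{1,1}$ situation no such directions exist and Wolpert's normalization $\langle\lambda_\alpha,\lambda_\alpha\rangle\simeq\frac1{2\pi}$ forces $r_\alpha\simeq 1$, so the orbit cannot linger.

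Next I would show that vectors in $A_\rho$ produce lingering orbits. By Lemma \ref{l.Clairaut-ODE} one has $r_\alpha'=O(f_\alpha^3)$, so $r_\alpha$ stays $\lesssim\rho^2$ as long as the orbit remains near $\mathcal{T}_\sigma$; combined with $|f_\alpha'|\le r_\alpha$ this yields $f_\alpha(t)\lesssim \rho+\rho^2|t|$, which is below the thin/thick threshold for $|t|\le 1/\rho$. Hence the geodesic of every $v\in A_\rho$ avoids $\supp f$ on $[-1/\rho,1/\rho]$, as required. For the volume I would argue as in Lemma \ref{l.wp-DM-bdry-nbhd-vol}: the base condition $f_\alpha\le\rho$ contributes $\int_0^\rho x_\alpha^3\,dx_\alpha\simeq\rho^4$, while the fibre condition $r_\alpha\le\rho^2$ cuts out a two-dimensional disc of radius $\rho^2$ in the plane $\{\lambda_\alpha,J\lambda_\alpha\}$, contributing $\rho^4$; together $\textrm{vol}(A_\rho)\simeq\rho^8$.

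Finally, to convert $|C_{1/\rho}(f,g_\rho)|\simeq t^{-8}$ into ``at most polynomial'', I would track the smoothness of $g_\rho$. Since $A_\rho$ is an anisotropic box with smallest scale $\rho^2$ (velocity directions) and scale $\rho$ (base), a bump adapted to it satisfies $\|g_\rho\|_{C^r}\le C_r\rho^{-M}$ for a \emph{fixed} exponent $M=M(r)$. If the flow mixed faster than any polynomial on $C^r$, then $|C_t(f,g_\rho)|\le C_k\|f\|_{C^r}\|g_\rho\|_{C^r}\,t^{-k}\lesssim \rho^{k-M}$ at $t=1/\rho$, which is $o(\rho^8)$ once $k>M+8$, contradicting $|C_{1/\rho}(f,g_\rho)|\simeq\rho^8$. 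I expect the main obstacle to be making the lingering step uniform and rigorous: bootstrapping the ODE bounds for $r_\alpha$ and $f_\alpha$ over the whole interval while controlling the $O(f_\alpha^3)$ errors, together with the passage to the $MCG$-quotient (including that $\varphi_t$ is defined $\mu$-a.e.\ on $A_\rho$) and the simultaneous bookkeeping that delivers both $\textrm{vol}(A_\rho)\simeq\rho^8$ and the polynomial bound on $\|g_\rho\|_{C^r}$.
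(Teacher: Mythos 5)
Your proposal is correct and follows essentially the same route as the paper: your set $A_\rho$ is exactly the paper's $V_{\varepsilon}$ (with $\varepsilon=\rho$), your lingering argument via $r_\alpha'=O(f_\alpha^3)$ and $|f_\alpha'|\le r_\alpha$ is the bootstrap of Lemma \ref{l.WP-mixing-time-lower-bound} built on Lemma \ref{l.Clairaut-ODE}, the volume computation $\textrm{vol}(A_\rho)\simeq\rho^8$ matches the paper's, and your final norm-tracking contradiction is precisely Proposition \ref{p.WP-poly-mixing} together with Remark \ref{r.poly-mixing-Ck}. The only cosmetic difference is that the paper makes explicit the symmetry $v\in V_\varepsilon\Leftrightarrow -v\in V_\varepsilon$ to get the two-sided (backward-time) disjointness of $\supp f$ and $\supp(g_\rho\circ\varphi_t)$, which you assert but do not isolate as a separate step.
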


\begin{theorem}\label{t.BMMW-2} The rate of mixing of the WP flow $\varphi_t$ on $T^1\mathcal{M}_{g,n}$ is rapid (faster than any polynomial) when $3g-3+n=1$.
\end{theorem}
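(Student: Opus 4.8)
The starting point is the contrast with Theorem~\ref{t.BMMW-1}. The polynomial lower bound there was produced entirely by the \emph{product-like} geometry of the Weil--Petersson metric near a boundary stratum $\mathcal{T}_\sigma$ with $\#\sigma\geq 2$: the presence of sectional curvatures tending to $0$ (Remark~\ref{r.wp-almost-zero-curvatures}) let a whole transverse family of geodesics, of Liouville measure $\simeq\rho^8$, drift almost parallel to a positive-dimensional stratum for a time $\simeq 1/\rho$. When $3g-3+n=1$ the curve complex $\mathcal{C}(S)$ has no edges, every stratum $\mathcal{T}_\sigma$ is a single point, and by Remark~\ref{r.wp-m11-surface-revolution} (see also Example~\ref{ex.wolpert-asymptotics}) the metric near the unique cusp is a $C^2$-small perturbation of the surface of revolution of the profile $y=x^3$; \emph{every} sectional curvature blows up to $-\infty$ and none approaches $0$. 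The whole point of the proof is to turn this ``absence of almost-flat directions'' into a quantitative statement about cusp excursions and then feed it into a rapid-mixing criterion.

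The plan is first to represent $\varphi_t$ as a suspension over a hyperbolic base. I would fix a Poincar\'e section $\Sigma$ transverse to the flow inside a compact part of $T^1\mathcal{M}_{g,n}$, with first-return map $F\colon\Sigma\to\Sigma$ and roof (return-time) function $r\colon\Sigma\to\mathbb{R}_+$. On the compact part the flow is uniformly hyperbolic (Theorem~\ref{t.BMW} already exploits this), so $F$ is modelled by a Gibbs--Markov map or a Young tower, and the only way $r$ can be large is an excursion of the geodesic deep into the cusp between consecutive returns.

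The analytic heart is to control these excursions. Here I would use the near-conserved quantity $r_\alpha(t)$ of Lemma~\ref{l.Clairaut-ODE}, which is exactly a Clairaut integral for the $y=x^3$ model, together with the second-order identity of Lemma~4.16 of \cite{BurnsMasurWilkinson}; these reduce an excursion to a one-dimensional turning-point problem for the depth $f_\alpha=\sqrt{\ell_\alpha}$. Since the curvature is genuinely negative and tends to $-\infty$, a geodesic with $r_\alpha$ bounded away from $0$ is turned back at a controlled depth and spends only a controlled time near the cusp, with no analogue of the long ``almost-parallel'' drift of the product case; meanwhile the winding in the twist direction $\tau_\alpha$ grows without bound as the excursion deepens, and this shearing is what drives mixing in the cusp. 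Combining this with the volume estimate of Lemma~\ref{l.wp-DM-bdry-nbhd-vol}, I would conclude that the tails $\mu(\{r>T\})$ decay faster than any power of $T$, i.e.\ $r$ is super-polynomially integrable.

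With the suspension and the tail bound in hand, the last step is to verify a uniform non-integrability (oscillation) condition for the pair $(F,r)$ and to invoke a rapid-mixing criterion in the spirit of Dolgopyat~\cite{Dolgopyat} and Melbourne, which upgrades super-polynomial return tails together with non-integrability into super-polynomial decay of $C_t(f,g)$ for smooth $f,g$. I expect the genuine difficulty to be the \emph{degeneracy of the hyperbolicity} at the cusp: because the curvature and the derivative of the flow blow up during deep excursions (the flow is only polynomially controlled, cf.\ item (VI) of the ergodicity criterion), the uniform estimates underpinning any Dolgopyat-style argument must be carried out against this blow-up, and it is precisely this degeneracy that I expect to confine the conclusion to \emph{rapid} rather than exponential mixing. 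Verifying the non-integrability condition for a base map arising from a non-compact, singular phase space, uniformly in the excursion depth, is where I would expect to spend most of the effort. Carrying out these steps would establish Theorem~\ref{t.BMMW-2}, and together with Theorem~\ref{t.BMMW-1} complete the proof of Theorem~\ref{t.BMMW} (cf.\ \cite{BurnsMasurMatheusWilkinson}).
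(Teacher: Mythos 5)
Your overall skeleton --- write the flow as a suspension over a return map to a cross-section, show the return map is modelled by a Young tower with exponential tails, and invoke a Melbourne-school criterion for rapid mixing of suspension flows --- is indeed the strategy of the paper (after its preliminary reductions: $\mathcal{M}_{0,4}$ covers $\mathcal{M}_{1,1}$ finitely, and the WP metric near the cusp of $\mathcal{M}_{1,1}$ is treated as a perturbation of the surface of revolution of $\{v=u^r\}$, with $r=3$ the borderline of the toy model $r>3$). However, two of your intermediate steps aim at the wrong target, and a third conceals where essentially all of the work lies.

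First, the roof function. The paper's section is a parallel $C(d_0)$ near the cusp, and the decisive geometric fact (Remark \ref{r.return-time}) is that the return time $T(\beta)$ is uniformly \emph{bounded}: the metric is incomplete, the cusp lies at finite distance, and the strongly negative curvature turns back \emph{every} geodesic --- however deep its excursion --- in time $O_{d_0,r}(1)$; only the measure-zero circle $\Sigma$ of vectors aiming straight at the cusp fails to return. So there is no tail bound $\mu(\{r>T\})$ to prove, and your plan to establish super-polynomial integrability of the roof via Lemma \ref{l.wp-DM-bdry-nbhd-vol} addresses a non-issue (your phrase ``the only way $r$ can be large is an excursion deep into the cusp'' is exactly what fails to happen here). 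The genuine analytic problem with the roof is its \emph{regularity}: by \eqref{e.T'-estimate} one has $|T'(\beta)|\leq C\beta^{-(1+4\varepsilon)}$, blowing up as $\beta\to 0$, so $T$ is not uniformly H\"older on $M-\Sigma$; this forces the homogeneity-strip decomposition $H_k=\{(k+1)^{-\nu}<|\beta|<k^{-\nu}\}$ of \cite{BunimovichChernovSinai91} and the uniform piecewise H\"older estimate of Proposition \ref{p.Holder-roof-function}, none of which appears in your outline.

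Second, the Young tower. You assert that, since the flow is uniformly hyperbolic on the compact part, the base map ``is modelled by a Gibbs--Markov map or a Young tower''. This is precisely the step that cannot be waved through: the return map $F:M-\Sigma\to N$ is singular at $\Sigma$, with expansion factor blowing up like $\Lambda(\beta)\gtrsim\beta^{-(r-1)(1-3\varepsilon)/r}$ and distortion $\Lambda'/\Lambda\lesssim\beta^{-(1+2/r)(1+\varepsilon)}$, so uniform distortion control --- the backbone of any Young-tower construction --- fails outright near $\Sigma$. Recovering it is the heart of the paper's proof: one verifies the geometric criterion of Chernov--Zhang \cite{ChernovZhang} (the one-step growth condition $\sum_{k\geq k_0}\Lambda_k^{-1}<1$ and the distortion bound of Proposition \ref{p.step-growth-distortion-bound}) by Jacobi-field and Riccati comparison arguments against the explicit curvature $K=-r(r-1)/(x^2(1+(rx^{r-1})^2)^2)$, and the exponents close up (one needs $\nu\theta<\nu+1$, i.e.\ $r+2<2r-1$) exactly because $r>3$, with $r=3$ the borderline WP case. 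You do flag ``degeneracy of hyperbolicity at the cusp'' as the main difficulty, which is the right instinct, but you locate the effort in verifying a uniform non-integrability condition for a Dolgopyat-type argument. The paper avoids any UNI-type verification: since the geodesic flow is a \emph{contact} flow, the theorem of B\'alint--Melbourne \cite{BalintMelbourne} yields rapid mixing from exponential tails of the tower together with a bounded, uniformly piecewise H\"older roof --- the contact structure supplies the non-degeneracy that UNI would otherwise encode. With that substitution, and with the two families of estimates above actually supplied, your outline would become the paper's proof; as written, it is missing its two hardest ingredients.
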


\begin{remark} These results were announced in \cite{BurnsMasurMatheusWilkinson}. Since then, Burns, Masur, Wilkinson and myself found some evidence indicating that the Weil-Petersson geodesic flow on $T^1\mathcal{M}_{g,n}$ is actually exponentially mixing when $3g-3+n=1$. The details will hopefully appear in the forthcoming paper (currently still in preparation).
\end{remark}

\begin{remark} An open problem left by Theorem \ref{t.BMMW-1} is to determine the rate of mixing of the WP flow on $T^1\mathcal{M}_{g,n}$ for $3g-3+n>1$. Indeed, while this theorem provides a polynomial upper bound for the rate of mixing in this setting, it does not rule out the possibility that the actual rate of mixing of the WP flow is sub-polynomial (even for reasonably smooth observables). Heuristically speaking, we believe that the sectional curvatures of the WP metric control the time spend by WP geodesics near the boundary of $\overline{\mathcal{M}}_{g,n}$. In particular, it seems that the problem of determining the rate of mixing of the WP flow (when $3g-3+n>1$) is somewhat related to the issue of finding suitable (polynomial?) bounds for how close to zero the sectional curvatures of the WP metric can be (in terms of the distance to the boundary 
of $\overline{\mathcal{M}}_{g,n}$). Unfortunately, the best available bounds for the sectional curvatures of the WP metric (due to Wolpert) do not rule out the possibility that some of these quantities get extremely close to zero (see Remark \ref{r.wp-almost-zero-curvatures} above).
\end{remark}

The difference in the rates of mixing of the WP flow on $T^1\mathcal{M}_{g,n}$ when $3g-3+n>1$ or $3g-3+n=1$ in Theorem \ref{t.BMMW} reflects the following simple (yet important) feature of the WP metric near the boundary of the Deligne-Mumford compactification of $\mathcal{M}_{g,n}$. 

In the case $3g-3+n=1$, e.g., $g=1=n$, the moduli space $\mathcal{M}_{1,1}\simeq\mathbb{H}/PSL(2,\mathbb{Z})$ equipped with the WP metric looks like the surface of revolution of the profile $\{v=u^3: 0 < u \leq 1\}$ near the cusp at infinity (see Remark \ref{r.wp-m11-surface-revolution} above). Thus, even though a $\varepsilon$-neighborhood of the cusp is ``polynomially large'' (with area $\sim \varepsilon^4$), the Gaussian curvature approaches only $-\infty$ near the cusp and, as it turns out, this strong negative curvature near the cusp makes that all geodesic not pointing directly towards the cusp actually come back to the compact part in bounded (say $\leq 1$) time. In other words, the excursions of infinite WP geodesics on $\mathcal{M}_{1,1}$ near the cusp are so quick that the WP flow on $T^1\mathcal{M}_{1,1}$ is ``close'' to a classical Anosov geodesic flow on negatively curved compact surface. In particular, it is not entirely surprising that the WP flow on $T^1\mathcal{M}_{1,1}$ is rapid. 

On the other hand, in the case $3g-3+n>1$, the WP metric on $\mathcal{M}_{g,n}$ has \emph{some} sectional curvatures close to \emph{zero} near the boundary of the Deligne-Mumford compactification $\overline{\mathcal{M}}_{g,n}$ of $\mathcal{M}_{g,n}$ (cf. Remark \ref{r.wp-almost-zero-curvatures}). By exploiting this feature of the WP metric on $\mathcal{M}_{g,n}$ for $3g-3+n>1$ (that has no counterpart for $\mathcal{M}_{1,1}$ or $\mathcal{M}_{0,4}$), we will build a \emph{non-neglegible} set of WP geodesics spending a \emph{long} time near the boundary of $\overline{\mathcal{M}}_{g,n}$ before eventually getting into the compact part. In this way, we will deduce that the WP flow on $\mathcal{M}_{g,n}$ takes a fair (polynomial) amount of time to mix certain parts of the boundary of $\overline{\mathcal{M}}_{g,n}$ with fixed compact subsets of $\mathcal{M}_{g,n}$. 

In the remainder of this post, we will give some details of the proof of Theorem \ref{t.BMMW} (or, equivalently, Theorems \ref{t.BMMW-1} and \ref{t.BMMW-2}). In the next subsection, we give a fairly complete proof of the polynomial upper bound on the rate of mixing of the WP flow on $T^1\mathcal{M}_{g,n}$ when $3g-3+n>1$. After that, in the final subsection, we provide a \emph{sketch} of the proof of the rapid mixing property of the WP flow on $T^1\mathcal{M}_{1,1}$. In fact, we decided (for pedagogical reasons) to explain some key points of the rapid mixing property \emph{only} in the \emph{toy model} case of a negatively curved surface with one cusp corresponding \emph{exactly} to a surface of revolution of a profile $\{v=u^r\}$, $r>3$. In this way, since the WP metric near the cusp of $\mathcal{M}_{1,1}\simeq \mathbb{H}/PSL(2,\mathbb{Z})$ can be thought as a ``perturbation'' of the surface of revolution of the ``borderline profile'' $\{v=u^3\}$ with $r=3$ (thanks to Wolpert's asymptotic formulas), the reader hopefully will get a flavor of the main ideas behind the proof of rapid mixing of the WP flow on $\mathcal{M}_{1,1}$ without getting into the (somewhat boring) technical details needed to check that the arguments used in the toy model case are ``sufficiently robust'' so that they can be ``carried over'' to the ``perturbative setting'' of the WP flow on $T^1\mathcal{M}_{1,1}$. 

\subsection{Rates of mixing of the WP flow on $T^1\mathcal{M}_{g,n}$ I: Proof of Theorem \ref{t.BMMW-1}}

In this subsection, our notations are the same as in Section \ref{s.wp-geometry}.

Given $\varepsilon>0$, let us consider the portion of $\mathcal{M}_{g,n}$ consisting of $X\in\mathcal{M}_{g,n}$ such that a non-separating (homotopically non-trivial, non-peripheral) simple closed curve $\alpha$ has hyperbolic length $\ell_{\alpha}(X)\leq (2\varepsilon)^2$. The following picture illustrates this portion of $\mathcal{M}_{g,n}$ as a $(2\varepsilon)^2$-neighborhood of the stratum $\mathcal{T}_{\alpha}/MCG_{g,n}$ of the boundary of the Deligne-Mumford compactification $\overline{\mathcal{M}}_{g,n}$ where $\alpha$ gets pinched (i.e., $\ell_{\alpha}$ becomes zero).

\begin{figure}[htb!]
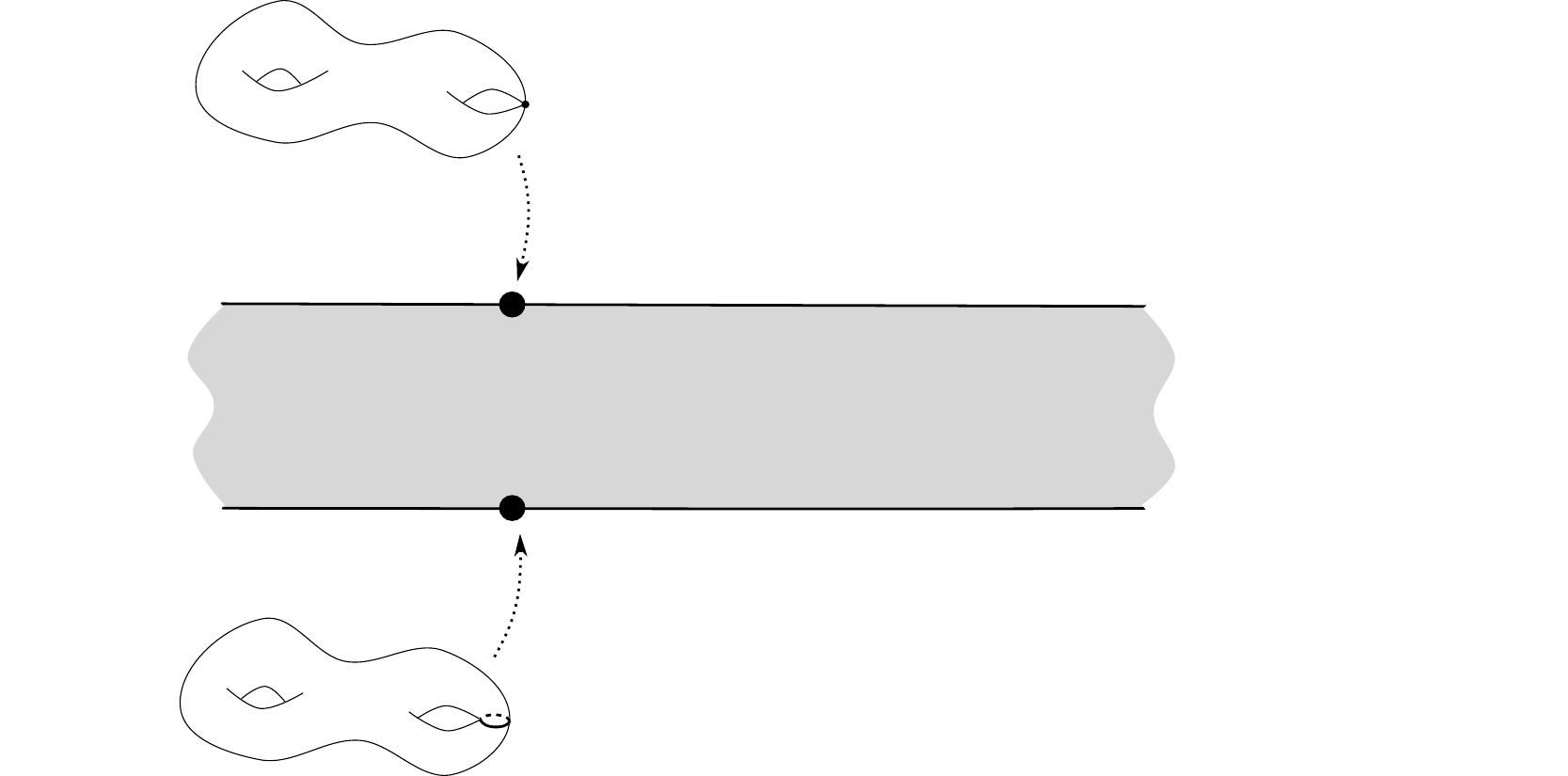
\caption{A portion of the boundary of $\mathcal{M}_{g,n}$ (when $3g-3+n>1$).}\label{f.DM-boundary}
\end{figure}

Note that the stratum $\mathcal{T}_{\alpha}/MCG_{g,n}$ is non-trivial (that is, not reduced to a single point) when $3g-3+n>1$. Indeed, by pinching $\alpha$ as above and by disconnecting the resulting node, we obtain Riemann surfaces of genus $g-1$ with $n+2$ punctures whose moduli space is isomorphic to $\mathcal{T}_{\alpha}/MCG_{g,n}$. It follows that $\mathcal{T}_{\alpha}/MCG_{g,n}$ is a complex orbifold of dimension $3(g-1)+(n+2)=3g-3+n-1>0$, and, a fortiori, $\mathcal{T}_{\alpha}/MCG_{g,n}$ is not trivial. Evidently, this argument breaks down when $3g-3+n=1$: for example, by pinching a curve $\alpha$ as above in a once-punctured torus and by removing the resulting node, we obtain thrice punctured spheres (whose moduli space $\mathcal{M}_{0,3}=\{\overline{\mathbb{C}}-\{0,1,\infty\}\}$ is trivial). In particular, our Figure \ref{f.DM-boundary} concerns \emph{exclusively} the case $3g-3+n>1$. 

We want to locate certain regions near $\mathcal{T}_{\alpha}/MCG_{g,n}$ taking a long time to mix with the compact part of $\mathcal{M}_{g,n}$. For this sake, we will exploit the geometry of the WP metric near $\mathcal{T}_{\alpha}/MCG_{g,n}$ -- e.g., Wolpert's formulas in Theorem \ref{t.Wolpert-expansions}-- to build nice sets of unit vectors traveling in an ``almost parallel'' way to $\mathcal{T}_{\alpha}/MCG_{g,n}$ for a significant amount of time. 

More precisely, we consider the vectors $\lambda_{\alpha} := \textrm{grad}(\ell_{\alpha}^{1/2})$ and $J\lambda_{\alpha}$ (where $J$ is the complex structure). By definition, they span a complex line $L=\textrm{span}\{\lambda_{\alpha}, J\lambda_{\alpha}\}$. Intuitively, the complex line $L$  points in the normal direction to a ``copy'' of $\mathcal{T}_{\alpha}/MCG_{g,n}$ inside a level set of the function $\ell_{\alpha}^{1/2}$ as indicated below:

\begin{figure}[htb!]
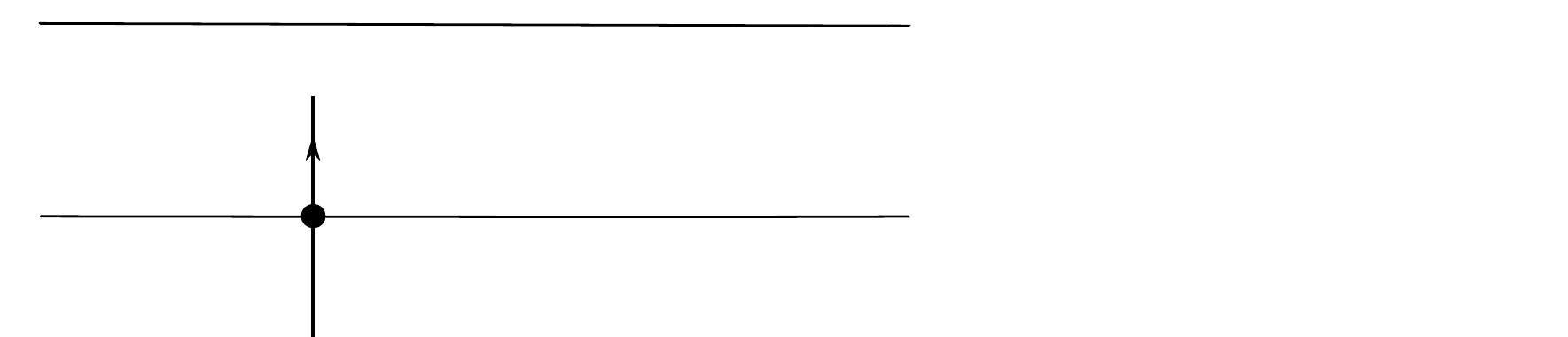
\end{figure}

Using the complex line $L$, we formalize the notion of ``almost parallel'' vector to $\mathcal{T}_{\alpha}/MCG_{g,n}$. Indeed, given $v\in T^1\mathcal{M}_{g,n}$, let us denote by $r_{\alpha}(v)$ the quantity $r_{\alpha}(v):=\sqrt{\langle v, \lambda_{\alpha} \rangle^2 + \langle v, J\lambda_{\alpha}\rangle^2}$ (where $\langle.,.\rangle$ is the WP metric). By definition, $r_{\alpha}(v)$ measures the size of the projection of the unit vector $v$ in the complex line $L$. In particular, we can think of $v$ as ``almost parallel'' to $\mathcal{T}_{\alpha}/MCG_{g,n}$ whenever the quantity $r_{\alpha}(v)$ is very close to zero.

In this setting, we will show that unit vectors almost parallel to $\mathcal{T}_{\alpha}/MCG_{g,n}$ whose footprints are close to $\mathcal{T}_{\alpha}/MCG_{g,n}$ always generate geodesics staying near $\mathcal{T}_{\alpha}/MCG_{g,n}$ for a long time. More concretely, given $\varepsilon>0$, let us define the set 
$$V_{\varepsilon} := \{v\in T^1\mathcal{M}_{g,n}: f_{\alpha}(v)\leq \varepsilon, \, r_{\alpha}(v)\leq \varepsilon^2\}$$
where $f_{\alpha}(v) := \ell_{\alpha}^{1/2}(p)$ and $p\in\mathcal{M}_{g,n}$ is the footprint of the unit vector $v\in T^1\mathcal{M}_{g,n}$. Equivalently, $V_{\varepsilon}$ is the disjoint union of the pieces of spheres $S_{\varepsilon}(p):=\{v\in T^1_p\mathcal{M}_{g,n}: r_{\alpha}(v)\leq \varepsilon^2\}$ attached to points $p\in\mathcal{M}_{g,n}$ with $\ell_{\alpha}(p)\leq \varepsilon^2$. The following figure summarizes the geometry of $S_{\varepsilon}(p)$:

\begin{figure}[htb!]
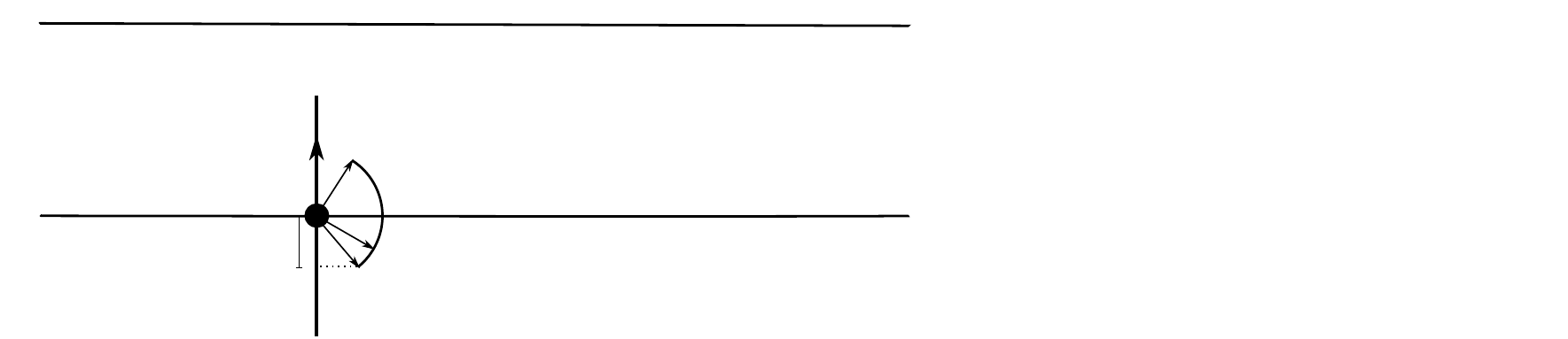
\end{figure}

We would like to prove that a geodesic $\gamma_v(t)$ originating at any $v\in V_{\varepsilon}$ stays in a $(2\varepsilon)^2$-neighborhood of $\mathcal{T}_{\alpha}/MCG_{g,n}$ for an interval of  time $[0, T]$ of size of order $1/\varepsilon$, so that the WP geodesic flow does \emph{not} mix $V_{\varepsilon}$ with any fixed ball $U$ in the compact part of $\mathcal{M}_{g,n}$ of Riemann surfaces with systole $> (2\varepsilon)^2$: 

\begin{figure}[htb!]
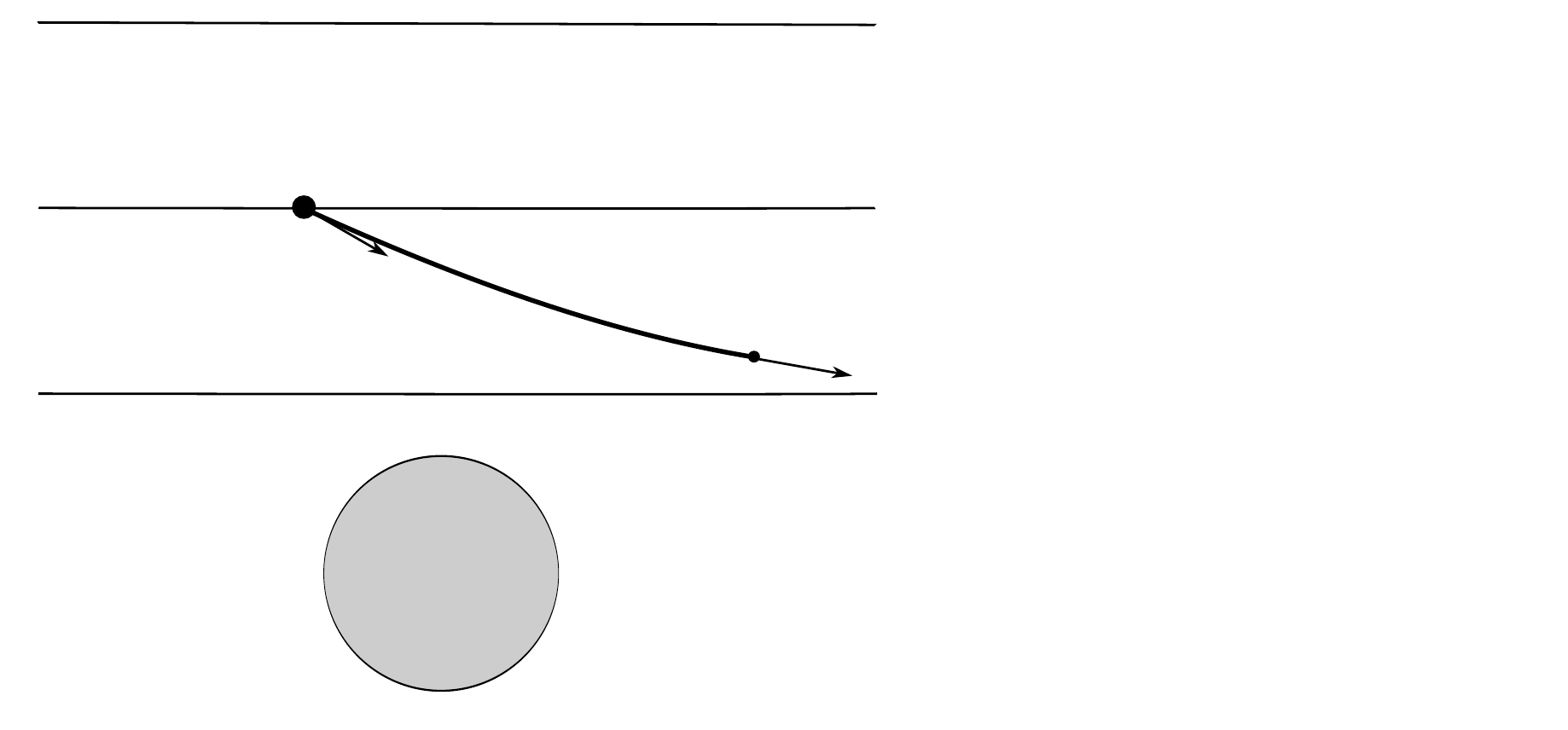
\end{figure}

In this direction, we will need the following estimate from Lemma \ref{l.Clairaut-ODE} above: given $\gamma(t)$ be a WP geodesic as above, and denoting by $r_{\alpha}(t)=r_{\alpha}(\dot{\gamma}(t))$ and $f_{\alpha}(t)=\ell_{\alpha}^{1/2}(\gamma(t))$,  then 
$$r_{\alpha}'(t) = O(f_{\alpha}(t)^3)$$

From this inequality, it is not hard to estimate the amount of time spent by a geodesic $\gamma_v(t)$ near $\mathcal{T}_{\alpha}/MCG_{g,n}$ for an arbitrary 
$v\in V_{\varepsilon}$:

\begin{lemma}\label{l.WP-mixing-time-lower-bound} There exists a constant $C_0>0$ (depending only on $g$ and $n$) such that 
$$\ell_{\alpha}^{1/2}(\gamma_v(t))=f_{\alpha}(t)\leq 2\varepsilon$$
for all $v\in V_{\varepsilon}$ and $0\leq t\leq 1/C_0\varepsilon$. 
\end{lemma}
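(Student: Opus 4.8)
The plan is to run a continuity (bootstrap) argument driven by two first-order differential relations along the geodesic $\gamma_v$. The first is the chain-rule identity $f_\alpha'(t) = \langle \lambda_\alpha, \dot\gamma(t)\rangle$ (the same identity used to establish the preceding Lemma 4.16 of \cite{BurnsMasurWilkinson}), which together with the definition of $r_\alpha$ yields the pointwise bound $|f_\alpha'(t)| \le r_\alpha(t)$, since $\langle \lambda_\alpha, \dot\gamma\rangle$ is one of the two orthogonal components whose squares sum to $r_\alpha^2$. The second is Lemma \ref{l.Clairaut-ODE}, furnishing a constant $C_1 = C_1(g,n) > 0$ with $|r_\alpha'(t)| \le C_1 f_\alpha(t)^3$ as long as $\gamma(t)$ stays in the Bers region where Wolpert's formulas (Theorem \ref{t.Wolpert-expansions}) hold uniformly. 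Thus the quantity we must control, $f_\alpha$, is governed by $r_\alpha$, while $r_\alpha$ is \emph{nearly conserved} because its derivative is cubically small in $f_\alpha$; this is precisely the ``almost parallel'' mechanism, and the initial smallness $f_\alpha(0) \le \varepsilon$, $r_\alpha(0) \le \varepsilon^2$ of vectors in $V_\varepsilon$ is what gets propagated.

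Concretely, I would set $T^* := \sup\{T \ge 0 : f_\alpha(t) \le 2\varepsilon \text{ for all } t \in [0,T]\}$, which is positive by continuity since $f_\alpha(0) \le \varepsilon < 2\varepsilon$. On $[0,T^*]$ one has $f_\alpha \le 2\varepsilon$, hence $|r_\alpha'| \le 8 C_1 \varepsilon^3$ and so $r_\alpha(t) \le \varepsilon^2 + 8 C_1 \varepsilon^3 t$. Feeding this into $|f_\alpha'| \le r_\alpha$ and integrating from $0$ gives
$$ f_\alpha(t) \le \varepsilon + \varepsilon^2 t + 4 C_1 \varepsilon^3 t^2 \qquad (0 \le t \le T^*). $$
For $t \le 1/(C_0\varepsilon)$ the last two terms are bounded by $\varepsilon/C_0$ and $4C_1\varepsilon/C_0^2$ respectively, so $f_\alpha(t) \le \varepsilon(1 + C_0^{-1} + 4C_1 C_0^{-2})$. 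Choosing $C_0 = C_0(g,n)$ large enough that $C_0^{-1} + 4C_1 C_0^{-2} \le 1$, this reads $f_\alpha(t) \le 2\varepsilon$, with strict inequality on the open interval. A standard continuity argument then forces $T^* \ge 1/(C_0\varepsilon)$: were $T^* < 1/(C_0\varepsilon)$, the displayed bound would give $f_\alpha(T^*) < 2\varepsilon$, contradicting that $f_\alpha(T^*) = 2\varepsilon$ by maximality and continuity.

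The only genuinely delicate point is the uniformity of the constant $C_1$ in the estimate $r_\alpha'(t) = O(f_\alpha^3)$. Lemma \ref{l.Clairaut-ODE} is a consequence of Wolpert's expansions (Theorem \ref{t.Wolpert-expansions}), whose implied constants are uniform only on a Bers region $\Omega(\sigma,\chi,c)$, i.e.\ while $\alpha$ stays short and the remaining length parameters stay in a fixed compact range. Since the bootstrap keeps $\ell_\alpha(\gamma(t)) = f_\alpha(t)^2 \le (2\varepsilon)^2$ small throughout, $\gamma$ does remain in the relevant neighborhood of the stratum $\mathcal{T}_\alpha$, so the uniform constant applies; but to be careful one should fix $\varepsilon_0$ once and for all so that the set $\{\ell_\alpha \le (2\varepsilon)^2\}$ lies inside such an $\Omega(\sigma,\chi,c)$ for all $\varepsilon \le \varepsilon_0$, and note that the unit-speed normalization of $\dot\gamma$ makes all coefficients entering Wolpert's formulas $O(1)$. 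Modulo this bookkeeping the argument is complete, and it transparently yields the order $1/\varepsilon$ escape-time lower bound that will drive the polynomial upper bound on the rate of mixing.
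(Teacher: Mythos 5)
Your proposal is correct and follows essentially the same route as the paper: the same maximal-interval bootstrap, the same two differential inequalities ($|f_\alpha'|\le r_\alpha$ from $f_\alpha'=\langle\lambda_\alpha,\dot\gamma\rangle$, and $|r_\alpha'|\le C_1 f_\alpha^3$ from Lemma \ref{l.Clairaut-ODE}), the same integration yielding $f_\alpha(t)\le\varepsilon+\varepsilon^2 t+4C_1\varepsilon^3 t^2$, and the same continuity/maximality contradiction; the paper merely fixes $C_0=8B$ outright where you choose $C_0$ large enough that $C_0^{-1}+4C_1C_0^{-2}\le 1$. Your added remark on the uniformity of the constant over a Bers region $\Omega(\sigma,\chi,c)$ is a point of care the paper leaves implicit, and it is handled exactly as you suggest.
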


\begin{proof} By definition, $v\in V_{\varepsilon}$ implies that $f_{\alpha}(0)\leq \varepsilon$. Thus, it makes sense to consider the maximal interval $[0,T]$ of time such that $f_{\alpha}(t)\leq 2\varepsilon$ for all $0\leq t\leq T$. 

By Lemma \ref{l.Clairaut-ODE}, we have that $r_{\alpha}'(s)=O(f_{\alpha}(s)^3)$, i.e., $|r_{\alpha}'(s)|\leq B f_{\alpha}(s)^3$ for some constant $B>1/4$ depending only on $g$ and $n$. In particular, $|r_{\alpha}'(s)|\leq B f_{\alpha}(s)^3\leq B (2\varepsilon)^3$ for all $0\leq s\leq T$. From this estimate, we deduce that 
$$r_{\alpha}(t) = r_{\alpha}(0)+\int_0^t r_{\alpha}'(s)\,ds\leq r_{\alpha}(0)+B(2\varepsilon)^3 t = r_{\alpha}(0)+8B\varepsilon^3t$$
for all $0\leq t\leq T$. Since the fact that $v\in V_{\varepsilon}$ implies that $r_{\alpha}(0)\leq\varepsilon^2$, the previous inequality tell us that 
$$r_{\alpha}(t)\leq \varepsilon^2+8B\varepsilon^3t$$
for all $0\leq t\leq T$. 

Next, we observe that, by definition, $f_{\alpha}'(t)=\langle\dot{\gamma}(t),\textrm{grad}\ell_{\alpha}^{1/2}\rangle = \langle\dot{\gamma}(t),\lambda_{\alpha}\rangle$. Hence, 
$$|f_{\alpha}'(t)|=|\langle\dot{\gamma}(t),\lambda_{\alpha}\rangle|\leq \sqrt{\langle\dot{\gamma}(t),\lambda_{\alpha}\rangle^2 + \langle\dot{\gamma}(t),J\lambda_{\alpha}\rangle^2} = r_{\alpha}(t)$$ 
By putting together the previous two inequalities and the fact that $f_{\alpha}(0)\leq\varepsilon$ (as $v\in V_{\varepsilon}$), we conclude that 
$$f_{\alpha}(T) = f_{\alpha}(0)+\int_0^T f_{\alpha}'(t) \, dt\leq \varepsilon + \varepsilon^2 T + 4B\varepsilon^3T^2$$
Since $T>0$ was chosen so that $[0,T]$ is the maximal interval with $f_{\alpha}(t)\leq 2\varepsilon$ for all $0\leq t\leq T$, we have that $f_{\alpha}(T)=2\varepsilon$. Therefore, the previous estimate can be rewritten as 
$$2\varepsilon\leq \varepsilon + \varepsilon^2T + 4B\varepsilon^3T^2$$
Because $B>1/4$, it follows from this inequality that $T\geq 1/C_0\varepsilon$ where $C_0:=8B$. 

In other words, we showed that $[0,1/C_0\varepsilon]\subset [0,T]$, and, \emph{a fortiori}, $f_{\alpha}(t)\leq 2\varepsilon$ for all $0\leq t\leq 1/C_0\varepsilon$. This completes the proof of the lemma.
\end{proof}

Once we have Lemma \ref{l.WP-mixing-time-lower-bound} in our toolbox, it is not hard to infer some upper bounds on the rate of mixing of the WP flow on $T^1\mathcal{M}_{g,n}$ when $3g-3+n>1$. 

\begin{proposition}\label{p.WP-poly-mixing} Suppose that the WP flow $\varphi_t$ on $T^1\mathcal{M}_{g,n}$ has a rate of mixing of the form 
$$C_t(a,b) = \left|\int a\cdot b\circ\varphi_t - \left(\int a\right)\left(\int b\right)\right|\leq C t^{-\gamma}\|a\|_{C^1}\|b\|_{C^1}$$
for some constants $C>0$, $\gamma>0$, for all $t\geq 1$, and for all choices of $C^1$-observables $a$ and $b$.

Then, $\gamma\leq 10$, i.e., the rate of mixing of the WP flow is at most polynomial. 
\end{proposition}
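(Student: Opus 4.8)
The plan is to test the hypothesized mixing bound against the sets $V_\varepsilon$ of unit vectors ``almost parallel'' to the boundary stratum $\mathcal{T}_\alpha/MCG_{g,n}$, whose orbits were shown in Lemma \ref{l.WP-mixing-time-lower-bound} to linger near the boundary for a time of order $1/\varepsilon$. Concretely, I would fix once and for all a ball $U$ in the thick part of $\mathcal{M}_{g,n}$ (systole bounded below by a positive constant) together with a smooth observable $a\geq 0$ supported on $T^1U$ with $\int a\simeq 1$ and $\|a\|_{C^1}\simeq 1$; and for each small $\varepsilon>0$ I would take a smooth observable $b_\varepsilon$ with $0\leq b_\varepsilon\leq 1$, supported on $V_\varepsilon$ and equal to $1$ on $V_{\varepsilon/2}$.

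The first step is to show that $C_t(a,b_\varepsilon)$ does not decay on the interval $1\leq t\leq 1/(C_0\varepsilon)$. Since $r_\alpha(-v)=r_\alpha(v)$ and $f_\alpha(-v)=f_\alpha(v)$, the set $V_\varepsilon$ is symmetric under $v\mapsto -v$; hence Lemma \ref{l.WP-mixing-time-lower-bound}, applied to both $w$ and $-w$, shows that for $w\in V_\varepsilon$ the whole orbit segment $\{\varphi_s(w):|s|\leq 1/(C_0\varepsilon)\}$ stays in the region $\ell_\alpha^{1/2}\leq 2\varepsilon$. As $U$ has systole $>(2\varepsilon)^2$ for $\varepsilon$ small, no such orbit meets $T^1U$; writing $w=\varphi_t(x)$ this forces $a(x)\,b_\varepsilon(\varphi_t(x))\equiv 0$, so that $\int a\cdot b_\varepsilon\circ\varphi_t=0$ and therefore $C_t(a,b_\varepsilon)=(\int a)(\int b_\varepsilon)$ for all $1\leq t\leq 1/(C_0\varepsilon)$.

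The second step is to compute the two sizes of $b_\varepsilon$. For the mass, Wolpert's expansions (Theorem \ref{t.Wolpert-expansions}, as used in Lemma \ref{l.wp-DM-bdry-nbhd-vol}) give a volume element $\simeq x_\alpha^3\,dx_\alpha\,d\tau_\alpha\,(\cdots)$ near $\mathcal{T}_\alpha$, so the base $\{\ell_\alpha^{1/2}\leq\varepsilon\}$ has WP-volume $\simeq\varepsilon^4$; and since $\lambda_\alpha,J\lambda_\alpha$ are WP-orthogonal of norm $\simeq 1$, the fibre condition $r_\alpha(v)\leq\varepsilon^2$ cuts out a set of measure $\simeq(\varepsilon^2)^2=\varepsilon^4$ on the unit sphere $S^{2N-1}$ (here $N=3g-3+n\geq 2$, which is exactly where the hypothesis $3g-3+n>1$ enters: when $N=1$ the plane $L$ is all of $T_X\mathcal{T}$ and $V_\varepsilon$ is empty). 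Hence $\int b_\varepsilon\simeq\mathrm{vol}(V_\varepsilon)\simeq\varepsilon^8$. For the $C^1$-norm, the decisive point is that the thinnest scale defining $V_\varepsilon$ is the fibre condition $r_\alpha\leq\varepsilon^2$: differentiating the cutoff along the fibre gives a factor $\varepsilon^{-2}$ times $\|\nabla_v r_\alpha\|=O(1)$, whereas differentiating along the base, using the covariant-derivative bound $\nabla_w\lambda_\alpha=\frac{3}{2\pi f_\alpha}\langle w,J\lambda_\alpha\rangle J\lambda_\alpha+O(f_\alpha^3)$ from Theorem \ref{t.Wolpert-expansions} together with $|\langle v,\lambda_\alpha\rangle|,|\langle v,J\lambda_\alpha\rangle|\leq r_\alpha\leq\varepsilon^2$ on $\mathrm{supp}\,b_\varepsilon$, only produces the subdominant scale $\varepsilon^{-1}$. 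Thus $\|b_\varepsilon\|_{C^1}=O(\varepsilon^{-2})$.

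Finally I would combine these. Evaluating the hypothesis at $t=1/(C_0\varepsilon)\simeq\varepsilon^{-1}$ gives $\varepsilon^8\simeq C_t(a,b_\varepsilon)\leq C t^{-\gamma}\|a\|_{C^1}\|b_\varepsilon\|_{C^1}\lesssim\varepsilon^{\gamma}\cdot\varepsilon^{-2}=\varepsilon^{\gamma-2}$, i.e.\ $\varepsilon^{10-\gamma}\lesssim 1$ for all small $\varepsilon$; letting $\varepsilon\to 0$ forces $10-\gamma\geq 0$, that is $\gamma\leq 10$. The main obstacle is the second step, and specifically the $C^1$-estimate: one must verify through Wolpert's formulas that the $\varepsilon^2$-thin fibre direction dominates the gradient while the degenerating base directions (where $\nabla\lambda_\alpha$ blows up like $1/f_\alpha$) stay subdominant on the support of $b_\varepsilon$ — this control, riding on the bound $r_\alpha\leq\varepsilon^2$, is what pins the exponent at $2$ and hence the final constant at $10$.
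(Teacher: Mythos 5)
Your strategy coincides with the paper's own proof: the same test sets $V_{\varepsilon}$, the same pair of observables (a fixed bump $a$ on the thick part, a smoothed indicator $b_{\varepsilon}$ of $V_{\varepsilon}$), the same use of the symmetry $v\mapsto -v$ plus Lemma \ref{l.WP-mixing-time-lower-bound} to force $a\cdot b_{\varepsilon}\circ\varphi_t\equiv 0$ up to time $1/(C_0\varepsilon)$, the same volume count $\textrm{vol}(V_{\varepsilon})\simeq\varepsilon^8$, and the same final comparison $\varepsilon^8\lesssim \varepsilon^{\gamma}\cdot\varepsilon^{-2}$ yielding $\gamma\leq 10$. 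Your observation about why $N=3g-3+n>1$ is needed (for $N=1$ the complex line $L$ is the whole tangent space, so $V_{\varepsilon}=\emptyset$) is also correct.

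There is, however, a genuine gap precisely at the step you yourself single out as decisive: the bound $\|b_{\varepsilon}\|_{C^1}=O(\varepsilon^{-2})$. Differentiating the cutoff in $r_{\alpha}$ along a base (horizontal) direction $w$ produces $\varepsilon^{-2}\partial_w r_{\alpha}$, and the estimate you extract from Theorem \ref{t.Wolpert-expansions} together with $|\langle v,\lambda_{\alpha}\rangle|, |\langle v,J\lambda_{\alpha}\rangle|\leq r_{\alpha}\leq\varepsilon^2$ is only $|\partial_w r_{\alpha}|\lesssim r_{\alpha}/f_{\alpha}+f_{\alpha}^3$, hence $\varepsilon^{-2}|\partial_w r_{\alpha}|\lesssim 1/f_{\alpha}$. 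This is \emph{not} $O(\varepsilon^{-1})$: the transition region $\{\varepsilon^2/4<r_{\alpha}\leq\varepsilon^2\}$ where the cutoff must be differentiated extends all the way down to $f_{\alpha}=0$, so $1/f_{\alpha}$ is unbounded on the support of $b_{\varepsilon}$; your claim that the base directions "only produce the subdominant scale $\varepsilon^{-1}$" tacitly assumes $f_{\alpha}\gtrsim\varepsilon$, which fails there. What saves the estimate is an exact cancellation, the same one that underlies Lemma \ref{l.Clairaut-ODE}: since the WP metric is K\"ahler ($J$ is parallel), writing
$$\partial_w (r_{\alpha}^2)=2\langle v,\lambda_{\alpha}\rangle\langle v,\nabla_w\lambda_{\alpha}\rangle+2\langle v,J\lambda_{\alpha}\rangle\langle v,J\nabla_w\lambda_{\alpha}\rangle$$
and substituting $\nabla_w\lambda_{\alpha}=\frac{3}{2\pi f_{\alpha}}\langle w,J\lambda_{\alpha}\rangle J\lambda_{\alpha}+O(f_{\alpha}^3)$, the two $1/f_{\alpha}$-singular terms are $\pm\frac{3}{\pi f_{\alpha}}\langle w,J\lambda_{\alpha}\rangle\langle v,\lambda_{\alpha}\rangle\langle v,J\lambda_{\alpha}\rangle$ and cancel identically, leaving $|\partial_w r_{\alpha}|=O(f_{\alpha}^3)$ and hence a base contribution $O(\varepsilon^{-2}f_{\alpha}^3)=O(\varepsilon)$; only then does the fibre direction dominate and $\|b_{\varepsilon}\|_{C^1}=O(\varepsilon^{-2})$ hold. (Alternatively, you could sidestep the cancellation by supporting $b_{\varepsilon}$ in the shell $\{\varepsilon/2\leq f_{\alpha}\leq\varepsilon,\ r_{\alpha}\leq\varepsilon^2\}$, which is still symmetric, still has volume $\simeq\varepsilon^8$, and on which your crude bound gives $1/f_{\alpha}\leq 2/\varepsilon$.) The paper merely asserts the $C^1$ bound without computation, so probing it was the right instinct — but the mechanism is Clairaut-type cancellation, not the size bound $r_{\alpha}\leq\varepsilon^2$ alone.
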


\begin{proof} Let us fix once and for all an open ball $U$ (with respect to the WP metric) contained in the compact part of $\mathcal{M}_{g,n}$: this means that there 
exists $\varepsilon_0>0$ such that the systoles of all Riemann surfaces in $U$ are $\geq\varepsilon_0^2$. 

Take a $C^1$ function $a$ supported on the set $T^1U$ of unit vectors with footprints on $U$ with values $0\leq a\leq 1$ such that $\int a\geq \textrm{vol}(U)/2$ and $\|a\|_{C^1}=O(1)$: such a function $a$ can be easily constructed by smoothing the characteristic function of $U$ with the aid of bump functions. 
  
Next, for each $\varepsilon>0$, take a $C^1$ function $b_{\varepsilon}$ supported on the set $V_{\varepsilon}$ with values $0\leq b_{\varepsilon}\leq 1$ such that $\int b_{\varepsilon}\geq \textrm{vol}(V_{\varepsilon})/2$ and $\|b_{\varepsilon}\|_{C^1}=O(1/\varepsilon^2)$: such a function $b_{\varepsilon}$ can also be constructed by smoothing the characteristic function of $V_{\varepsilon}$ after taking into account the description of the WP metric near $\mathcal{T}_{\alpha}/MCG_{g,n}$ given by Theorems \ref{t.Wolpert2008} and \ref{t.Wolpert-expansions} above and the definition of $V_{\varepsilon}$ (in terms of the conditions $\ell_{\alpha}^{1/2}\leq \varepsilon$ and $r_{\alpha}\leq\varepsilon^2$). Furthermore, this description of the WP metric $g_{WP}$ near $\mathcal{T}_{\alpha}/MCG_{g,n}$ combined with the asymptotic expansion $g_{WP}\sim 4dx_{\alpha}^2+x_{\alpha}^6d\tau_{\alpha}$ where $x_{\alpha}:=\ell_{\alpha}^{1/2}/\sqrt{2\pi^2}$ and $\tau_{\alpha}$ is a twist parameter (see the proof of Lemma \ref{l.wp-DM-bdry-nbhd-vol}) says that $\textrm{vol}(V_{\varepsilon})\sim \varepsilon^8$: indeed, the condition $f_{\alpha}=\ell_{\alpha}^{1/2}\leq\varepsilon$ on footprints of unit tangent vectors in $V_{\varepsilon}$ provides a set of volume $\sim \varepsilon^4$ (cf. the proof of Lemma 4 of the aforementioned post for details) and the condition $r_{\alpha}\leq\varepsilon^2$ on unit tangent vectors in $V_{\varepsilon}$ with a fixed footprint provides a set of volume comparable to the Euclidean area $\pi\varepsilon^4$ of the Euclidean ball $\{\vec{v}\in\mathbb{R}^2: |v|\leq\varepsilon^2\}$ (cf. Theorem \ref{t.Wolpert2008}), so that 
$$\textrm{vol}(V_{\varepsilon}) = \int_{\{\ell_{\alpha}^{1/2}(p)\leq\varepsilon\}} \textrm{vol}(\{v\in T^1_p\mathcal{M}_{g,n}: r_{\alpha}(v)\leq\varepsilon^2\})\sim (\pi\varepsilon^4)\cdot \varepsilon^4\sim \varepsilon^8$$ 

In summary, for each $\varepsilon>0$, we have a $C^1$ function $b_{\varepsilon}$ supported on $V_{\varepsilon}$ with $0\leq b\leq 1$, $\|b_{\varepsilon}\|_{C^1} = O(1/\varepsilon^2)$ and $\int b_{\varepsilon}\geq c_0\varepsilon^8$ for some constant $c_0>0$ depending only on $g$ and $n$. 

Our plan is to use the observables $a$ and $b_{\varepsilon}$ to give some upper bounds on the mixing rate of the WP flow $\varphi_t$. For this sake, suppose that there are constants $C>0$ and $\gamma>0$ such that 
$$C_t(a,b_{\varepsilon})=\left|\int a\cdot b_{\varepsilon}\circ\varphi_t - \left(\int a\right)\left(\int b_{\varepsilon}\right)\right|\leq 
C t^{-\gamma}\|a\|_{C^1}\|b_{\varepsilon}\|_{C^1}$$ 
for all $t\geq 1$ and $\varepsilon>0$. 

By Lemma \ref{l.WP-mixing-time-lower-bound}, there exists a constant $C_0>0$ such that $V_{\varepsilon}\cap \varphi_{\frac{1}{C_0\varepsilon}}(T^1U)=\emptyset$ whenever $2\varepsilon<\varepsilon_0$. Indeed, since $V_{\varepsilon}$ is a symmetric set (i.e., $v\in V_{\varepsilon}$ if and only if $-v\in V_{\varepsilon}$), it follows from Lemma \ref{l.WP-mixing-time-lower-bound} that all Riemann surfaces in the footprints of $\varphi_{-\frac{1}{C_0\varepsilon}}(V_{\varepsilon})$ have a systole $\leq (2\varepsilon)^2<\varepsilon_0^2$. Because we took $U$ in such a way that all Riemann surfaces in $U$ have systole $\geq \varepsilon_0^2$, we obtain $\varphi_{-\frac{1}{C_0\varepsilon}}(V_{\varepsilon})\cap T^1U=\emptyset$, that is, $V_{\varepsilon}\cap \varphi_{\frac{1}{C_0\varepsilon}}(T^1U)=\emptyset$, as it was claimed. 

Now, let us observe that the function $a\cdot b_{\varepsilon}\circ \varphi_t$ is supported on $V_{\varepsilon}\cap \varphi_t(T^1U)$ because $a$ is supported on $T^1U$ and $b_{\varepsilon}$ is supported on $V_{\varepsilon}$. By putting together this fact and the claim in the previous paragraph (that $V_{\varepsilon}\cap \varphi_{\frac{1}{C_0\varepsilon}}(T^1U)=\emptyset$ for $2\varepsilon<\varepsilon_0$), we deduce that $a\cdot b_{\varepsilon}\circ \varphi_{\frac{1}{C_0\varepsilon}}\equiv 0$ whenever $2\varepsilon<\varepsilon_0$. Thus,   
$$C_{\frac{1}{C_0\varepsilon}}(a,b_{\varepsilon}) := \left|\int a\cdot b_{\varepsilon}\circ\varphi_{\frac{1}{C_0\varepsilon}} - \left(\int a\right)\left(\int b_{\varepsilon}\right)\right| = 
\left(\int a\right)\left(\int b_{\varepsilon}\right)$$ 

By plugging this identity into the polynomial decay of correlations estimate $C_t(a,b_{\varepsilon})\leq Ct^{-\gamma}\|a\|_{C^1}\|b_{\varepsilon}\|_{C^1}$, we get 
$$\left(\int a\right)\left(\int b_{\varepsilon}\right) = C_{\frac{1}{C_0\varepsilon}}(a,b_{\varepsilon})\leq CC_0^{\gamma}\varepsilon^{\gamma}\|a\|_{C^1}\|b_{\varepsilon}\|_{C^1}$$
whenever $2\varepsilon<\varepsilon_0$ and $1/C_0\varepsilon\geq 1$. 

We affirm that the previous estimate implies that $\gamma\leq 10$. In fact, recall that our choices were made so that $\int a\geq \textrm{vol}(U)/2$ where $U$ is a fixed ball, $\|a\|_{C^1}=O(1)$, $\int b_{\varepsilon} \geq c_0\varepsilon^8$ for some constant $c_0>0$ and $\|b_{\varepsilon}\|_{C^1}=O(1/\varepsilon^2)$. Hence, by combining these facts and the previous mixing rate estimate, we get that 
$$\left(\frac{\textrm{vol}(U)}{2}\right) c_0\varepsilon^8\leq \left(\int a\right)\left(\int b_{\varepsilon}\right)\leq CC_0^{\gamma}\varepsilon^{\gamma}\|a\|_{C^1}\|b_{\varepsilon}\|_{C^1} = O(\varepsilon^{\gamma}\frac{1}{\varepsilon^2}),$$
that is, $\varepsilon^{10}\leq D \varepsilon^{\gamma}$, for some constant $D>0$ and for all $\varepsilon>0$ sufficiently small (so that $2\varepsilon<\varepsilon_0$ and $1/C_0\varepsilon\geq 1$). It follows that $\gamma\leq 10$, as we claimed. This completes the proof of the proposition. 
\end{proof}

\begin{remark}\label{r.poly-mixing-Ck} In the statement of the previous proposition, the choice of $C^1$-norms to measure the rate of mixing of the WP flow is not very important. Indeed, an inspection of the construction of the functions $b_{\varepsilon}$ in the argument above reveals that $\|b_{\varepsilon}\|_{C^{k+\alpha}} = O(1/\varepsilon^{k+\alpha})$ for any $k\in\mathbb{N}$, $0\leq\alpha<1$. In particular, the proof of the previous proposition is sufficiently robust to show also that a rate of mixing of the form
$$C_t(a,b) = \left|\int a\cdot b\circ\varphi_t - \left(\int a\right)\left(\int b\right)\right|\leq C t^{-\gamma}\|a\|_{C^{k+\alpha}}\|b\|_{C^{k+\alpha}}$$
for some constants $C>0$, $\gamma>0$, for all $t\geq 1$, and for all choices of $C^1$-observables $a$ and $b$ holds only if $\gamma\leq 8+2(k+\alpha)$. 

In other words, even if we replace $C^1$-norms by (stronger, smoother) $C^{k+\alpha}$-norms in our measurements of rates of mixing of the WP flow (on $T^1\mathcal{M}_{g,n}$ for $3g-3+n>1$), our discussions so far will always give polynomial upper bounds for the decay of correlations. 
\end{remark}

At this point, our discussion of the proof of Theorem \ref{t.BMMW-1} (i.e., the first item of Theorem \ref{t.BMMW}) is complete thanks to Proposition \ref{p.WP-poly-mixing} and Remark \ref{r.poly-mixing-Ck}. So, we will now move on to the next subsection where we give some of the key ideas in the proof of Theorem \ref{t.BMMW-2} (i.e., the second item of Theorem \ref{t.BMMW}).

\subsection{Rates of mixing of the WP flow on $T^1\mathcal{M}_{g,n}$ II: Proof of Theorem \ref{t.BMMW-2}}

Let us consider the WP flow on $T^1\mathcal{M}_{g,n}$ when $3g-3+n=1$, that is, when $(g,n)=(0,4)$ or $(1,1)$. 

Actually, we will restrict our attention to the case $(g,n)=(1,1)$ because the remaining case $(g,n)=(0,4)$ is very similar to $(g,n)=(1,1)$. 

Indeed, the moduli space $\mathcal{M}_{0,4}$ of four-times punctured spheres is a \emph{finite} cover of the moduli space $\mathcal{M}_{1,1}\simeq \mathbb{H}/SL(2,\mathbb{Z})$: this can be seen by sending each four-punctured sphere $\overline{\mathbb{C}}-\{x_1,\dots, x_4\}$ to the elliptic curve $y^2=(x-x_1)\dots(x-x_4)$, so that $\mathcal{M}_{0,4}$ becomes naturally isomorphic to $\mathbb{H}/\Gamma_0(2)$ where $\Gamma_0(2)$ is a congruence subgroup of $SL(2,\mathbb{Z})$ of level $2$ with index $3$. Since all arguments towards rapid mixing of geodesic flows in this section still work after taking finite covers, it suffices to prove Theorem \ref{t.BMMW-2} for the WP flow on $T^1\mathcal{M}_{1,1}$. 

The rate of mixing of a geodesic flow on the unit tangent bundle of a negatively curved \emph{compact} surface is known to be \emph{fast}: indeed, Chernov \cite{Chernov} used his technique of ``Markov approximations'' to show \emph{stretched exponential} decay of correlations, and Dolgopyat \cite{Dolgopyat} added a new crucial ingredient (``Dolgopyat's estimate'') to Chernov's work to prove \emph{exponential} decay of correlations. 

Evidently, these works of Chernov and Dolgopyat can not be applied to the WP flow on $T^1\mathcal{M}_{1,1}$ because of the non-compactness of $\mathcal{M}_{1,1}\sim\mathbb{H}/SL(2,\mathbb{Z})$ due to the presence of a (single) cusp (at infinity). Nevertheless, this suggests that we should be able to determine the rate of mixing of the WP flow on $T^1\mathcal{M}_{1,1}$ provided we have enough control of the geometry of the WP metric near the cusp. 

Fortunately, as we mentioned in Example \ref{ex.wolpert-asymptotics} above, Wolpert showed that the WP metric $g_{WP}$ on $\mathcal{M}_{1,1}\simeq \mathbb{H}/SL(2,\mathbb{Z})$ has an \emph{asymptotic} expansion $g_{WP}^2\sim \frac{|dz|^2}{\textrm{Im}(z)}$ at a point $z\in\mathbb{H}$. Thus, the WP metric on neighborhoods $\{z=x+iy\in\mathbb{H}: |x|\leq 1/2, y>y_0\}/SL(2,\mathbb{Z})$ (with $y_0>1$) of the cusp at infinity of $\mathcal{M}_{1,1}$ becomes closer (as $y_0\to\infty$) to the metric of surface of revolution of the profile $v=u^3$ on neighborhoods $\{v=u^3: 0\leq u<u_0\}$ of the cusp at $0$ (as $u_0\to 0$). 

Partly motivated by the scenario of the previous paragraph, from now on we will \emph{pretend} that the WP metric on $\mathbb{H}/PSL(2,\mathbb{Z})$ looks \emph{exactly} like the metric $\frac{|dz|^2}{\textrm{Im}(z)}$ at all points $\{z\in\mathbb{H}: \textrm{Im}(z)>y_0\}$ for some $y_0\gg 1$. In other words, instead of studying the WP flow on $T^1\mathcal{M}_{1,1}$, we will focus on the rates of mixing of the following \emph{toy model}: the geodesic flow on a negatively curved surface $S$ with a single cusp possessing a neighborhood where the metric is isometric to the surface of revolution of a profile $\{v=u^r\}$ for a fixed real number $r>3$. 

\begin{remark} The surface of revolution modeling the WP metric on $T^1\mathcal{M}_{1,1}$ is obtained by rotating the profile $\{v=u^3\}$. In other words, we see that the study of rates of mixing of the surface of revolution approximating the WP metric on $T^1\mathcal{M}_{1,1}$ is a ``borderline case'' in our subsequent discussion. 
\end{remark}

Here, our main motivations to replace the WP flow $\varphi_t$ on $T^1\mathcal{M}_{1,1}$ by the toy model described above are:
\begin{itemize}
\item all important ideas for the study of rates of mixing of $\varphi_t$ are also present in the case of the toy model, and
\item even though the WP metric on $\mathcal{M}_{1,1}$ is a perturbation of a surface of revolution, the verification of the fact that the arguments used to estimate the decay of correlations of the geodesic flow on the toy model surfaces are robust enough so that they can be carried over the WP metric situation is somewhat boring: basically, besides performing a slight modification of the proofs to include the borderline case $r=3$, one has to introduce ``error terms'' in the whole discussion below and, after that, one has to check that these errors terms do not change the qualitative nature of all estimates. 
\end{itemize} 

In summary, the remainder of this subsection will contain a proof of the following ``toy model version'' of Theorem \ref{t.BMMW-2}.

\begin{theorem}\label{t.rapid-mixing-toy-model} Let $\overline{S}$ be a compact surface and fix $0\in\overline{S}$. Suppose that $S=\overline{S}-\{0\}$ is equipped with a negatively curved Riemannian metric $g$ such that the restriction of $g$ to a neighborhood of $\{p\in S: d(p,0) < \rho_0\}$ is isometric to a surface of revolution of a profile $\{v=u^r:0<u\leq u_0\}$ (for some choices of $\rho_0>0$ and $u_0>0$). 

Then, the geodesic flow (associated to $g$) on $T^1S$ is rapid (faster than polynomial) mixing in the sense that, for all $n\in\mathbb{N}$, one can choose an adequate Banach space $X_n$ of ``reasonably smooth'' observables and a constant $C_n>0$ so that 
$$C_t(a,b)=\left|\int a\cdot b\circ\varphi_t - \left(\int a\right)\left(\int b\right)\right|\leq C_n t^{-n}\|a\|_{X_n}\|b\|_{X_n}$$ 
for all $t\geq 1$. 
\end{theorem}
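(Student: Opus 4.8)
The plan is to represent the geodesic flow $\varphi_t$ on $T^1S$ as a suspension over a uniformly hyperbolic first-return map to a compact cross-section $\Sigma$ sitting in the core $\{u\ge u_0\}$, and then to feed this representation into the abstract theory of rates of mixing for suspension flows. The first and most important step is a purely geometric excursion estimate, which is the exact opposite of the phenomenon exploited in the proof of Theorem \ref{t.BMMW-1}: whereas for $3g-3+n>1$ the \emph{almost-flat} sectional curvatures let geodesics run almost parallel to a boundary stratum for time $\sim 1/\varepsilon$, here the profile $\{v=u^{r}\}$ forces the Gaussian curvature to blow up like $K(u)\sim -u^{-2}$ as $u\to 0$, so no such parallel drift can occur. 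I would make this quantitative using the exact analogue of Lemma \ref{l.Clairaut-ODE}: on a genuine surface of revolution Clairaut's relation holds \emph{exactly}, so the angular momentum $c=\rho(u)\sin\psi$ is conserved along each geodesic (here $\rho(u)\sim u^{r}$ is the radius and $\psi$ the angle with the meridian). A geodesic entering $\{u<\varepsilon\}$ with $c\neq0$ turns around at depth $u_{\min}\sim|c|^{1/r}$, and estimating $\int_{u_{\min}}^{\varepsilon}(1-c^{2}/\rho(u)^{2})^{-1/2}\,du$ shows that the time spent inside $\{u<\varepsilon\}$ is \emph{uniformly bounded} (only the measure-zero family $c=0$ reaches the cusp). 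Hence the return time $R$ to $\Sigma$ is bounded.

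With a bounded roof in hand, the remaining difficulty is that the return map $F$ is only \emph{non-uniformly} hyperbolic: a geodesic dipping to depth $u_{\min}\sim\delta$ accumulates expansion $\exp\!\big(\int\kappa\,dt\big)\sim\delta^{-O(1)}$, since $\kappa\sim\sqrt{|K|}\sim u^{-1}$ there. Next I would build a hyperbolic Young tower for $F$ (equivalently, a countable Markov coding) whose base consists of shallow-excursion vectors and whose higher levels record deeper and deeper excursions, inducing long enough on the deep pieces to restore uniform hyperbolicity and bounded distortion level by level. The tail of the tower is then governed by the measure of deep excursions; the Clairaut picture yields $\vol\{v:\text{excursion reaches depth}\le\delta\}=O(\delta^{r})$, and combining this with the (polynomial) relation between excursion depth and the number of symbols needed to tame the distortion produces a \emph{super-polynomial} tail for the inducing time. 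This is exactly where the strict inequality $r>3$, as opposed to the borderline $r=3$, leaves comfortable room.

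Finally I would invoke the abstract rapid-mixing criterion for suspension flows over hyperbolic towers with controlled tails (in the spirit of Dolgopyat's work \cite{Dolgopyat}, as developed for nonuniformly hyperbolic flows by Melbourne), which upgrades a super-polynomial tail estimate, together with a \emph{uniform non-integrability} (UNI) condition on the temporal-distance function / stable--unstable holonomy over the core, into super-polynomial decay of $C_t(a,b)$ for observables in a suitable Banach space $X_n=C^{k(n)}$. The UNI condition holds because $\overline{S}$ is genuinely negatively curved and is \emph{not} globally a surface of revolution, so its stable and unstable foliations are not jointly integrable on the core; transferring the estimate from the tower back to smooth observables on $T^1S$ costs only finitely many derivatives, which fixes the scale $k(n)$ and hence the statement of Theorem \ref{t.rapid-mixing-toy-model}.

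The hard part is the interplay between the tower tail estimate and distortion control: one must show that the unbounded hyperbolicity near the cusp can be absorbed by inducing while keeping the tail super-polynomial, and \emph{simultaneously} that the UNI/Dolgopyat oscillatory-cancellation estimate survives the unbounded distortion. It is precisely this last point that confines the method to \emph{rapid} rather than \emph{exponential} mixing: verifying the full Dolgopyat cancellation estimate \emph{uniformly} in the presence of curvature $\to-\infty$ is not achieved by this argument, which is why the borderline Weil--Petersson case ($r=3$) is only \emph{conjectured} to be exponentially mixing.
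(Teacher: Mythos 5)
Your skeleton matches the paper's: a cross-section $N=T^1_{C(d_0)}S$ near the cusp, Clairaut's relation giving a uniformly bounded return time (Remark \ref{r.return-time}), a Young-tower structure for the return map, and an abstract Melbourne-type criterion for the suspension. But there is a genuine gap at the heart of your argument: the tail estimate does not deliver rapid mixing. You propose to bound the tower tail by the measure of deep excursions, $O(\delta^r)$ for depth $\delta$, combined with a \emph{polynomial} relation between depth and inducing time; these two facts only give a polynomial tail $\mu(\textrm{inducing time} > n)=O(n^{-r/a})$ for some fixed $a$, and a suspension over a tower with a fixed polynomial tail yields polynomial, not super-polynomial, decay of correlations. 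The mechanism that actually produces the required (exponential) tails is not the smallness of the set of deep excursions but the growth-lemma phenomenon that expansion beats cutting: the paper feeds the return map $F$ into the Chernov--Zhang criterion, partitioning a neighborhood of the singular set $\Sigma$ into homogeneity strips $H_k=\{(k+1)^{-\nu}<|\beta|<k^{-\nu}\}$ and verifying the one-step growth condition $\sum_{k\geq k_0}\Lambda_k^{-1}<1$ together with the distortion bound $\Lambda'(\beta)/\Lambda(\beta)\leq C\beta^{-\theta}$ with $\nu\theta<\nu+1$ (Proposition \ref{p.step-growth-distortion-bound}). This is also where $r>3$ genuinely enters --- not as ``room'' in the tail, but as the compatibility of the two exponents: the expansion $\Lambda(\beta)\gtrsim\beta^{-(r-1)/r}$ forces $\nu>r/(r-1)$, the distortion gives $\theta\approx (r+2)/r$, and $\nu\theta<\nu+1$ then amounts to $(r+2)/(r-1)<(2r-1)/(r-1)$, i.e.\ exactly $r>3$.

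Two further steps are missing or misargued. First, your verification of UNI (``$\overline{S}$ is not globally a surface of revolution, hence the stable and unstable foliations are not jointly integrable'') is an assertion, not a proof, and it is not the mechanism to use: the paper avoids any UNI-type hypothesis by invoking B\'alint--Melbourne's theorem for \emph{contact} suspension flows, and the geodesic flow is contact irrespective of any revolution symmetry, so the needed non-integrability comes for free from the contact form. Second, the Melbourne-type criteria require the roof function to be \emph{uniformly piecewise H\"older} on the pieces of the tower base, and this is not automatic here: although $T$ is bounded, its derivative blows up like $|T'(\beta)|\lesssim\beta^{-1-4\varepsilon}$ as $\beta\to 0$, so uniform H\"older control holds only strip-by-strip, with exponent $\alpha<1/(\nu+1)$, and proving it requires the Jacobi-field/Riccati comparison argument of Proposition \ref{p.Holder-roof-function}. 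Your proposal does not address this regularity issue at all.
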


\begin{remark} The arguments below show that the statement above also holds when $S=\overline{S}-\{0_1,\dots, 0_k\}$ is equipped with a negatively curved metric that is isometric to a surface of revolution $\{v=u^{r_i}\}$, $r_i>3$, near $0_i$ for each $i=1,\dots, k$.
\end{remark}

\begin{remark} The Riemannian metric $g$ is incomplete because the surface of revolution of $\{v=u^r\}$ is incomplete when $r>1$ (as the reader can check via a simple calculation). 
\end{remark}

Recall that, in the setting of Theorem \ref{t.rapid-mixing-toy-model}, we want to understand the dynamics of the excursions of the geodesic flow near the cusp $0$ (in order to get rapid mixing). For this sake, we describe these excursions by rewriting the geodesic flow (near $0$) as a \emph{suspension flow}. 

\subsubsection{Excursions near the cusp and suspension flows}

Consider a small neighborhood in $S$ of $0$ where the metric is isometric to the surface of revolution of the profile $\{v=u^r: 0<u\leq u_0\}$, i.e., 
$$\{(x, x^r\cos y, x^r\sin y)\in\mathbb{R}^3: 0<x\leq u_0, 0\leq y\leq 2\pi\}$$ 
Next, take $0<d_0<u_0$ a small parameter and consider the parallel $C=C(d_0) = \{(d_0, d_0^r\cos y, d_0^r\sin y)\in \mathbb{R}^3: 0\leq y\leq 2\pi\}$. We parametrize unit tangent vectors to the surface of revolution with footprints in $C$ as follows. 

Given $q=(d_0,d_0^r\cos y_0, d_0^r\sin y_0)\in C$, we denote by $V=V(q)\in T_q^1S$ the unique unit tangent vector pointing towards to the cusp $O$ at $x=0$. Equivalently, $V$ is the unit vector tangent to the meridian $\{(d_0-t, (d_0-t)^r\cos y_0, (d_0-t)^r\sin y_0)\in\mathbb{R}^3: 0\leq t< d_0\}$ at time $t=0$, or, alternatively, $V(q)=-\nabla d(q)$ where $d(p)=\textrm{dist}(O,p)$ is the distance function from the cusp $O$ to a point $p$. Also, we let $JV=JV(q)$ be the unit vector obtained by rotating $V$ by $\pi/2$ in the counterclockwise sense (i.e., by applying the natural almost complex structure $J$). 

\begin{figure}[htb!]
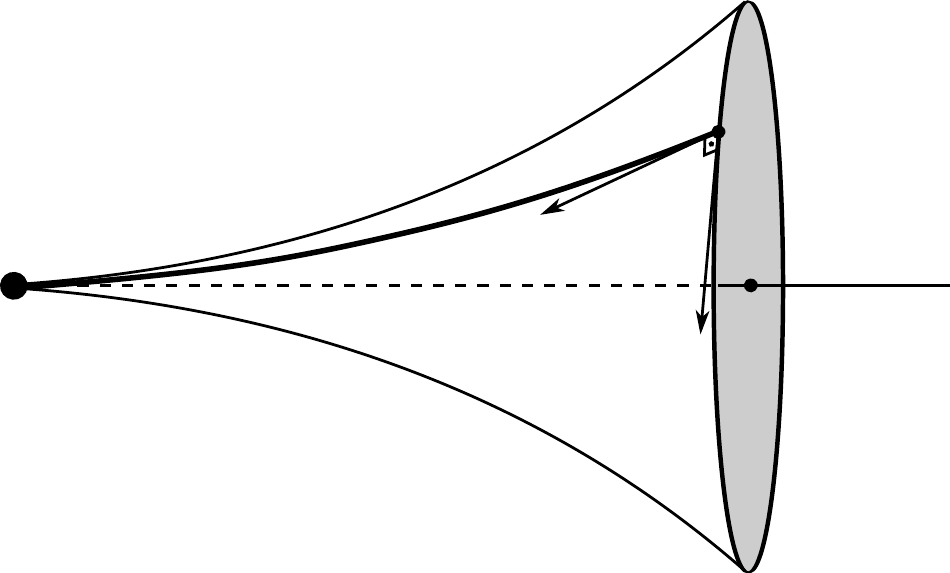
\end{figure}

\newpage

In this setting, an unit vector $v\in T_q^1S$ pointing towards the cusp $O$ is completely determined by a real number $\beta\in (-\pi/2, \pi/2)$ such that $\langle v, V\rangle = \cos\beta$ and $\langle v, JV \rangle = \sin\beta$, i.e., 
$$v = \cos\beta\cdot V + \sin\beta\cdot JV:=v(\beta)$$

The \emph{qualitative} behavior of the excursion of a geodesic $\gamma(t)=(x(t),x(t)^r\cos y(t),x(t)^r\sin y(t))$ starting at $\dot{\gamma}(0)=v(\beta)\in T^1_qS$ can be easily determined in terms of the parameter $\beta$ thanks to the classical results in Differential Geometry about surfaces of revolutions. Indeed, it is well-known (see, e.g., Do Carmo's book \cite{DoCarmo}) that such a geodesic $\gamma(t)$ satisfies 
$$x(t)^{2r}y'(t) = c$$
and 
$$(1+r^2 x(t)^{2(r-1)})x'(t)^2 + \frac{c^2}{x(t)^{2r}}=1$$
for a certain constant $c$, and, furthermore, these relations imply the famous \emph{Clairaut's relation}:
\begin{equation}
x(t)^r \cos|\frac{\pi}{2}-|\beta(t)|| = c = constant
\end{equation}
where $\beta(t)$ is the parameter attached to $\dot{\gamma}(t)$ (i.e., $\dot{\gamma}(t)=v(\beta(t))\in T^1_{\gamma(t)}C(x(t))$). In particular, except for the geodesic going directly to the cusp (i.e., the geodesic starting at $V(q)$ associated to $\beta=0$), all geodesics $\gamma(t)$ (starting at $v(\beta)$ with $\beta\neq0$) behave qualitatively in a simple way. In the first part $t\in [0,T(\beta)/2]$ of its excursion towards the cusp, the angle $\beta(t)$ increases (resp. decreases) from $\beta>0$ to $\pi/2$ (resp. from $\beta<0$ to $-\pi/2$) while the value of $x(t)$ diminishes in order to keep up with Clairaut's relation. Then, the geodesic $\gamma(t)$ reaches its closest position to the cusp at time $t=T(\beta)/2$: here, 
$\beta(t)=\pm\pi/2$ (i.e., $\dot{\gamma}(T(\beta)/2)$ is tangent to the parallel $C(x(T(\beta)/2))$ containing $\gamma(T(\beta)/2)$) and, hence,  
$$x(T(\beta)/2)^r = x(0)^r\sin\beta = d_0^r\sin\beta:=x_{\min}(\beta)^r$$ 
Finally, in the second part $t\in [T(\beta)/2, T(\beta)]$, $\gamma(t)$ does the ``opposite'' from the first part: the angle $\beta(t)$ goes from $\pm\pi/2$ to $\pm\pi/2-\beta$ and $x(t)$ increases from $x_{\min}(\beta)$ back to $x(0)=d_0$. The following picture summarizes the discussion of this paragraph: 

\begin{figure}[htb!]
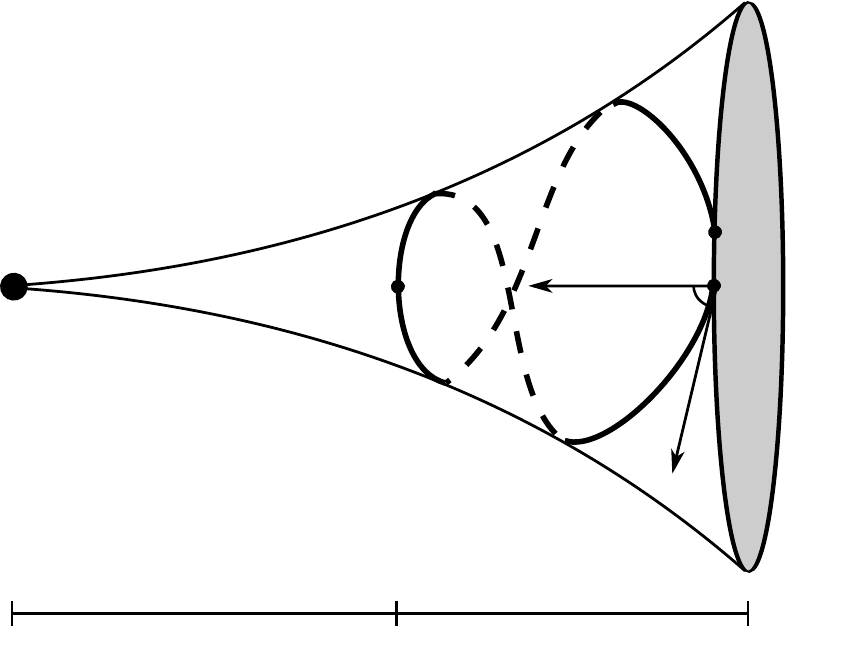
\end{figure}

\begin{remark}\label{r.return-time} Note that the time $T(\beta)$ taken by the geodesic $\gamma(t)$ to go from the parallel $C=C(d_0)$ to $C(x_{\min}(\beta))$ and then from $C(x_{\min}(\beta))$ back to $C$ is \emph{independent} of the basepoint $q=\gamma(0)\in C$. Indeed, this is a direct consequence of the rotational symmetry of our surface. Alternatively, this can be easily seen from the formula 
$$\frac{T(\beta)}{2} = \int_{x_{\min}(\beta)}^{x(0)}x^{r}\sqrt{\frac{1+(rx^{r-1})^2}{x^{2r}-c^2}} \, dx = \int_{d_0(\sin\beta)^{1/r}}^{d_0}x^{r}\sqrt{\frac{1+(rx^{r-1})^2}{x^{2r}-(d_0^r\sin\beta)^2}} \, dx$$
deduced by integration of the ODE satisfied by $x(t)$. Observe that this formula also shows that $T(\beta)$ is uniformly bounded, i.e., $T(\beta)=O_{d_0, r}(1)$ for all $\beta\neq 0$. Geometrically, this means that all geodesics $\gamma(t)$ starting at $C$ must return to $C$ in bounded time unless they go directly into the cusp. 
\end{remark}

This description of the excursions of geodesics near the cusp permits to build a \emph{suspension-flow} model of the geodesic flow near $O$. Indeed, let us consider the \emph{cross-section} $N=T^1_CS = T^1_{C(d_0)}S$. As we saw above, an element of the surface $N$ is parametrized by two angular coordinates $y$ and $\beta$: the value of $y$ determines a point $q=(d_0, d_0^r\cos y, d_0^r\sin y)\in C$ and the value of $\beta$ determines an unit tangent vector $v(\beta)\in T^1_qS$ making angle $\beta$ with $V(q)$. The subset $M$ of $N$ consisting of those elements $v(\beta)$ with angular coordinate $-\pi/2<\beta<\pi/2$ corresponds to the unit vectors with footprint in $C$ pointing towards the cusp at $O$. The equation $\beta=0$ determines a circle $\Sigma$ inside $M$ corresponding to geodesics going straight into the cusp, and, furthermore, we have a natural ``first-return map'' $F:M-\Sigma\to N$ defined by $F(v(\beta)) = \dot{\gamma}_{v(\beta)}(T(\beta))$ where $\gamma_{v(\beta)}$ is the geodesic starting at $v(\beta)$ at time $t=0$. 

\begin{figure}[htb!]
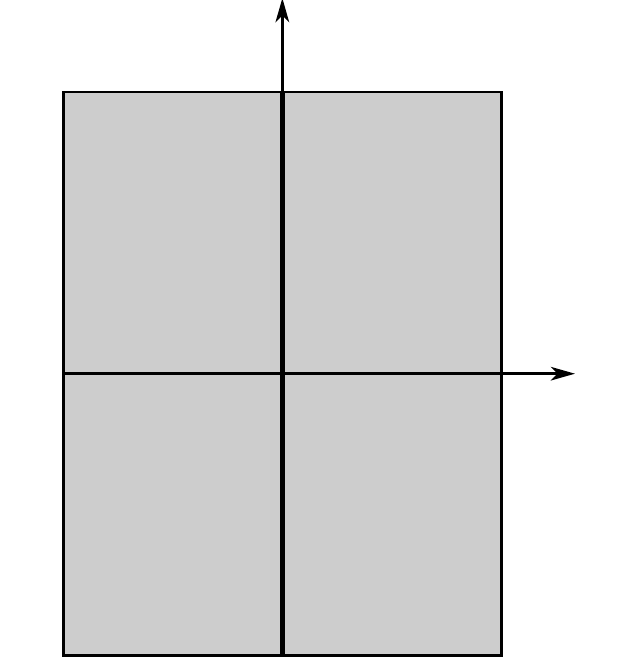
\end{figure}

\newpage

In this setting, the orbits $\gamma_{v(\beta)}(t)$, $t\in [0,T(\beta)]$ are modeled by the ``suspension flow'' $\varphi_t(v(\beta),s)=(v(\beta), s+t)$ if $0\leq s+t<T(\beta)$, $\varphi_{T(\beta)}(v(\beta),0) = (F(v(\beta)),0)$ over the \emph{base map} $F$ with \emph{roof} function $T:M-\Sigma\to\mathbb{R}$, $T(v(\beta))=T(\beta)$. 

\begin{remark}\label{r.suspension} Technically speaking, one needs to ``complete'' the definition of $F$ and $r$ by including the dynamics of the geodesic flow on the compact part of $S$ in order to properly write the geodesic flow on $S$ as a suspension flow. Nevertheless, since the major technical difficulty in the proof of Theorem \ref{t.rapid-mixing-toy-model} comes from the presence of the cusp, we will ignore the excursions of geodesics in the compact part $S$ and we will pretend that the (partially defined) flow $\varphi_t$ is a ``genuine'' suspension flow model. 
\end{remark}

\subsubsection{Rapid mixing of contact suspension flows}

One of the advantages about thinking of the geodesic flow on $S$ as a suspension flow comes from the fact that several authors have previously studied the interplay between the rates of mixing of this class of flows and the features of $F$ and $r$: see, e.g., these papers of Avila-Gou\"ezel-Yoccoz \cite{AvilaGouezelYoccoz} and Melbourne \cite{Melbourne} for some results in this direction (and also for a precise definition of suspension flows).  

For our current purposes, it is worth to recall that B\'alint and Melbourne (cf. Theorem 2.1 [and Remarks 2.3 and 2.5] of \cite{BalintMelbourne}) proved the rapid mixing property for \emph{contact} suspension flows whose base map is modeled by a \emph{Young tower} with \emph{exponential tails} and whose roof function is bounded and \emph{uniformly piecewise H\"older continuous} on each subset of the basis of the Young tower. In particular, the proof of Theorem \ref{t.rapid-mixing-toy-model} is complete once we prove that the base map $F:M-\Sigma\to N$ is modeled by Young towers and the roof function $T:M-\Sigma\to \mathbb{R}$ is bounded and uniformly piecewise H\"older continuous on each element of the basis of the Young tower (whatever this means). 

As it turns out, the theory of Young towers (introduced by Young \cite{Young98}, \cite{Young99}) is a \emph{double-edged sword}: while it provides an adequate setup for the study of statistical properties of systems with some hyperbolicity \emph{once} the so-called \emph{Young towers} were built, it has the \emph{drawback} that the construction of Young towers (satisfying all five natural but technical axioms in Young's definition) is usually a delicate issue: indeed, one has to find a countable Markov partition of a positive measure subset (working as the basis of the Young tower) so that the return maps associated to this Markov partition verify several hyperbolicity and distortion controls, and it is not always clear where one could possibly find such a Markov partition for a given dynamical system. 

Fortunately, Chernov and Zhang \cite{ChernovZhang} gave a list of \emph{sufficient} geometric properties for a \emph{two-dimensional} map like $F:M-\Sigma\to N$ to be modeled by Young towers with exponential tails: in fact, Theorem 10 in Chernov-Zhang paper is a sort of ``black-box'' producing Young towers with exponential tails whenever seven \emph{geometrical} conditions are fulfilled. For the sake of exposition, we will not attempt to check all seven conditions for $F:M-\Sigma\to N$: instead, we will focus on two main conditions called \emph{distortion bounds} and \emph{one-step growth condition}. 

Before we discuss the distortion bounds and the one-step growth condition, we need to recall the concept of \emph{homogeneity strips} (originally introduced by Bunimovich-Chernov-Sinai \cite{BunimovichChernovSinai91}). In our setting, we take $k_0\in\mathbb{N}$ and $\nu=\nu(r)\in\mathbb{N}$ (to be chosen later) and we make a partition of a neighborhood of the \emph{singular set} $\Sigma$ (of geodesics going straight into the cusp) into countably many strips: 
$$H_k:=\left\{(y,\beta)\in M: \frac{1}{(k+1)^{\nu}}<|\beta|<\frac{1}{k^{\nu}}\right\}$$
for all $k\in\mathbb{N}$, $k\geq k_0$. (Actually, $H_k$ has two connected components, but we will slightly abuse of notation by denoting these connected components by $H_k$.)

\begin{figure}[htb!]
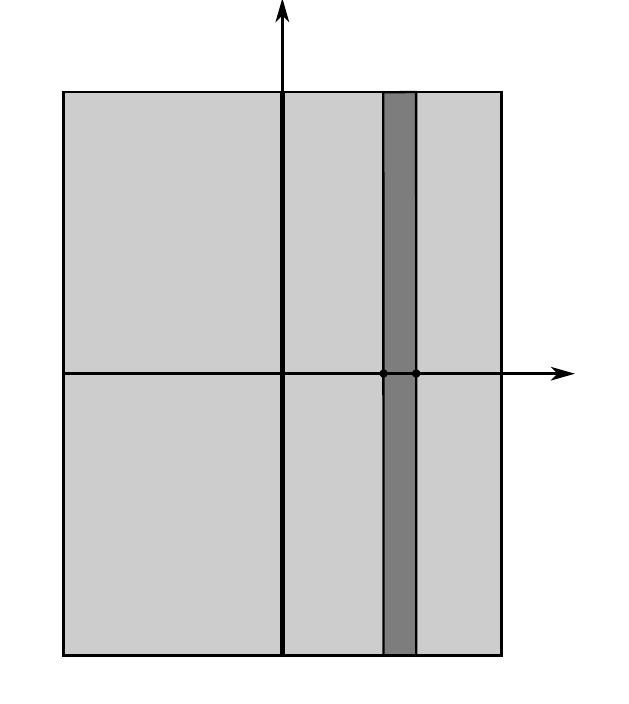
\end{figure}

Intuitively, the partition $H_k$ into polynomial scales $1/k^\nu$ in the parameter $\beta$ is useful in our context because the relevant quantities (such as Gaussian curvature, first and second derivatives, etc.) for the study of the geodesic flow of the surface of revolution blows up with a polynomial speed as the excursions of geodesics get closer the cusp (that is, as $\beta\to 0$). Thus, the important quantities for the analysis of the geodesic flow near the cusp become ``almost constant'' when restricted to one of the homogeneity strips $H_k$. 

Also, another advantage of the homogeneity strips is the fact that they give a rough control of the elements of the countable Markov partition at the basis of the Young tower produced by Chernov-Zhang: indeed, the arguments of Chernov-Zhang show that each element of the basis of their Young tower is completely contained in a homogeneity strip. In particular, the verification of the uniform piecewise H\"older continuity of the roof function $T:M-\Sigma\to N$ follows once we prove that the restriction $T|_{H_k}$of the roof function to each homogeneity strip $H_k$ is uniformly H\"older continuous (in the sense that, for some $0<\alpha=\alpha(r)\leq 1$, the H\"older norms $\|T|_{H_k}\|_{C^\alpha}$ are bounded by a constant \emph{independent} of $k$). 

Coming back to the one-step growth and distortion bounds, let us content ourselves to formulate simpler \emph{versions} of them (while referring to Section 4 and 5 of Chernov-Zhang paper for precise definitions): indeed, the actual definitions of these notions involve the properties of the derivative along unstable manifolds, and, in our current setting, we have just a \emph{partially defined} map $F:M-\Sigma\to N$, so that we can not talk about future iterates and unstable manifolds unless we ``complete'' the definition of $F$. 

Nevertheless, even if $F$ is only partially defined, we still can give crude analogs to unstable directions for $F$ by noticing that the vector field $w^u:=\partial/\partial\beta$ on $M-\Sigma$ (whose leaves are $\{y=constant\}$) morally works like an unstable direction: in fact, this vector field is transverse to the singular set $\Sigma=\{\beta=0\}$ which is a sort of ``stable set'' because all trajectories of the geodesic flow starting at $\Sigma$ converge in the future to the same point, namely, the cusp at $O$. In terms of the ``unstable direction'' $w^u=\partial/\partial\beta$, we define the \emph{expansion factor} $\Lambda(v)$ of $F$ at a point $v=(y,\beta)\in M-\Sigma$ as $\Lambda(v):=\|DF(v)w^u\|/\|w^u\|$, that is, the amount of expansion of the ``unstable'' vector field $w^u$ under $DF(v)$. Note that, from the definitions, the expansion factor $\Lambda(v)$ depends only on the $\beta$-coordinate of $v=(y,\beta)$. So, from now on, we will think of expansion factors as a function $\Lambda(\beta)$ of $\beta$. 

In terms of expansion factors, the (variant of the) distortion bound condition is 
\begin{equation}\label{e.distortion-bounds'}
\frac{d\log\Lambda}{d\beta}(\beta_0)=\frac{\Lambda'(\beta_0)}{\Lambda(\beta_0)}\leq C\frac{1}{\beta_0^{\theta}}
\end{equation}
where $\theta=\theta(r)>0$ satisfies $\nu\theta < \nu+1$, and the (variant of the) one-step growth condition is 
\begin{equation}\label{e.step-growth}
\sum\limits_{k=k_0}^{\infty}\Lambda_k^{-1}<1
\end{equation}
where $\Lambda_k:=\min\limits_{v\in H_k}\Lambda(v) = \min\limits_{\frac{1}{(k+1)^{\nu}}\leq |\beta|\leq\frac{1}{k^{\nu}}} \Lambda(\beta)$.

\begin{remark} The one-step growth condition above is very close to the original version in Chernov-Zhang work (compare \eqref{e.step-growth} with Equation (5.5) in \cite{ChernovZhang}). On the other hand, the distortion bound condition \eqref{e.distortion-bounds'} differs slightly from its original version in Equation (4.1) in Chernov-Zhang paper. Nevertheless, they can be related as follows. The original distortion condition essentially amounts to give estimates $\log\prod\limits_{i=0}^n\frac{\Lambda(F^{-i}(v_1))}{\Lambda(F^{-i}(v_2))}\leq \psi(dist(v_1,v_2))$ (where $\psi$ is a smooth function such that $\psi(s)\to 0$ as $s\to 0$) whenever $x$ and $y$ belong to the same \emph{homogenous unstable manifold} $W$ (i.e., a piece $W$ of unstable manifold such that $F^{-j}(W)$ never intersects the boundaries of the homogeneity strips $H_k$ for all $j\geq 0$ and $k\geq k_0$; the existence of homogenous unstable manifolds through almost every point is guaranteed by a Borel-Cantelli type argument described in Appendix 2 of Bunimovich-Chernov-Sinai's paper \cite{BunimovichChernovSinai91}). Here, one sees that 
$$\log\prod\limits_{i=0}^n\frac{\Lambda(F^{-i}(v_1))}{\Lambda(F^{-i}(v_2))} = \sum\limits_{i=0}^n \frac{\Lambda'(z_i)}{\Lambda(z_i)} dist(F^{-i}(x), F^{-i(y)})$$
for some $z_i\in F^{-i}(W)$. Using the facts that $dist(F^{-i}(x), F^{-i}(y))$ decays exponentially fast (as $x$ and $y$ are in the same unstable manifold $W$) and $F^{-i}(W)$ is always contained in a homogeneity strip $H_{k_i}$ (as $W$ is a homogenous unstable manifold), one can check that the estimate in \eqref{e.distortion-bounds'} implies the desired uniform bound on the previous expression in terms of a smooth function $\psi(s)$ such that $\psi(s)\to 0$ as $s\to 0$. In other words, the estimate \eqref{e.distortion-bounds'} can be shown to imply the original version of distortion bounds, so that we can safely concentrate on the proof of \eqref{e.distortion-bounds'}. 
\end{remark}

At this point, we can summarize the discussion so far as follows. By Melbourne's criterion for rapid mixing for contact suspension flows and Chernov-Zhang criterion for the existence of Young towers with exponential tails for the map $F:M-\Sigma\to N$, we have ``reduced'' the proof of Theorem \ref{t.rapid-mixing-toy-model} to the following statements:

\begin{proposition}\label{p.Holder-roof-function} Given $\nu>0$ and $0<\alpha<1/(\nu+1)$, one has the following ``uniform H\"older estimate'' 
$$\sup\limits_{k\in\mathbb{N}} \|T|_{H_k}\|_{C^\alpha}<\infty$$
whenever $d_0$ is sufficiently small (depending on $r$, $\nu$ and $\alpha$). 
\end{proposition}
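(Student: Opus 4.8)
The plan is to exploit the fact that, by the rotational symmetry recorded in Remark \ref{r.return-time}, the roof function $T$ is \emph{independent of the angular coordinate} $y$ and depends only on $\beta$; thus $\|T|_{H_k}\|_{C^\alpha}$ reduces to a one-variable estimate for $T(\beta)$ on the interval $I_k=\{1/(k+1)^\nu<|\beta|<1/k^\nu\}$. The $C^0$ part is already bounded uniformly in $k$ by Remark \ref{r.return-time} (i.e.\ $T=O_{d_0,r}(1)$). For the Hölder seminorm, I would use that in the coordinates $(y,\beta)$ on the cross-section the distance $d(v_1,v_2)$ dominates $|\beta_1-\beta_2|$ up to a fixed constant, so that, since $T$ only sees $\beta$,
\[
[T|_{H_k}]_{C^\alpha}\le C\sup_{\beta\in I_k}|T'(\beta)|\cdot(\diam_\beta I_k)^{1-\alpha},\qquad \diam_\beta I_k\asymp k^{-(\nu+1)}.
\]
Everything therefore comes down to a pointwise bound of the form $|T'(\beta)|\le C|\beta|^{1/r-1}$ for $0<|\beta|<\beta_0$.

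To obtain this derivative bound I would start from the Clairaut integral displayed in Remark \ref{r.return-time},
\[
\frac{T(\beta)}{2}=\int_{x_{\min}}^{d_0}\frac{x^{r}\sqrt{1+r^2x^{2(r-1)}}}{\sqrt{x^{2r}-c^2}}\,dx,\qquad c=d_0^r\sin\beta,\ x_{\min}=c^{1/r}.
\]
The main obstacle is that this is an integral with a moving, integrable $(x-x_{\min})^{-1/2}$ singularity at the lower limit, and a naive differentiation (via the substitution $x^r=cw$, which fixes the singular endpoint at $w=1$ and sends the upper limit to $1/\sin\beta$) produces a spurious leading term of size $\asymp|\beta|^{-1}$ from \emph{both} the prefactor $c^{1/r}\asymp|\beta|^{1/r}$ and the moving upper limit; these two $|\beta|^{-1}$ contributions must cancel, since in fact $T(\beta)=T_0+A(\sin\beta)^{1/r}+o(|\beta|^{1/r})$. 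To make this cancellation manifest I would subtract the meridian-distance integral, writing $T(\beta)/2=M(\beta)+R(\beta)$ with
\[
M(\beta)=\int_{x_{\min}}^{d_0}\sqrt{1+r^2x^{2(r-1)}}\,dx,\qquad
R(\beta)=\int_{x_{\min}}^{d_0}\frac{\big(x^{r}-\sqrt{x^{2r}-c^2}\big)\sqrt{1+r^2x^{2(r-1)}}}{\sqrt{x^{2r}-c^2}}\,dx.
\]
The term $M$ is differentiated explicitly, giving $M'(\beta)=-\sqrt{1+r^2x_{\min}^{2(r-1)}}\,x_{\min}'(\beta)\asymp|\beta|^{1/r-1}$. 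For $R$, the substitution $x^r=cw$ turns the integrand into $\frac{c^{1/r}}{r}\,\frac{(w-\sqrt{w^2-1})\,w^{1/r-1}}{\sqrt{w^2-1}}\,\sqrt{1+r^2(cw)^{2(r-1)/r}}$, whose large-$w$ behaviour is $\asymp w^{1/r-3}$; since $1/r-3<-1$, the $w$-integral \emph{converges} as $\beta\to0$, so $R(\beta)=\frac{c^{1/r}}{r}(J_0+o(1))$ and a term-by-term differentiation (prefactor, integrand, now-harmless upper limit) yields $R'(\beta)\asymp|\beta|^{1/r-1}$ with no residual $|\beta|^{-1}$. Adding the two gives the desired $|T'(\beta)|\le C|\beta|^{1/r-1}$.

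The smallness of $d_0$ enters precisely to keep all of this uniform: it guarantees we remain inside the exact surface-of-revolution region, makes the factor $\sqrt{1+r^2x^{2(r-1)}}=1+O(d_0^{2(r-1)})$ and its $\beta$-derivatives uniformly small, and ensures the contributions of the excursions through the compact part (suppressed here, cf.\ Remark \ref{r.suspension}) do not affect the leading asymptotics. Finally I would combine the two estimates: on $H_k$ one has $|\beta|\gtrsim(k+1)^{-\nu}$, hence $\sup_{I_k}|T'|\lesssim k^{\nu(r-1)/r}$, and therefore
\[
[T|_{H_k}]_{C^\alpha}\lesssim k^{\,\nu(r-1)/r-(\nu+1)(1-\alpha)}.
\]
The exponent is nonpositive exactly when $\alpha\le 1-\tfrac{\nu(r-1)}{r(\nu+1)}$; since $r>1$ gives $\tfrac{r-1}{r}<1$, this threshold strictly exceeds $\tfrac{1}{\nu+1}$, so the hypothesis $\alpha<\tfrac{1}{\nu+1}$ forces the exponent to be strictly negative. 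Thus $[T|_{H_k}]_{C^\alpha}$ is bounded (indeed tends to $0$) uniformly in $k$, which together with the uniform $C^0$ bound yields $\sup_k\|T|_{H_k}\|_{C^\alpha}<\infty$. The one genuinely delicate point, and the step I expect to cost the most work, is the manifest cancellation of the leading singular terms via the meridian subtraction together with the uniform control of the error terms as $d_0\to0$; the rest is bookkeeping.
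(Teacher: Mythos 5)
Your proposal is correct, but it reaches the crucial estimate by a genuinely different route from the paper. Both arguments share the same skeleton: a pointwise bound on $T'(\beta)$ near $\Sigma$, then the mean value theorem on each strip $H_k$, whose $\beta$-width $\asymp k^{-(\nu+1)}$ compensates the blow-up of $T'$; the hypothesis $\alpha<1/(\nu+1)$ enters exactly in this exponent comparison. The difference is how $T'$ is bounded. The paper never differentiates the Clairaut integral: it first proves the identity $T'(\beta_0)=-(\tan\beta_0)\,j(T(\beta_0))$ relating $T'$ to the perpendicular unstable Jacobi field along $\gamma_{\beta_0}$ (Equation \eqref{e.T'-Jacobi-field}), and then bounds $j(T(\beta))$ by a Riccati comparison against the function $g=r(1+\varepsilon)/x$ combined with Clairaut's relation, arriving at $|T'(\beta)|\leq C\beta^{-(1+4\varepsilon)}$ (Equation \eqref{e.T'-estimate}); this forces the restriction $\nu(1+4\varepsilon)\leq(\nu+1)(1-\alpha)$ (the paper's displayed condition $\nu(1+4\varepsilon)\leq\nu(1-\alpha)$ is a typo for this), i.e., it needs both $\alpha<1/(\nu+1)$ and $\varepsilon$ (hence $d_0$) small. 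Your direct differentiation of the explicit integral, with the meridian subtraction making the cancellation of the two $|\beta|^{-1}$ terms manifest, yields the sharper bound $|T'(\beta)|\lesssim|\beta|^{1/r-1}$ — the bookkeeping checks out: $M'\asymp|\beta|^{1/r-1}$, the prefactor term of $R'$ is $\asymp|\beta|^{1/r-1}$, and the boundary and $\partial_c$-integrand terms are only $O(|\beta|^{1-1/r})$ — so you obtain the conclusion for the strictly larger range $\alpha<\frac{r+\nu}{r(\nu+1)}$, with smallness of $d_0$ needed only for uniformity of constants rather than for the exponent comparison. What the paper's less sharp argument buys is robustness and economy: the Jacobi/Riccati comparison uses only the curvature asymptotics $K\sim-r(r-1)/x^2$ and ODE inequalities, so it survives the passage from the exact surface of revolution to the genuine WP metric on $T^1\mathcal{M}_{1,1}$ (which is only a perturbation of the model, as emphasized before Theorem \ref{t.rapid-mixing-toy-model}), and the same Jacobi field machinery is immediately recycled to prove Proposition \ref{p.step-growth-distortion-bound}. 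Your closed-form computation is tied to exact rotational symmetry (Clairaut's relation and the explicit return-time integral), so carrying it over to the perturbed setting would require reproving the singular cancellation in the presence of error terms — precisely the step you yourself flag as the delicate one.
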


\begin{proposition}\label{p.step-growth-distortion-bound} The expansion factor function $\Lambda(\beta)$ satisfies:
\begin{itemize}
\item given $\nu>r/(r-1)$, we can choose $k_0\in\mathbb{N}$ large (and $d_0$ sufficiently small) so that 
$$\sum\limits_{k=k_0}^{\infty}\Lambda_k^{-1}<1$$
where $\Lambda_k = \min\limits_{\frac{1}{(k+1)^{\nu}}\leq |\beta|\leq\frac{1}{k^{\nu}}} \Lambda(\beta)$;
\item given $r>3$, we can choose $\nu>r/(r-1)$ and $\theta>1+2/r$ such that $\nu\theta<\nu+1$ and 
$$\frac{\Lambda'(\beta)}{\Lambda(\beta)}\leq C\frac{1}{\beta^{\theta}}$$
for some (sufficiently large) constant $C>0$ and for all $\beta$.
\end{itemize}
\end{proposition}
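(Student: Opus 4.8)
The plan is to reduce both assertions to a single asymptotic analysis of the expansion factor $\Lambda(\beta)$ as $\beta\to 0$, extracted from Clairaut's relation. First I would record the explicit geometry of an excursion. Writing $c=d_0^r\sin\beta$ for the Clairaut constant and $x_{\min}=d_0(\sin\beta)^{1/r}$ for the closest approach, the radial relations $x^{2r}y'=c$ and $(1+r^2x^{2r-2})(x')^2=1-c^2/x^{2r}$ give the closed forms
$$\frac{T(\beta)}{2}=\int_{x_{\min}}^{d_0}\frac{x^r\sqrt{1+r^2x^{2r-2}}}{\sqrt{x^{2r}-c^2}}\,dx,\qquad \frac{\Delta y(\beta)}{2}=c\int_{x_{\min}}^{d_0}\frac{\sqrt{1+r^2x^{2r-2}}}{x^r\sqrt{x^{2r}-c^2}}\,dx,$$
where $\Delta y(\beta)$ is the total azimuthal winding of the excursion. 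By the rotational symmetry already exploited in Remark \ref{r.return-time}, the return map has the form $F(y,\beta)=(y+\Delta y(\beta),B(\beta))$, so in the $(y,\beta)$–coordinates $DF$ is upper triangular with entries $1,\Delta y'(\beta),0,B'(\beta)$; since $\partial_\beta$ is a vertical unit vector and $\|\partial_y\|$ is bounded independently of $\beta$ in the Sasaki metric, the expansion factor satisfies $\Lambda(\beta)\asymp |\Delta y'(\beta)|$ to leading order, the term $B'(\beta)=O(1)$ (forced by $\sin B=\sin\beta$) being negligible.

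The analytic heart is the behaviour of these integrals as $\beta\to 0$. I would substitute $u=x^r$ and then $u=c\cosh\psi$, turning the square-root singularity at $x_{\min}$ into the regular endpoint $\psi=0$ and converting each integral into $\int_0^{\Psi}(\cosh\psi)^{a}\,d\psi$ with $\Psi=\operatorname{arccosh}(d_0^r/c)\to\infty$. For the winding this gives $a=-(2-1/r)<-1$, so the integral converges to a positive constant and $\Delta y(\beta)\sim A\,c^{1/r-1}\asymp \beta^{-(r-1)/r}$; for the roof function the exponent $a=1/r>0$ makes the integral grow exactly so as to cancel the prefactor, recovering $T(\beta)=O(1)$ as in Remark \ref{r.return-time}. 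Differentiating under the integral sign (legitimate away from $\beta=0$) and tracking the two sources of error—the expansion of $\sqrt{1+r^2x^{2r-2}}$ and the replacement of $\sin\beta$ by $\beta$—yields $\Lambda(\beta)\asymp\beta^{-(2r-1)/r}$ together with the logarithmic-derivative bound $\Lambda'(\beta)/\Lambda(\beta)=O(1/\beta)$, the subleading corrections being of strictly smaller order in $\beta$.

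Granting this, the two conclusions follow quickly. Since $\Lambda$ is monotone in $|\beta|$, the minimum over $H_k$ is attained near $|\beta|=k^{-\nu}$, so $\Lambda_k\asymp k^{\nu(2r-1)/r}$ and $\sum_{k\ge k_0}\Lambda_k^{-1}$ is the tail of a convergent $p$–series whenever $\nu(2r-1)/r>1$; as $\nu>r/(r-1)$ forces $\nu(2r-1)/r\ge \nu(r-1)/r>1$, this tail can be made $<1$ by taking $k_0$ large, giving \eqref{e.step-growth}. For the distortion bound, $\Lambda'/\Lambda=O(1/\beta)$ and $|\beta|<1$ give $\Lambda'(\beta)/\Lambda(\beta)\le C/\beta\le C/\beta^{\theta}$ for every $\theta\ge 1$, so \eqref{e.distortion-bounds'} holds for any admissible $\theta$. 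It remains only to choose $\nu$ and $\theta$ meeting all three constraints $\nu>r/(r-1)$, $\theta>1+2/r$ and $\nu\theta<\nu+1$; the last rewrites as $\theta<1+1/\nu$, so one needs $1+2/r<1+1/\nu$, i.e. $\nu<r/2$, and the interval $r/(r-1)<\nu<r/2$ is nonempty precisely because $r>3$. Any $\nu$ there, with $\theta$ chosen in $(1+2/r,\,1+1/\nu)$, completes the proof.

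I expect the main obstacle to be the middle step: obtaining the leading power laws \emph{uniformly} in $\beta$ and, more delicately, controlling $\Lambda'(\beta)$ so that no cancellation spoils the clean $O(1/\beta)$ logarithmic derivative. One must show that differentiating the singular integral—where both the integrand and the lower limit $x_{\min}(\beta)$ depend on $\beta$—produces only the expected leading term plus errors of order $o(1/\beta)$, which requires splitting the $\psi$–integral into a fixed compact part and a convergent tail and an honest accounting of the errors from $\sqrt{1+r^2x^{2r-2}}$ and from $\sin\beta=\beta+O(\beta^3)$. The companion Proposition \ref{p.Holder-roof-function} on the H\"older regularity of $T|_{H_k}$ uses the same integral representation and the same substitution, so the estimates developed here feed directly into it.
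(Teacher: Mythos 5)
Your proposal is correct in substance but follows a genuinely different route from the paper. The paper never computes $\Lambda(\beta)$ asymptotically: it stays inside the Jacobi/Riccati formalism, taking $\Lambda(\beta)=j(T(\beta))+j'(T(\beta))$ as in \eqref{e.expansion-factor-definition} and producing only \emph{one-sided} bounds by comparison functions. A Riccati subsolution $h(x)=(r-1)(1-2\varepsilon)/x$ gives $\Lambda(\beta)\geq c\,\beta^{-(r-1)(1-3\varepsilon)/r}$, which already suffices for the one-step growth condition since $\nu>r/(r-1)$; the distortion bound is then obtained by differentiating Riccati's equation with respect to $\beta$ and invoking the explicit curvature formula, which yields only the weaker exponent $\theta=(1+2/r)(1+\varepsilon)$, after which the same arithmetic as yours ($r+2<2r-1$, i.e., $r>3$) closes the argument. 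You instead exploit the complete integrability of the surface of revolution: Clairaut's relation gives a closed-form integral for the winding $\Delta y(\beta)$, the return map is a shear $F(y,\beta)=(y+\Delta y(\beta),B(\beta))$ with $B'=O(1)$, and $\Lambda\asymp|\Delta y'|$, so the $\cosh$-substitution produces two-sided asymptotics $\Lambda\asymp\beta^{-(2r-1)/r}$ and the sharper bound $\Lambda'/\Lambda=O(1/\beta)$. Two remarks on what still needs to be said. First, since the proposition concerns the paper's $\Lambda$, you should make the comparability explicit: from $\dot c(\beta)=J(T(\beta))+T'(\beta)\dot\gamma_\beta(T(\beta))$ and $T'(\beta)=-(\tan\beta)\,j(T(\beta))$ one gets $|\Delta y'|=j(T(\beta))/(d_0^r\cos\beta)$, while $j'(T(\beta))=u(T(\beta))\,j(T(\beta))$ with $u(T(\beta))$ bounded (this boundedness itself needs a Riccati supersolution, as in the paper); moreover the distortion bound for $j(T)+j'(T)$ requires controlling $\frac{d}{d\beta}\,j'(T(\beta))$ and not only $\Delta y''$. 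Second, the monotonicity of $\Lambda$ you invoke for estimating $\Lambda_k$ is unnecessary (and unproven): the lower bound $\Lambda\gtrsim\beta^{-(2r-1)/r}$ on $H_k$ already gives $\Lambda_k\gtrsim k^{\nu(2r-1)/r}$. As for what each approach buys: yours gives sharp two-sided power laws and makes the role of $r>3$ transparent (the interval $r/(r-1)<\nu<r/2$ is nonempty exactly when $r>3$); the paper's comparison arguments are deliberately soft, using only curvature bounds and the approximate Clairaut ODE, precisely because the toy model must eventually be perturbed to the genuine WP metric on $\mathcal{M}_{1,1}$, where your closed-form integrals are unavailable. The remaining work in your plan --- justifying two differentiations under the singular integral, uniformly in $\beta$ --- is real but routine, and you have identified it correctly.
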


The proofs of these two propositions are given in the next two subsections and they are based on the study of perpendicular unstable Jacobi fields related to the variations of geodesics of the form $\gamma_{v(\beta)}(t)$, $0<\beta<\pi/2$. 

\subsubsection{The derivative of the roof function}

From now on, we fix $q\in C=C(d_0)$ (e.g., $q=(d_0, d_0^r, 0)$) and, for the sake of simplicity, we will denote a geodesic $\gamma_{v(\beta)}(t)$ corresponding to an initial vector $v(\beta)\in T^1_qS$ by $\gamma_{\beta}(t)$. Of course, there is no loss of generality here because of the rotational symmetry of the surface $S$. Also, we will suppose that $\beta>0$ as the case $\beta<0$ is symmetric.

Note that the roof function $T(\beta)$ is defined by the condition $\gamma_{\beta}(T(\beta))\in C=C(d_0)$, or, equivalently, 
$$d(\gamma_{\beta}(T(\beta))) = I(d_0) := \int_0^{d_0}\sqrt{1+(rx^{r-1})^2}dx$$
where $d(.)$ denotes the distance from a point to the cusp at $O$ and $I(d_0)$ is the distance from $C(d_0)$ to $O$. By taking the derivative with respect to $\beta$ at $\beta=\beta_0$ and by recalling that $-\nabla d = V$, we obtain that 
$$0=\langle\nabla d(c(\beta_0)), \dot{c}(\beta_0)\rangle = -\langle V(c(\beta_0)), \dot{c}(\beta_0)\rangle$$
where $c(\beta):=\gamma_{\beta}(T(\beta))$. Since $c(\beta) = C(\beta,T(\beta))$ where $C(\beta, t):=\gamma_{\beta}(t)$, we have $\dot{c}(\beta) = \frac{D \gamma_{\beta}}{\partial\beta} (T(\beta)) + \dot{\gamma}_{\beta}(T(\beta)) T'(\beta)$, and, \emph{a fortiori}, 
$$0=\langle V(\gamma_{\beta_0}(T(\beta_0))), \frac {D \gamma_{\beta}}{\partial\beta}|_{\beta=\beta_0}(T(\beta_0))\rangle + \langle V(\gamma_{\beta_0}(T(\beta_0))), \dot{\gamma}_{\beta_0}(T(\beta_0))\rangle T'(\beta_0)$$

Let us compute the two inner products above. By definition of the parameter $\beta$ and the symmetry of the revolution surface $S$, we have $\langle V(\gamma_{\beta}(T(\beta))), \dot{\gamma}_{\beta}(T(\beta))\rangle = - \cos\beta = -\langle V(\gamma_{\beta}(0)), \dot{\gamma}_{\beta}(0)\rangle$. Also, if we denote by $J(t)=\frac {D \gamma_{\beta}}{\partial\beta}(t):= j(t)\cdot J\dot{\gamma}_{\beta}(t)$ the perpendicular (``unstable'') \emph{Jacobi field}\footnote{See the paper \cite{Burns} in this volume for background material on Jacobi fields.} along the geodesic $\gamma_{\beta_0}(t)$ associated to the variation of $C(\beta,t)=\gamma_{\beta}(t)$ with initial conditions $j(0)=0$ and $j'(0)=1$, then 
\begin{eqnarray*}
\langle V(\gamma_{\beta_0}(T(\beta_0))), \frac {D \gamma_{\beta}}{\partial\beta}|_{\beta=\beta_0}(T(\beta_0))\rangle &=& j(T(\beta_0)) \langle V(\gamma_{\beta_0}(T(\beta_0))), J\dot{\gamma}_{\beta_0}(T(\beta_0))\rangle \\ &=& - j(T(\beta_0)) \langle V(\gamma_{\beta_0}(0)), J\dot{\gamma}_{\beta_0}(0)\rangle \\ & = & -j(T(\beta_0))\langle JV(\gamma_{\beta_0}(0)), \dot{\gamma}_{\beta_0}(0)\rangle \\ &=& - j(T(\beta_0))\sin\beta_0
\end{eqnarray*}

From the computation of the inner products above and the fact that they add up to zero, we deduce that $0 = - j(T(\beta_0))\sin\beta_0 - (\cos\beta_0) T'(\beta_0)$, that is,
\begin{equation}\label{e.T'-Jacobi-field}
T'(\beta_0)=-(\tan\beta_0) j(T(\beta_0))
\end{equation}

In other terms, the previous equation says that the derivative $T'(\beta_0)$ can be controlled via the quantity $j(T(\beta_0))$ measuring the growth of the perpendincular Jacobi field $J(t)$ at the return time $T(\beta_0)$. Here, it is worth to recall that Jacobi fields are driven by \emph{Jacobi's equation}:
$$j''(t)+K(t) j(t)=0$$ 
where $K(t)<0$ is the Gaussian curvature of the surface of revolution $S$ at the point $\gamma_{\beta_0}(t)$. Also, it is useful to keep in mind that Jacobi's equation implies that the quantity $u=j'/j$ satisfies \emph{Riccati's equation}
$$u'(t)+u(t)^2 = k(t)^2$$
where $-k(t)^2:=K(t)$. 

In the context of the surface of revolution $S$, these equations are important tools because we have the following explicit formula for the Gaussian curvature $K(q)$ at a point $q=(x,x^r\cos y, x^r\sin y)\in S$:
$$K(q) = \frac{-r(r-1)}{x^2(1+(rx^{r-1})^2)^2}$$ 
In particular, $k(q):=\sqrt{r(r-1)}/x(1+(rx^{r-1})^2)$ verifies $-k(q)^2=K(q)$. 

Next, we take $\varepsilon>0$ and we consider the following auxiliary function:
$$g(q):=\frac{r(1+\varepsilon)}{x}$$
By definition, $k(q)<g(q)$. Furthermore, 
$$k(t)^2-g(t)^2-g'(t)\leq \frac{r(r-1)}{x(t)^2} - \frac{r^2(1+\varepsilon)^2}{x(t)^2} - \frac{r(1+\varepsilon)x'(t)}{x(t)^2}$$
Since the equation $(1+rx(t)^{r-1})^2 x'(t)^2 = 1-c^2/x(t)^{2r}=\cos\beta(t)^2$ (describing the motion of geodesic on $S$) implies that $|x'(t)|\leq 1$, we deduce from the previous inequality that 
\begin{equation}\label{e.curvature-upper-bound}
k(t)^2 - g(t)^2 - g'(t)\leq \frac{1}{x(t)^2}(r(r-1)-(r(1+\varepsilon))^2+r(1+\varepsilon))<0
\end{equation}
for all times $t\in[0,T(\beta)]$.

This estimate allows to control the solution $u=j'/j$ of Riccati's equation along the following lines. The initial data of the Jacobi field $J(t)$ is $j(0)=0$ and $j'(0)$. Hence, 
$$\frac{j'(0)}{j(0)}=\infty>g(0)=\frac{r(1+\varepsilon)}{x(0)} = \frac{r(1+\varepsilon)}{d_0}$$
In particular, there exists a well-defined maximal interval $[0,t_0]\subset [0,T(\beta)]$ where $j'(t)/j(t)\geq g(t)$ for all $t\in[0, t_0]$. By plugging this estimate into Jacobi's equation, we get that 
$$\frac{j''(t)}{j'(t)}=\frac{k(t)^2j(t)}{j'(t)}\leq \frac{k(t)^2}{g(t)}\leq g(t)$$
for each $t\in [0, t_0]$. 

By integrating this inequality (and using the initial condition $j'(0)=1$), we obtain that 
$$\log j'(t_0)=\log \frac{j'(t_0)}{j'(0)} =\int_0^{t_0} \frac{j''(t)}{j'(t)} ds\leq \int_0^{t_0} g(t) dt.$$
Therefore, 
$$j(t_0)\leq \frac{j'(t_0)}{g(t_0)}\leq \frac{1}{g(t_0)} \exp\left(\int_0^{t_0} g(t) dt\right)$$

If $t_0=T(\beta)$, we deduce that $j(T(\beta))\leq \frac{1}{g(T(\beta))}\exp\left(\int_0^{T(\beta)} g(t) dt\right)\leq \frac{1}{k(0)}\exp\left(\int_0^{T(\beta)} g(t) dt\right)$ (as $k(0)=k(T(\beta))<g(T(\beta))$). Otherwise, $0<t_0<T(\beta)$ and $u(t_0)=j'(t_0)/j(t_0) = g(t_0)$. Since $u=j'/j$ satisfies Riccati's equation, we deduce from \eqref{e.curvature-upper-bound} that 
$$u'(t_1) - g'(t_1) = k(t_1)^2 - u(t_1)^2 - g'(t_1) = k(t_1)^2 - g(t_1)^2 - g'(t_1) < 0$$ 
at each time $t_1$ where $u(t_1)=g(t_1)$. It follows that $j'(t)/j(t):=u(t)\leq g(t)$ for all $t\in [t_0, T(\beta)]$. Hence, 
$$\log\frac{j(T(\beta))}{j(t_0)} = \int_{t_0}^{T(\beta)} \frac{j'(t)}{j(t)} dt\leq \int_{t_0}^{T(\beta)} g(t) dt,$$
and, \emph{a fortiori}, 
\begin{eqnarray*}
j(T(\beta))&\leq& j(t_0)\exp\left(\int_{t_0}^{T(\beta)} g(t) dt\right) \\ &\leq& \frac{1}{g(t_0)} \exp\left(\int_{0}^{t_0} g(t) dt\right) \exp\left(\int_{t_0}^{T(\beta)} g(t) dt\right) \\ 
&\leq & \frac{1}{k(0)} \exp\left(\int_0^{T(\beta)} g(t) dt\right).  
\end{eqnarray*}
In other words, we proved that 
\begin{equation}\label{e.jTbeta-estimate}
j(T(\beta))\leq \frac{1}{k(0)} \exp\left(\int_0^{T(\beta)} g(t) dt\right)  
\end{equation}
independently whether $t_0=T(\beta)$ or $0<t_0<T(\beta)$. 

Now, the quantity $\exp\left(\int_0^{T(\beta)} g(t) dt\right)$ can be estimated as follows. By deriving Clairaut's relation $x(t)^r\sin\beta(t)=c$, we get 
$$rx(t)^{r-1}x'(t)\sin\beta(t) + x(t)^r(\cos\beta(t))\beta'(t)=0,$$
that is, 
\begin{equation}\label{e.1/d-formula}
\frac{1}{x(t)} = -\frac{1}{r}\frac{\cos\beta(t)}{x'(t)}\frac{\beta'(t)}{\sin\beta(t)}
\end{equation}
Since $\sin\beta(t)\sim\beta(t)$ (as we are interested in small angles $|\beta|<k_0^{-\nu}$, $k_0$ large) and $\cos\beta(t)\sim x'(t)$ (thanks to the relation $(1+rx(t)^{r-1})^2 x'(t)^2 =1 - c^2/x(t)^{2r} = (\cos\beta(t))^2$ and the fact that $r>1$ and, thus, $1\leq 1+(rx(t)^{r-1})^2\leq 1+(rd_0^{r-1})^2\sim 1$ for $d_0$ small), we conclude that 
$$g(t)=\frac{r(1+\varepsilon)}{x(t)}\leq (1+2\varepsilon)\frac{\beta'(t)}{\beta(t)}$$
for $t\in [0, T(\beta)/2]$. Here, we used the fact that $x'(t)<0$ for $t\in[0, T(\beta)/2]$. Therefore, 
$$\int_0^{T(\beta)/2} g(t) dt \leq (1+2\varepsilon) \log\frac{\pi/2}{\beta(0)}$$
since $\beta(T(\beta)/2)=\pi/2$.  Also, the symmetry of the surface $S$ implies $x(t)=x(T(\beta)-t)$ and, hence, 
$$\int_0^{T(\beta)/2}g(t) dt = \int_{T(\beta)/2}^{T(\beta)} g(t) dt$$
In summary, we have shown that $\int_0^{T(\beta)}g(t) dt\leq 2(1+2\varepsilon)\log(\pi/2\beta(0))$, i.e.,  
\begin{equation}\label{e.exp-g-integral}
\exp\left(\int_0^{T(\beta)} g(t) dt\right)\leq (\pi/2)^{2(1+2\varepsilon)}\frac{1}{\beta(0)^{2(1+2\varepsilon)}}
\end{equation}

By putting together \eqref{e.T'-Jacobi-field}, \eqref{e.jTbeta-estimate} and \eqref{e.exp-g-integral}, we conclude that 
\begin{equation}\label{e.T'-estimate}
|T'(\beta_0)|\leq\frac{\tan\beta_0}{k(0)}\exp\left(\int_0^{T(\beta_0)} g(t) dt\right)\leq C\frac{\beta_0}{\beta_0^{2(1+2\varepsilon)}} = \frac{C}{\beta_0^{1+4\varepsilon}}
\end{equation}
for some constant $C>0$ depending on $r>1$ and $\varepsilon>0$. 

At this stage, we are ready to complete the proof of Proposition \ref{p.Holder-roof-function}. 

\begin{proof} Let us estimate the H\"older constant $\|T|_{H_k}\|_{C^{\alpha}}$. For this sake, we fix $\beta_1, \beta_2\in H_k$ and we write 
$$\frac{|T(\beta_1)-T(\beta_2)|}{|\beta_1-\beta_2|^{\alpha}} = |T'(\beta_3)|\cdot |\beta_1-\beta_2|^{1-\alpha}$$ 
for some $\beta_3\in H_k$ between $\beta_1$ and $\beta_2$. Since $|\beta_1-\beta_2|\leq k^{-\nu}-(k+1)^{-\nu}\leq \nu/k^{\nu+1}$ and $|\beta_3|\geq (k+1)^{-\nu}$, it follows from \eqref{e.T'-estimate} that  
$$\frac{|T(\beta_1)-T(\beta_2)|}{|\beta_1-\beta_2|^{\alpha}} \leq  C\nu^{1-\alpha}\frac{(k+1)^{\nu(1+4\varepsilon)}}{k^{(\nu+1)(1-\alpha)}}$$
Because $\beta_1$ and $\beta_2$ are arbitrary points in $H_k$, we have that
$$\|T|_{H_k}\|_{C^{\alpha}}\leq C\frac{(k+1)^{\nu(1+4\varepsilon)}}{k^{(\nu+1)(1-\alpha)}}$$
where $C>0$ is an appropriate constant.  

Now, our assumption $0<\alpha<1/\nu+1$ implies that we can choose $\varepsilon>0$ sufficiently small so that $\nu(1+4\varepsilon)\leq \nu(1-\alpha)$. By doing so, we see from the previous estimate that 
$$\sup\limits_{k\in\mathbb{N}}\|T|_{H_k}\|_{C^{\alpha}}<\infty$$
whenever $\varepsilon>0$, i.e., $d_0>0$, is sufficently small. This proves Proposition \ref{p.Holder-roof-function}.
\end{proof}

\subsubsection{Some estimates for the expansion factors $\Lambda(\beta)$}

Similarly to the previous subsection, the proof of Proposition \ref{p.step-growth-distortion-bound} uses the properties of Jacobi's and Riccati's equation to study 
\begin{equation}\label{e.expansion-factor-definition}
\Lambda(\beta):=j(T(\beta)) + j'(T(\beta))
\end{equation}
where $j(t)=j_{\beta}(t)$ is the scalar function (with $j(0)=0$ and $j'(0)=1$) measuring the size of the perpendicular ``unstable'' Jacobi field along $\gamma_{\beta}(t)$. 

We begin by giving a lower bound on $\Lambda(\beta)$.  Given $\varepsilon>0$, let us choose $d_0=d_0(\varepsilon, r)>0$ small so that 
$$\sqrt{1-\varepsilon}<\frac{1}{1+(rd_0^{r-1})^2}(\leq 1)$$
Of course, this choice of $d_0$ is possible because $r>1$. Next, we consider the auxiliary function:
$$h(q):=\frac{(r-1)(1-2\varepsilon)}{x}.$$

By definition, $h(q)<\sqrt{r(r-1)}/x(1+(rd_0^{r-1})^2)\leq k(q)$. Furthermore, 
$$h'(t) = -\frac{(r-1)(1-\varepsilon)}{x(t)^2}x'(t)$$
In particular, 
$$k(t)^2-h(t)^2-h'(t)>\frac{r(r-1)(1-\varepsilon)}{x(t)^2}-\frac{(r-1)^2(1-2\varepsilon)^2}{x(t)^2} - \frac{(r-1)(1-2\varepsilon)x'(t)}{x(t)^2}$$
Since $|x'(t)|\leq 1$ (cf. the paragraph before \eqref{e.curvature-upper-bound}), we deduce from the previous estimate that
$$k(t)^2-h(t)^2-h'(t)>0$$ 
This inequality implies that the solution $u(t)=j'(t)/j(t)$ of Riccati's equation satisfies $u(t)\geq h(t)$ for all $t\in [0, T(\beta)]$. Indeed, the initial condition $j'(0)=1$, $j(0)=0$ says that $u(0)=\infty>h(0)$ and the inequality above tells us that 
$$u'(t_1)-h'(t_1) = k(t_1)^2 - u(t_1)^2 - h'(t_1) = k(t_1)^2 - h(t_1)^2 - h'(t_1) >0$$
at any time $t_1$ where $u(t_1)=h(t_1)$. 

By integrating the estimate $u(t)=j'(t)/j(t)\geq h(t)$ over the interval $[t_0, T(\beta)]$, we obtain that 
$$\log \frac{j(T(\beta))}{j(t_0)} = \int_{t_0}^{T(\beta)} \frac{j'(t)}{j(t)} dt \geq \int_{t_0}^{T(\beta)} h(t) dt,$$
i.e., 
$$j(T(\beta))\geq j(t_0)\exp\left(\int_{t_0}^{T(\beta)} h(t) dt\right)$$

For sake of concreteness, let us set $t_0:=d_0/10$ and let us restrict our attention to geodesics whose initial angle $\beta=\beta(0)$ with the meridians of $S$ are sufficiently small so that $T(\beta)\geq d_0/2$. In this way, we have that $j(t_0)\geq t_0=d_0/10$ (thanks to Jacobi's equation $j''=k^2j$ and our initial conditions $j(0)=0$ and $j'(0)=1$). In this way, the inequality above becomes
$$j(T(\beta))\geq \frac{d_0}{10}\exp\left(\int_{t_0}^{T(\beta)} h(t) dt\right)$$

Next, we observe that $\exp\left(\int_{t_0}^{T(\beta)} h(t) dt\right)$ can be bounded from below in a similar way to our derivation of a bound from above to $\exp\left(\int_{0}^{T(\beta)} g(t) dt\right)$ in the previous subsection: in fact, by repeating the arguments appearing after \eqref{e.1/d-formula} above, one can show that 
$$h(t)\geq \frac{(r-1)(1-3\varepsilon)}{r}\frac{\beta'(t)}{\beta(t)}$$
and 
$$\exp\left(\int_{t_0}^{T(\beta)} h(t) dt \right)\geq \overline{c} \frac{1}{\beta(0)^{(r-1)(1-3\varepsilon)/r}}$$
where $\overline{c}>0$ is an adequate (small) constant depending on $r$, $d_0$ and $\varepsilon$. 

By putting together the estimates above, we deduce that 
$$\Lambda(T(\beta))\geq j(T(\beta))\geq c\frac{1}{\beta(0)^{(r-1)(1-3\varepsilon)/r}}$$
where $c=d_0\overline{c}/10$. 

This inequality shows that 
$$\sum\limits_{k=k_0}^{\infty} \Lambda_k^{-1}\leq \frac{1}{c} \sum\limits_{k=k_0}^{\infty}\frac{1}{(k+1)^{(r-1)\nu(1-3\varepsilon)/r}}$$ 
Thus, if $\nu>r/(r-1)$, then we can choose $\varepsilon>0$ small (with $(r-1)(1-3\varepsilon)\nu/r>1$) and $k_0\in\mathbb{N}$ large so that (our variant of) the one-step growth condition \eqref{e.step-growth} holds. This proves the first part of Proposition \ref{p.step-growth-distortion-bound}. 

Finally, we give an indication of the proof of the second part of Proposition \ref{p.step-growth-distortion-bound} (i.e., the distortion bound \eqref{e.distortion-bounds'}). We start by writing 
$$\frac{\Lambda'(\beta)}{\Lambda(\beta)} = \frac{d}{d\beta}\log\Lambda(\beta)$$
and by noticing that 
$$\log\Lambda(\beta) = \log(j(T(\beta))+j'(T(\beta))) = \log j(T(\beta)) + \log(1+u(T(\beta)))$$
Next, we take the derivative with respect to $\beta$ of the previous expression. Here, we obtain several terms involving some quantities already estimated above via Jacobi's and Riccati's equation (such as $j(T(\beta))$, $T'(\beta)$, etc.), but also a new quantity appears, namely, $u_{\beta}(t)$, i.e., the derivative with respect to $\beta$ of the family of solutions $u(t)=u(t,\beta)$ of Riccati's equation along $\gamma_{\beta}(t)$. Here, the ``trick'' to give bounds on $u_{\beta}(t)$ is to derive Riccati's equation 
$$u'(t)+u(t)^2 = k(t)^2$$
with respect to $\beta$ in order to get an ODE (in the time variable $t$) satisfied by $u_{\beta}(t)$. In this way, it is possible to see that one has reasonable bounds on $u_{\beta}(t)$ as soon as the derivative $k_{\beta}$ of the square root of the absolute value $-K$ of the Gaussian curvature. Here, $k_{\beta}$ can be bounded by recalling that we have an explicit formula 
$$K=-r(r-1)/x^2(1+(rx^{r-1})^2)^2$$ 
for the Gaussian curvature. By following these lines, one can prove that, for a given $\varepsilon>0$, the distortion bound  
$$\frac{\Lambda'(\beta)}{\Lambda(\beta)}\leq C\frac{1}{\beta(0)^{(1+2/r)(1+\varepsilon)}} = \frac{C}{\beta(0)^{\theta}}$$
holds whenever $d_0>0$ is taken sufficiently small. In other words, by taking $\theta = \theta(r) = (r+2)(1+\varepsilon)/r$, we have $\Lambda'(\beta)/\Lambda(\beta) 
\leq C\beta(0)^{-\theta}$. 

Note that the estimate in the previous paragraph gives the desired distortion bounds \eqref{e.distortion-bounds'} once we show that $\theta=\theta(r)=\frac{(r+2)}{r}+$ can be selected such that $\nu\theta<\nu+1$. In order to check this, it suffices to recall that $\nu-r/(r-1)>0$ can be taken arbitrarily small (cf. the proof of the first part of Proposition \ref{p.step-growth-distortion-bound}), i.e., $\nu=\frac{r}{r-1}+$. So, 
$$\nu\theta = \left(\frac{r}{r-1}+\right)\left(\frac{r+2}{r}+\right) = \frac{r+2}{r-1}+$$ 
and 
$$\nu+1 = \frac{r}{r-1}+1+ = \frac{2r-1}{r-1}+$$
Since $r+2<2r-1$ for $r>3$, it follows that $\nu\theta<\nu+1$ for adequate choices of $\theta$ and $\nu$. This completes our sketch of proof of the second part of Proposition \ref{p.step-growth-distortion-bound}.

\end{document}